\theoremstyle{definition}
\newtheorem{thm}{Theorem}[section]
\newtheorem{defi}[thm]{Definition}
\newtheorem{lem}[thm]{Lemma}
\newtheorem{prop}[thm]{Proposition}
\newtheorem{rk}[thm]{Remark}
\numberwithin{equation}{section}
\def\E{\mathcal E}
\def\R{\mathbb R}
\def\e{\varepsilon}
\def\N{\mathbb N}
\def\H{\mathcal H}
\newcommand{\mres}{\mathbin{\vrule height 1.6ex depth 0pt width 
0.13ex\vrule height 0.13ex depth 0pt width 1.3ex}}
\begin{document}

\allowdisplaybreaks[4]

\title[Existence and regularity theorems of one-dimensional Brakke flows]{Existence and regularity theorems \\ of one-dimensional Brakke flows}

\author{Lami Kim}
\address{Center for Mathematical Analysis \& Computation, Yonsei University, Seoul 03722 Republic of Korea}
\email{lm.kim@yonsei.ac.kr}

\author{Yoshihiro Tonegawa}
\address{Department of Mathematics, Tokyo Institute of Technology, Tokyo 152-8551 Japan}
\email{tonegawa@math.titech.ac.jp}

\medskip
\subjclass[2010]{53C44, 49Q20,49N60}
\keywords{mean curvature flow, varifolds, geometric measure theory, regularity theory}
\thanks{L.\,K. is partially supported by the National Research Foundation of Korea (NRF) Grant NRF-2019R1I1A1A01062992, NRF-2015R1A5A1009350 and Y.\,T. by Japan Society for the Promotion of
Science (JSPS) Grant 18H03670, 19H00639 and 17H01092.} 

\begin{abstract}
Given a closed countably $1$-rectifiable set in $\R^2$ with locally finite $1$-dimensional 
Hausdorff measure, we prove that there exists a Brakke flow
starting from the given set with the following regularity property.
For almost all time, the flow locally consists of a finite number of embedded
curves of class $W^{2,2}$ whose endpoints meet at junctions with
angles of either 0, 60 or 120 degrees. 
\end{abstract}

\maketitle \setlength{\baselineskip}{18pt}

\section{Introduction}
A family of $n$-dimensional surfaces $\{\Gamma_t\}_{t\geq 0}$ in $\R^{n+1}$ is called the
mean curvature flow (abbreviated hereafter as MCF) if the velocity is equal to its mean
curvature at each point and time. 
Given a smooth compact surface $\Gamma_0$, there exists a
smoothly evolving MCF starting from $\Gamma_0$ 
until some singularities such as vanishing or pinching occur. 
There are numerous notions of generalized MCF past singularities and
the Brakke flow is the earliest one conceived by Brakke \cite{Brakke} within the
framework of geometric measure theory. 

In \cite{KimTone}, by reworking the proof of 
Brakke's general existence theorem \cite[Chapter 4]{Brakke}, the following time-global existence 
theorem was established: {\it Let $\Gamma_0$ be a closed
countably $n$-rectifiable set whose complement $\R^{n+1}\setminus \Gamma_0$
is not connected and whose $n$-dimensional Hausdorff measure $\mathcal H^n(\Gamma_0)$ is finite or at most of exponential
growth near infinity. Then there exists a non-trivial Brakke flow starting from $\Gamma_0$. }
The similar existence problem was considered with fixed boundary conditions \cite{Stuvard-Tonegawa},
where the Brakke flow converges subsequentially to a solution for the Plateau problem as $t\rightarrow\infty$. 
The Brakke flow is a family of varifolds which satisfies the motion law
of MCF in a distributional sense, and the flow 
may not be smooth in general. In fact, for $n=1$,
a typical Brakke flow is expected to look like an evolving network of curves joined by junctions, 
and the stable junctions are the ones with
three curves meeting with equal angles of 120 degrees. Let us call such junction simply as the {\it triple junction}. Physically, one can associate the motion of grain boundaries in polycrystalline materials
to this mathematical model. 
As the network goes though various topological changes, junctions of more than 
three curves are formed when two or more triple junctions collide. They then 
should instantaneously break up into triple junctions connected by short curves since the latter
has shorter curve length.
With this formal intuition, it is reasonable to speculate that, for general initial data, 
there should exist a $1$-dimensional Brakke flow
which locally consists of finite number of smooth curves meeting only at
triple junctions for almost all time, or even better, for all co-countable time. 

The present paper edges towards the positive resolution of this speculation.
We prove that the $1$-dimensional Brakke flow constructed in \cite{KimTone,Stuvard-Tonegawa}
has this property, albeit degenerately. Roughly speaking, we prove that
the support of Brakke flow in \cite{KimTone,Stuvard-Tonegawa} locally consists of embedded $W^{2,2}$ curves whose endpoints
meet at junctions with either 0, 60 or 120 degrees for almost all time
(Theorem \ref{KTmain2} and \ref{KTmain3}). Here, $W^{2,2}$ curve
means that it is locally represented as the graph of a function whose first and second weak derivatives are
square-integrable. 
In particular, these
curves are $C^{1,\sfrac12}$ by the standard embedding theorem and the angle condition says that they 
cannot intersect each other with angles other than 0, 60 or 120 degrees (see Figure \ref{Fig1} for some
examples). 
The configuration (a) is the typical picture that three curves meet at a triple junction. Note that
the configurations like (b)-(d) can arise as limits of curves connected only 
by triple junctions while keeping the curvatures bounded. In that sense, they may be seen as
degenerate triple junctions. For example,
(b) can arise as a limit of two triple junctions connected by an infinitesimally short curve, and 
(c) can be a limit of infinitesimally small regular hexagon with 6 curves leaving from the each vertex. 
The configuration like (d) can also arise as a limit of combination of triple junctions.
Degenerate configurations like (b)-(d) are unlikely to occur for a set of time with positive measure but we cannot exclude the possibility in the 
present paper
(see Section \ref{frr} for the related comment). 
In addition, we prove that the same regularity property holds for any tangent flow (Theorem \ref{KTmain4}). 
\begin{figure}[!h]
\centering
\includegraphics[width=16cm]{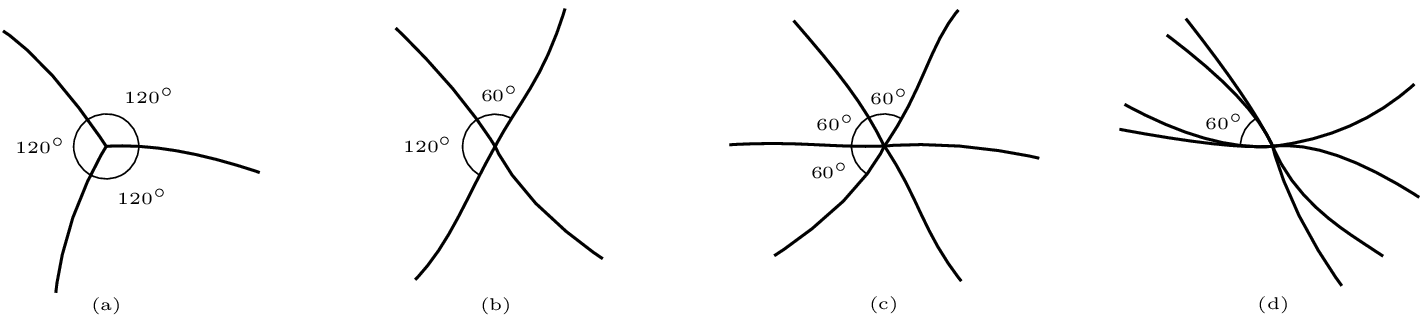}              
\caption{}
\label{Fig1}
\end{figure} 
The regularity statement for the support 
itself is a substantial improvement compared to the result deduced from the standard
regularity theorem in geometric measure theory. 
The Brakke flow by definition has locally square-integrable
generalized mean curvature and integer-multiplicity
for almost all time, and with the Allard regularity theorem \cite{Allard}, 
there exists a dense open set in which the flow consists of $W^{2,2}$ curves. 
But it is not known that the complement of such ``regular part'' has null $1$-dimensional 
measure in general when the Allard theorem is applied. 
As for the regularity near the junctions, 
we are not aware of any general theorem on the
uniqueness of tangent cone or regularity theorem 
other than the one for $1$-dimensional stationary varifolds due to Allard-Almgren \cite{AA}.

To avoid a possible confusion, one should note that the claim of the present paper is pertinent only
to the Brakke flows constructed in \cite{KimTone,Stuvard-Tonegawa}, and not necessarily to arbitrary 1-dimensional Brakke flows
as defined in \cite{Brakke,Ton1}. 
Just to clarify this point, here is a simple illustration: take a union of two lines crossing at 90 degrees as the initial data. Then, the Brakke flow starting from it in \cite{KimTone} cannot stay time-independent due to the 
result of the present paper, since such 
crossing, even though it is stationary and a Brakke flow itself, 
does not satisfy the angle condition as above. 
The implication is that
the method of construction in \cite{KimTone,Stuvard-Tonegawa} selects a special subclass of Brakke flows with this additional regularity property. This class is also compact with respect to the weak convergence (Lemma \ref{comp}) and because of this, it is a natural category to study measure-theoretically. 

There are numerous closely related results even if we focus on the 1-dimensional problem and we mention 
some of the most relevant works. The problem is also called curvature flow, curve-shortening flow or 
network flow in 
the literature. For an embedded closed curve as initial data, the well-known theorem due to 
Gage-Hamilton \cite{Gage} and Grayson \cite{Grayson} says that the flow stays embedded and becomes convex, and
eventually shrinks to a round circle. For a flow with a triple junction, Bronsard-Reitich \cite{Bronsard} 
proved the short-time existence and uniqueness for $C^{2+\alpha}$ initial data. The long-time 
behavior of the flow was studied in \cite{MNT,MMN} for fixed boundary condition. 
For less regular initial data, G\"{o}sswein-Menzel-Pluda \,\cite{Pluda} studied the short-time existence
and uniqueness within a $W^{2,p}$ class. For initial data with general junctions (non-120 degree triple 
junction or junction with more edges), Ilmanen-Neves-Schulze \cite{INS} showed the short-time existence 
of the flow. For more information on the existence and uniqueness for various related works, 
see \cite{MNPS} and the references therein. All of the aforementioned
flows may be called classical network flows in the sense that the junctions are all triple junctions
with equal angles as they evolve. For more general flow which allows topological changes,
in \cite{KimTone,Stuvard-Tonegawa}, the long-time existence was 
studied within the framework of geometric measure theory. It is expected that the solution is unique for ``regular enough'' initial network, but the precise 
condition is not known (see \cite{Laux} for a related uniqueness question). The present work 
establishes an interesting connection between the solution of \cite{KimTone,Stuvard-Tonegawa} and 
classical network flow in the sense that the former is found to be 
within the measure-theoretic closure of 
the class of classical network flows. 
 
Next we briefly describe the idea of the proof. In \cite{KimTone} (see Section \ref{Final} for
comments on \cite{Stuvard-Tonegawa}), the first task
is to construct a time-discrete approximate mean curvature flow. 
For $n=1$, it is proper to say simply as ``curvature'' in place of ``mean curvature'', but we 
may be referring to both 1-dimensional and general $n$-dimensional cases so that we continue
to use ``mean curvature''. 
Let $\Delta t_j$ be a time step size
which converges to $0$ as $j\rightarrow\infty$ and suppose that we inductively have 
$\Gamma_{t}$ at $t=(k-1)\Delta t_j$ for an integer $k$. To obtain $\Gamma_t$ at $t=k\Delta t_j$,
a restricted Lipschitz deformation is first applied to $\Gamma_{(k-1)\Delta t_j}$ so that the
resulting intermediate $\tilde \Gamma_{k\Delta t_j}$ is almost measure-minimized within a length
scale of $O(1/j^2)$. Then one computes a smooth analogue of 
mean curvature vector $h_{\varepsilon_j}$ of $\tilde \Gamma_{k\Delta t_j}$, and moves $\tilde
\Gamma_{k\Delta t_j}$ by $h_{\varepsilon_j}\Delta t_j$ to obtain $\Gamma_{k\Delta t_j}$. The 
parameter $\varepsilon_j$ is much smaller than $1/j^2$, and in view of the length scale 
of measure-minimization, $\varepsilon_j$ is so small that $h_{\varepsilon_j}$ behaves like
the real mean curvature vector of $\tilde\Gamma_{k\Delta t_j}$.  
For each $j$, we continue the induction for $k=1,\ldots,[j/\Delta t_j]$ and define $\Gamma_j(t)$
as a piecewise constant approximate flow for $t\in[0,j]$. The limit of $\{\Gamma_j(t)\}_{t\geq 0}$ as $j\rightarrow 
\infty$ corresponds roughly to the desired Brakke flow. 
Because of the accompanying estimates, for almost all $t\in\R^+$, we can make sure that $\Gamma_j(t)$
is almost measure-minimized within any ball of radius $o(1/j^2)$ and that we have a control of 
square-integral of approximate mean curvature vector. This minimality in the 1-dimensional situation gives
the result that $\Gamma_j(t)$ is very close to either a line or a triple junction within any
ball of radius $o(1/j^2)$. We patch these short lines (or triple junctions) globally. The variation of
these line or triple junction can be controlled by the square-integral of approximate mean curvature
and this gives us $C^{1,\sfrac12}$ control of the curves in a length scale of $O(1)$ independent of $j$.
Then we can make sure that $\Gamma_j(t)$ behaves more or less like a regular network of triple junctions,
and the limit with the $L^2$ curvature bound is expected to have the same type of regularity, except
that some triple junctions may collapse and create junctions with multiple edges. On the other hand,
the angle condition is preserved. To justify this argument, we need to evaluate various errors of 
approximations. We actually need to take a subsequence so that we have $\{j_\ell\}_{\ell=1}^{\infty}$
in place of $\{j\}_{j=1}^\infty$ in the above argument.   
 
The organization of the paper is as follows. In Section \ref{Main} the existence theorem 
of \cite{KimTone} is recalled and the main regularity theorems are stated. Section \ref{Pre}
gathers relevant definitions, lemmas and estimates for approximate MCF from \cite{KimTone}.
In Section \ref{Small}, the main result is Theorem \ref{chlim}, which shows that the 
converging sets are asymptotically close to some measure-minimizing hypersurfaces in any ball of 
radius $o(1/j^2_\ell)$ as $\ell\rightarrow\infty$.
In Section \ref{Large}, specializing for the case of 1-dimension, we prove Theorem \ref{regflat},
which is a regularity theorem for the ``flat'' portion of the varifold with multiplicity. 
Up to this point, we are left with the analysis of isolated singularities, and Section \ref{Pr}
shows Theorem \ref{KTmain2} and \ref{KTmain3} as well as the compactness property.
The last Section \ref{Final} gives some further comments. 

\section{Main results}\label{Main}

We use the same notation stated in \cite[Section 2]{KimTone}. 
Since some of the results obtained in this paper are 
for arbitrary dimensions, we recall the notation for general $n$-dimensional case in the following. 
For $a\in \mathbb R^{n+1}$ and $r>0$, we define
$$
B_r(a):=\{z\in\mathbb R^{n+1}\,:\, |z-a|\leq r\}\,\,\mbox{ and }\,\,
U_r(a):=\{z\in\mathbb R^{n+1}\,:\, |z-a|<r\}.
$$
Let ${\bf G}(n+1,n)$ be the space of $n$-dimensional subspaces of $\mathbb R^{n+1}$ and 
${\bf G}_n(\mathbb R^{n+1}):=\mathbb R^{n+1}\times{\bf G}(n+1,n)$. The symbol 
${\bf V}_n(\mathbb R^{n+1})$ denotes the set of all general 
$n$-varifolds in $\mathbb R^{n+1}$ and ${\bf IV}_n(\mathbb R^{n+1})$ denotes the set of integral 
$n$-varifolds in $\mathbb R^{n+1}$. 
The only minor notational difference from \cite{KimTone} is that we use $z$ for a point in $\R^{n+1}$,
and when we specialize in $n=1$, we use $z=(x,y)\in\R^2$, reserving $x$ and $y$ for the coordinates
on $\R^2$. Moreover, for the consistency of presentation, we state in this section the results for
the Brakke flow obtained in \cite{KimTone}, even though
Theorem \ref{KTmain2}-\ref{KTmain4} hold true for the Brakke flow in \cite{Stuvard-Tonegawa} 
(see Section \ref{Final1}). 

Let $\Omega\in C^2(\mathbb R^{n+1})$
be a weight function satisfying 
\begin{equation}
0<\Omega(z)\leq 1, \,\,|\nabla\Omega(z)|\leq \Cl[c]{c_1}\Omega(z),\,\,\|\nabla^2\Omega(z)\|\leq \Cr{c_1}
\Omega(z)
\label{omega}
\end{equation}
for all $z\in\R^{n+1}$ where $\Cr{c_1}$ is a constant. The function $\Omega$ is introduced
to handle initial data $\Gamma_0$ with infinite $\mathcal H^n$ measure. If 
$\Gamma_0$ has finite $\mathcal H^n$ measure, 
we may take $\Omega(z)
\equiv 1$ and $\Cr{c_1}=0$.
If there exists $c>0$ such that 
$\mathcal H^n(B_R\cap \Gamma_0)\leq \exp(cR)$ for all large $R$, for example, we may choose $\Omega(z)=\exp(-2c\sqrt{1+|z|^2})$ so that \eqref{omega} is satisfied for a suitable $\Cr{c_1}$
depending on $c$ and $n$. With such choice, we have the following \eqref{omegaA}. 
We excerpt the main existence theorem of Brakke flow 
from \cite[Theorem 3.2 and 3.5]{KimTone}:
\begin{thm}
\label{KTmain}
Suppose that $\Gamma_0\subset \R^{n+1}$ is a closed countably $n$-rectifiable set whose 
complement $\R^{n+1}\setminus \Gamma_0$ is not connected and
suppose 
\begin{equation}
\label{omegaA}
\mathcal H^n\mres_{\Omega}(\Gamma_0):=\int_{\Gamma_0}\Omega(z)\,d\mathcal H^n(z)<\infty.
\end{equation}
For some $N\geq 2$, choose a finite collection of non-empty open sets $\{E_{0,i}\}_{i=1}^N$ in $\R^{n+1}$
such that they are mutually disjoint and $\cup_{i=1}^N E_{0,i}=\R^{n+1}\setminus \Gamma_0$. Then 
there exist a family of $n$-dimensional varifolds $\{V_t\}_{t\in \R^+}\subset{\bf V}_n(\R^{n+1})$ and a family of open 
sets $\{E_i(t)\}_{t\in \R^+}$ in $\R^{n+1}$ for each $i=1,\ldots,N$ with the following property. 
\begin{enumerate}
\item $V_0=|\Gamma_0|$ and $E_i(0)=E_{0,i}$ for $i=1,\ldots,N$.
\item For $\mathcal L^1$ a.e.~$t\in\R^+$, $V_t\in {\bf IV}_n(\R^{n+1})$ and $h(\cdot,V_t)
\in L^2(\|V_t\|\mres_\Omega)$.
\item For all $0\leq t_1<t_2<\infty$ and $\phi\in C_c^1(\R^{n+1}\times\R^+;\R^+)$, we have
\begin{equation}\label{bineq}
\|V_t\|(\phi(\cdot,t))\big|_{t=t_1}^{t_2}\leq\int_{t_1}^{t_2}\delta(V_t,\phi(\cdot,t))(
h(\cdot,V_t))+\|V_t\|\big(\frac{\partial\phi}{\partial t}(\cdot,t)\big)\,dt,
\end{equation}
where $\|V_t\|(\phi(\cdot,t))\big|_{t=t_1}^{t_2}:=\|V_{t_2}\|(\phi(\cdot,t_2))-
\|V_{t_1}\|(\phi(\cdot,t_1))$. 
\item $E_1(t),\ldots,E_N(t)\subset\R^{n+1}$ are mutually disjoint open sets for each $t\in \R^+$. 
\item Let $d\mu:=d\|V_t\|dt$. Then $\{z\,:\,(z,t)\in {\rm spt}\,\mu\}=\R^{n+1}\setminus
\cup_{i=1}^N E_i(t)=\cup_{i=1}^N\partial E_i(t)$ for each $t>0$. 
\item ${\rm spt}\,\|V_t\|\subset \{z\,:\,(z,t)\in{\rm spt}\,\mu\}$ for each $t>0$. 
\item $\|V_t\|\geq \|\nabla\chi_{E_i(t)}\|$ for each $t\in\R^+$ and $i=1,\ldots,N$.
\item For each $i=1,\ldots,N$, $z\in\R^{n+1}$ and $R>0$, $\chi_{E_i(t)}\in C([0,\infty);L^1(B_R(z)))$.
\item If $\mathcal H^n(\Gamma_0\setminus \cup_{i=1}^N\partial^*
E_{0,i})=0$, then $\lim_{t\rightarrow 0+}\|V_t\|=\mathcal H^n\mres_{\Gamma_0}$.
\end{enumerate}
Here, $|\Gamma_0|$ is the $n$-dimensional varifold naturally induced from the countably $n$-rectifiable 
set $\Gamma_0$, 
$\|V_t\|$ is the weight measure of $V_t$, $h(x,V_t)$ is the generalized 
mean curvature vector of $V_t$, and
\begin{equation}\label{bineq2}
\delta(V_t,\phi(\cdot,t))(h(\cdot,V_t)):=\int_{\R^{n+1}}
\nabla\phi(z,t)\cdot h(z,V_t)-\phi(z,t)|h(z,V_t)|^2\,d\|V_t\|(z).
\end{equation}
\end{thm}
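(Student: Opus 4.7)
The plan is to follow Brakke's original time-discrete approximation scheme, adapted so as to simultaneously track an open partition $\{E_i(t)\}$ of the complement. First I fix a time step $\Delta t_j\to 0$ and a mollification parameter $\varepsilon_j$ much smaller than $1/j^2$, then inductively produce approximate surfaces $\Gamma_{k\Delta t_j}$ together with updated open sets $E_{i,j}(k\Delta t_j)$.

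The inductive step is carried out in two substeps. In the first substep I apply to $\Gamma_{(k-1)\Delta t_j}$ a restricted Lipschitz deformation that almost minimizes area inside balls of radius $O(1/j^2)$, while being compatible with the partition: the deformation is chosen so that $\chi_{E_i}$ is transported consistently and each complementary open piece is preserved. This renders the intermediate $\tilde\Gamma_{k\Delta t_j}$ almost stationary on the small scale and absorbs singular topological transitions (vanishing loops, pinching) into a length-decreasing step. In the second substep I compute the smoothed mean curvature $h_{\varepsilon_j}$ of $\tilde\Gamma_{k\Delta t_j}$ by convolving the first variation with an $\varepsilon_j$-mollifier and move each point by $h_{\varepsilon_j}\Delta t_j$; the open sets are updated by pushing forward under this smooth flow.

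Next I derive discrete $\Omega$-weighted a priori estimates: a Brakke-type monotonicity inequality that controls $\|V_{k\Delta t_j}\|(\Omega)$ in terms of the preceding step minus a discrete $L^2$-norm of $h_{\varepsilon_j}$, giving a uniform curvature bound; and a uniform modulus of continuity for each $\chi_{E_i}$ in $L^1(B_R(z))$. After a piecewise-constant interpolation in $t$ and a diagonal subsequence, one obtains limits $V_t$ and $E_i(t)$. The Brakke inequality \eqref{bineq} follows from passing to the limit in the discrete version, using that the Lipschitz step only decreases mass. Integrality for a.e. $t$ comes from Allard's integral compactness theorem combined with the $L^2$ curvature bound. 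Items (4)--(8) follow from the $L^1$-continuity of $\chi_{E_i(t)}$ and the inequality $\|V_t\|\geq\|\nabla\chi_{E_i(t)}\|$, which is preserved at the discrete level by both substeps. For item (9), under the reduced-boundary hypothesis the initial mass is exactly $\sum_i\|\nabla\chi_{E_{0,i}}\|$; combining the perimeter lower bound with lower-semicontinuity forces $\|V_t\|\to\mathcal H^n\mres_{\Gamma_0}$ as $t\to 0^+$.

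The main technical obstacle is calibrating the two substeps together. The Lipschitz minimization must avoid destroying integrality or the partition structure, and the parameters $\Delta t_j$, $\varepsilon_j$, and the minimization scale $1/j^2$ must be coupled so that $h_{\varepsilon_j}$ behaves like the true mean curvature on the relevant length scales while cumulative approximation errors still vanish in the limit. Equally delicate is that, in the limit, one needs the strict Brakke inequality rather than an averaged form; this requires a careful choice of test functions adapted to the discrete scheme, together with a lower-semicontinuity argument for the first-variation term against the weak $L^2$ limit of the approximate curvatures $h_{\varepsilon_j}$.
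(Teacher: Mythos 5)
Your outline reproduces the correct overall architecture of the proof in \cite{KimTone} (which is what Theorem \ref{KTmain} is excerpted from): time-discrete two-substep scheme with the Lipschitz ``almost-minimization'' at scale $O(1/j^2)$ followed by a push by the mollified mean curvature $h_{\varepsilon_j}\Delta t_j$, coupling of $\varepsilon_j\ll 1/j^2$ and $\Delta t_j$, $\Omega$-weighted discrete Brakke estimates, $L^1_{loc}$ compactness for the partition functions, and passage to the limit along a diagonal subsequence. So far, so good.

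There is, however, a genuine gap in the integrality step. You assert that ``integrality for a.e.\ $t$ comes from Allard's integral compactness theorem combined with the $L^2$ curvature bound.'' This does not work as stated, because the $L^2$ bound that the scheme provides (\eqref{caMCF4}, \eqref{caMCF9} in the paper) is on the \emph{smoothed} first variation $\Phi_{\varepsilon_j}*\delta(\partial\mathcal E_j(t))$, weighted by $(\Phi_{\varepsilon_j}*\|\partial\mathcal E_j(t)\|+\varepsilon_j\Omega^{-1})^{-1}$, not on the first variation $\delta(\partial\mathcal E_j(t))$ itself. The approximate varifolds $\partial\mathcal E_j(t)$ are merely unit-density rectifiable sets built from open partitions, and in general they carry junctions and other singularities whose unmollified first variation is not controlled by this quantity. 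Allard's integral compactness theorem requires a uniform local bound on $\|\delta V_j\|$, which you do not have. The actual argument that the limit is integral with square-integrable generalized mean curvature is the technical heart of Brakke's program; in \cite{KimTone} it is \cite[Theorem 8.6, Lemma 9.1, Theorem 9.3]{KimTone}, which combine the almost-measure-minimizing property from the Lipschitz substep with delicate estimates converting the smoothed curvature bound into a genuine first-variation estimate for the limit, following the Kasai--Tonegawa repair of Brakke's original (incomplete) integrality proof. For a similar reason, your claim that the limiting Brakke inequality ``follows from passing to the limit in the discrete version'' is not so immediate: the discrete inequality \eqref{caMCF5} is tested against $h_{\varepsilon_j}$, and the identification in the limit of the weak $L^2$ limit with the generalized mean curvature $h(\cdot,V_t)$ again rests on the rectifiability/integrality results just mentioned and cannot be decoupled from them.
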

See more complete description of the properties of $V_t$ and $E_i(t)$ in \cite{KimTone}. The claim
(9) is not stated in \cite{KimTone}, but the same argument in \cite[Proposition 6.10]{Stuvard-Tonegawa} shows 
this continuity property. 

Specializing in the case of $n=1$, the main theorem of the present paper in technical terms is 
the following. 
\begin{thm}
\label{KTmain2}
Suppose $n=1$ and let $\{V_t\}_{t\in\R^+}$ be the Brakke flow obtained in \cite{KimTone} as in
Theorem \ref{KTmain}. Then for almost all $t\in\R^+$, $V_t$ has the following
description at each $z\in{\rm spt}\,\|V_t\|$. After translation of $z$ to the origin and 
an orthogonal rotation, 
there exist $\rho>0$, $k_R,k_L\in \N$, functions $f_{1,R},\ldots,f_{k_R,R}\in W^{2,2}([0,\rho])$ and $f_{1,L},\ldots,f_{k_L,L}
\in W^{2,2}([-\rho,0])$ such that 
$f_{1,R}(x)\leq\ldots\leq f_{k_R,R}(x)$ for $x\in[0,\rho]$ and $f_{1,L}(x)\leq\ldots\leq f_{k_L,L}(x)$ for $x\in [-\rho,0]$, 
\begin{equation}
\label{conc0}
f_{1,R}(0)=\ldots=f_{k_R,R}(0)=f_{1,L}(0)=\ldots=f_{k_L,L}(0)=0,
\end{equation}
\begin{equation}
\label{conc1}
f'_{1,R}(0),\ldots,f'_{k_R,R}(0),f'_{1,L}(0),\ldots,
f'_{k_L,L}(0)\in \{0,\pm \sqrt 3\},
\end{equation}
\begin{equation}
\label{conc3}
\sum_{i=1}^{k_R}\frac{(1,f_{i,R}'(0))}{(1+(f_{i,R}'(0))^2)^{1/2}}+\sum_{i=1}^{k_L}\frac{(-1,-f_{i,L}'(0))}{(1+(f_{i,L}'(0))^2)^{1/2}}=0,
\end{equation}
\begin{equation}
\label{conc2}
\|V_t\|\mres_{B_\rho}=\sum_{i=1}^{k_R} \mathcal H^1\mres_{B_\rho\cap \{(x,f_{i,R}(x))\,:\,x\in[0,\rho]\}}+\sum_{i=1}^{k_L} \mathcal H^1\mres_{B_\rho\cap \{(x,f_{i,L}(x))\,:\,x\in[-\rho,0]\}}.
\end{equation}
\end{thm}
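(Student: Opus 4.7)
The plan is to combine the small-scale measure-minimization established in Theorem \ref{chlim} with the flat-portion regularity of Theorem \ref{regflat}, and then analyze the remaining isolated singular points. First I would fix a ``good'' time $t \in \R^+$ at which simultaneously $V_t \in {\bf IV}_1(\R^2)$, $h(\cdot,V_t) \in L^2(\|V_t\|\mres_\Omega)$, the approximating sequence $\Gamma_{j_\ell}(t)$ converges weakly to $V_t$, and the hypotheses and conclusions of Theorems \ref{chlim} and \ref{regflat} are in force; by the earlier sections this holds for $\mathcal L^1$-a.e.\ $t$.

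For such $t$ and any $z \in {\rm spt}\,\|V_t\|$, I would proceed by a dichotomy. \emph{Flat case:} if $z$ satisfies the smallness hypotheses of Theorem \ref{regflat}, that theorem directly yields a finite stack of $W^{2,2}$ graphs near $z$ sharing a common tangent direction; rotating that direction to horizontal and splitting each graph at $x=0$ into its left and right halves produces the representation \eqref{conc2} with all slopes $f_{i,R}'(0)=f_{i,L}'(0)=0$, and \eqref{conc3} holds trivially since each left trace is paired with the right trace of the same graph. \emph{Singular case:} Otherwise, Theorem \ref{chlim} gives that $\Gamma_{j_\ell}(t)\cap B_r(z)$ is asymptotically close to a measure-minimizing $1$-rectifiable set on some scale $r = o(1/j_\ell^2)$. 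In the $1$-dimensional setting in $\R^2$, the only such measure-minimizing configurations through a point are multiple lines and $120^\circ$ triple junctions (in the spirit of \cite{AA}); the limit must therefore be a superposition of such pieces through $z$, with multiplicities dictated by $V_t$.

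From this minimal-cone model at the scale $o(1/j_\ell^2)$, I would pass to a genuine $W^{2,2}$ graph representation on a uniform scale $\rho>0$ using the $L^2$ bound on approximate mean curvature as a Dirichlet-type energy. Specifically, the curvature bound combined with near-minimality at small scales should produce uniform $C^{1,\sfrac12}$ estimates on each branch emanating from $z$; in the limit, each branch becomes a graph of a $W^{2,2}$ function over a suitable half-axis, with initial slope forced into $\{0,\pm\sqrt 3\}$ because each branch must be tangent at $z$ to one of the six rays at angles $0,\pi/3,2\pi/3,\pi,4\pi/3,5\pi/3$ coming from the minimizing limit. Ordering sheets by height yields the monotonicity $f_{1,R}\leq \cdots \leq f_{k_R,R}$ and similarly on the left, and \eqref{conc3} follows from the stationarity identity: since $h(\cdot,V_t) \in L^2$, the first variation carries no atomic contribution at $z$, forcing the weighted sum of outward unit tangent vectors of the branches to vanish.

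The main obstacle I anticipate is controlling each branch uniformly across all scales between the minimal-cone scale $o(1/j_\ell^2)$ and the fixed macroscopic scale $\rho$, so as to rule out oscillation or spiraling of individual branches. This requires carefully translating the $L^2$ approximate-curvature bound into a uniform $C^{1,\sfrac12}$ oscillation estimate and then passing this estimate to the limit $\ell\to\infty$. A secondary difficulty is the bookkeeping for the degenerate configurations of Figure \ref{Fig1}(b)--(d), where several $W^{2,2}$ sheets with slopes in $\{0,\pm\sqrt 3\}$ can coincide at $z$ while still satisfying \eqref{conc3}; distinguishing these legitimate limits from spurious ones requires a careful multiplicity and orientation analysis in the passage from the approximating networks $\Gamma_{j_\ell}(t)$ to $V_t$.
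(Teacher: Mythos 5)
Your overall plan — pick a good time $t$, classify points by tangent cone, apply Theorem \ref{regflat} in the flat case, appeal to the small-scale minimization of Theorem \ref{chlim} in the singular case, and close with stationarity for \eqref{conc3} — matches the paper's structure in outline. But there is a genuine gap at the core of the singular case, and you have assumed the hard part.

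The gap is the step from ``$\partial\mathcal E_{j_\ell}(t)$ is close to a line or $120^\circ$ triple junction at scale $o(1/j_\ell^2)$'' to ``the tangent cone of $V_t$ at $z$ has rays at angles that are multiples of $60^\circ$.'' These are statements at very different scales and about different objects. Theorem \ref{chlim} constrains the approximating network $\partial\mathcal E_{j_\ell}$ at the \emph{micro}-scale $r_\ell = o(j_\ell^{-2})\to 0$, where each blow-up has \emph{unit density}. The tangent cone of the limit $V_t$ at a fixed point $z$ (the object appearing in your dichotomy) lives at a scale $R_i \to 0+$ that is \emph{independent of $\ell$}, and it can be any finite union $\sum_k\theta_k|M_k|$ of rays with integer multiplicities and $m\ge 3$, a priori with arbitrary angles between consecutive rays. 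The degenerate configurations of Figure \ref{Fig1}(b)--(d) arise precisely because, as $\ell\to\infty$, a whole cluster of micro-scale triple junctions inside $B_{r_\ell}$ can collapse to the single macroscopic point $z$; nothing in Theorem \ref{chlim} by itself limits the angles between the surviving rays to $\{60^\circ,120^\circ\}$.

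Your sentence ``each branch must be tangent at $z$ to one of the six rays at angles $0,\pi/3,2\pi/3,\pi,4\pi/3,5\pi/3$ coming from the minimizing limit'' is the desired conclusion of \eqref{conc1}, not a justification of it. The paper bridges this with a combinatorial tracking argument (Step 5 of the proof of Theorem \ref{regflat} together with cases (a)--(c) in the proof of Theorem \ref{KTmain2}): starting from the approximating graph over a given ray $M_k$, one extends the $C^{1,\sfrac12}$ polyline constructed from $Z_\ell$ towards the singular point, records successive encounters with micro-triple junctions (whose locations converge to $z$ by the isolatedness of ${\rm sing}\,V$), and uses the $C^{1,\sfrac12}$ slope estimate \eqref{varf} together with the annular graph decomposition \eqref{nojun1}--\eqref{oritan2} to rule out all scenarios except exactly one or exactly two intervening junctions between $M_k$ and $M_{k+1}$, yielding $120^\circ$ or $60^\circ$ respectively. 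Without this argument your proposal does not exclude, for instance, two rays of the tangent cone separated by $90^\circ$, and so does not prove \eqref{conc1}. A correct write-up must therefore supply this multi-scale patching and exhaustion argument (or an equivalent) explicitly; merely invoking the minimizing structure of the micro-scale limit and the $L^2$ curvature bound does not get you there.
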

Here, $f\in W^{2,2}([a,b])$ means that 
$f,f',f''$ are in $L^2([a,b])$. It is well-known that 
$W^{2,2}([a,b])\subset C^{1,\sfrac12}([a,b])$ so that $f'$ is well-defined as a 
$\sfrac12$-H\"{o}lder continuous function on $[a,b]$. Note that each term of \eqref{conc3} is the 
inward-pointing unit
tangent vector to the curve at the origin and the claim is that their vector sum is $0$. 
To see the content of the claim
clearly, consider the special case of $k_R=k_L=1$. Then, we have $f_{1,R}$ on $[0,\rho]$
and $f_{1,L}$ on $[-\rho,0]$ with $f_{1,R}(0)=f_{1,L}(0)=0$, $f'_{1,R}(0)=f'_{1,L}(0)\in \{0,\pm\sqrt 3\}$ 
by \eqref{conc0}-\eqref{conc3}. In this case, $f_1:[-\rho,\rho]\rightarrow\R$ 
defined by $f_1(x):=f_{1,R}(x)$ for $x\in[0,\rho]$ and $f_1(x):=f_{1,L}(x)$ for $x\in [-\rho,0]$
is in $W^{2,2}([-\rho,\rho])$ and we have $\|V_t\| \mres_{B_\rho}=\mathcal H^1\mres_{B_\rho
\cap\{(x,f_1(x))\,:\, x\in[-\rho,\rho]\}}$. Hence, this is the case that 
${\rm spt}\,\|V_t\|$ is represented locally as the graph of $f_1$. 
If $k_R=1$ and $k_L=2$ (and similarly for $k_R=2$ and $k_L=1$), 
because of \eqref{conc1} and \eqref{conc3}, we see that 
$f_{1,R}'(0)=0$, $f_{1,L}'(0)=\sqrt 3$ and $f_{2,L}'(0)=-\sqrt 3$ have to be true. This case
corresponds to the triple junction with multiplicity $1$, that is, three curves meet
at the origin with equal angles of 120 degrees. In other cases of $k_R,k_L\geq 2$, we have
$k_R$ curves coming from the right-hand side and $k_L$ curves from the left-hand side, and
they meet at the origin with angles of either $0$, $60$ or $120$ degrees, and so that
\eqref{conc3} holds true. If the derivatives of all the functions are equal at the origin, 
then $k:=k_R=k_L$ by \eqref{conc3}, and this case corresponds to the situation that 
${\rm spt}\,\|V_t\|$ is locally represented by $W^{2,2}$ functions $f_1\leq \ldots\leq f_k$ which are tangent 
at the origin. It is also clear from this description that genuine junctions (with
some non-zero angles between curves meeting at the junction) are isolated, and away from 
them, ${\rm spt}\,\|V_t\|$ is a union of embedded $W^{2,2}$ curves which may 
be tangent to each other but which do not cross transversally. 

If $N=2$ (i.e., the ``two-phase'' case of $\R^2\setminus\Gamma_0=E_{0,1}\cup E_{0,2}$), we can conclude in the next Theorem \ref{KTmain3} that there are no genuine junctions and 
the worst possible irregularities are curves being tangent. 
\begin{thm}\label{KTmain3}
Assume in addition that $N=2$ in Theorem \ref{KTmain2}. 
Then we have $k_R=k_L(=:k)$ and 
$f_{1,R}'(0)=\ldots=f_{k,R}'(0)=f_{1,L}'(0)=\ldots=f_{k,L}'(0)$. 
In particular,
for almost all $t$, ${\rm spt}\,\|V_t\|$ locally consists of a finite number of
embedded $W^{2,2}$ curves which are tangent if they intersect. 
\end{thm}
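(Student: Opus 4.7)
The plan is to combine the local regularity from Theorem \ref{KTmain2} with the $N=2$ constraint to force the tangent cone of ${\rm spt}\,\|V_t\|$ at every point to be a single line through that point. Under this constraint, Theorem \ref{KTmain2} immediately gives $f'_{i,R}(0)=s=f'_{i,L}(0)$ for some common $s\in\{0,\pm\sqrt{3}\}$, and then the balance \eqref{conc3} forces $k_R=k_L$. So the task reduces to ruling out the configurations permitted by Theorem \ref{KTmain2} whose tangent directions at origin span more than one line through the origin, such as two lines crossing at $60^\circ$.

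The structural key is that, for a two-phase partition, the approximating networks $\Gamma_j(t)$ constructed in \cite{KimTone} cannot contain genuine triple junctions or transverse crossings. The near measure-minimizing Lipschitz deformation step preserves the partition into $\{E_{0,1},E_{0,2}\}$; for $N=2$, every curve in the resulting $\tilde\Gamma_{k\Delta t_j}$ must therefore lie on the boundary between $E_1$ and $E_2$, since any curve segment surrounded by the same phase on both sides could be excised strictly decreasing length while leaving the partition unchanged, contradicting near-minimality. A triple junction would create three sectors labeled from only two phases, forcing two adjacent sectors to share a label, and the curve between them would fail the phase-separating property just established. A transverse crossing of two curves compatible with a 2-coloring produces four sectors alternately labeled $1,2,1,2$, but such a configuration can be strictly shortened by a small Lipschitz rounding that resolves the crossing into two tangent arcs while preserving the 2-coloring, again contradicting near-minimality at scale $O(1/j^2)$.

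Passing to the limit $j\to\infty$, the absence of transverse crossings and genuine junctions in the $N=2$ approximation means that the local tangent structure of ${\rm spt}\,\|V_t\|$ at $z$ can arise only as a limit of smooth phase-separating curves accumulating, possibly with high multiplicity, along a single tangent line through $z$. Together with Theorem \ref{KTmain2}, this forces the common slope $s$ and $k_R=k_L$. The final assertion that ${\rm spt}\,\|V_t\|$ locally consists of embedded $W^{2,2}$ curves which are tangent where they meet follows by combining this pointwise conclusion with the $W^{2,2}$ graph representation from Theorem \ref{KTmain2}, since intersecting graphs with a common tangent line must, by the $C^{1,\sfrac12}$ embedding, touch only tangentially.

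The hard part will be the rigorous verification of the ``no triple junctions and no transverse crossings'' claim for $\Gamma_j(t)$ at each discrete time. The measure-minimization and curvature-motion steps alternate in each cycle, and the approximate curvature motion at scale $\varepsilon_j\ll 1/j^2$ could, between successive minimizations, introduce incipient near-tangent crossings. One must show that such incipient crossings are immediately resolved by the next minimization round, that the resulting ``aligned tangent'' constraint is preserved by the curvature step with uniform estimates in $j$, and that it survives the subsequential limit via an argument parallel to the one used for Theorem \ref{KTmain2}.
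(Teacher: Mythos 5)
Your conceptual lever is the right one and matches the paper's: for $N=2$ the regular part of the flow must be phase-separating, and a triple junction would force two of its three adjacent sectors to carry the same label, contradicting phase separation across the edge between them. That is precisely the content of Lemma~\ref{two-sided} and its use in the proof of Theorem~\ref{chlim}(3).

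However, you try to implement this at the level of the discrete networks $\partial\mathcal E_{j,\ell}$, and this is where a genuine gap opens up. The discrete partitions are only \emph{almost} measure-minimizing, with an error of order $\Delta t_{j_\ell}$ at scale $\lesssim 1/j_\ell^2$ (cf.\ \eqref{dsmall}), so there is no exact structural statement available of the form ``$\partial\mathcal E_{j,\ell}$ has no triple junctions or transverse crossings.'' The excision argument you sketch (``strictly decreasing length contradicts near-minimality'') produces only a quantitative bound, not an exclusion, and the concern you yourself raise --- that the heat-kernel curvature step could reintroduce near-tangent crossings --- is real in this framing and is never resolved in your sketch. The paper sidesteps the problem entirely: it does not assert anything structural about the discrete networks. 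Instead it rescales by $r_\ell = o(1/j_\ell^2)$, passes to the varifold limit $V$ which by Theorem~\ref{chlim} \emph{is} exactly measure-minimizing and unit density, and only then applies the two-sided/pigeonhole argument (Lemma~\ref{two-sided}) to $V$, giving that $V$ is a line when $N=2$. This feeds into Lemma~\ref{defC4} ($N=2$ branch, only case (a)), forcing $Z_\ell = Z_\ell^*$, and the analysis of ${\rm sing}\,V_t$ in the proof of Theorem~\ref{KTmain2} then shows ${\rm sing}\,V_t=\emptyset$. Your final deduction (common slope $s$, hence $k_R=k_L$ via \eqref{conc3}, hence tangency) is correct once the tangent cone is known to be a single line; the missing step is to obtain that conclusion by the blow-up route rather than by a structural claim about the approximating networks that the paper never makes and that is unlikely to hold verbatim.
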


Next, we note that the class of Brakke flows with the regularity property in Theorem \ref{KTmain2}
(and \ref{KTmain3} for $N=2$) is compact with respect to the natural weak convergence of measures
(see Lemma \ref{comp}).
In particular, we have the following:
\begin{thm}\label{KTmain4}
Any tangent flow of Brakke flow obtained in \cite{KimTone} has the same regularity property
as in Theorem \ref{KTmain2} (and \ref{KTmain3} for $N=2$).
\end{thm}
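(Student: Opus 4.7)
The plan is to realize any tangent flow as a subsequential weak limit of parabolically rescaled copies of $\{V_t\}$ and then invoke the compactness Lemma \ref{comp}. Given a spacetime point $(z_0,t_0)$ with $t_0>0$ and a scale sequence $\lambda_k\to\infty$, set $\eta_{z_0,\lambda_k}(z):=\lambda_k(z-z_0)$ and consider the parabolically rescaled flow
$$V^{(k)}_s:=(\eta_{z_0,\lambda_k})_{\sharp}V_{t_0+\lambda_k^{-2}s}\quad\text{for }s\in[-\lambda_k^2 t_0,\infty).$$
A tangent flow at $(z_0,t_0)$ is, by definition, a subsequential limit of $\{V^{(k)}\}$ in the weak sense of spacetime measures on compact subsets of $\R^2\times\R$. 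Because Brakke's inequality \eqref{bineq}--\eqref{bineq2} is invariant under parabolic rescaling, each $V^{(k)}$ is itself a Brakke flow on its time domain.

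Next I would verify that each $V^{(k)}$ lies in the regularity class described by Theorem \ref{KTmain2} (or Theorem \ref{KTmain3} if $N=2$). Fix $s$ such that $t_k:=t_0+\lambda_k^{-2}s$ belongs to the full-measure set on which $V_{t_k}$ admits the local representation \eqref{conc0}--\eqref{conc2} at every point of ${\rm spt}\,\|V_{t_k}\|$. If a graph $x\mapsto(x,f(x))$ describes a piece of ${\rm spt}\,\|V_{t_k}\|$ near a base point $z_0=(x_0,y_0)$, then its image under $\eta_{z_0,\lambda_k}$ is the graph of $\tilde f(\tilde x):=\lambda_k\bigl(f(\tilde x/\lambda_k+x_0)-y_0\bigr)$. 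Two features are now evident: the slopes satisfy $\tilde f'(\tilde x)=f'(\tilde x/\lambda_k+x_0)$, so the admissible-angle condition \eqref{conc1} and the vanishing tangent sum \eqref{conc3} pass verbatim to $V^{(k)}_s$; and $\tilde f''(\tilde x)=\lambda_k^{-1}f''(\tilde x/\lambda_k+x_0)$, so $\tilde f\in W^{2,2}$ on the rescaled interval with $L^2$-curvature norm actually \emph{smaller} than the original by a factor $\lambda_k^{-1/2}$. The graphical ordering and the mass identity \eqref{conc2} are purely kinematic and survive the push-forward. A Fubini argument in the time variable then shows that $V^{(k)}$ lies in the regularity class for $\mathcal L^1$-a.e.\ $s$ in its domain.

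Finally, applying the compactness Lemma \ref{comp} along a subsequence of $\{V^{(k)}\}$ realizing the tangent flow immediately yields that the limit belongs to the same class, proving Theorem \ref{KTmain4}. In the $N=2$ situation the two-phase structure $E_1(t),E_2(t)$ is preserved under rescaling and, by the continuity property (8) of Theorem \ref{KTmain}, it is not destroyed in the limit, so the sharper conclusion of Theorem \ref{KTmain3} is inherited as well. The principal obstacle is precisely the passage to the limit: a priori, distinct triple junctions could collide to produce spurious higher-multiplicity configurations, graph pieces could lose their $W^{2,2}$-control, or the angle condition could fail to survive the concentration of curvature. This is exactly the content that Lemma \ref{comp} is designed to supply, by combining the uniform $L^2$-bound on the generalized mean curvature, the local almost-minimality transmitted from the construction of \cite{KimTone}, and the $L^1$-continuity of the phase indicators, which together force the limit to remain in the prescribed class.
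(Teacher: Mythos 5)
Your proposal takes exactly the same route as the paper's: express the tangent flow as a subsequential limit of parabolically rescaled copies of the flow, observe that the regularity class of Theorem \ref{KTmain2} (resp. \ref{KTmain3}) is invariant under such rescaling, and then invoke the compactness Lemma \ref{comp}. The only difference is that you spell out the elementary invariance computation (rescaled graph slopes unchanged, $L^2$-curvature norm shrinking by $\lambda_k^{-1/2}$) which the paper leaves implicit, and your appeal to property (8) of Theorem \ref{KTmain} for the $N=2$ case is unnecessary since Lemma \ref{comp} already delivers the $N=2$ conclusion directly; otherwise this is the paper's argument.
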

See \cite{Ton1,White} for the definition and properties of tangent flow. 
One of the corollaries is that the support of any static tangent flow (the one which is homogeneous and 
independent of time) is either a line, a triple junction,
two lines crossing with 60 degrees, or three lines crossing with 60 degrees, all of them 
with possible integer multiplicities. If $N=2$, then the static tangent
flow is a line (with a possible integer multiplicity). We remark that the Brakke flow is $C^{\infty}$ 
in a space-time neighborhood of a point if there exists a static tangent flow at that point
which is a line with multiplicity $1$. This is due to Brakke's partial regularity theorem \cite{Brakke},
the proof of which has been given in \cite{Kasai-Tone,Ton-2}. If there exists a static tangent flow
which is a triple junction with multiplicity $1$, then the Brakke flow is $C^{1,\alpha}$ in the
parabolic sense in a space-time neighborhood of that point for all $\alpha\in (0,1)$ (see \cite{ToneWick} 
for the detail). For further discussion on the main results, see Section \ref{Final}.
 
\section{Preliminaries} \label{Pre}
\subsection{Basic definitions and lemmas}
We recall some essential definitions and lemmas from \cite{KimTone} for the subsequent proofs. 
See \cite[Section 4 \& 5]{KimTone}
for a more comprehensive treatment of these concepts. 
\begin{defi} \label{opdef}
An ordered collection $\mathcal E=\{E_1,\ldots,E_N\}$ of subsets in $\R^{n+1}$
is called an $\Omega$-finite {\it open partition of $N$ elements} if
\begin{itemize}
\item[(a)]
$E_1,\ldots,E_N$ are open and mutually disjoint;
\item[(b)]
$\mathcal H^n\mres_{\Omega}(\R^{n+1}\setminus \cup_{i=1}^N E_i)<\infty$;
\item[(c)]
$\cup_{i=1}^N\partial E_i$ is countably $n$-rectifiable.
\end{itemize}
\end{defi} 
We do not exclude the possibility that some of $E_i$'s are empty set $\emptyset$. 
The set of all $\Omega$-finite open partitions of $N$ elements is denoted by $\mathcal{OP}^N_{\Omega}$.
By Definition \ref{opdef}(b) and (c) as well as $\Omega>0$, we have $\mathcal H^n(B_R\setminus \cup_{i=1}^N
E_i)<\infty$ for all $R>0$ and
\begin{equation}
\label{finiteness}
\cup_{i=1}^N \partial E_i=\R^{n+1}\setminus \cup_{i=1}^N E_i.
\end{equation}
Given $\mathcal E=\{E_1,\ldots,E_N\}\in \mathcal{OP}^N_{\Omega}$, we define (with a slight abuse of 
notation)
\begin{equation}
\label{finiteness2}
\partial\mathcal E:=\big|\cup_{i=1}^N \partial E_i\big|\in {\bf IV}_n(\R^{n+1}),
\end{equation}
which is a unit density varifold naturally induced from the countably $n$-rectifiable
$\cup_{i=1}^N\partial E_i$. We also regard $\partial\mathcal E$ as a set $\cup_{i=1}^N\partial E_i$
with no fear of confusion. The weight measure of $\partial\mathcal E$ satisfies 
\begin{equation}
\label{finiteness3}
\|\partial\mathcal E\|=\mathcal H^{n}\mres_{\cup_{i=1}^N\partial E_i}.
\end{equation}
By Definition \ref{opdef}(b) and \eqref{finiteness}, we have $\|\partial\mathcal E\|(\Omega)<\infty$. 
\begin{defi} \label{adef}
Given $\mathcal E=\{E_1,\ldots,E_N\}\in \mathcal{OP}^N_{\Omega}$, 
a function $f\,:\,\R^{n+1}\rightarrow \R^{n+1}$ is called {\it $\mathcal E$-admissible} if it is 
Lipschitz continuous and satisfies the following.
Define $\tilde E_i:={\rm int}\,(f(E_i))$ for $i=1,\ldots,N$. Then:
\begin{itemize}
\item[(a)] $\tilde E_1,\ldots, \tilde E_N$ are mutually disjoint;
\item[(b)] $\R^{n+1}\setminus\cup_{i=1}^N\tilde E_i\subset f(\cup_{i=1}^N \partial E_i)$;
\item[(c)] $\sup_{z\in\R^{n+1}} |f(z)-z|<\infty$. 
\end{itemize} 
\end{defi}
From the definition, one can prove (see \cite[Lemma 4.4]{KimTone} for the proof) 
\begin{lem}
For $\mathcal E=\{E_1,\ldots,E_N\}\in \mathcal{OP}^N_{\Omega}$ and a $\mathcal E$-admissible function $f$,
define $\tilde{\mathcal E}:=\{\tilde E_1,
\ldots,\tilde E_N\}$ with $\tilde E_i:={\rm int}\,(f(E_i))$. Then we have
$\tilde{\mathcal E}\in \mathcal{OP}^N_{\Omega}$. 
\end{lem}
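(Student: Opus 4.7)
The plan is to verify the three conditions of Definition \ref{opdef} for $\tilde{\mathcal E}$ in turn, with the only real content being the weighted finiteness in (b).

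Condition (a) is immediate: each $\tilde E_i$ is open by construction as an interior, and mutual disjointness is built into Definition \ref{adef}(a). For condition (c), I first show that $\cup_{i=1}^N\partial\tilde E_i\subset \R^{n+1}\setminus\cup_{i=1}^N\tilde E_i$: a point $z\in\partial\tilde E_i$ cannot lie in $\tilde E_i$ by openness, and cannot lie in any other $\tilde E_j$ because then $\tilde E_j$ would be an open neighborhood of $z$ disjoint from $\tilde E_i$, contradicting $z\in\partial \tilde E_i$. Combined with Definition \ref{adef}(b), this gives
\[
\cup_{i=1}^N\partial\tilde E_i\subset \R^{n+1}\setminus\cup_{i=1}^N\tilde E_i\subset f\big(\cup_{i=1}^N\partial E_i\big),
\]
and the right-hand side is countably $n$-rectifiable as a Lipschitz image of a countably $n$-rectifiable set, so any subset of it is as well.

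The heart of the argument is condition (b). Let $L$ be a Lipschitz constant for $f$ and $M:=\sup_{z\in\R^{n+1}}|f(z)-z|$, which is finite by Definition \ref{adef}(c). The bound $|\nabla\Omega|\leq \Cr{c_1}\Omega$ in \eqref{omega} makes $\log\Omega$ globally $\Cr{c_1}$-Lipschitz, so
\[
\Omega(f(z))\leq e^{\Cr{c_1} M}\,\Omega(z)\qquad\text{for every }z\in\R^{n+1}.
\]
Combining this with the standard area-formula estimate $\int_{f(A)}g\,d\mathcal H^n\leq L^n\int_A g\circ f\,d\mathcal H^n$ (valid for nonnegative Borel $g$ on countably $n$-rectifiable $A$, since $J_f^A\leq L^n$ and the multiplicity is at least one on $f(A)$), applied with $g=\Omega$ and $A=\cup_{i=1}^N\partial E_i$, yields
\[
\int_{f(\cup_{i=1}^N\partial E_i)}\Omega\,d\mathcal H^n\leq L^n e^{\Cr{c_1} M}\,\|\partial\mathcal E\|(\Omega)<\infty,
\]
where finiteness on the right follows from $\mathcal E\in\mathcal{OP}^N_\Omega$ as noted after \eqref{finiteness3}. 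In view of the chain of inclusions from the previous paragraph, this bounds $\mathcal H^n\mres_\Omega(\R^{n+1}\setminus\cup_{i=1}^N\tilde E_i)$ and gives Definition \ref{opdef}(b).

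The only real obstacle is ensuring that the weight $\Omega$ does not degenerate under push-forward by $f$; this is handled precisely by the exponential gradient bound in \eqref{omega} together with the bounded-displacement clause in Definition \ref{adef}(c), which are calibrated exactly so that their combination produces the uniform factor $e^{\Cr{c_1}M}$. Without the bounded displacement, the weighted finiteness could fail even though the unweighted Lipschitz-image bound would still hold.
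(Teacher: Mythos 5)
Your proof is correct and follows what is essentially the only reasonable approach: conditions (a) and (c) are immediate from the admissibility conditions and the Lipschitz-image characterization of rectifiability, and the substantive step (b) is handled by combining the area-formula estimate $\int_{f(A)}\Omega\,d\mathcal H^n\leq L^n\int_A\Omega\circ f\,d\mathcal H^n$ with the Gr\"onwall-type bound $\Omega(f(z))\leq e^{\Cr{c_1}M}\Omega(z)$ coming from \eqref{omega} and the bounded-displacement clause in Definition \ref{adef}(c). The paper itself defers to \cite[Lemma 4.4]{KimTone} without reproducing the argument, but that proof (by the same authors) proceeds along the same lines.
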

In the following, $f_{\star}\mathcal E$ denotes the above $\tilde{\mathcal E}$, that is, $f_\star
\mathcal E:=\tilde{\mathcal E}$.  
\begin{defi}\label{testdef}
For every $j\in\N$, the class $\mathcal A_j$ is defined as follows:
\begin{equation}
\label{testdef1}
\begin{split}
\mathcal A_j:=\{\phi\in C^2(\R^{n+1};[0,1])\,&:\,\phi(z)\leq \Omega(z),\,|\nabla\phi(z)|\leq j\phi(z), \\
&\|\nabla^2\phi(z)\|\leq j\phi(z)\,\mbox{ for every }z\in\R^{n+1}\}.
\end{split}
\end{equation}
\end{defi}

\begin{defi}
\label{boldE}
For $\mathcal E=\{E_1,\ldots,E_N\}\in \mathcal{OP}^N_{\Omega}$ and $j\in\N$, 
define ${\bf E}(\mathcal E,j)$ to be the set of all $\mathcal E$-admissible functions $f$
such that:
\begin{itemize}
\item[(a)]
$|f(z)-z|\leq 1/j^2$ for every $z\in \R^{n+1}$;
\item[(b)]
$\mathcal L^{n+1}(E_i\triangle \tilde E_i)\leq 1/j$ for all $i=1,\ldots,N$, where $\tilde E_i={\rm int}\,
(f(E_i))$;
\item[(c)]
$\|\partial f_{\star}\mathcal E\|(\phi)\leq \|\partial\mathcal E\|(\phi)$ for all $\phi\in \mathcal A_j$.
Here, $f_{\star}\mathcal E=\{\tilde E_1,\ldots,\tilde E_N\}$.
\end{itemize}
\end{defi}
Since the identity map $f(z)=z$ is in ${\bf E}(\mathcal E,j)$, ${\bf E}(\mathcal E,j)$ is not empty.
\begin{defi}\label{tridef}
Given $\mathcal E\in \mathcal{OP}^N_{\Omega}$ and $j\in\N$, we define the quantity
\begin{equation}\label{tridef1}
\Delta_j\|\partial\mathcal E\|(\Omega):=\inf_{f\in{\bf E}(\mathcal E,j)}\{\|\partial f_{\star}\mathcal E\|(\Omega)
-\|\partial\mathcal E\|(\Omega)\}.
\end{equation}
\end{defi} 
Since the identity map is in ${\bf E}(\mathcal E,j)$, we have $\Delta_j\|\partial\mathcal E\|(\Omega)\leq 0$.
We also define a localized version of ${\bf E}(\mathcal E,j)$ and $\Delta_j\|\partial\mathcal E\|(\Omega)$ as follows.
\begin{defi}
\label{lboldE}
For $\mathcal E\in \mathcal{OP}^N_{\Omega}$, $j\in\N$ and a compact set $C\subset \R^{n+1}$ we define
\begin{equation}
{\bf E}(\mathcal E,C,j):=\{f\in{\bf E}(\mathcal E,j)\,:\, \{z\,:\,f(z)\neq z\}\cup\{f(z)\,:\,f(z)\neq z\}
\subset C\},
\end{equation}
\begin{equation}
\Delta_j\|\partial\mathcal E\|(C):=\inf_{f\in{\bf E}(\mathcal E,C,j)}(\|\partial f_{\star}\mathcal E\|(C)
-\|\partial\mathcal E\|(C)).
\end{equation}
\end{defi}
We use the following (see \cite[Lemma 4.12]{KimTone} for the proof):
\begin{lem}
\label{echeck}
Suppose $\mathcal E=\{E_1,\ldots,E_N\}\in \mathcal{OP}^N_{\Omega}$, $j\in\N$, $C$ is a compact set,
$f$ is $\mathcal E$-admissible such that 
\begin{itemize}
\item[(a)] 
$\{z\,:\,f(z)\neq z\}\cup\{f(z)\,:\,f(z)\neq z\}\subset C$;
\item[(b)]
$|f(z)-z|\leq 1/j^2$ for all $z\in \R^{n+1}$;
\item[(c)]
$\mathcal L^{n+1}(E_i\triangle \tilde E_i)\leq 1/j$ for all $i=1,\ldots,N$
and where $\tilde E_i={\rm int}\,(f(E_i))$;
\item[(d)]
$\|\partial f_{\star}\mathcal E\|(C)\leq \exp(-j\,{\rm diam}\,C)\|\partial\mathcal E\|(C)$.
\end{itemize}
Then we have $f\in {\bf E}(\mathcal E,C,j)$.
\end{lem}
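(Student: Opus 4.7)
My plan is to verify each of the three conditions in Definition~\ref{boldE} together with the support constraint from Definition~\ref{lboldE}. Hypotheses (b) and (c) of the lemma are literally conditions (a) and (b) of Definition~\ref{boldE}, and hypothesis (a) is the support constraint in Definition~\ref{lboldE}. So the only real work is to derive condition (c) of Definition~\ref{boldE}, namely
\[ \|\partial f_\star\mathcal E\|(\phi) \leq \|\partial\mathcal E\|(\phi) \qquad \text{for every } \phi \in \mathcal A_j, \]
from the quantitative hypothesis (d).

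First I would reduce the inequality to an estimate localized to $C$. Using hypothesis (a), $f$ is the identity on $\R^{n+1}\setminus C$, and a short argument based on this and the $\mathcal E$-admissibility shows $\tilde E_i\cap(\R^{n+1}\setminus C)=E_i\cap(\R^{n+1}\setminus C)$ for every $i$. Consequently the measures $\|\partial f_\star\mathcal E\|$ and $\|\partial\mathcal E\|$ agree on $\R^{n+1}\setminus C$, and it suffices to prove $\int_C\phi\,d\|\partial f_\star\mathcal E\| \leq \int_C\phi\,d\|\partial\mathcal E\|$.

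The key step is a Harnack-type oscillation bound on $\phi$. From the defining inequality $|\nabla\phi|\leq j\phi$ of $\mathcal A_j$, integrating $|\nabla\log\phi|\leq j$ along line segments yields $\sup_C\phi \leq (\inf_C\phi)\exp(j\,\mathrm{diam}\,C)$ (either $\phi\equiv 0$ on $C$, in which case the desired inequality is trivial, or $\phi>0$ throughout $C$). Plugging this into the trivial bound $\int_C\phi\,d\|\partial f_\star\mathcal E\| \leq (\sup_C\phi)\,\|\partial f_\star\mathcal E\|(C)$ and then using hypothesis (d) to replace $\|\partial f_\star\mathcal E\|(C)$ by $\exp(-j\,\mathrm{diam}\,C)\,\|\partial\mathcal E\|(C)$ yields
\[ \int_C\phi\,d\|\partial f_\star\mathcal E\| \leq (\inf_C\phi)\,\|\partial\mathcal E\|(C) \leq \int_C\phi\,d\|\partial\mathcal E\|, \]
and the proof is complete.

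I do not anticipate a genuine obstacle. The entire argument hinges on the exact cancellation of the oscillation factor $\exp(j\,\mathrm{diam}\,C)$ produced by test functions in $\mathcal A_j$ against the factor $\exp(-j\,\mathrm{diam}\,C)$ built into hypothesis (d); this is precisely why the lemma is stated with that particular exponential. The only mildly technical point is the off-$C$ agreement of the two measures, but this falls out directly from hypothesis (a) and the definition of $\mathcal E$-admissibility.
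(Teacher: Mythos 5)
Your proof is correct, and the key mechanism you identify — that the $\exp(j\,\mathrm{diam}\,C)$ oscillation bound on $\phi\in\mathcal A_j$ coming from $|\nabla\phi|\le j\phi$ (via Gr\"{o}nwall along line segments, after noting that $\phi>0$ everywhere unless $\phi\equiv 0$) exactly cancels the $\exp(-j\,\mathrm{diam}\,C)$ factor in hypothesis (d) — is surely the point of the lemma. The paper itself defers the proof to \cite[Lemma 4.12]{KimTone}, so there is no in-paper argument to compare against, but the approach you take is the natural one. One small point worth spelling out: the claim that $\|\partial f_\star\mathcal E\|$ and $\|\partial\mathcal E\|$ agree on $\R^{n+1}\setminus C$ needs the identity $\partial\tilde E_i\cap(\R^{n+1}\setminus C)=\partial E_i\cap(\R^{n+1}\setminus C)$ rather than just $\tilde E_i\cap(\R^{n+1}\setminus C)=E_i\cap(\R^{n+1}\setminus C)$; this follows because $\R^{n+1}\setminus C$ is open (so closures and interiors localize there), and because hypothesis (a) guarantees no point outside $C$ is the image of a point that moved. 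That detail is routine, but it is the place where "$f$ is the identity off $C$" by itself would not quite suffice — the full content of (a) is used.
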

Let $\psi\in C^{\infty}(\R^{n+1})$ be a radially symmetric function such that 
\begin{equation}
\begin{split}
&\psi(z)=1\mbox{ for }|z|\leq 1/2,\,\,\psi(z)=0\mbox{ for }|z|\geq 1, \\
& 0\leq\psi(z)\leq 1,\,\,|\nabla\psi(z)|\leq 3,\,\,\|\nabla^2\psi(z)\|\leq 9\mbox{ for all }z\in\R^{n+1}.
\end{split}
\end{equation}
Define for each $\varepsilon>0$ 
\begin{equation} \label{tphi}
\hat\Phi_{\varepsilon}(z):=\frac{1}{(2\pi\varepsilon^2)^{\frac{n+1}{2}}}\exp\big(-\frac{|z|^2}{2\varepsilon^2}
\big),\,\,\,\Phi_{\varepsilon}(z):=c(\varepsilon)\psi(z)\hat\Phi_{\varepsilon}(z),
\end{equation}
where the constant $c(\varepsilon)$ is chosen so that $\int_{\R^{n+1}}\Phi_{\varepsilon}(z)\,dz=1$. 
\begin{defi}
For $V\in{\bf V}_n(\R^{n+1})$, we define $\Phi_{\varepsilon}\ast V\in {\bf V}_n(\R^{n+1})$ through
\begin{equation}
(\Phi_{\varepsilon}\ast V)(\phi):=V(\Phi_{\varepsilon}\ast\phi):=\int_{{\bf G}_n(\R^{n+1})}
\int_{\R^{n+1}}\phi(z-\hat z,S)\Phi_{\varepsilon}(\hat z)\,d\hat z dV(z,S)
\end{equation}
for $\phi\in C_c({\bf G}_n(\R^{n+1}))$. For a Radon measure $\mu$ on $\R^{n+1}$, we define a Radon measure $\Phi_{\varepsilon}\ast 
\mu$ on $\R^{n+1}$ through
\begin{equation}
(\Phi_{\varepsilon}\ast\mu)(\phi):=\mu(\Phi_{\varepsilon}\ast\phi):=\int_{\R^{n+1}} \int_{\R^{n+1}}
\phi(z-\hat z)\Phi_{\varepsilon}(\hat z)\,d\hat z d\mu(z)
\end{equation}
for $\phi\in C_c(\R^{n+1})$. 
\label{sf1}
\end{defi}
One may prove the following by the Fubini theorem (see \cite[Section 4 (4.28)]{KimTone}):
\begin{lem} The Radon measure
$\Phi_{\varepsilon}\ast\mu$ may be identified with the $C^{\infty}$ function 
\begin{equation}
(\Phi_{\varepsilon}\ast\mu)(z):=\int_{\R^{n+1}} \Phi_{\varepsilon}(\hat z-z)\,d\mu(\hat z)
\end{equation}
since $(\Phi_{\varepsilon}\ast\mu)(\phi)=\int_{\R^{n+1}} (\Phi_{\varepsilon}\ast\mu)(z)\phi(z)\,dz$ holds
for $\phi\in C_c(\R^{n+1})$. 
\end{lem}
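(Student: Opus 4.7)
The plan is to verify the identity by a direct Fubini argument and then to read off $C^{\infty}$ regularity from the smoothness of $\Phi_\varepsilon$. The key observation is that $\Phi_\varepsilon$ has compact support in $\{|z|\le 1\}$, so all the integrals localize onto compact sets where $\mu$ is finite (since $\mu$ is a Radon measure).

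First I would unfold the definitions. By Definition \ref{sf1},
\begin{equation*}
(\Phi_\varepsilon \ast \mu)(\phi) = \mu(\Phi_\varepsilon \ast \phi) = \int_{\R^{n+1}} \int_{\R^{n+1}} \phi(z - \hat z)\, \Phi_\varepsilon(\hat z) \, d\hat z\, d\mu(z).
\end{equation*}
Since $\phi \in C_c(\R^{n+1})$, say $\operatorname{spt}\phi \subset B_R$, and $\operatorname{spt}\Phi_\varepsilon \subset B_\varepsilon$, the integrand vanishes outside $\{(z,\hat z) : z \in B_{R+\varepsilon},\ |\hat z| \le \varepsilon\}$, on which $|\phi|\Phi_\varepsilon$ is bounded and $(\mathcal L^{n+1}\otimes \mu)$-integrable. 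Thus Fubini applies. Next I would change variables $\hat z \mapsto z - \hat z$ in the inner integral and use the radial symmetry $\Phi_\varepsilon(\hat z) = \Phi_\varepsilon(-\hat z)$ implicit in \eqref{tphi} to obtain
\begin{equation*}
(\Phi_\varepsilon \ast \mu)(\phi) = \int_{\R^{n+1}} \phi(z) \int_{\R^{n+1}} \Phi_\varepsilon(\hat z - z)\, d\mu(\hat z)\, dz,
\end{equation*}
which is exactly the stated formula with the pointwise function defined by the inner integral.

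For the smoothness claim, I would fix a compact set $K \subset \R^{n+1}$ and note that for $z \in K$, the integrand $\Phi_\varepsilon(\hat z - z)$ is supported in the compact set $K_\varepsilon := K + B_\varepsilon$, on which $\mu(K_\varepsilon) < \infty$. Since $\Phi_\varepsilon \in C^\infty_c(\R^{n+1})$ with all derivatives uniformly bounded, one can apply the standard dominated-convergence-based differentiation-under-the-integral-sign theorem to conclude that $z \mapsto \int \Phi_\varepsilon(\hat z - z)\, d\mu(\hat z)$ is of class $C^\infty$ on $K$, and hence globally, with derivatives obtained by differentiating $\Phi_\varepsilon$ in the integrand.

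I do not expect any real obstacle: the only point requiring care is verifying the integrability hypothesis that legitimizes Fubini and the differentiation, and this is immediate from the compact support of $\psi$ appearing in \eqref{tphi} together with the Radon property of $\mu$.
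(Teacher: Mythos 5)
Your approach matches the paper's, which simply invokes the Fubini theorem (citing the original computation in \cite{KimTone}); the unfolding of the definition, justification of integrability, change of variables, and differentiation-under-the-integral argument are all the standard steps. One small slip: by \eqref{tphi} and the definition of $\psi$, ${\rm spt}\,\Phi_\varepsilon \subset B_1$, not $B_\varepsilon$ (the cutoff $\psi$ vanishes only for $|z|\ge 1$), so the relevant compact sets are $B_{R+1}$ and $K+B_1$; this does not affect the validity of the Fubini or the differentiation argument. Also, the radial symmetry of $\Phi_\varepsilon$ is not actually needed---after the substitution $w=z-\hat z$ one obtains $\Phi_\varepsilon(z-w)$, and relabeling the outer variable as $z$ and the inner ($\mu$-)variable as $\hat z$ directly yields $\Phi_\varepsilon(\hat z - z)$---though invoking it is harmless since $\Phi_\varepsilon$ is indeed radial.
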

We next define the ``smoothed first variation'' of $V$ as follows:
\begin{defi}
For $V\in {\bf V}_n(\R^{n+1})$, define the following $C^{\infty}$ vector field 
\begin{equation}
\label{defi2}
(\Phi_{\varepsilon}\ast \delta V)(z):=\int_{{\bf G}_n(\R^{n+1})} S(\nabla\Phi_{\varepsilon}(\hat z-z))\,
dV(\hat z,S).
\end{equation}
For any vector field $g\in C_c(\R^{n+1};\R^{n+1})$, we define
\begin{equation}
\label{defi1}
(\Phi_{\varepsilon}\ast\delta V)(g):=\int_{\R^{n+1}} (\Phi_{\varepsilon}\ast\delta V)(z)\cdot
g(z)\,dz.
\end{equation}
\end{defi}
The following can be verified (see \cite[Lemma 4.16]{KimTone} for the proof):
\begin{lem}
For $V\in {\bf V}_n(\R^{n+1})$, we have
\begin{equation}
\label{defi3}
\Phi_{\varepsilon}\ast\|V\|=\|\Phi_{\varepsilon}\ast V\|,
\end{equation}
\begin{equation}
\label{defi4}
\int_{\R^{n+1}}(\Phi_{\varepsilon}\ast\delta V)(z)\cdot g(z)\,dz=\delta V(\Phi_{\varepsilon}\ast g)
\,\,\mbox{ for }\,\,g\in C_c^1(\R^{n+1};\R^{n+1}),
\end{equation}
\begin{equation}
\label{defi5}
\Phi_{\varepsilon}\ast\delta V=\delta(\Phi_{\varepsilon}\ast V).
\end{equation}
\end{lem}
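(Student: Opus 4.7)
The plan is to reduce each of the three identities to a routine application of Fubini's theorem, combined with differentiation under the integral sign. In all three cases, the smoothness and compact support of $\Phi_\varepsilon$ and the regularity of the test object ($\phi\in C_c(\R^{n+1})$ for \eqref{defi3}, $g\in C_c^1(\R^{n+1};\R^{n+1})$ for \eqref{defi4}--\eqref{defi5}) make every interchange unproblematic, so no step is analytically delicate.

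For \eqref{defi3}, I would test both sides against an arbitrary $\phi\in C_c(\R^{n+1})$, regarded as a function on ${\bf G}_n(\R^{n+1})$ depending only on the base point. Unwinding the definitions, the left side $(\Phi_\varepsilon\ast\|V\|)(\phi)=\|V\|(\Phi_\varepsilon\ast\phi)$ becomes $\iint\phi(z-\hat z)\Phi_\varepsilon(\hat z)\,d\hat z\,d\|V\|(z)$, while the right side $\|\Phi_\varepsilon\ast V\|(\phi)=(\Phi_\varepsilon\ast V)(\phi)=V(\Phi_\varepsilon\ast\phi)$ yields the same double integral against $dV(z,S)$, since $\phi$ is independent of $S$ and $\|V\|$ is the Grassmannian pushforward of $V$.

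For \eqref{defi4}, I would differentiate under the integral to obtain $\partial_i(\Phi_\varepsilon\ast g)_j(z)=\int(\partial_i\Phi_\varepsilon)(z-\hat z)\,g_j(\hat z)\,d\hat z$, hence $\dive_S(\Phi_\varepsilon\ast g)(z)=\int S\,\nabla\Phi_\varepsilon(z-\hat z)\cdot g(\hat z)\,d\hat z$ for each $(z,S)\in {\bf G}_n(\R^{n+1})$. Then
\[
\delta V(\Phi_\varepsilon\ast g)=\int\dive_S(\Phi_\varepsilon\ast g)\,dV(z,S)=\iint S\,\nabla\Phi_\varepsilon(z-\hat z)\cdot g(\hat z)\,d\hat z\,dV(z,S),
\]
and a Fubini swap followed by a renaming $(z,\hat z)\mapsto(\hat z,z)$ converts this into $\int (\Phi_\varepsilon\ast\delta V)(z)\cdot g(z)\,dz$ in view of the defining formula \eqref{defi2} for $\Phi_\varepsilon\ast\delta V$. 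The only bookkeeping point is a sign when shifting the argument of $\nabla\Phi_\varepsilon$ from $z-\hat z$ to $\hat z-z$, which is handled automatically by the relabeling (equivalently by using that $\Phi_\varepsilon$ is even).

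For \eqref{defi5}, I would expand $\delta(\Phi_\varepsilon\ast V)(g)=(\Phi_\varepsilon\ast V)(\dive_S g)=V(\Phi_\varepsilon\ast(\dive_S g))$ and note that since convolution acts only in the base variable, it commutes with $\nabla$, so $\Phi_\varepsilon\ast(\dive_S g)=\dive_S(\Phi_\varepsilon\ast g)$ as functions on ${\bf G}_n(\R^{n+1})$. This gives $\delta(\Phi_\varepsilon\ast V)(g)=\delta V(\Phi_\varepsilon\ast g)$, which by \eqref{defi4} coincides with $\int(\Phi_\varepsilon\ast\delta V)(z)\cdot g(z)\,dz$. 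Since $\Phi_\varepsilon\ast\delta V$ is smooth and $\|\Phi_\varepsilon\ast V\|=\Phi_\varepsilon\ast\|V\|$ is also smooth (so $\delta(\Phi_\varepsilon\ast V)$ is represented by a smooth vector field), equality as distributions on $C_c^1$ upgrades to pointwise equality of vector fields. The main obstacle throughout is purely notational: keeping track of which variable plays the role of the convolution dummy and which is the varifold variable, so that the Fubini swaps and the differentiations under the integral line up correctly.
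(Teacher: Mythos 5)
Your argument is correct and is the standard Fubini-and-differentiation-under-the-integral computation; the paper itself does not spell this out, deferring instead to \cite[Lemma 4.16]{KimTone}, and your proof is exactly the verification one expects to find there. One small remark: the parenthetical about needing the evenness of $\Phi_\varepsilon$ (or a sign coming from shifting $z-\hat z$ to $\hat z - z$) is unnecessary — the relabeling of dummy variables already produces $\nabla\Phi_\varepsilon(\hat z - z)$ in the form matching \eqref{defi2} with no sign flip — but this does not affect correctness.
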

The following is the ``smoothed mean curvature vector'' of $V$:
\begin{defi}
For $V\in {\bf V}_n(\R^{n+1})$ and $\varepsilon>0$, define
\begin{equation}
h_{\varepsilon}(\cdot,V):=-\Phi_{\varepsilon}\ast
\Big(\frac{\Phi_{\varepsilon}\ast\delta V}{\Phi_{\varepsilon}\ast\|V\|+\varepsilon \Omega^{-1}}\Big).
\end{equation}
\end{defi}
We use the following quantity as a proxy for a weighted ``$L^2$-norm of smoothed mean curvature vector'' (see
\cite[Lemma 5.2]{KimTone}):
\begin{lem}
For $V\in {\bf V}_n(\R^{n+1})$ with $\|V\|(\Omega)\leq M$ and $\varepsilon\in (0,\epsilon_1)$
(where $\epsilon_1$ depends only on $n,\Cr{c_1}$ and $M$), we have
\begin{equation}
\int_{\R^{n+1}} \frac{|\Phi_{\varepsilon}\ast\delta V|^2\Omega}{\Phi_{\varepsilon}\ast\|V\|+\varepsilon
\Omega^{-1}}\,dz<\infty.
\end{equation}
\end{lem}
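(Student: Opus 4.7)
My plan is to establish a pointwise Cauchy--Schwarz estimate that cancels $\Phi_\varepsilon\ast\|V\|$ from the denominator, then integrate and apply Fubini. Starting from \eqref{defi2} and using $|S(v)|\le|v|$ for every orthogonal projection $S\in{\bf G}(n+1,n)$, I would first write $|\Phi_\varepsilon\ast\delta V(z)|\le\int|\nabla\Phi_\varepsilon(\hat z-z)|\,d\|V\|(\hat z)$ and then apply the weighted Cauchy--Schwarz inequality (with the convention $|\nabla\Phi_\varepsilon|^2/\Phi_\varepsilon:=0$ on $\{\Phi_\varepsilon=0\}$) to obtain
\[
|\Phi_\varepsilon\ast\delta V(z)|^2\le(\Phi_\varepsilon\ast\|V\|)(z)\int\frac{|\nabla\Phi_\varepsilon(\hat z-z)|^2}{\Phi_\varepsilon(\hat z-z)}\,d\|V\|(\hat z).
\]
Dividing by $\Phi_\varepsilon\ast\|V\|+\varepsilon\Omega^{-1}$ and discarding the nonnegative $\varepsilon\Omega^{-1}$ from the denominator then yields the crucial pointwise bound
\[
\frac{|\Phi_\varepsilon\ast\delta V(z)|^2}{\Phi_\varepsilon\ast\|V\|(z)+\varepsilon\Omega(z)^{-1}}\le\int\frac{|\nabla\Phi_\varepsilon(\hat z-z)|^2}{\Phi_\varepsilon(\hat z-z)}\,d\|V\|(\hat z).
\]

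Next I would multiply by $\Omega(z)$, integrate over $\R^{n+1}$, and swap the order of integration via Fubini. After the substitution $w=\hat z-z$ the inner integral takes the form $\int\Omega(\hat z-w)|\nabla\Phi_\varepsilon(w)|^2/\Phi_\varepsilon(w)\,dw$. The log-Lipschitz bound $|\nabla\Omega|\le\Cr{c_1}\Omega$ from \eqref{omega} implies $\Omega(\hat z-w)\le e^{\Cr{c_1}}\Omega(\hat z)$ on $|w|\le1$, which covers $\mathrm{spt}\,\Phi_\varepsilon$, and therefore
\[
\int_{\R^{n+1}}\frac{|\Phi_\varepsilon\ast\delta V|^2\Omega}{\Phi_\varepsilon\ast\|V\|+\varepsilon\Omega^{-1}}\,dz\le e^{\Cr{c_1}}K(\varepsilon)\,\|V\|(\Omega)\le e^{\Cr{c_1}}K(\varepsilon)M,
\]
where $K(\varepsilon):=\int_{\R^{n+1}}|\nabla\Phi_\varepsilon(w)|^2/\Phi_\varepsilon(w)\,dw$. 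On the Gaussian core $|w|\le1/2$ one has $\psi\equiv1$, so $\Phi_\varepsilon=c(\varepsilon)\hat\Phi_\varepsilon$ and $|\nabla\Phi_\varepsilon|^2/\Phi_\varepsilon=c(\varepsilon)|w|^2\varepsilon^{-4}\hat\Phi_\varepsilon(w)$; the second moment of a Gaussian then controls this portion of $K(\varepsilon)$ by $O(\varepsilon^{-2})$.

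The one genuinely delicate step will be the cutoff annulus $1/2\le|w|\le1$, where the pointwise ratio $|\nabla\Phi_\varepsilon|^2/\Phi_\varepsilon$ may be unbounded because $\psi$ can vanish faster than $|\nabla\psi|$. The remedy is that in this annulus every contribution to the ratio still carries a factor $\hat\Phi_\varepsilon(w)\le(2\pi\varepsilon^2)^{-(n+1)/2}e^{-1/(8\varepsilon^2)}$, and for $\varepsilon<\epsilon_1(n,\Cr{c_1},M)$ sufficiently small this super-exponential smallness dominates any $\psi^{-1}$ singularity after integration; this is precisely where the threshold $\epsilon_1$ enters. The annular contribution is thereby absorbed into the leading term, giving $K(\varepsilon)<\infty$ and the claimed finiteness, with an overall bound of order $\varepsilon^{-2}M$ consistent with the role of the quantity as a proxy for a weighted $L^2$-norm of smoothed mean curvature.
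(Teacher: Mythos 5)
Your overall strategy — weighted Cauchy--Schwarz to cancel $\Phi_\varepsilon\ast\|V\|$ from the denominator, then Fubini plus the log-Lipschitz bound $\Omega(\hat z-w)\le e^{\Cr{c_1}}\Omega(\hat z)$ on $|w|\le 1$ — is sound, and it reduces everything to the finiteness of $K(\varepsilon)=\int|\nabla\Phi_\varepsilon|^2/\Phi_\varepsilon\,dw$, which is the right quantity to isolate. The splitting into the Gaussian core and the cutoff annulus is also the right decomposition.

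The gap is in your treatment of the annulus $1/2\le|w|\le 1$. You argue that the super-exponential smallness $\hat\Phi_\varepsilon\le(2\pi\varepsilon^2)^{-(n+1)/2}e^{-1/(8\varepsilon^2)}$ "dominates any $\psi^{-1}$ singularity after integration" for small $\varepsilon$. But on the annulus $\hat\Phi_\varepsilon$ is simply a $w$-independent constant (however tiny), so multiplying by it cannot convert a divergent $w$-integral into a convergent one: if $\int_{1/2\le|w|\le1}|\nabla\psi|^2/\psi\,dw$ were infinite, $K(\varepsilon)$ would be infinite for \emph{every} $\varepsilon$, and no threshold $\epsilon_1$ would help. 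Finiteness of $K(\varepsilon)$ is an $\varepsilon$-by-$\varepsilon$ statement, so the claim "this is precisely where $\epsilon_1$ enters" is also misplaced. What actually closes the gap is the pointwise Glaeser-type inequality for a nonnegative $C^2$ function: since $\psi\ge 0$ and $\|\nabla^2\psi\|\le 9$ (as postulated in the paper), one has $|\nabla\psi|^2\le 2\,\|\nabla^2\psi\|_{\infty}\,\psi\le 18\,\psi$, so $|\nabla\psi(w)|^2/\psi(w)\le 18$ on $\{\psi>0\}$, and the annular contribution to $K(\varepsilon)$ is finite (indeed bounded by a fixed constant times $\sup_{|w|\ge 1/2}\hat\Phi_\varepsilon$). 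The same inequality applied to $\Phi_\varepsilon$ itself also makes rigorous your convention of setting $|\nabla\Phi_\varepsilon|^2/\Phi_\varepsilon:=0$ on $\{\Phi_\varepsilon=0\}$, since it shows $\nabla\Phi_\varepsilon$ vanishes wherever $\Phi_\varepsilon$ does. With this substituted for the heuristic about super-exponential smallness, the proof is complete and, modulo the $\epsilon_1$ threshold (which is not needed for bare finiteness and presumably serves a quantitative role in \cite[Lemma 5.2]{KimTone}), matches the expected route.
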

\subsection{Construction of approximate MCF}
In this subsection, we summarize the relevant results of approximate MCF established in 
\cite[Section 6]{KimTone}. The following is \cite[Proposition 6.1]{KimTone} which proves the existence
of a time-discrete approximate MCF starting from $\mathcal E_0$. Here, given $\Gamma_0$
and $\{E_{0,i}\}_{i=1}^N$ as in Theorem \ref{KTmain}, we set $\mathcal E_{0}:=\{E_{0,1},\ldots,E_{0,N}\}$.
Since $\Gamma_0$ is a closed countably $n$-rectifiable set satisfying \eqref{omegaA}, one can see that
$\mathcal E_0\in \mathcal{OP}_{\Omega}^N$ and $\partial\mathcal E_0=|\Gamma_0|$. 
\begin{prop}
\label{caMCF}
Given $\mathcal E_0\in\mathcal{OP}_{\Omega}^N$ and $j\in\N$ with $j\geq \Cr{c_1}$, there exist
$\varepsilon_j\in(0,j^{-6})$, $p_j\in \N$, a family of open partitions $\mathcal E_{j,\ell}\in
\mathcal{OP}_{\Omega}^N$ ($\ell=0,1,\ldots,j2^{p_j}$) with the following property:
\begin{equation}\label{caMCF1}
\mathcal E_{j,0}=\mathcal E_0\,\,\mbox{ for all }j\in \N
\end{equation}
and with the notation of 
\begin{equation}\label{caMCF2}
\Delta t_j:=\frac{1}{2^{p_j}},
\end{equation}
we have
\begin{equation} \label{caMCF3}
\|\partial\mathcal E_{j,\ell}\|(\Omega)\leq \|\partial\mathcal E_0\|(\Omega)\exp\big(\frac{\Cr{c_1}^2}{2}
\ell\Delta t_j\big)+\frac{2\varepsilon_j^{\frac18}}{\Cr{c_1}^2}\big(\exp\big(\frac{\Cr{c_1}^2}{2}\ell\Delta t_j\big)
-1\big),
\end{equation}
\begin{equation}\label{caMCF4}
\begin{split}
\frac{\|\partial\mathcal E_{j,\ell}\|(\Omega)-\|\partial\mathcal E_{j,\ell-1}\|(\Omega)}{\Delta t_j}
&+\frac14\int_{\R^{n+1}}\frac{|\Phi_{\varepsilon_j}\ast \delta(\partial\mathcal E_{j,\ell})|^2\Omega}{
\Phi_{\varepsilon_j}\ast\|\partial\mathcal E_{j,\ell}\|+\varepsilon_j\Omega^{-1}}\,dz \\
&-\frac{(1-j^{-5})}{\Delta t_j}\Delta_j\|\partial\mathcal E_{j,\ell-1}\|(\Omega) 
\leq \varepsilon_j^{\frac18}+\frac{\Cr{c_1}^2}{2}\|\partial\mathcal E_{j,\ell-1}\|(\Omega),
\end{split}
\end{equation}
\begin{equation} \label{caMCF5}
\frac{\|\partial\mathcal E_{j,\ell}\|(\phi)-\|\partial\mathcal E_{j,\ell-1}\|(\phi)}{\Delta t_j}
\leq \delta(\partial\mathcal E_{j,\ell},\phi)(h_{\varepsilon_j}(\cdot,\partial\mathcal E_{j,\ell}))+
\varepsilon_j^{\frac18}
\end{equation}
for $\ell=1,2,\ldots,j2^{p_j}$ and $\phi\in \mathcal A_j$. When $\Cr{c_1}=0$ (so that 
$\|\partial \mathcal E_0\|(\R^{n+1})=\mathcal H^n(\Gamma_0)<\infty$ and we may take
$\Omega=1$), the right-hand side of \eqref{caMCF3}
should be understood as the limit when $\Cr{c_1}\rightarrow 0+$ and is equal to $\mathcal H^n(\Gamma_0)+\varepsilon_j^{\frac18}\ell\Delta t_j$. 
\end{prop}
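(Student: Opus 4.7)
The plan is to construct $\{\mathcal{E}_{j,\ell}\}_{\ell=0}^{j2^{p_j}}$ inductively by iterating, at each time-step, a two-stage move: first apply an area-reducing Lipschitz deformation that nearly realizes the infimum defining $\Delta_j\|\partial\mathcal{E}_{j,\ell-1}\|(\Omega)$, and then push the result forward along the flow generated by the smoothed mean curvature vector field $h_{\varepsilon_j}$ for time $\Delta t_j$. The minimization stage produces the $\Delta_j$-term in \eqref{caMCF4}, the curvature stage yields the smoothed $L^2$-curvature integral, and an expansion of $\|\cdot\|(\phi)$ under a near-identity pushforward gives \eqref{caMCF5}; inequality \eqref{caMCF3} then follows by dropping the two favorable terms in \eqref{caMCF4} and applying discrete Gronwall.

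I would begin by fixing parameters in a strict hierarchy. Given $j$, choose $\varepsilon_j\in(0,j^{-6})$ small enough (depending on the a priori mass bound that \eqref{caMCF3} itself will propagate) so that the weighted $L^2$-estimate for $\Phi_{\varepsilon_j}\ast\delta V/(\Phi_{\varepsilon_j}\ast\|V\|+\varepsilon_j\Omega^{-1})$ recalled at the end of Section \ref{Pre} holds uniformly along the induction. Then pick $p_j$ so large that $\Delta t_j=2^{-p_j}$ is much smaller than a sufficient positive power of $\varepsilon_j$, in order to absorb quadratic-in-$\Delta t_j$ pushforward remainders into the target error $\varepsilon_j^{1/8}\Delta t_j$. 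The inductive step $\mathcal{E}_{j,\ell-1}\mapsto\mathcal{E}_{j,\ell}$ is then: (a) select $f\in\mathbf{E}(\mathcal{E}_{j,\ell-1},j)$ that achieves the infimum in \eqref{tridef1} up to an error at most $j^{-5}|\Delta_j\|\partial\mathcal{E}_{j,\ell-1}\|(\Omega)|+\varepsilon_j^{1/8}\Delta t_j$, and set $\tilde{\mathcal{E}}:=f_\star\mathcal{E}_{j,\ell-1}$; by Definition \ref{boldE}(c), $\|\partial\tilde{\mathcal{E}}\|(\phi)\le\|\partial\mathcal{E}_{j,\ell-1}\|(\phi)$ for every $\phi\in\mathcal{A}_j$; (b) set $u:=h_{\varepsilon_j}(\cdot,\partial\tilde{\mathcal{E}})$ and define $\mathcal{E}_{j,\ell}:=(\mathrm{id}+\Delta t_j\,u)_\star\tilde{\mathcal{E}}$; for $\Delta t_j$ sufficiently small compared with the Lipschitz constant of $u$ (controlled purely in terms of $\varepsilon_j$), the map $\mathrm{id}+\Delta t_j u$ is a bi-Lipschitz homeomorphism, so $\mathcal{E}_{j,\ell}\in\mathcal{OP}^N_\Omega$, and Lemma \ref{echeck} is invoked to keep the composition within the admissible class.

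For \eqref{caMCF5} with $\phi\in\mathcal{A}_j$, I would use the standard first-variation expansion of mass under the pushforward by $\mathrm{id}+\Delta t_j u$,
$$\|\partial\mathcal{E}_{j,\ell}\|(\phi)-\|\partial\tilde{\mathcal{E}}\|(\phi)=\Delta t_j\,\delta(\partial\tilde{\mathcal{E}},\phi)(u)+O\bigl(\Delta t_j^{2}\,\|u\|_{C^1}^{2}\bigr),$$
combine it with $\|\partial\tilde{\mathcal{E}}\|(\phi)\le\|\partial\mathcal{E}_{j,\ell-1}\|(\phi)$, replace $\partial\tilde{\mathcal{E}}$ by $\partial\mathcal{E}_{j,\ell}$ on the right-hand side using continuity of $h_{\varepsilon_j}$ under small deformations, and absorb the quadratic remainder into $\varepsilon_j^{1/8}\Delta t_j$ via the parameter hierarchy. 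For \eqref{caMCF4}, take $\phi=\Omega$ in the same expansion; the leading term $\Delta t_j\,\delta(\partial\tilde{\mathcal{E}},\Omega)(u)$ is, by the very definition of $h_{\varepsilon_j}$ together with \eqref{defi3}--\eqref{defi5}, essentially $-\Delta t_j$ times the weighted $L^2$-quantity on the left of \eqref{caMCF4}, plus a $\nabla\Omega$ boundary-type term controlled via \eqref{omega} and Young's inequality. This produces the factor $\frac14$ on the left and the $\frac{\Cr{c_1}^2}{2}\|\partial\mathcal{E}_{j,\ell-1}\|(\Omega)$ on the right. The minimization step (a) contributes exactly $(1-j^{-5})\Delta_j\|\partial\mathcal{E}_{j,\ell-1}\|(\Omega)$ up to $\varepsilon_j^{1/8}\Delta t_j$, which gives \eqref{caMCF4}.

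The hard part is closing all these error estimates uniformly in $\ell=1,\ldots,j2^{p_j}$. The smoothed mean curvature $h_{\varepsilon_j}$ can be very large where $\Phi_{\varepsilon_j}\ast\|V\|$ is small, so the quadratic pushforward errors involving $|u|^2$ and $|\nabla u|^2$ are only tame because of the regularization $\varepsilon_j\Omega^{-1}$ in the denominator defining $h_{\varepsilon_j}$; propagating this bound forward demands the strict hierarchy $\Delta t_j\ll\varepsilon_j^{\alpha(n)}$ and careful use of Lemma \ref{echeck} to verify that the composed map remains in $\mathbf{E}(\mathcal{E},j)$ at every step, so that the area-reducing property (c) of Definition \ref{boldE} survives the full iteration.
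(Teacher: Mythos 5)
Your proposal matches the approach described in Section~1 of the paper and carried out in \cite[Proposition 6.1]{KimTone}, which this paper cites without reproving: each time-step is a near-optimal Lipschitz area-reduction within the class ${\bf E}(\mathcal E_{j,\ell-1},j)$ (yielding the $\Delta_j$-term), followed by a pushforward along $h_{\varepsilon_j}\Delta t_j$ whose first-variation expansion against $\Omega$ produces the smoothed $L^2$-curvature integral, and \eqref{caMCF3} is a discrete Gr\"{o}nwall consequence of \eqref{caMCF4}. One small point of confusion: Lemma~\ref{echeck} (verifying membership in ${\bf E}$) is only relevant to the minimization step (a) — the subsequent diffeomorphic pushforward by $\mathrm{id}+\Delta t_j\,h_{\varepsilon_j}$ does not need to lie in ${\bf E}(\cdot,j)$ (indeed it generally would not satisfy the area-nonincreasing condition for arbitrary $\phi\in\mathcal A_j$); one only needs that it preserves the class $\mathcal{OP}_\Omega^N$, which is automatic since the map is a smooth diffeomorphism for $\Delta t_j$ small.
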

We remark on the relation of $\varepsilon_j$ and $\Delta t_j$ in the proof of Proposition \ref{caMCF}.
As is explained in \cite[p.83]{KimTone}, 
\begin{equation} \label{caMCF5.5}
\Delta t_j=\frac{1}{2^{p_j}}\in (2^{-1} \varepsilon_j^{3n+20},\varepsilon_j^{3n+20}].
\end{equation}
\begin{defi}
We define for each $j\in\N$ with $j\geq \max\{1,\Cr{c_1}\}$ a family $\mathcal E_j(t)\in \mathcal{OP}_{\Omega}^N$
for $t\in[0,j]$ by
\begin{equation}
\label{caMCF6}
\mathcal E_j (t):=\mathcal E_{j,\ell}\,\,\mbox{ if }\,t\in ((\ell-1)\Delta t_j,\ell \Delta t_j].
\end{equation}
\end{defi}
The next is \cite[Propisition 6.4]{KimTone}: 
\begin{prop}
\label{caMCF7}
There exist a subsequence $\{j_{\ell}\}_{\ell=1}^\infty$ and a family of Radon measures $\{\mu_t\}_{t\in
\R^+}$ on $\R^{n+1}$ such that
\begin{equation}
\label{caMCF8}
\lim_{\ell\rightarrow\infty}\|\partial\mathcal E_{j_\ell} (t)\|(\phi)=\mu_t(\phi)
\end{equation}
for all $\phi\in C_c(\R^{n+1})$ and for all $t\in \R^+$. For all $T<\infty$, we have
\begin{equation}\label{caMCF9}
\limsup_{\ell\rightarrow\infty} \int_0^T\Big(\int_{\R^{n+1}} \frac{|\Phi_{\varepsilon_{j_\ell}}\ast
\delta(\partial\mathcal E_{j_\ell}(t))|^2\Omega}{\Phi_{\varepsilon_{j_\ell}}\ast\|\partial\mathcal E_{j_\ell}
(t)\|+\varepsilon_{j_\ell}\Omega^{-1}}\,dz 
-\frac{1}{\Delta t_{j_\ell}} \Delta_{j_\ell}\|\partial\mathcal E_{j_\ell}(t)\|(\Omega)\Big)\,dt<\infty.
\end{equation}
\end{prop}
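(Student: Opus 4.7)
The statement splits into two claims: the subsequential convergence \eqref{caMCF8} and the integrated curvature bound \eqref{caMCF9}. The plan is to obtain \eqref{caMCF9} by a short telescoping of \eqref{caMCF4}, and to obtain \eqref{caMCF8} by combining the uniform mass bound from \eqref{caMCF3} with a one-sided almost-monotonicity extracted from \eqref{caMCF5}, followed by a diagonal compactness argument.

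First, fix $T<\infty$ and take $j$ with $j\ge T$. Iterating \eqref{caMCF3} yields $\sup_{t\in[0,T]}\|\partial\mathcal E_j(t)\|(\Omega)\le C(T)$ uniformly in $j$, which, since $\Omega>0$ is continuous, gives locally uniform mass bounds on every compact set. Thus for every fixed $t\in\R^+$ the family $\{\|\partial\mathcal E_j(t)\|\}_j$ is weak-$*$ precompact among Radon measures on $\R^{n+1}$. To extract a single subsequence valid for every $t$, I would exploit \eqref{caMCF5}: for any $\phi\in C_c^2(\R^{n+1};[0,1])$ with $\phi\le\Omega$ and $|\nabla\phi|,\|\nabla^2\phi\|\le K\phi$ (so $\phi\in\mathcal A_j$ for all $j\ge K$), the pointwise Young bound $\nabla\phi\cdot h-\phi|h|^2\le |\nabla\phi|^2/(4\phi)$ applied to \eqref{bineq2} reduces \eqref{caMCF5} to
\begin{equation*}
\|\partial\mathcal E_{j,\ell}\|(\phi)-\|\partial\mathcal E_{j,\ell-1}\|(\phi)\le \Delta t_j\Bigl(\tfrac{K^2}{4}\|\partial\mathcal E_{j,\ell-1}\|(\phi)+\varepsilon_j^{1/8}\Bigr),
\end{equation*}
so that $t\mapsto e^{-K^2 t/4}\|\partial\mathcal E_j(t)\|(\phi)$ differs from a monotone nonincreasing function by an error tending to $0$ as $j\to\infty$. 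This yields uniform one-sided variation bounds on $[0,T]$ for every such $\phi$.

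Choosing a countable family $\{\phi_k\}\subset C_c^2(\R^{n+1};\R^+)$ of test functions of the above type that is dense in $C_c(\R^{n+1})$ (for example, products of standard bumps with $\Omega$ on a dyadic cover of $\R^{n+1}$), I would apply Helly's selection to $t\mapsto\|\partial\mathcal E_j(t)\|(\phi_k)$ for each $k$ and then diagonalize in $k$ and in a countable dense set of times, obtaining a subsequence $\{j_\ell\}$ along which $\|\partial\mathcal E_{j_\ell}(t)\|(\phi_k)$ converges for every $t\in\R^+$ and every $k$. Using the uniform local mass bound, this upgrades to weak-$*$ convergence $\|\partial\mathcal E_{j_\ell}(t)\|\to\mu_t$ on all of $C_c(\R^{n+1})$ at every $t$, giving \eqref{caMCF8}.

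For \eqref{caMCF9}, sum \eqref{caMCF4} over $\ell=1,\ldots,\lfloor T/\Delta t_j\rfloor$ after multiplying by $\Delta t_j$. The first term telescopes to $\|\partial\mathcal E_j(T')\|(\Omega)-\|\partial\mathcal E_0\|(\Omega)\ge -\|\partial\mathcal E_0\|(\Omega)$ (where $T'=\lfloor T/\Delta t_j\rfloor\Delta t_j$); the remaining two terms on the left reproduce exactly the time integrals in \eqref{caMCF9} up to the harmless prefactors $1/4$ and $(1-j^{-5})$, because $\partial\mathcal E_j(t)$ is piecewise constant in $t$ with step $\Delta t_j$. The right-hand side is bounded in terms of $T$, $C(T)$, $\varepsilon_j^{1/8}$, and the constant from \eqref{omega}, uniformly in $j$. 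Rearranging and taking $\limsup$ yields \eqref{caMCF9}. The main obstacle I anticipate is not any single estimate but the bookkeeping in the Helly/diagonalization argument: verifying that the restricted class of $\phi$'s with $|\nabla\phi|\le K\phi$ forms a sufficiently rich family to detect weak-$*$ convergence of the $\|\partial\mathcal E_{j_\ell}(t)\|$, which under the uniform local mass bound reduces to a standard density argument that crucially uses $\Omega>0$.
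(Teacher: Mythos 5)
Your overall strategy is the right one, and it matches the standard Brakke/Ilmanen argument that \cite[Proposition 6.4]{KimTone} follows: telescoping \eqref{caMCF4} for the curvature bound, one-sided almost-monotonicity from \eqref{caMCF5} plus Young's inequality, then Helly and diagonalization. The telescoping for \eqref{caMCF9} is essentially correct (the one-index shift between $\partial\mathcal E_{j,\ell}$ and $\partial\mathcal E_{j,\ell-1}$ in \eqref{caMCF4} versus the common index in \eqref{caMCF9} is harmless because $-\Delta_j\|\partial\mathcal E\|(\Omega)\geq 0$ and the mass is uniformly bounded, so the endpoint term is $O(1)\cdot\Delta t_j$). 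The almost-monotonicity argument via $\nabla\phi\cdot h-\phi|h|^2\le|\nabla\phi|^2/(4\phi)\le(K^2/4)\phi$ is also the standard device, although you gloss over the fact that $\delta(\partial\mathcal E_{j,\ell},\phi)(h_{\varepsilon_j})$ involves the \emph{smoothed} mean curvature and convolved quantities rather than the bare form \eqref{bineq2}; reducing it to that form costs additional error terms of order $\varepsilon_j^{1/8}$, but these are controlled in \cite{KimTone} and do not change the conclusion.

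There is, however, a concrete error in the density step. You propose the family ``products of standard bumps with $\Omega$ on a dyadic cover'' as elements of $\bigcup_K\mathcal A_K$. No nonzero, compactly supported, $C^1$, nonnegative function can belong to any $\mathcal A_K$: if $\phi\not\equiv 0$ satisfies $|\nabla\phi|\le K\phi$, then integrating this differential inequality along segments (a Gr\"onwall argument) forces $\phi>0$ on all of $\R^{n+1}$, contradicting compact support. So any bump multiplied by $\Omega$ fails the required bound near the boundary of its support, where $\phi\to 0$ but $|\nabla\phi|/\phi\to\infty$. The correct family consists of globally positive, rapidly decaying functions of the form $\phi(z)=\Omega(z)\,\eta_\lambda(z)$ with, e.g., $\eta_\lambda(z)=\exp(-\lambda\, d(z)^2)$ where $d$ is a smooth nonnegative function with bounded gradient and Hessian approximating the distance to a fixed compact set; these \emph{do} lie in $\mathcal A_K$ for a $K$ depending only on $\lambda$, $\Cr{c_1}$, and the $C^2$ bounds on $d$. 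Combined with the uniform bound $\sup_{\ell}\|\partial\mathcal E_{j_\ell}(t)\|(\Omega)<\infty$ coming from \eqref{caMCF3}, a countable collection of such functions (over rational $\lambda$ and a countable family of centers) separates Radon measures on $\R^{n+1}$, which is exactly what the diagonalization needs. With that correction the argument closes; also note Helly's theorem already yields pointwise convergence at \emph{every} $t\in\R^+$ along a single subsequence for a fixed $\phi$, so there is no need to separately diagonalize over a dense set of times — diagonalization is needed only over the countable family of test functions.
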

Because of \eqref{caMCF9} and Fatou's lemma, for a.e.~$t\in\R^+$, 
we may choose a time-dependent further subsequence (denoted by the same index) $\{j_{\ell}\}_{\ell=1}^{\infty}$
such that 
\begin{equation}
\label{boundht}
\sup_{\ell\in\N}\Big(\int_{\R^{n+1}}\frac{|\Phi_{\e_{j_\ell}}*\delta
(\partial\E_{j_\ell}(t))|^2\Omega}{\Phi_{\e_{j_\ell}}*\|\partial\E_{j_\ell}(t)\|+\e_{j_\ell}\Omega^{-1}}
\, dz 
-\frac{1}{\Delta t_{j_\ell}}\Delta_{j_\ell}\|\partial\E_{j_\ell}(t)\|(\Omega)\Big)
\leq \Cl[c]{c_2}
\end{equation}
for some $\Cr{c_2}$. The following is proved (see \cite[Theorem 8.6, Lemma 9.1, Theorem 9.3]{KimTone}):
\begin{thm} \label{limreg}
The limit of $\{\partial\mathcal E_{j_\ell}(t)\}_{\ell=1}^{\infty}$ satisfying \eqref{boundht} is necessarily
an integral varifold $V_t$ with $\|V_t\|=\mu_t$, 
and $V_t$ has a locally square integrable 
generalized mean curvature. Moreover, $\{V_t\}_{t\in\R^+}$ is a Brakke flow.
\end{thm}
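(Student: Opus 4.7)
The plan is to unpack the hypothesis \eqref{boundht} into three separate pieces of information about $V_t$: a uniform $L^2$-type bound on the smoothed mean curvature, an asymptotic almost-minimality property from the $\Delta_{j_\ell}$ term, and the varifold convergence coming from Proposition \ref{caMCF7}. First, I would upgrade the weak convergence of measures in \eqref{caMCF8} to varifold convergence. Since each $\partial\mathcal E_{j_\ell}(t)$ is a unit density rectifiable varifold with $\|\partial\mathcal E_{j_\ell}(t)\|(\Omega)$ uniformly bounded by \eqref{caMCF3}, a diagonal/compactness argument on the Grassmann-valued measures produces a subsequential limit $V_t \in \mathbf V_n(\R^{n+1})$ whose weight is $\mu_t$. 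The identity \eqref{defi5} together with \eqref{defi4} then allows me to identify the weak limit of $\Phi_{\varepsilon_{j_\ell}}\ast \delta(\partial\mathcal E_{j_\ell}(t))$, viewed as a vector-valued distribution, with $\delta V_t$.

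Next I would use the first term in \eqref{boundht} to show that $\delta V_t$ is absolutely continuous with respect to $\|V_t\|$, with an $L^2(\Omega\,d\|V_t\|)$ density $h(\cdot,V_t)$. Indeed, if $g\in C_c^1(\R^{n+1};\R^{n+1})$, one writes
\begin{equation*}
\delta(\partial\mathcal E_{j_\ell}(t))(\Phi_{\varepsilon_{j_\ell}}\ast g)
=\int_{\R^{n+1}} (\Phi_{\varepsilon_{j_\ell}}\ast \delta(\partial\mathcal E_{j_\ell}(t)))\cdot g\,dz,
\end{equation*}
applies Cauchy–Schwarz against $\Phi_{\varepsilon_{j_\ell}}\ast\|\partial\mathcal E_{j_\ell}(t)\|+\varepsilon_{j_\ell}\Omega^{-1}$ using \eqref{defi3}, and passes to the limit, exploiting that $\Phi_{\varepsilon_{j_\ell}}\ast g\to g$ in $C^1$ and $\Phi_{\varepsilon_{j_\ell}}\ast\|\partial\mathcal E_{j_\ell}(t)\|\to \mu_t=\|V_t\|$ weakly. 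The resulting bound on $|\delta V_t(g)|$ in terms of $\|g\|_{L^2(\|V_t\|)}$ gives existence of $h(\cdot,V_t)\in L^2_{\mathrm{loc}}(\|V_t\|)$, and lower semicontinuity yields the uniform $L^2$-bound with constant controlled by $\Cr{c_2}$.

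Once $V_t$ has mean curvature in $L^2_{\mathrm{loc}}(\|V_t\|)$, Allard's rectifiability theorem applies provided the lower density of $\|V_t\|$ is positive $\|V_t\|$-a.e., which follows from the standard monotonicity formula for varifolds with $L^2$ mean curvature. For the integrality, I would use the second term in \eqref{boundht}: since $\Delta t_{j_\ell}$ is of order $\varepsilon_{j_\ell}^{3n+20}$ by \eqref{caMCF5.5}, the bound $-\Delta_{j_\ell}\|\partial\mathcal E_{j_\ell}(t)\|(\Omega)\leq \Cr{c_2}\,\Delta t_{j_\ell}$ tunnels to an almost-minimizing property at every fixed scale, namely that any $\partial\mathcal E_{j_\ell}(t)$-admissible competitor deformation at a scale larger than $1/j_\ell^2$ with area improvement more than $O(\Delta t_{j_\ell})$ is eventually forbidden. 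Together with the unit density of each $\partial\mathcal E_{j_\ell}(t)$, this lets me invoke the integrality theorem for limits of almost-minimizing unit density varifolds (essentially an Ambrosio–Soner / Brakke type argument using lower semicontinuity of weight under local Lipschitz retractions) to conclude $V_t\in \mathbf{IV}_n(\R^{n+1})$.

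Finally, for the Brakke inequality, I would pass to the limit in the discrete estimate \eqref{caMCF5} summed over $\ell$ in an interval $[t_1,t_2]$, against $\phi(\cdot,t)\in \mathcal A_{j}$. The left-hand telescopes to $\|\partial\mathcal E_{j_\ell}(t_2)\|(\phi(\cdot,t_2))-\|\partial\mathcal E_{j_\ell}(t_1)\|(\phi(\cdot,t_1))$ plus a term from the time-derivative of $\phi$, both of which pass to the limit by \eqref{caMCF8}. The right-hand side requires identifying the limit of $\delta(\partial\mathcal E_{j_\ell})(h_{\varepsilon_{j_\ell}})$: using \eqref{defi4}–\eqref{defi5} and the weak $L^2$ convergence of $h_{\varepsilon_{j_\ell}}$ to $h(\cdot,V_t)$ against the smooth test field $\phi\nabla\phi^{-1}$-type quantities gives the linear term, while Fatou/lower semicontinuity handles the quadratic $|h|^2$ term with the correct sign. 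This yields \eqref{bineq} and completes the Brakke flow property.

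The main obstacle is undoubtedly the integrality step: the varifolds $\partial\mathcal E_{j_\ell}(t)$ have unit density, but integer density could form in the limit only if mass concentrates on overlapping sheets; one must simultaneously exploit the almost-minimality at scale $o(1/j_\ell^2)$ \emph{and} the $L^2$ mean curvature bound to rule out fractional densities, and to carry out the Lipschitz deformation arguments within the admissible class $\mathbf E(\mathcal E_{j_\ell},j_\ell)$ of Definition \ref{boldE}.
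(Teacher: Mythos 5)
The paper does not give its own proof of this statement; it is recalled from \cite[Theorem 8.6, Lemma 9.1, Theorem 9.3]{KimTone}. Your sketch follows the same broad decomposition used there: varifold compactness giving $V_t$ with $\|V_t\|=\mu_t$; a Cauchy--Schwarz estimate against $\Phi_{\varepsilon_{j_\ell}}\ast\|\partial\mathcal E_{j_\ell}\|+\varepsilon_{j_\ell}\Omega^{-1}$ to bound $\delta V_t$ and extract $h(\cdot,V_t)\in L^2_{\rm loc}(\|V_t\|)$; integrality via the almost-minimizing information in the $\Delta_{j_\ell}$ term; and a time-integrated passage to the limit in \eqref{caMCF5} for the Brakke inequality. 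So the architecture is essentially the one in the cited reference.

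Two points, however, need correction. First, the lower density bound you need for Allard's rectifiability theorem does \emph{not} follow from ``the standard monotonicity formula for varifolds with $L^2$ mean curvature'' alone: that formula only yields approximate monotonicity of the density ratio and cannot by itself rule out that $\theta^n_*(\|V_t\|,\cdot)=0$ on a set of positive $\|V_t\|$-measure. The positive lower bound genuinely comes from the almost-minimizing hypothesis (this is the role of \cite[Proposition 7.2]{KimTone}; see Lemma \ref{ldb} for the analogous estimate at the rescaled level, and compare Lemma \ref{haudi}). Second, your phrase ``any admissible competitor deformation at a scale larger than $1/j_\ell^2$'' inverts the geometry: by Definition \ref{boldE}(a), functions in ${\bf E}(\mathcal E_{j_\ell},j_\ell)$ move points by at most $1/j_\ell^2$, so the almost-minimality only operates at scales $o(1/j_\ell^2)$, which are vanishing. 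Consequently the integrality step cannot be a fixed-scale argument and must be a blow-up argument, carried out within the shrinking admissible class; this is done via \cite[Lemma 8.1]{KimTone} (compare Lemma \ref{integral} here). Your concluding paragraph states the scale correctly, so the earlier phrase reads as a slip, but the Ambrosio--Soner reference is not apt — it is a Brakke-style popping argument, and its careful execution inside the restricted class ${\bf E}(\mathcal E_{j_\ell},j_\ell)$ is precisely the technical core that cannot be dispatched by citing a generic compactness theorem.
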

The main purpose of the present paper is to prove
that, if $\|V_t\|$ is obtained as the limit of $\|\partial\mathcal E_{j_\ell}(t)\|$ satisfying \eqref{boundht},
then ${\rm spt}\,\|V_t\|$ consists of locally finite number of $W^{2,2}$
curves with junctions of specific type, namely, these curves at junctions meet either
at 0, 60 or 120 degrees. This proves Theorem \ref{KTmain2} since the Brakke flow obtained
in \cite{KimTone} is precisely the limit of sequence satisfying \eqref{boundht} for a.e.$\,t$. 

For the rest of the paper, dropping the variable $t$, we let $\{\partial\E_{j_\ell}\}_{\ell=1}^{\infty}$ be the subsequence
with the uniform bound \eqref{boundht}. 
\section{Small scale behavior of approximate sequence} \label{Small}

The purpose of this section is to prove that $\partial\mathcal E_{j_\ell}$ is almost measure-minimizing 
within a length scale smaller than $1/j_\ell^2$. This is an expected result due to 
Definition \ref{boldE}(a) and the uniform bound
$-\Delta_{j_\ell}\|\partial\mathcal E_{j_\ell}\|(\Omega)\leq \Cr{c_2} \Delta t_{j_\ell}$ as in \eqref{boundht}.
Since $\Delta t_{j_\ell}\leq \varepsilon_{j_\ell}^{3n+20}<j_\ell^{-6(3n+20)}$ by \eqref{caMCF5.5}
and $\varepsilon_j\in(0,j^{-6})$ in Proposition \ref{caMCF}, 
we have
\begin{equation}
\label{dsmall}
-\Delta_{j_\ell}\|\partial\mathcal E_{j_\ell}\|(\Omega)<\Cr{c_2} j_\ell^{-6(3n+20)},
\end{equation}
and even after
rescaling $\partial\mathcal E_{j_\ell}$ by $1/j_\ell^2$, it is still very close to being 
measure-minimizing
under Lipschitz deformations of admissible class, ${\bf E}(\mathcal E_{j_\ell},j_{\ell})$.  

Suppose throughout this section that $\{z^{(\ell)}\}_{\ell=1}^{\infty}\subset\R^{n+1}$ is a bounded sequence
and suppose that $\{r_\ell\}_{\ell=1}^{\infty}$ is a sequence of positive numbers such that
\begin{equation}
\lim_{\ell\rightarrow\infty}r_\ell(j_\ell)^2=0
\label{rg}
\end{equation}
and 
\begin{equation}
\lim_{\ell\rightarrow\infty} r_\ell(j_\ell)^3=\infty.
\label{rg2}
\end{equation}
It is not necessary but to fix the idea, we set $r_\ell:=1/(j_\ell)^{2.5}$ so that \eqref{rg} and \eqref{rg2} are satisfied. The motivation for the choice of $r_\ell$ is that we want $r_\ell=o(1/j_\ell^2)$ but not too small so that 
$\varepsilon_{j_\ell}\ll r_\ell$. 
For each $\ell\in \N$, we define 
$F_\ell:\R^{n+1}\rightarrow\R^{n+1}$ by 
\begin{equation}
\label{dilf}
F_\ell(z):=\frac{z-z^{(\ell)}}{r_\ell}
\end{equation}
and define
\begin{equation}
\label{vdef}
V_\ell:=(F_\ell)_{\sharp}(\partial \E_{j_\ell}).
\end{equation}
Using Lemma \ref{ub} below, for each $R>0$, we can prove
\begin{equation}
\label{ub2}
\limsup_{\ell\rightarrow\infty}\|V_\ell\|(B_R)\leq \H^n(\partial B_1)\,R^n.
\end{equation}
Once this is proved, by the standard compactness theorem of Radon measures, we have a converging subsequence 
(denoted by the same index) and a limit $V\in {\bf V}_n(\R^{n+1})$, namely, 
\[
\lim_{\ell\rightarrow\infty} V_\ell(\phi)=V(\phi)\]
for all $\phi\in C_c({\bf G}_n(\R^{n+1}))$. 
The main result in this section is the following characterization of this limit $V$. 
\begin{thm}
\label{chlim}
Let $V$ be obtained as above and suppose that $V\neq 0$. 
Then $V$ is measure-minimizing with respect to any 
compact diffeomorphism and belongs to ${\bf IV}_n(\mathbb R^{n+1})$ with unit density. Moreover, $V$ satisfies the following. 
\begin{enumerate}
\item
For $N\geq 3$ and $n=1$, ${\rm spt}\,\|V\|$ is either a line or a triple junction (with three
half-lines) of 120 degrees.  
\item
For $N\geq 3$ and $n\geq 2$, ${\rm spt}\,\|V\|$ consists of three mutually disjoint sets, ${\rm reg}\,V$, 
${\rm sing}_1\,V$ and ${\rm sing}_2\,V$. The set ${\rm reg}\,V$ is relatively open in 
${\rm spt}\,\|V\|$ and is a real-analytic minimal hypersurface. For any point in ${\rm sing}_1\,
V$, there exists a neighborhood in which ${\rm spt}\,\|V\|$ consists of three real-analytic 
minimal hypersurfaces with boundaries which meet along an $n-1$-dimensional real-analytic surface.
The set ${\rm sing}_2\,V$ is a closed set of Hausdorff dimension $\leq n-2$. In the case of $n=2$,
${\rm sing}_2\,V$ is a set of isolated points in $\mathbb R^3$. 
\item 
For $N=2$ and $1\leq n\leq 6$, ${\rm spt}\,\|V\|$ is a hyperplane.
\item
For $N=2$ and $n\geq 7$, ${\rm spt}\,\|V\|$ is a real-analytic minimal hypersurface
away from a closed set of Hausdorff dimension $\leq n-7$ and a set of isolated 
points if $n=7$. 
\end{enumerate}
\end{thm}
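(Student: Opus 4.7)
The plan is to first promote the infinitesimal almost-minimality of $\partial\mathcal E_{j_\ell}$ recorded in \eqref{boundht} into genuine measure-minimality of $V$ under compactly supported diffeomorphisms, then to read off unit density from the partition structure, and finally to invoke the classical regularity theory for minimizing partitions to get the structural alternatives (1)--(4). The admissible scale window \eqref{rg}--\eqref{rg2}, together with the near-exponential smallness of $\Delta t_{j_\ell}$ from \eqref{caMCF5.5}, will be decisive for all quantitative estimates.

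For minimality I argue by contradiction: suppose a compactly supported $C^1$-diffeomorphism $\Psi$ of $\R^{n+1}$ and $\eta>0$ satisfy $\|\Psi_\sharp V\|(\R^{n+1})\leq \|V\|(\R^{n+1})-\eta$. Pull $\Psi$ back to the unrescaled scale via $f_\ell:=F_\ell^{-1}\circ\Psi\circ F_\ell$ and set $C_\ell:=F_\ell^{-1}(\mathrm{spt}(\Psi-\mathrm{id}))$, so $\mathrm{diam}\,C_\ell=r_\ell K$ where $K:=\mathrm{diam}\,\mathrm{spt}(\Psi-\mathrm{id})$. I then verify the hypotheses of Lemma \ref{echeck} for $f_\ell$. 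Hypothesis (a) is built in; (b) follows from $|f_\ell-\mathrm{id}|\leq r_\ell\|\Psi-\mathrm{id}\|_\infty\leq j_\ell^{-2}$ by \eqref{rg}; (c) from $\mathcal L^{n+1}(E_{j_\ell,i}\triangle\tilde E_{j_\ell,i})\lesssim r_\ell^{n+1}$. For (d), the identity $F_\ell\circ f_\ell=\Psi\circ F_\ell$ and the scaling relations reduce the required inequality to
\[
\|\Psi_\sharp V_\ell\|(\mathrm{spt}(\Psi-\mathrm{id}))\;\leq\;\exp(-j_\ell r_\ell K)\,\|V_\ell\|(\mathrm{spt}(\Psi-\mathrm{id})),
\]
which holds eventually because the LHS tends to $\|V\|(\R^{n+1})-\eta$ while the RHS tends to $\|V\|(\R^{n+1})$ via $j_\ell r_\ell\to 0$ (with $r_\ell=j_\ell^{-2.5}$). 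Hence $f_\ell\in \mathbf E(\mathcal E_{j_\ell},C_\ell,j_\ell)\subset \mathbf E(\mathcal E_{j_\ell},j_\ell)$, and since $\Omega$ is essentially constant $\Omega(z^\infty)$ on the shrinking $C_\ell$ with $z^\infty:=\lim_\ell z^{(\ell)}$,
\[
\Delta_{j_\ell}\|\partial\mathcal E_{j_\ell}\|(\Omega)\;\leq\;-\tfrac12\,\Omega(z^\infty)\,r_\ell^n\,\eta+o(r_\ell^n).
\]
This contradicts $-\Delta_{j_\ell}\|\partial\mathcal E_{j_\ell}\|(\Omega)\leq \Cr{c_2}\Delta t_{j_\ell}\leq \Cr{c_2}\,j_\ell^{-6(3n+20)}$ from \eqref{boundht} and \eqref{caMCF5.5}, since $r_\ell^n=j_\ell^{-2.5n}$ dominates $j_\ell^{-6(3n+20)}$.

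With minimality in hand, unit density follows from the partition structure: each $V_\ell$ is unit-density, the $L^1_{\mathrm{loc}}$-precompact indicators $\chi_{(F_\ell)_\sharp E_{j_\ell,i}}$ converge to Caccioppoli sets $E_i^\infty$, and minimality plus lower semicontinuity of perimeter force $V=|\bigcup_i\partial E_i^\infty|\in \mathbf{IV}_n(\R^{n+1})$ with unit multiplicity. The dichotomies (1)--(4) then reduce to standard regularity results: for $N=2$, $\mathrm{spt}\,\|V\|$ is a perimeter-minimizing Caccioppoli boundary, and De\,Giorgi (case (3), $n\leq 6$) together with the Simons--Federer singular set estimate (case (4), $n\geq 7$) applies; for $N\geq 3$ and $n\geq 2$, Almgren--Taylor regularity for $(\mathbf{M},0,\delta)$-minimal sets, together with the low-codimension classification of tangent cones (for $n=2$, only the $Y$-cone along a curve and the $T$-cone at isolated points), gives (2); and for $N\geq 3$, $n=1$, the Allard--Almgren classification of stationary 1-varifolds with positive lower density in $\R^2$, refined by the observation that any crossing other than the $120^\circ$ triple junction admits a length-reducing Lipschitz perturbation and so is excluded by minimality, yields (1).

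The main obstacle will be the scale-matching in Lemma \ref{echeck}(d): the deformation scale $r_\ell K$ must be small enough that $j_\ell r_\ell K\to 0$ (so that $\exp(-j_\ell\,\mathrm{diam}\,C_\ell)\to 1$ and the strict mass decrease $\eta$ survives the multiplicative tolerance), yet large enough that $r_\ell^n$ still dominates the super-polynomially small $\Delta t_{j_\ell}\leq \varepsilon_{j_\ell}^{3n+20}$; the exponents encoded in \eqref{rg}--\eqref{rg2} and \eqref{caMCF5.5} are tuned so that the representative choice $r_\ell=j_\ell^{-2.5}$ lies safely inside this window.
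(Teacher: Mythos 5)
The measure-minimality step (reducing Lemma \ref{echeck}(d) to a strict mass decrease at the rescaled level) mirrors the paper's Lemma \ref{ub3cl} and is fine. But the passage from minimality to integrality and unit density is a genuine gap. You write that ``minimality plus lower semicontinuity of perimeter force $V=|\bigcup_i\partial E_i^\infty|\in \mathbf{IV}_n(\R^{n+1})$ with unit multiplicity,'' but this does not follow. Lower semicontinuity only gives the one-sided inequality $\|\nabla\chi_{E_k^\infty}\|\leq\|V\|$ (the paper's \eqref{dname4}), and the worry is exactly that $V$ could carry higher-multiplicity mass invisible to the limit partition — e.g., two sheets bounding the same phase on both sides collapsing to a doubled interface, which contributes to $V$ but not to any $\|\nabla\chi_{E_k^\infty}\|$. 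Moreover, minimality under compact diffeomorphisms alone cannot rule this out: $2|T|$ for a hyperplane $T$ is measure-minimizing under every compactly supported diffeomorphism, so diffeomorphism-minimality is compatible with arbitrary integer multiplicity. The paper has to work at the unrescaled level: Lemma \ref{ldb} for the lower density bound (via an explicit $\mathcal E$-admissible retraction and \cite[Prop.\ 7.2]{KimTone}), Lemma \ref{integral} for integrality (via \cite[Lemma 8.1]{KimTone} and a Besicovitch covering argument), and Lemma \ref{uniden} for unit density (via the crushing map of \eqref{def-g1}--\eqref{def-g2} applied to $\partial\mathcal E_{j_\ell}$). Those maps are admissible Lipschitz retractions, not diffeomorphisms, and they only yield mass comparison because they act on the discrete $\partial\mathcal E_{j_\ell}$ before taking limits; they cannot be deduced from diffeomorphism-minimality of the already-limited $V$.

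The same obstruction affects your appeals to Almgren--Taylor and to the ``length-reducing Lipschitz perturbation'' for excluding non-$120^\circ$ junctions. $(\mathbf M,0,\delta)$-minimality is minimality under Lipschitz deformations, which is strictly stronger than what you have established for $V$, and verifying it again requires pulling Lipschitz maps back to the $\partial\mathcal E_{j_\ell}$ scale and controlling the error against $\Delta_{j_\ell}\|\partial\mathcal E_{j_\ell}\|(\Omega)$. Similarly, the claim that any crossing other than a $120^\circ$ triple junction is excluded requires an explicit construction (the paper's Lemma \ref{triplelem} and Figure 3), again carried out at the approximate level; it is not an immediate corollary of the Allard--Almgren classification, which by itself permits any number of half-lines meeting with any balanced angle configuration. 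Finally, the paper's $N=2$ case (3)--(4) also uses the ``two-sidedness'' Lemma \ref{two-sided}, which rules out triple junctions by showing the two local sides of a regular point must lie in distinct $E_k^\infty$; your proof does not address why the $N=2$ limit has no junctions. In short, the overall strategy is reasonable, but the steps between ``diffeomorphism-minimality'' and the regularity dichotomies hide most of the actual work, and several of them are false as stated when read at the level of $V$ alone rather than $\partial\mathcal E_{j_\ell}$.
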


One can expect that this should be true due to the ``almost measure-minimizing property'' \eqref{dsmall}, and Theorem \ref{chlim}
is known in a variety of different settings in area-minimizing problems. In fact, the present
setting of the small scale is close to that of 
Almgren's $(F,\varepsilon,\delta)$ minimal sets \cite{Almgren}. On the other hand, since it is not
precisely the same with the use of open partitions and the admissible class, 
we give a self-contained proof (except that we cite results from \cite{KimTone} and well-known
results in geometric measure theory) and the rest of this section is devoted to the proof of Theorem \ref{chlim}. We use results for $n=1$ in the subsequent sections. 

We start with the following upper density ratio bound. 
\begin{lem}
For any $R>0$, \[\limsup_{\ell\rightarrow\infty}\frac{1}{(r_\ell R)^n}\|\partial\E_{j_\ell}\|(B_{r_\ell R}(z^{(\ell)}))\leq \H^n(\partial B_1) .\]
\label{ub}
\end{lem}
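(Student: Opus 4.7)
My plan is to argue by contradiction. Suppose, after passing to a subsequence, that $\|\partial\mathcal E_{j_\ell}\|(B_{r_\ell R}(z^{(\ell)}))\ge L(r_\ell R)^n$ for some fixed $L>\mathcal H^n(\partial B_1)$. For each large $\ell$ I would construct an $\mathcal E_{j_\ell}$-admissible Lipschitz map $f_\ell$ supported in $B:=B_{r_\ell R}(z^{(\ell)})$ with $\|\partial(f_\ell)_\star\mathcal E_{j_\ell}\|(B)\le(\mathcal H^n(\partial B_1)+2\eta)(r_\ell R)^n$ for a prescribed small $\eta>0$. Lemma \ref{echeck} would then place $f_\ell\in\mathbf{E}(\mathcal E_{j_\ell},B,j_\ell)\subset\mathbf{E}(\mathcal E_{j_\ell},j_\ell)$, and combined with \eqref{dsmall} and the near-constancy of $\Omega$ on the small ball $B$ coming from \eqref{omega}, one obtains
\[
(L-\mathcal H^n(\partial B_1)-2\eta-o(1))\,\Omega(z^{(\ell)})\,(r_\ell R)^n\le\Cr{c_2}\Delta t_{j_\ell}\lesssim j_\ell^{-6(3n+20)}.
\]
Since $r_\ell=j_\ell^{-2.5}$ makes the left-hand side of order $j_\ell^{-2.5n}$, which eventually dominates the right-hand side, choosing $\eta<(L-\mathcal H^n(\partial B_1))/2$ yields a contradiction.

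For the competitor I would pick a small $\delta>0$ (depending on $\eta$ and the uniform mass bound from \eqref{caMCF3}) and select $p_\ell\in B\setminus\partial\mathcal E_{j_\ell}$ with $\|\partial\mathcal E_{j_\ell}\|(U_{2\delta}(p_\ell))\le\eta\delta^n$; the existence of such $p_\ell$ follows from a Fubini/Chebyshev estimate on $\int_B\|\partial\mathcal E_{j_\ell}\|(U_{2\delta}(y))\,dy\lesssim\delta^{n+1}\|\partial\mathcal E_{j_\ell}\|(B_{2r_\ell R})$. Define $f_\ell$ to equal the identity on $(\R^{n+1}\setminus B)\cup U_\delta(p_\ell)$, equal to the radial projection $\pi_{p_\ell}(z):=$ the intersection of the ray $\{p_\ell+s(z-p_\ell):s\ge0\}$ with $\partial B$ on $B\setminus U_{2\delta}(p_\ell)$, and equal on the transition shell $U_{2\delta}(p_\ell)\setminus U_\delta(p_\ell)$ to the linear interpolation $(1-t)z+t\,\pi_{p_\ell}(z)$ with $t=(|z-p_\ell|-\delta)/\delta$. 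The Lipschitz constant on the shell is $\lesssim r_\ell R/\delta$. Since $\partial(f_\ell)_\star\mathcal E_{j_\ell}\subset f_\ell(\partial\mathcal E_{j_\ell})$ by Definition \ref{adef}(b), a region-wise area-formula computation gives
\[
\|\partial(f_\ell)_\star\mathcal E_{j_\ell}\|(B)\le\|\partial\mathcal E_{j_\ell}\|(U_\delta(p_\ell))+(r_\ell R/\delta)^n\|\partial\mathcal E_{j_\ell}\|(U_{2\delta}(p_\ell))+\mathcal H^n(\partial B_1)(r_\ell R)^n,
\]
which by the choice of $p_\ell$ is at most $(\mathcal H^n(\partial B_1)+2\eta)(r_\ell R)^n$.

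The hypotheses of Lemma \ref{echeck} are straightforward: (a) holds by construction; (b) $|f_\ell(z)-z|\le 2r_\ell R\le j_\ell^{-2}$ by \eqref{rg}; (c) $\mathcal L^{n+1}(E_i\triangle\tilde E_i)\le\omega_{n+1}(r_\ell R)^{n+1}\le j_\ell^{-1}$ for large $\ell$; and (d) $\|\partial(f_\ell)_\star\mathcal E_{j_\ell}\|(B)\le\exp(-2j_\ell r_\ell R)\|\partial\mathcal E_{j_\ell}\|(B)$, which holds because $j_\ell r_\ell\to 0$ by \eqref{rg} and under the contradiction hypothesis the right-hand side is at least $(1-o(1))L(r_\ell R)^n$, strictly above $(\mathcal H^n(\partial B_1)+2\eta)(r_\ell R)^n$. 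The main obstacle is to verify that $\{\tilde E_i\}:=\{\mathrm{int}(f_\ell(E_i))\}$ is a genuine open partition in the sense of Definition \ref{adef}(a), given that $\pi_{p_\ell}$ is many-to-one. Choosing $p_\ell$ in the interior of some phase $E_{i_0}$ so that $U_\delta(p_\ell)\subset E_{i_0}$ prevents the identity piece from fragmenting the partition; for $q\in B\setminus\partial B$ the unique annular-diffeomorphism preimage determines the phase, and for $q\in\partial B$ the exterior neighborhood of $q$ lies in a single $E_i$ by disjointness of the $E_i$. These two observations together propagate the disjointness of the $E_i$'s to the $\tilde E_i$'s and complete the verification of $\mathcal E_{j_\ell}$-admissibility.
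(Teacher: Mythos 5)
Your proof follows the same contradiction-by-almost-minimality scheme as the paper's: assume $\|\partial\mathcal E_{j_\ell}\|(B_{r_\ell R}(z^{(\ell)}))$ is too large, build a Lipschitz retraction supported in $B:=B_{r_\ell R}(z^{(\ell)})$, verify via Lemma \ref{echeck} that it lies in ${\bf E}(\mathcal E_{j_\ell},j_\ell)$, and contradict the smallness of $-\Delta_{j_\ell}\|\partial\mathcal E_{j_\ell}\|(\Omega)$ from \eqref{dsmall}. The main difference is the competitor, and the paper's is simpler. The paper picks any ball $B_\epsilon(z')\subset U_{r_\ell R}(z^{(\ell)})\setminus\partial\mathcal E_{j_\ell}$ (no Chebyshev or mass-density estimate is needed; this set is nonempty and open), \emph{expands it bijectively onto $B$}, and retracts the annulus to $\partial B$; the interior of $B$ then consists of a single phase, so the competitor boundary in $U_{r_\ell R}(z^{(\ell)})$ is empty and its mass in $B$ is at most $(r_\ell R)^n\mathcal H^n(\partial B_1)$ \emph{exactly}, with no $\eta$-error. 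Your map keeps $U_\delta(p_\ell)$ fixed and interpolates on a shell, which introduces two extra mass contributions (the untouched piece and the stretched shell) that you then need the Chebyshev choice of $p_\ell$ to control; moreover, for that Chebyshev step to produce a nonempty good set one needs $\delta\lesssim \eta\,(r_\ell R)^{n+1}$ in terms of the uniform mass bound, so $\delta$ must depend on $r_\ell$ -- a dependence you should state explicitly. Your estimates do close, so the argument is valid, but the paper's expand-and-retract map sidesteps all of this bookkeeping.

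Two points in your admissibility discussion need repair. First, the Chebyshev selection yields $p_\ell\notin\partial\mathcal E_{j_\ell}$ with $\|\partial\mathcal E_{j_\ell}\|(U_{2\delta}(p_\ell))\le\eta\delta^n$, but this does \emph{not} secure the stronger claim $U_\delta(p_\ell)\subset E_{i_0}$ that you invoke at the end. Second, the statement ``for $q\in\partial B$ the exterior neighborhood of $q$ lies in a single $E_i$'' is false in general; that neighborhood may meet several phases whenever $q\in\partial\mathcal E_{j_\ell}$ or $\partial\mathcal E_{j_\ell}$ touches $\partial B$ near $q$. Fortunately neither assertion is needed. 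The correct verification of Definition \ref{adef}(b) at $q\in\partial B\setminus\partial\mathcal E_{j_\ell}$ is a connectedness argument: if $q\notin\cup_i\tilde E_i$, then the shell boundary point $p_\ell+2\delta\,\omega$ with $\omega=(q-p_\ell)/|q-p_\ell|$ and $q$ itself cannot lie in a common phase (else that phase's preimage neighborhood and exterior neighborhood would place $q$ in the corresponding $\tilde E_i$), so the radial segment through the outer annulus joining them must cross $\partial\mathcal E_{j_\ell}$; that crossing point is mapped to $q$ by $f_\ell$. With these corrections your proof is correct.
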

\begin{proof}
Suppose that $\ell$ is large so that $r_\ell R<\frac{1}{2(j_\ell)^2}$. Assume for a contradiction that
there exists $\beta>0$ such that 
\begin{equation}
\label{ch2}
\|\partial\E_{j_\ell}\|(B_{r_\ell R}(z^{(\ell)}))\geq
(\beta+\H^n(\partial B_1))(r_\ell R)^n
\end{equation}
for some large $\ell$. Define a Lipschitz map $\hat F:\R^{n+1}\rightarrow\R^{n+1}$ as follows (though 
$\hat F$ depends on $\ell$, we drop the dependence for simplicity). 
Since $U_{r_\ell R}(z^{(\ell)})\setminus
\partial\E_{j_\ell}$ is a non-empty open set, we can choose and fix a ball $B_{\epsilon}(z')\subset U_{r_\ell R}(z^{(\ell)})\setminus
\partial\E_{j_\ell}$ for some $z'$ and $\epsilon>0$. Let $\hat F$ be a retraction map such that $U_{r_\ell R}(z^{(\ell)})\setminus 
U_{\epsilon}(z')$ is projected onto $\partial B_{r_\ell R}(z^{(\ell)})$. The ball $B_{\epsilon}(z')$ is 
expanded onto $B_{r_\ell R}(z^{(\ell)})$ bijectively. For $z\in \R^{n+1}\setminus U_{r_\ell R}(z^{(\ell)})$, define $\hat F(z)=z$. 
Note this $\hat F$ is $\E_{j_\ell}$-admissible (see Definition \ref{adef}) 
as noted in \cite[4.3.4]{KimTone}. We also claim that 
$\hat F\in {\bf E}(\E_{j_\ell},j_\ell)$ (see Definition \ref{boldE}). 
To prove this, we use Lemma \ref{echeck} (note that ${\bf E}(\mathcal E,C,j)\subset{\bf E}(\mathcal E,j)$
by definition).
We take $C$ in the statement as $B_{r_\ell R}(z^{(\ell)})$. The first three conditions (a)-(c) (with 
$j$ there replaced by $j_\ell$) are satisfied for all large $j_\ell$. Here we used $r_\ell R<1/2j_\ell^2$ to prove (b) and (c) of Lemma \ref{echeck}. Thus we only need to check (d)
of Lemma \ref{echeck}:
\begin{equation}
\label{ch1}
\|\partial \hat F_{\star}\E_{j_\ell}\|(B_{r_\ell R}(z^{(\ell)}))\leq \exp(-2 j_\ell r_\ell R)\|\partial\E_{j_\ell}\|(B_{r_\ell R}
(z^{(\ell)})).
\end{equation}
Since $(\partial \hat F_{\star}
\E_{j_\ell})\cap U_{r_\ell R}(z^{(\ell)})=\emptyset$, we have 
\begin{equation}
\label{ch3}
\|\partial \hat F_{\star}\E_{j_\ell}\|(B_{r_\ell R}(z^{(\ell)}))=\|\partial \hat F_{\star}\E_{j_\ell}\|(\partial
B_{r_\ell R}(z^{(\ell)}))\leq \H^n(\partial B_{r_\ell R}(z^{(\ell)}))= (r_\ell R)^n\H^n(\partial B_1).
\end{equation}
By \eqref{rg} (which gives $\exp(-2j_\ell r_\ell R)\approx 1$), \eqref{ch2} and \eqref{ch3}, we have \eqref{ch1} for all large $\ell$. Thus we proved
that $\hat F\in {\bf E}(\E_{j_\ell},j_\ell)$ by Lemma \ref{echeck}. 
Recall that 
\begin{equation}
\label{ch5}
\Delta_{j_\ell}\|\partial\E_{j_\ell}\|(\Omega)=\inf_{f\in {\bf E}(\E_{j_\ell},j_\ell)}
(\|\partial f_{\star}\E_{j_\ell}\|(\Omega)-\|\partial\E_{j_\ell}\|(\Omega))
\end{equation}
which is bounded from below by $-c_2 \Delta t_{j_\ell}$ due to \eqref{boundht}. 
Since $\partial \hat F_{\star}\E_{j_\ell}$ and $\partial \E_{j_\ell}$ coincide outside 
$B_{r_\ell R}(z^{(\ell)})$, we have by \eqref{ch3} and \eqref{ch2} that
\begin{equation}
\label{ch4}
\begin{split}
&\|\partial \hat F_{\star}\E_{j_\ell}\|(\Omega)-\|\partial\E_{j_\ell}\|(\Omega)\\ 
&\leq \Big((\max_{B_{r_\ell R}(z^{(\ell)})}\Omega)\H^n(\partial B_1) -(\min_{B_{r_\ell R}(z^{(\ell)})}\Omega)
(\beta+\H^n(\partial B_1))\Big)(r_\ell R )^n\\
& \leq (\min_{B_{r_\ell R}(z^{(\ell)})}\Omega)\Big(\H^n(\partial B_1)\exp(2\Cr{c_1} r_\ell R) -(\beta+\H^n(\partial B_1))\Big)(r_\ell R)^n \\
&\leq - (\min_{B_{r_\ell R}(z^{(\ell)})}\Omega)\frac{\beta (r_\ell R)^n}{2}
\end{split}
\end{equation}
for all large $\ell$, 
where we also used $\Omega(z)\leq \Omega(z') \exp(\Cr{c_1}|z-z'|)$ which follows from $|\nabla\Omega|\leq 
\Cr{c_1}\Omega$ (cf. \eqref{omega}) and a standard Gr\"{o}nwall type argument. 
By \eqref{boundht}, \eqref{ch5} and \eqref{ch4}, for all large $\ell$, we have
\[
(\min_{B_{r_\ell R}(z^{(\ell)})}\Omega)\beta (r_\ell R)^n\leq 2\Cr{c_2}\Delta t_{j_\ell}.\]
Since $\{z^{(\ell)}\}_{\ell=1}^{\infty}$ is a bounded sequence and $\Omega>0$, we have
$\inf_\ell (\min_{B_{r_\ell R}(z^{(\ell)})}\Omega)>0$.
We now obtain a contradiction since $\Delta t_{j_\ell}\ll (j_\ell)^{-3n}$ and $(j_\ell)^{-3n}\ll (r_\ell)^n$ as $\ell\rightarrow\infty$
by \eqref{dsmall} and \eqref{rg2}.
\end{proof}
This proves \eqref{ub2} and the existence of a limit $n$-dimensional varifold $V$. 
It is also useful to consider the convergence of partitions. For this purpose, 
for each $\mathcal E_{j_\ell}$, write
\begin{equation}
\label{dname}
\{E_{j_\ell,k}\}_{k=1}^{N}=\mathcal E_{j_\ell}.
\end{equation}
Note that $|\cup_{k=1}^N \partial E_{j_\ell,k}|=\partial \mathcal E_{j_\ell}$ and 
each $E_{j_\ell,k}$ satisfies
$\|\nabla\chi_{F_\ell(E_{j_\ell,k})}\|\leq \|V_\ell\|$ by \cite[Proposition 3.62]{Ambrosio}. 
Thus by the compactness theorem of BV functions, there exist a further subsequence 
(denoted by the same index) and
Caccioppoli sets $E_1,\ldots,E_N\subset\mathbb R^{n+1}$ such that
\begin{equation}
\label{dname2}
\chi_{F_\ell(E_{j_\ell,k})}\rightarrow \chi_{E_k}
\end{equation}
for each $k=1,\ldots,N$ in $L^1_{loc}(\mathbb R^{n+1})$ and a.e.~pointwise $z\in\mathbb R^{n+1}$ as $\ell\rightarrow
\infty$. Since $\{F_\ell(E_{j_\ell,k})\}_{k=1}^N$ are open partitions of $\mathbb R^{n+1}$, one can also prove that 
$E_1,\ldots,E_N$ satisfy
\begin{equation}
\label{dname3}
\mathcal L^{n+1}(E_i\cap E_k)=0
\,\,\mbox{ for }i\neq k\mbox{ and }\sum_{k=1}^N \chi_{E_k}=1 \, \,\mbox{a.e.\,\,on }\R^{n+1}.
\end{equation}
By the lower-semicontinuity of the BV semi-norm, we also have
\begin{equation}
\label{dname4}
\|\nabla\chi_{E_k}\|\leq \|V\|
\end{equation}
for all $k=1,\ldots,N$. 

Next we
prove that $V$ is measure-minimizing in the following sense, proving the first
claim of Theorem \ref{chlim}.
\begin{lem} \label{ub3cl}
For any diffeomorphism $g\in C^1(\R^{n+1};\R^{n+1})$ with
$g\mres_{\R^{n+1}\setminus U_R}(z)=z$ for some $R>0$, 
we have
\begin{equation}
\label{ub3}
\|g_{\sharp}V\|(B_R)\geq \|V\|(B_R).
\end{equation}
\end{lem}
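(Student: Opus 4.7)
The plan is to argue by contradiction: suppose $\|g_\sharp V\|(B_R)<\|V\|(B_R)$, and use this to construct a competitor for $\partial\E_{j_\ell}$ that violates the approximate minimality \eqref{boundht}. The natural competitor is obtained by conjugating $g$ with the rescaling, setting $h_\ell:=F_\ell^{-1}\circ g\circ F_\ell$, that is $h_\ell(z)=z^{(\ell)}+r_\ell\,g((z-z^{(\ell)})/r_\ell)$; this is a $C^1$-diffeomorphism of $\R^{n+1}$ equal to the identity outside $C_\ell:=B_{r_\ell R}(z^{(\ell)})$, and since $h_\ell$ is a diffeomorphism one has $(h_\ell)_\sharp\partial\E_{j_\ell}=\partial(h_\ell)_\star\E_{j_\ell}$ as unit-density varifolds.

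Before passing to limits I would replace $R$ by a slightly larger radius so that $\|V\|(\partial B_R)=0$; this is harmless because $g$ stays the identity outside $U_R$, and it upgrades the weak convergences $V_\ell\to V$ and $g_\sharp V_\ell\to g_\sharp V$ to $\|V_\ell\|(B_R)\to\|V\|(B_R)$ and $\|g_\sharp V_\ell\|(B_R)\to\|g_\sharp V\|(B_R)$. The assumed strict inequality then produces an $\epsilon>0$ with $\|g_\sharp V_\ell\|(B_R)\leq\|V_\ell\|(B_R)-\epsilon$ for all large $\ell$. Next I would verify $h_\ell\in\mathbf{E}(\E_{j_\ell},j_\ell)$ using Lemma \ref{echeck} with $C=C_\ell$: conditions (a)-(c) reduce to $r_\ell=o(1/j_\ell^2)$ and $r_\ell^{n+1}=o(1/j_\ell)$, both supplied by \eqref{rg} (note $|h_\ell-\mathrm{id}|_\infty\leq r_\ell\|g-\mathrm{id}\|_\infty$ and the symmetric differences of partition elements are supported in $C_\ell$). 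For (d), the scaling identities $\|\partial(h_\ell)_\star\E_{j_\ell}\|(C_\ell)=r_\ell^n\|g_\sharp V_\ell\|(B_R)$ and $\|\partial\E_{j_\ell}\|(C_\ell)=r_\ell^n\|V_\ell\|(B_R)$ convert the preceding inequality into $\|\partial(h_\ell)_\star\E_{j_\ell}\|(C_\ell)\leq(1-\delta)\|\partial\E_{j_\ell}\|(C_\ell)$ for some $\delta>0$, and this beats $\exp(-j_\ell\,\mathrm{diam}\,C_\ell)=\exp(-2j_\ell r_\ell R)\to 1$, again from \eqref{rg}.

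With $h_\ell$ admissible, the defining inequality for $\Delta_{j_\ell}$ together with \eqref{boundht} yields
\[
\|\partial(h_\ell)_\star\E_{j_\ell}\|(\Omega)-\|\partial\E_{j_\ell}\|(\Omega)\geq\Delta_{j_\ell}\|\partial\E_{j_\ell}\|(\Omega)\geq-\Cr{c_2}\Delta t_{j_\ell}.
\]
Since both varifolds agree outside $C_\ell$, and since by \eqref{omega} a Gr\"onwall-type comparison as in \eqref{ch4} gives $\max_{C_\ell}\Omega/\min_{C_\ell}\Omega\leq\exp(2\Cr{c_1}r_\ell R)\to 1$, while boundedness of $\{z^{(\ell)}\}$ and positivity of $\Omega$ keep $\min_{C_\ell}\Omega$ bounded away from zero, the deficit on $C_\ell$ upgrades to $\|\partial(h_\ell)_\star\E_{j_\ell}\|(\Omega)-\|\partial\E_{j_\ell}\|(\Omega)\leq-c'r_\ell^n$ for some $c'>0$. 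This forces $r_\ell^n\leq(\Cr{c_2}/c')\Delta t_{j_\ell}$, which contradicts \eqref{caMCF5.5} together with $r_\ell=j_\ell^{-2.5}$: indeed $\Delta t_{j_\ell}\leq j_\ell^{-6(3n+20)}$ while $r_\ell^n=j_\ell^{-2.5n}$.

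The main obstacle is the passage from qualitative weak convergence $V_\ell\to V$ to a uniform quantitative deficit on $\partial\E_{j_\ell}$ of the correct order $r_\ell^n$. This forces the preliminary adjustment of $R$ to a generic radius and demands that the $\Omega$-weighted comparison provided by \eqref{boundht} be transferred to an unweighted comparison on the shrinking balls $C_\ell$; the oscillation bound on $\Omega$ from \eqref{omega} is precisely what makes this transfer essentially free, and the gap between $\Delta t_{j_\ell}$ and $r_\ell^n$ coming from \eqref{caMCF5.5} and the choice $r_\ell=j_\ell^{-2.5}$ is what delivers the contradiction.
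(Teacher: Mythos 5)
Your proof is correct and follows essentially the same strategy as the paper's: conjugate $g$ by the rescaling $F_\ell$ to obtain a competitor $h_\ell=F_\ell^{-1}\circ g\circ F_\ell$, verify $h_\ell\in{\bf E}(\E_{j_\ell},j_\ell)$ via Lemma \ref{echeck} with $C=B_{r_\ell R}(z^{(\ell)})$, and extract a contradiction with the almost-minimality \eqref{boundht} from the resulting mass deficit of order $r_\ell^n$. The only cosmetic difference is in the convergence step: rather than perturbing $R$ to a generic radius and passing to the limit in $\|V_\ell\|(B_R)$ and $\|g_\sharp V_\ell\|(B_R)$ separately, the paper notes that $\|g_\sharp V_\ell\|(B_R)-\|V_\ell\|(B_R)=V_\ell\bigl(|\Lambda_n\nabla g\circ S|-1\bigr)$ with the integrand lying in $C_c({\bf G}_n(\R^{n+1}))$, so a single application of varifold convergence suffices; both routes are valid.
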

\begin{proof}
Suppose on the contrary that there exists a diffeomorphism $g$ as above such that 
$\|g_{\sharp} V\|(B_R)-\|V\|(B_R)< -\beta$ for some $\beta>0$. Then, by the definition of 
the push-forward of varifold, we have
\[
\int_{{\bf G}_n(B_R)} (|\Lambda_n\nabla g\circ S|-1)\, dV(z,S)< -\beta.
\]
Since $|\Lambda_n\nabla g(z)\circ S|-1=0$ for $z\in\R^{n+1}\setminus U_R$, $|\Lambda_n\nabla g(z)\circ
 S|-1$ is an element of
$C_c({\bf G}_n(\R^{n+1}))$. Thus, by the varifold convergence, for all sufficiently large $\ell$, we have
\begin{equation}
\label{ub4}
\|g_{\sharp}V_\ell\|(B_R)-\|V_\ell\|(B_R)=\int_{{\bf G}_n (B_R)}(|\Lambda_n\nabla g(z)\circ S|-1)\, dV_\ell(z,S)<-\beta.
\end{equation}
We want to interpret this inequality in terms of $\partial\E_{j_\ell}$. Consider the map
$\hat F_\ell=(F_\ell)^{-1}\circ g\circ F_\ell:\R^{n+1}\rightarrow \R^{n+1}$, where $F_\ell$ is defined
as in \eqref{dilf}. This is a diffeomorphism which is the identity map on $\R^{n+1}\setminus U_{r_\ell R}(z^{(\ell)})$
and maps $B_{r_\ell R}(z^{(\ell)})$ to itself bijectively. Since $g_{\sharp}V_\ell=(g\circ F_\ell)_{\sharp}
(\partial\E_{j_\ell})$, we have from \eqref{ub4}
\begin{equation}
\label{ub5}
\|\partial(\hat F_\ell)_{\sharp}\E_{j_\ell}\|(B_{r_\ell R}(z^{(\ell)}))-\|\partial\E_{j_\ell}\|(B_{r_\ell R}(z^{(\ell)}))
<-\beta r_\ell^n.
\end{equation}
We claim that $\hat F_\ell\in {\bf E}(\E_{j_\ell},j_\ell)$ for all sufficiently large $\ell$. If this holds,
note that we would obtain a contradiction just as in the proof of Lemma \ref{ub}
by the same argument following \eqref{ch5}, which would conclude the proof. Thus we only need to check $\hat F_\ell\in {\bf E}(\E_{j_\ell},j_\ell)$.
The $\E_{j_\ell}$-admissibility is fine since it is a diffeomorphism. 
Just as before, we use Lemma \ref{echeck} with $C=B_{r_\ell R}(z^{(\ell)})$ there. 
The first three conditions (a)-(c) are satisfied
since $r_\ell R<\frac{1}{2(j_\ell)^2}$. To check (d), that is, 
\[
\|\partial(\hat F_\ell)_{\sharp}\E_{j_\ell}\|(B_{r_\ell R}(z^{(\ell)}))\leq \exp(-2j_\ell r_\ell R)\|\partial
\E_{j_\ell}\|(B_{r_\ell R}(z^{(\ell)})),\]
we use \eqref{ub5}. For above to be true, we only need to see from \eqref{ub5}  that
\[\|\partial\E_{j_\ell}\|(B_{r_\ell R}(z^{(\ell)}))-\beta r_\ell^n\leq \exp(-2j_\ell r_\ell R)\|\partial
\E_{j_\ell}\|(B_{r_\ell R}(z^{(\ell)}))\] 
but this holds true since
\[\|\partial\E_{j_\ell}\|(B_{r_\ell R}(z^{(\ell)}))(1-\exp(-2j_\ell r_\ell R))\leq 2(r_\ell R)^n\H^n(\partial B_1)
(1-\exp(-2j_\ell r_\ell R))\leq \beta r_\ell^n
\]
for all sufficiently large $\ell$, where we used $j_\ell r_\ell\rightarrow 0$ and Lemma \ref{ub}. 
This proves that $\hat F_\ell\in {\bf E}(\E_{j_\ell},j_\ell)$. 
\end{proof}
In particular, we obtain
\begin{prop}
\label{limst}
The limit varifold $V$ is stable and stationary.
\end{prop}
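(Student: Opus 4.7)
The plan is to derive both conclusions immediately from the compact minimality established in Lemma \ref{ub3cl}. Given any vector field $g\in C_c^1(\R^{n+1};\R^{n+1})$ with $\mathrm{spt}\,g\subset U_R$ for some $R>0$, I would consider the one-parameter family of perturbations $\phi_t(z):=z+tg(z)$. For all sufficiently small $|t|$, each $\phi_t$ is a smooth diffeomorphism of $\R^{n+1}$ agreeing with the identity outside $U_R$, so Lemma \ref{ub3cl} applies and yields
\[
\|(\phi_t)_\sharp V\|(B_R)\geq \|V\|(B_R).
\]

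Next I would define
\[
f(t):=\|(\phi_t)_\sharp V\|(B_R)=\int_{{\bf G}_n(B_R)}|\Lambda_n\nabla\phi_t(z)\circ S|\,dV(z,S).
\]
Since the integrand depends smoothly on $t$ and $V$ has finite mass on $B_R$ by \eqref{ub2}, $f$ is smooth (in fact, polynomial in $t$) in a neighborhood of $0$. The previous step shows that $t=0$ is a local minimum of $f$, hence $f'(0)=0$ and $f''(0)\geq 0$. The standard Taylor expansion of the Jacobian $|\Lambda_n\nabla\phi_t\circ S|$ in $t$ (see, e.g., Simon's GMT notes, Ch.~9) identifies these two derivatives with the first variation $\delta V(g)$ and the second variation $\delta^2V(g,g)$, respectively. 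Consequently $\delta V(g)=0$ for every $g\in C_c^1(\R^{n+1};\R^{n+1})$, which is stationarity, and $\delta^2 V(g,g)\geq 0$ for every such $g$, which is the desired stability.

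The main difficulty has already been overcome in Lemma \ref{ub3cl}, where the discrete admissibility class ${\bf E}(\E_{j_\ell},j_\ell)$ was matched against a given ambient diffeomorphism via the rescalings $\hat F_\ell=(F_\ell)^{-1}\circ g\circ F_\ell$. After that comparison, there is no genuine obstacle here: the step from compact measure-minimality to the first- and second-order Euler--Lagrange conditions is a calculus exercise for the one-parameter family $\phi_t$. The only routine verification is that $\phi_t$ is a diffeomorphism for $|t|$ small, which follows immediately from $\nabla\phi_0=I$ and the inverse function theorem applied uniformly on the compact set $\overline{U_R}$.
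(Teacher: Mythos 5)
Your proposal is correct and takes the same route the paper intends: Proposition \ref{limst} is presented with the introduction ``In particular, we obtain,'' indicating the authors regard it as an immediate consequence of the compact measure-minimality in Lemma \ref{ub3cl}, which is precisely what you make explicit by differentiating $t\mapsto\|(\phi_t)_\sharp V\|(B_R)$ at $t=0$ and reading off the first- and second-variation (in)equalities. One small imprecision: the integrand $|\Lambda_n\nabla\phi_t\circ S|$ is the square root of a polynomial in $t$, not a polynomial, but since it equals $1$ at $t=0$ it is smooth in a neighborhood of the origin, which is all the argument needs.
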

Next, we prove the following lower density ratio bound of the limit varifold $V$ using 
\cite[Proposition 7.2]{KimTone}.
 
\begin{lem}
\label{ldb} 
There exists a constant $\Cl[c]{c_3}\in (0,1)$ depending only on $n$ with the following property.
For any $z\in {\rm spt}\,\|V\|$ and $R>0$, we have $\|V\|(B_R (z))\geq \Cr{c_3} R^n$.
\end{lem}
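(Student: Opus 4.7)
The plan is to reduce the claim to a corresponding lower density bound for the approximating sequence $\{\partial\E_{j_\ell}\}$ at small scale, then transport the bound through the dilation $F_\ell$ and pass to the varifold limit. The key input is \cite[Proposition 7.2]{KimTone}, which provides a lower density bound for the time-discrete approximate partitions at scales well below $1/j^2$.

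First I would fix $z\in{\rm spt}\,\|V\|$ and $R>0$, and choose an arbitrary $R'\in(0,R)$. Since $z\in{\rm spt}\,\|V\|$ and $V_\ell\to V$ as varifolds, a standard diagonalization (using that $\|V_\ell\|(U_\delta(w))>0$ for all large $\ell$ whenever $\|V\|(U_\delta(w))>0$, by lower semicontinuity on open sets) produces a sequence $w^{(\ell)}\in{\rm spt}\,\|V_\ell\|$ with $w^{(\ell)}\to z$. In particular $B_{R'}(w^{(\ell)})\subset B_R(z)$ for all sufficiently large $\ell$. Translating through the dilation \eqref{dilf}, the preimage $\tilde w^{(\ell)}:=z^{(\ell)}+r_\ell w^{(\ell)}$ lies in ${\rm spt}\,\|\partial\E_{j_\ell}\|$, and $\{\tilde w^{(\ell)}\}$ is bounded because $\{z^{(\ell)}\}$ is bounded and $r_\ell\to 0$.

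Next I would apply \cite[Proposition 7.2]{KimTone} at the point $\tilde w^{(\ell)}$ and scale $r_\ell R'$. By \eqref{rg} this scale is $o(j_\ell^{-2})$, placing it in the admissible regime, while the uniform bound \eqref{boundht} together with \eqref{dsmall} and \eqref{caMCF5.5} makes the deficit $-\Delta_{j_\ell}\|\partial\E_{j_\ell}\|(\Omega)$ negligible compared with $(r_\ell R')^n$ by virtue of \eqref{rg2}. The cited proposition then yields a constant $\Cr{c_3}\in(0,1)$ depending only on $n$ with
\[
\|\partial\E_{j_\ell}\|\bigl(B_{r_\ell R'}(\tilde w^{(\ell)})\bigr)\geq \Cr{c_3}\,(r_\ell R')^n
\]
for all sufficiently large $\ell$. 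Scaling by $F_\ell$, which multiplies $n$-dimensional mass by $r_\ell^{-n}$ and distances by $r_\ell^{-1}$, rewrites this as $\|V_\ell\|(B_{R'}(w^{(\ell)}))\geq \Cr{c_3}(R')^n$, hence $\|V_\ell\|(B_R(z))\geq \Cr{c_3}(R')^n$ for large $\ell$.

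Finally, upper semicontinuity of mass on the closed ball $B_R(z)$ under weak varifold convergence gives $\|V\|(B_R(z))\geq \limsup_{\ell\to\infty}\|V_\ell\|(B_R(z))\geq \Cr{c_3}(R')^n$, and letting $R'\nearrow R$ completes the proof. The main obstacle I expect is the book-keeping in the middle step: one has to verify that the hypotheses of \cite[Proposition 7.2]{KimTone} are genuinely met at scale $r_\ell R'$, namely that the almost-minimizing deficit required there is dominated by $(r_\ell R')^n$ in view of \eqref{rg}, \eqref{rg2} and \eqref{caMCF5.5}, so that the resulting constant $\Cr{c_3}$ can be chosen independently of $\ell$.
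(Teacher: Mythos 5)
Your high-level plan is the right one, and the transport step through $F_\ell$ and the limit passage via upper semicontinuity on closed balls are both fine. But there is a genuine gap in the middle step, where you misread what \cite[Proposition 7.2]{KimTone} supplies. That proposition does not output a density lower bound; its output is a mass-reducing admissible deformation. Concretely, given a point $\tilde w^{(\ell)}\in{\rm spt}\,\|\partial\E_{j_\ell}\|$ and a ball of radius $\sigma\ll j_\ell^{-2}$ in which the density ratio is below $\Cr{c_3}$, it produces a Lipschitz retraction $g_\ell$, supported in a sub-ball $B^{(\ell)}$ of comparable radius, with $\|\partial (g_\ell)_{\star}\E_{j_\ell}\|(B^{(\ell)})\leq \tfrac12\|\partial\E_{j_\ell}\|(B^{(\ell)})$. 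It has no hypothesis on the size of the almost-minimizing deficit, and it certainly does not by itself yield $\|\partial\E_{j_\ell}\|(B_{r_\ell R'}(\tilde w^{(\ell)}))\geq \Cr{c_3}(r_\ell R')^n$.

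What the almost-minimizing bound \eqref{boundht} actually gives you, once $g_\ell$ exists and is checked to lie in ${\bf E}(\E_{j_\ell},j_\ell)$ via Lemma \ref{echeck}, is the dichotomy: for each large $\ell$, either the density ratio of $\|V_\ell\|$ at $w^{(\ell)}$ and scale $R'$ is at least $\Cr{c_3}$, or $\|V_\ell\|(B_{R'/2}(w^{(\ell)}))\lesssim \Delta t_{j_\ell}/r_\ell^{n}\to 0$. The second alternative is not ruled out by the smallness of the deficit alone; it is ruled out precisely because it would force $\|V\|(U_{R'/2}(z))=0$, contradicting $z\in{\rm spt}\,\|V\|$. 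Your proposal never invokes this contradiction, and your closing remark that the ``hypotheses'' of Proposition 7.2 include a smallness condition on the deficit relative to $(r_\ell R')^n$ is not how that proposition is structured. The paper organizes the whole lemma as a contradiction argument from the start (assume $\|V\|(B_R(\hat z))<\Cr{c_3}R^n$), which streamlines exactly this point: the deformation from Proposition 7.2, combined with \eqref{boundht}, forces $\|V_\ell\|(B_{R/2}(\hat z^{(\ell)}))\to 0$, hence $\|V\|(U_{R/2}(\hat z))=0$, the desired contradiction. Your argument can be repaired by inserting the dichotomy above and using $z\in{\rm spt}\,\|V\|$ to discard the degenerate branch, but as written the density bound for $\|V_\ell\|$ is asserted rather than established.
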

\begin{proof}
Let $\Cr{c_3}$ be the constant appearing in \cite[Proposition 7.2]{KimTone}
also as $\Cr{c_3}$ and assume for a contradiction that we have some $\hat z\in {\rm spt}\,\|V\|$ and $R>0$
such that $\|V\|(B_R(\hat z))<\Cr{c_3} R^n$. Then, since $\|V_\ell\|\rightarrow\|V\|$ and $\hat z\in
{\rm spt}\,\|V\|$, we have a sequence $\hat z^{(\ell)}\rightarrow\hat z$ such that $\hat z^{(\ell)}\in {\rm spt}\,
\|V_\ell\|$ and 
$\|V_\ell\|(B_R(\hat z^{(\ell)}))<\Cr{c_3} R^n$ for all sufficiently large $\ell$. In terms of $\partial\E_{j_\ell}$, this means
\[
\|\partial\E_{j_\ell}\|(B_{r_\ell R}(z^{(\ell)}+r_\ell\hat z^{(\ell)}))<\Cr{c_3} (r_\ell R)^n
\]
for all sufficiently large $\ell$ with $z^{(\ell)}+r_\ell \hat z^{(\ell)}\in {\rm spt}\,\|\partial\E_{j_\ell}\|$. 
We may apply \cite[Proposition 7.2]{KimTone} to $\E_{j_\ell}$ in $B_{r_\ell R}(z^{(\ell)}+r_\ell\hat z^{(\ell)})$, which 
gives a $\E_{j_\ell}$-admissible function $g_\ell$ and a radius $\hat r_\ell\in [r_\ell R/2, r_\ell R]$ such that,
writing $B^{(\ell)}:=B_{\hat r_\ell}(z^{(\ell)}+r_\ell\hat z^{(\ell)})$, 
\begin{itemize}
\item[(1)] $g_\ell(z)=z$ for $z\in \R^{n+1}\setminus B^{(\ell)}$,
\item[(2)] $g_\ell(z)\in B^{(\ell)}$ for $z\in B^{(\ell)}$,
\item[(3)] $\|\partial (g_\ell)_{\star}\E_{j_\ell}\|(B^{(\ell)})\leq \frac12
\|\partial\E_{j_\ell}\|(B^{(\ell)})$.
\end{itemize}
By Lemma \ref{echeck}, we also have $g_\ell\in {\bf E}(\E_{j_\ell},j_\ell)$
for all large $\ell$. Note that Lemma \ref{echeck}(c) is satisfied since the change takes place only
within $B^{(\ell)}$ and $\mathcal L^{n+1}(B^{(\ell)})\ll 1/j_\ell$ by \eqref{rg}. 
Then we have with \eqref{omega} and (3)
\begin{equation}
\begin{split}
\|\partial (g_\ell)_{\star}\E_{j_\ell}\|(\Omega)-\|\partial\E_{j_\ell}\|(\Omega)&\leq 
\max_{B^{(\ell)}} \Omega\, \|\partial (g_\ell)_{\star}\E_{j_\ell}\|(B^{(\ell)})-\min_{B^{(\ell)}} \Omega\, 
\|\partial \E_{j_\ell}\|(B^{(\ell)}) \\
&
\leq \min_{B^{(\ell)}} \Omega\, \Big(\exp(2 \Cr{c_1} r_\ell R)\frac12 -1\Big)\|\partial\E_{j_\ell}\|(B^{(\ell)}).
\end{split}
\label{ub6}
\end{equation}
Since the left-hand side of \eqref{ub6} is bounded below by $-\Cr{c_2}\Delta t_{j_\ell}$, we have
\begin{equation}
\label{ub7}
(\inf_\ell \min_{B^{(\ell)}}\Omega)\, \Big(1-\exp(2\Cr{c_1} r_\ell R)\frac12\Big)\|\partial\E_{j_\ell}\|(B^{(\ell)})
\leq \Cr{c_2} \Delta t_{j_\ell}.
\end{equation}
Since $\Delta t_{j_\ell}\ll r_\ell^n$ and $B_{r_\ell R/2}(z^{(\ell)}+r_\ell\hat z^{(\ell)})\subset B^{(\ell)}$, 
\eqref{ub7} implies $\|V_\ell\|(B_{R/2}(\hat z^{(\ell)}))
\leq r_\ell^{-n}\|\partial\E_{j_\ell}\|(B^{(\ell)})\rightarrow 0$ as $\ell\rightarrow \infty$. 
Since $\hat z^{(\ell)}\rightarrow\hat z$, this implies that
$\|V\|(U_{R/2}(\hat z))=0$, contradicting $\hat z\in {\rm spt}\,\|V\|$. 
\end{proof}

Proposition \ref{limst}, Lemma \ref{ldb} and Allard's rectifiability theorem \cite[5.5(1)]{Allard} show
that $V$ is rectifiable in particular. We next see that 
\begin{lem}
\label{integral}
The limit varifold $V$ is integral. 
\end{lem}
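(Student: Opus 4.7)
The plan is to deduce the integrality of $V$ from Allard's integer compactness theorem applied to the sequence $V_\ell=(F_\ell)_{\sharp}\partial\E_{j_\ell}$. By \eqref{finiteness2}, each $\partial\E_{j_\ell}$ is a unit-density integer rectifiable varifold, and since $F_\ell$ is a bijective affine map, $V_\ell$ inherits integer rectifiability with unit multiplicity. The uniform local mass bound $\sup_\ell\|V_\ell\|(B_R)<\infty$ is already provided by \eqref{ub2}. Consequently, it suffices to establish a uniform local first-variation bound $\sup_\ell\|\delta V_\ell\|(B_R)<\infty$ for each $R>0$; Allard's integer compactness then yields $V\in\mathbf{IV}_n(\R^{n+1})$ as desired.

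To obtain this first-variation bound, I would exploit the push-forward scaling identity $\delta V_\ell(g)=r_\ell^{1-n}\,\delta(\partial\E_{j_\ell})(g\circ F_\ell)$, which reduces matters to estimating $\delta(\partial\E_{j_\ell})(\tilde g)$ for $\tilde g=g\circ F_\ell$ supported in $B_{r_\ell R}(z^{(\ell)})$. Using the convolution identity \eqref{defi4}, I would rewrite this as $\int(\Phi_{\e_{j_\ell}}\ast\delta(\partial\E_{j_\ell}))\cdot\hat g\,dz+\mathrm{err}_\ell$, where $\hat g$ is a smoothed replacement for $\tilde g$ and $\mathrm{err}_\ell$ is a mollification commutator that is negligible because $\e_{j_\ell}\ll r_\ell$ by our choice of scales. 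A weighted Cauchy--Schwarz with weight $\Phi_{\e_{j_\ell}}\ast\|\partial\E_{j_\ell}\|+\e_{j_\ell}\Omega^{-1}$ then splits the principal term into the quantity already controlled by \eqref{boundht} (the negative contribution $-\Delta_{j_\ell}\|\partial\E_{j_\ell}\|(\Omega)/\Delta t_{j_\ell}$ being nonnegative and further controlled by the almost-minimizing estimate \eqref{dsmall}) multiplied by an $L^2$-type factor that can be bounded using the upper-density estimate of Lemma \ref{ub} transported back to the original scale. Tracking the powers of $r_\ell$ yields $|\delta V_\ell(g)|\leq C(R)\|g\|_{C^0}$ for all $g\in C_c^1(B_R;\R^{n+1})$.

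The main obstacle is precisely this conversion from the smoothed quantity appearing in \eqref{boundht} to a genuine first-variation bound at the intermediate scale $r_\ell$, lying between $\e_{j_\ell}$ and $1/j_\ell^2$. The mollification commutator must be estimated quantitatively, and one must verify that any auxiliary test functions used along the way respect the admissibility conditions of the class $\mathcal A_{j_\ell}$ in \eqref{testdef1}, i.e., the pointwise bounds on $\phi$, $|\nabla\phi|$ and $\|\nabla^2\phi\|$ by $j_\ell\phi$. Once these scale-comparison estimates are in place, Allard's integer compactness theorem applied to $\{V_\ell\}$, which is a sequence of unit-density integer varifolds with uniformly bounded mass and first variation on compact sets, produces the integrality of the weak limit $V$, completing the proof.
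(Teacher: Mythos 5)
Your proposed route through Allard's integer compactness theorem does not work here, and the gap is precisely the step you flag as the "main obstacle": the uniform first-variation bound $\sup_\ell\|\delta V_\ell\|(B_R)<\infty$ is not available from the given hypotheses, and no rearrangement of the smoothed-curvature estimate \eqref{boundht} can produce it. The sets $\partial\mathcal E_{j_\ell}$ are merely countably $n$-rectifiable open partitions with junctions and corners; nothing in the construction ensures that $\delta(\partial\mathcal E_{j_\ell})$ is even locally a Radon measure, let alone uniformly bounded after rescaling. What \emph{is} controlled is the $C^\infty$ vector field $\Phi_{\varepsilon_{j_\ell}}\ast\delta(\partial\mathcal E_{j_\ell})$, and converting that control into a bound on the unsmoothed functional $\delta(\partial\mathcal E_{j_\ell})(\tilde g)$ incurs the commutator $\delta(\partial\mathcal E_{j_\ell})(\tilde g-\Phi_{\varepsilon_{j_\ell}}\ast\tilde g)$, whose estimate requires a modulus-of-continuity or $\|\nabla^2 g\|_{C^0}$ bound on $\nabla\tilde g$, scaling like $\varepsilon_{j_\ell}r_\ell^{-1}\|\nabla^2 g\|_{C^0}(r_\ell R)^n$. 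After dividing by $r_\ell^n$, this is a bound by $\|\nabla^2 g\|_{C^0}$, not $\|g\|_{C^0}$, so it does not contribute to $\|\delta V_\ell\|$; the supremum over $|g|\le 1$, $g\in C_c^1$, remains uncontrolled. Moreover, even the smoothed principal term fails for $n\ge 3$: weighted Cauchy--Schwarz against \eqref{boundht} together with Lemma \ref{ub} gives roughly $c_2^{1/2}\,r_\ell^{1-n/2}R^{n/2}\|g\|_{C^0}$ after rescaling, which blows up as $\ell\to\infty$ once $n\ge 3$. Since Lemma \ref{integral} is stated (and used) for general $n$, this is fatal.

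The paper avoids first-variation bounds entirely and instead proves $\theta^n(\|V\|,\hat z)\in\mathbb N$ for $\|V\|$-a.e.\ $\hat z$ directly. Starting from rectifiability, it blows up both $V$ and a diagonal subsequence of $\partial\mathcal E_{j_\ell}$ at a generic point to obtain $\beta|T|$, sets $\nu:=\lceil\beta\rceil$ if $\beta\notin\mathbb N$, and invokes the combinatorial mass lower bound of \cite[Lemma~8.1]{KimTone}: whenever a vertical fiber meets $\partial\mathcal E_{j_\ell}$ in $\ge\nu$ density-$1$ points with small tilt excess and small $\Delta_{j_\ell}$ in a neighborhood, the local mass must be at least $(\nu-\zeta)\omega_nR^n$, contradicting $\beta<\nu-\zeta$. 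A Besicovitch covering of the bad set $G_\ell^*$ plus the convergence of $(\tilde F_\ell)_\sharp\partial\mathcal E_{j_\ell}$ to $\beta|T|$ (which kills the tilt excess and, via \eqref{dsmall}, the $\Delta_{j_\ell}$ term) shows $\|V\|(T^{-1}(G_\ell^*)\cap C_\ell)$ is asymptotically negligible, whence the projected multiplicity count is $\le\nu-1$ and $\beta=\nu-1\in\mathbb N$. The almost-minimizing property \eqref{dsmall} is the engine here, not any first-variation estimate, and it enters through $\Delta_{j_\ell}\|\partial\mathcal E\|(E^*(r))$ localized to tiny cylinders, a quantity for which you have genuine smallness at scale $o(r_\ell^n)$. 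Your sketch mentions \eqref{dsmall} only as a way to discard the nonpositive $\Delta_{j_\ell}$ term in \eqref{boundht}, which is a much weaker use of the almost-minimality than what is actually needed.
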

\begin{proof}
Since $V$ is rectifiable, for $\|V\|$ a.e.~$\hat z\in \R^{n+1}$, $V$ has the approximate
tangent space ${\rm Tan}^n(\|V\|,\hat z)$ and the blow-up of $V$ at $\hat z$ converges to
$\theta^n(\|V\|,\hat z)|{\rm Tan}^n(\|V\|,\hat z)|$ as varifolds. We prove that $\theta(\|V\|,\hat z)\in
\N$ in the following, which proves that $V$ is integral. For simplicity, we write
\begin{equation}
\label{ub8}
\beta:=\theta^n(\|V\|,\hat z)\,\mbox{ and }\, T:={\rm Tan}^n(\|V\|,\hat z).
\end{equation}
For each $\ell\in\N$, define $g_\ell(z):=\ell(z-\hat z)$. Because of the above, we have $\lim_{\ell\rightarrow\infty}
(g_\ell)_{\sharp}V=\beta|T|$. Since $V_\ell\rightarrow V$, we may choose a further subsequence
(denoted by the same index) such that $\lim_{\ell\rightarrow\infty} (g_\ell)_{\sharp}
V_\ell=\beta|T|$. 
Since $V_\ell=(F_\ell)_{\sharp}\partial\E_{j_\ell}$ and 
\[
(g_\ell\circ F_\ell)(z)=\frac{z-z^{(\ell)}-r_\ell \hat z}{(r_\ell/\ell)},
\]
we have
$(g_\ell)_{\sharp}V_\ell=(g_\ell\circ F_\ell)_{\sharp}\partial\E_{j_\ell}$.
We set 
\begin{equation}
\label{rlab}
\tilde r_\ell:=r_\ell/\ell,\,\, \tilde z^{(\ell)}:=z^{(\ell)}+r_\ell\hat z\,\,\mbox{ and }\,\tilde F_\ell(z):=(g_\ell\circ F_\ell)(z)
=\frac{z-\tilde z^{(\ell)}}{\tilde r_\ell}.
\end{equation}
In the following, we assume that $\tilde z^{(\ell)}=0$ for simplicity. The general case can be 
handled with suitable parallel translations and no difficulties arise. 
Writing 
\begin{equation}
\label{rlab3s}
\tilde V_\ell:=(g_\ell)_{\sharp}V_\ell=(\tilde F_\ell)_{\sharp}\partial\E_{j_\ell},
\end{equation}
and by the above discussion and notation, we have
\begin{equation}
\label{rlab2}
\lim_{\ell\rightarrow\infty} \tilde V_\ell=\beta|T|.
\end{equation}
By choosing a further subsequence if necessary, we have
\eqref{rg2} (and \eqref{rg} since $\tilde r_\ell<r_\ell$) with $r_\ell$ replaced by $\tilde r_\ell$, i.e.,
\begin{equation}
\label{rg2s}
\lim_{\ell\rightarrow\infty} \tilde r_\ell(j_\ell)^2=0 \,\,\mbox{ and }\,\,\lim_{\ell\rightarrow\infty} \tilde r_\ell (j_\ell)^3=\infty.
\end{equation}
Suppose that $\nu$ is the smallest positive integer strictly greater than $\beta$, namely,
\begin{equation}
\label{rlab3}
\nu\in \N\,\,\mbox{ and }\,\, \nu\in (\beta,\beta+1].
\end{equation}
We use \cite[Lemma 8.1]{KimTone}. In the assumption of \cite[Lemma 8.1]{KimTone}, we fix $\alpha=1/2$, and 
choose $\zeta\in (0,1)$ so that
\begin{equation}
\label{rlab4}
\nu-\zeta>\beta.
\end{equation}
This choice is possible due to \eqref{rlab3}. Let $\gamma\in (0,1)$ and $j_0\in \N$ be 
constants given as the result of \cite[Lemma 8.1]{KimTone}, which have the following properties.
We write
\begin{equation}
\label{rlab5}
 E^*(r):=\{z\in \R^{n+1}\,:\, |T(z)|\leq r,\, {\rm dist}\,(T^{\perp}(z),Y)\leq (1+R^{-1}r)\rho\}
\end{equation}
for $r,\,R,\,\rho\in (0,\infty)$, $T\in{\bf G}(n+1,n)$, $Y\subset T^{\perp}$
and assume
\begin{itemize}
\item[(1)] $\E=\{E_i\}_{i=1}^N\in \mathcal{OP}_{\Omega}^N$, $j\in \N$ with $j\geq j_0$, 
$R\in (0,\frac12 j^{-2})$, $\rho\in (0,\frac12 j^{-2})$,
\item[(2)] $\rho\geq \alpha R$,
\item[(3)] $Y\subset T^{\perp}$ satisfies $\H^0(Y)\leq \nu$ with ${\rm diam}\,Y< j^{-2}$ 
and $\theta^n(\|\partial\E\|,z')=1$ for all $z'\in Y$, 
\item[(4)]
$\int_{{\bf G}_n(E^*(r))}\|S-T\|\, d(\partial\E)(z,S)\leq \gamma \|\partial\E\|(E^*(r))$
for all $r\in (0,R)$,
\item[(5)]
$\Delta_j\|\partial\E\|(E^*(r))\geq -\gamma\|\partial\E\|(E^*(r))$ for all $r\in (0,R)$. 
\end{itemize}
The conclusion under these assumptions (1)-(5) is that
\begin{equation}
\label{rlab6}
\|\partial\E\|(E^*(R))\geq (\H^0(Y)-\zeta)\omega_n R^n.
\end{equation}
We will apply this result for $\partial \E_{j_\ell}$ with $j=j_\ell\geq j_0$,  
$R=\tilde r_\ell$ and $\rho=2\tilde r_\ell$. Note that $R,\rho\in (0,\frac12 j_\ell^{-2})$ is satisfied due to \eqref{rg2s}.
We consider a cylinder
\[
C_\ell:=\{z\in\R^{n+1}\,:\, |T(z)|\leq \tilde r_\ell,\, |T^{\perp}(z)|\leq
\tilde r_\ell\}
\]
for $\ell\in \N$ and consider the behavior of $\partial\E_{j_\ell}$ in $C_\ell$. Note that 
\[\tilde F_\ell(C_\ell)=\{z\in\R^{n+1}\,:\, |T(z)|\leq 1,\, |T^{\perp}(z)|\leq 1\}.\]
Let 
\[
G_\ell:=\{z\in \partial\E_{j_\ell}\cap C_\ell \, :\, \theta^n(\|\partial\E_{j_\ell}\|,z)=1\}
\]
and note that $\|\partial\E_{j_\ell}\|(C_\ell\setminus G_\ell)=0$ due to the rectifiability of $\partial
\E_{j_\ell}$. We then set 
\[
G_\ell^*:=\{z\in T\,:\, 
\H^0(T^{-1}(z)\cap G_\ell)\geq \nu\}.
\]
We next prove that for all $z\in G_\ell^*$, there exist a set $Y\subset T^{-1}(z)\cap G_\ell$ with
$\H^0(Y)=\nu$ and
$r_z\in (0,\tilde r_\ell)$ such that,
writing $E^*(z,r):=E^*(r)+z$ where $E^*(r)$ is defined with respect to $Y$ and $T$, either
\begin{equation}
\label{rlab7}
\int_{{\bf G}_n(E^*(z,r_z))} \|S-T\|\, d(\partial\E_{j_\ell})>\gamma\|\partial
\E_{j_\ell}\|(E^*(z,r_z))
\end{equation}
or 
\begin{equation}
\label{rlab8}
\Delta_{j_\ell}\|\partial\E_{j_\ell}\|(E^*(z,r_z))<-\gamma\|\partial\E_{j_\ell}\|(E^*(z,r_z)).
\end{equation}
In fact, let $Y\subset T^{-1}(z)\cap G_\ell$ be any subset with $\H^0(Y)=\nu$. By the 
definition of $G_\ell^*$, there exists such a $Y$. If there were no $r_z\in(0,\tilde r_\ell)$
with both \eqref{rlab7} and \eqref{rlab8}, then
we have all the assumptions (1)-(5) satisfied and \eqref{rlab6} shows
\begin{equation}
\label{rlab9}
\|\partial\E_{j_\ell}\|(E^*(z,\tilde r_\ell))\geq (\nu-\zeta)\omega_n\tilde r_\ell^n.
\end{equation}
On the other hand, $E^*(z,\tilde r_\ell)=\{z'\in\R^{n+1} \,:\, |T(z'-z)|\leq \tilde r_\ell,\,
{\rm dist}\,(T^{\perp}(z'),Y)\leq 4\tilde r_\ell\}$, and since $Y$ is not empty
and $Y\subset C_\ell$, we have
\begin{equation}\label{rlab9ex}
E^*(z,\tilde r_\ell)\subset \{z'\in \R^{n+1}\,:\, |T(z'-z)|\leq \tilde r_\ell,\, |T^{\perp}(z')|
\leq 5\tilde r_\ell\}=:\hat E(z,\tilde r_\ell).\end{equation}
Since $\|(\tilde F_\ell)_{\sharp}\partial\E_{j_\ell}\|\rightarrow \|\beta |T|\|=\beta\,\H^n\mres_T$, 
we have
\begin{equation}\label{rlab9a}
\limsup_{\ell\rightarrow\infty} \tilde r_\ell^{-n}\|\partial\E_{j_\ell}\|(\hat E(z,\tilde r_\ell))
=\beta\omega_n.\end{equation}
This shows with \eqref{rlab9} and \eqref{rlab9ex} that $\nu-\zeta\leq \beta$, which is a contradiction to \eqref{rlab4}. 
The convergence \eqref{rlab9a} may be made uniform in $z$. Thus for all sufficiently large $\ell$ and for all
$z\in G_\ell^*$ and $Y$ as above, we proved that either \eqref{rlab7} or \eqref{rlab8} hold 
for some $r_z\in (0,\tilde r_\ell)$. Now we use the Besicovitch covering theorem to the
family of $n$-dimensional closed balls $\{B^n_{r_z}(z)\,:\, z\in G_\ell^*\}$ in $T$. 
Then we have a subfamilies $\mathcal C_1,\ldots,\mathcal C_{{\bf B}(n)}$ each of which
consists of a family of disjoint balls and that 
\[
G_\ell^*\subset \cup_{i=1}^{{\bf B}(n)}\cup_{B^n_{r_z}(z)\in \mathcal C_i} B^n_{r_z}(z).
\]
Because of the definitions of $E^*(z,r)$ and $E^*(r)$, we have for any $z\in G_\ell^*$
\[
T^{-1}(B^n_{r_z}(z))\cap C_\ell\subset E^*(z,r_z)\subset T^{-1}(B^n_{r_z}(z)) \cap \{z'\in \R^{n+1}\, :\,
|T^{\perp}(z')|\leq 5\tilde r_\ell\}.
\]
Thus, we have
\begin{equation}
\label{rlab10}
\begin{split}
&\|\partial\E_{j_\ell}\|(T^{-1}(G_\ell^*) \cap C_\ell)\\ & \leq \|\partial\E_{j_\ell}\|\big(\cup_{i=1}^{{\bf B}(n)} 
\cup_{B^n_{r_z}(z)\in\mathcal C_i} T^{-1}(B^n_{r_z}(z))\cap C_\ell\big) \\
&\leq \sum_{i=1}^{{\bf B}(n)} \sum_{B^n_{r_z}(z)\in \mathcal C_i}
\|\partial\E_{j_\ell}\|(E^*(z,r_z)) \\
&\leq \sum_{i=1}^{{\bf B}(n)}\sum_{B^n_{r_z}(z)\in\mathcal C_i} 
\gamma^{-1}\Big(\int_{{\bf G}_n(E^*(z,r_z))}\|S-T\|\, d(\partial\E_{j_\ell})-\Delta_{j_\ell}
\|\partial\E_{j_\ell}\|(E^*(z,r_z))\Big) \\
&\leq {\bf B}(n)\gamma^{-1}\Big(\int_{{\bf G}_n(\hat C_\ell)}\|S-T\|\, d(\partial\E_{j_\ell})-\Delta_{j_\ell}
\|\partial\E_{j_\ell}\|(\hat C_\ell)\Big),
\end{split}
\end{equation}
where $\hat C_\ell:=\{z\in \R^{n+1}\,:\, |T(z)|\leq 2\tilde r_\ell,\, |T^{\perp}(z)|\leq 5\tilde r_\ell\}$. 
Since $(\tilde F_\ell)_{\sharp}\partial\E_{j_\ell}$ converges to $\beta|T|$ as varifolds, we have
\[
\lim_{\ell\rightarrow\infty} \tilde r_\ell^{-n}\int_{{\bf G}_n(\hat C_\ell)} \|S-T\|\, d(\partial\E_{j_\ell})(z,S)
=0
\]
and we conclude that
\begin{equation}
\label{rlab11}
\lim_{\ell\rightarrow\infty} \tilde r_\ell^{-n}\|\partial\E_{j_\ell}\|(T^{-1}(G_\ell^*)\cap C_\ell)=0.
\end{equation}
We have
\[
\|T_{\sharp}\partial\E_{j_\ell}\|(C_\ell)=\int_{T\cap C_\ell}\, \H^0(T^{-1}(z)\cap G_\ell)\, d\H^n(z).
\]
Since $\H^0(T^{-1}(z)\cap G_\ell)\leq \nu-1$ on $T\setminus G_\ell^*$, and because of \eqref{rlab11}, we 
obtain
\begin{equation}
\label{rlab12}
\begin{split}
\lim_{\ell\rightarrow\infty} \tilde r_\ell^{-n} \|T_{\sharp}\partial\E_{j_\ell}\|(C_\ell) &
\leq \limsup_{\ell\rightarrow\infty}\tilde r_\ell^{-n} \int_{C_\ell\cap T\setminus G_\ell^*} \H^0(T^{-1}(z)\cap G_\ell)\, d\H^n(z) \\ &\leq (\nu-1)\lim_{\ell\rightarrow\infty} \tilde r_\ell^{-n}\H^n(T\cap C_\ell)=(\nu-1)\omega_n.
\end{split}
\end{equation}
On the other hand, we have
\[
\lim_{\ell\rightarrow\infty} \tilde r_\ell^{-n}\|T_{\sharp}\partial\E_{j_\ell}\|(C_\ell)
=\lim_{\ell\rightarrow\infty}\|T_{\sharp} \tilde V_\ell\|(\tilde F_\ell(C_\ell))=\beta \||T|\|(B_1^n)=\beta\omega_n.
\]
This proves that $\beta\leq \nu-1$. By \eqref{rlab3}, we have $\beta=\nu-1$ and $\beta\in \N$. 
\end{proof}
We next show
\begin{lem}
\label{uniden}
The limit varifold $V$ is a unit density varifold, that is, $\theta^n(\|V\|,z)=1$ for $\|V\|$ a.e.~$z$.
Moreover, any tangent cone and any blow-down limit of $V$ are also unit density varifolds.
\end{lem}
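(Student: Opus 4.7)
I argue by contradiction, assuming $\beta:=\theta^n(\|V\|,z_0)\geq 2$ at a Lebesgue point $z_0$. Following the blow-up construction of Lemma \ref{integral}, take rescalings $\tilde F_\ell(z):=(z-\tilde z^{(\ell)})/\tilde r_\ell$ satisfying \eqref{rg2s} so that $\tilde V_\ell := (\tilde F_\ell)_\sharp\partial\mathcal{E}_{j_\ell}\to \beta|T|$, where $T = \mathrm{Tan}^n(\|V\|,z_0)$. By BV compactness, $\chi_{\tilde F_\ell(E_{j_\ell,k})}\to\chi_{\tilde E_k}$ in $L^1_{loc}$ with $\|\nabla\chi_{\tilde E_k}\|\leq \|V\|=\beta\|T\|$; hence inside the unit cylinder $B:=\{|T(z)|<1,\,|T^\perp(z)|<1\}$, each $\tilde E_k\cap B$ is a union of the two half-spaces of $T$ modulo an $\mathcal{L}^{n+1}$-null set.

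\textbf{Geometric control and deformation.} Using Lemma \ref{ub3cl} (measure-minimality), Lemma \ref{ldb}, and the varifold convergence, the support of $\partial(\tilde F_\ell)_\star\mathcal{E}_{j_\ell}$ is confined to an $\varepsilon_\ell$-neighborhood of $T$ on compact subsets of $B$ with $\varepsilon_\ell\to 0$. The coarea formula then gives $\int_{B\cap T}\mathcal{H}^0(T^{-1}(x)\cap\partial(\tilde F_\ell)_\star\mathcal{E}_{j_\ell})\,d\mathcal{H}^n(x)\to\beta\omega_n$, and by integer-valuedness of $\mathcal{H}^0$ there is a positive-$\mathcal{H}^n$ subset of $B\cap T$ above which fibers meet $\partial(\tilde F_\ell)_\star\mathcal{E}_{j_\ell}$ in at least $\beta\geq 2$ points. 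Fix such an $x_0$ with two consecutive crossings at heights $h_i<h_{i+1}$ (both tending to $0$) and a ball $B'\subset B$ of fixed radius $\rho$ centered at $(x_0,(h_i+h_{i+1})/2)$. Define $g_\ell$ to be the identity outside $B'$, to vertically collapse the slab $\{h_i<T^\perp(z)<h_{i+1}\}$ onto its median plane inside a concentric ball of radius $\rho/2$, and to interpolate piecewise-linearly in the intermediate annulus. Setting $G_\ell := (\tilde F_\ell)^{-1}\circ g_\ell\circ\tilde F_\ell$ (supported in a ball of radius $O(\tilde r_\ell)$), a direct computation shows that in the inner region two sheets are replaced by at most one, giving a saving $\sim\omega_n\tilde r_\ell^n$ regardless of whether the strips above and below carry the same or distinct labels, while the tilt in the annular transition contributes only an $o(\tilde r_\ell^n)$ correction. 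Hence $\|\partial(G_\ell)_\star\mathcal{E}_{j_\ell}\|(\Omega)-\|\partial\mathcal{E}_{j_\ell}\|(\Omega)\leq -c\tilde r_\ell^n$ for some $c>0$.

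\textbf{Admissibility, contradiction, and tangent cones.} Lemma \ref{echeck}, applied with $C$ the support of $G_\ell$, gives $G_\ell\in\mathbf{E}(\mathcal{E}_{j_\ell},j_\ell)$: items (a)--(c) follow immediately from $\tilde r_\ell\ll 1/j_\ell^2$, and (d) holds for large $\ell$ because the area ratio $\|\partial(G_\ell)_\star\mathcal{E}_{j_\ell}\|(C)/\|\partial\mathcal{E}_{j_\ell}\|(C)$ is bounded above by a fixed constant strictly less than $1$ (depending only on $n$ and $\beta$), while $\exp(-j_\ell\cdot O(\tilde r_\ell))\to 1$. Consequently $\Delta_{j_\ell}\|\partial\mathcal{E}_{j_\ell}\|(\Omega)\leq -c\tilde r_\ell^n$, contradicting $-\Delta_{j_\ell}\|\partial\mathcal{E}_{j_\ell}\|(\Omega)\leq \Cr{c_2}\Delta t_{j_\ell}=o(\tilde r_\ell^n)$ from \eqref{boundht} and \eqref{caMCF5.5}. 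For tangent cones and blow-down limits, a standard diagonal argument realizes any such $V^*$ as $\lim_k(F_k')_\sharp\partial\mathcal{E}_{j_{\ell(k)}}$ with $F_k'(z)=(z-z^{(\ell(k))}-r_{\ell(k)}z_0)/(\lambda_k r_{\ell(k)})$; the admissible range $\lambda_k\in(j_{\ell(k)}^{-1/2},j_{\ell(k)}^{1/2})$ (when $r_\ell=j_\ell^{-5/2}$) accommodates both $\lambda_k\to 0$ (tangent cone) and $\lambda_k\to\infty$ (blow-down), with the new radii $\lambda_k r_{\ell(k)}$ still satisfying \eqref{rg} and \eqref{rg2}. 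Theorem \ref{chlim} then applies to $V^*$, and the first part of the lemma yields that $V^*$ has unit density a.e. The principal obstacle is verifying the area balance in the annular transition and checking condition (d) of Lemma \ref{echeck} for the explicit deformation, in particular ensuring that the tilt introduced by the piecewise-linear interpolation does not overcome the gain from the inner collapse.
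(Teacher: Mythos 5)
Your proposal follows the same broad strategy as the paper: construct a Lipschitz deformation that collapses sheets, verify admissibility via Lemma \ref{echeck}, and contradict the almost measure-minimizing property coming from \eqref{boundht} and \eqref{caMCF5.5}. The diagonal argument at the end for tangent cones and blow-down limits is also essentially what the paper does, just phrased more explicitly. So at the strategic level you are on the right track.

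The substantive difference, and the place where a genuine gap appears, is in the deformation itself. The paper's map $g$ (equations \eqref{def-g1}--\eqref{def-g2}) is defined globally on the cylinder $|\hat z|\leq 1$: it crushes the \emph{entire} slab $\{|z_{n+1}|\leq\delta/2\}$ onto the disc $B^n_1\subset T$, stretches the regions above and below, and tapers to the identity across $1\leq|\hat z|\leq 1+\delta$, with $\delta$ a free small parameter. Since varifold convergence forces essentially all of the mass ($\approx\nu\omega_n$) into a slab of thickness $o(1)\ll\delta$, the image has mass $\leq\omega_n+c(n)\delta$ (cf. \eqref{tokei1}--\eqref{tokei4}); choosing $\delta$ small yields a definite saving, and since $g$ is a retraction it is automatically admissible. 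The argument never has to identify or track any individual sheet. Your construction instead picks one fiber $x_0$ where the cardinality $\geq\beta$ occurs, selects two consecutive crossings $h_i<h_{i+1}$ \emph{over that single fiber}, and collapses $\{h_i<T^\perp(z)<h_{i+1}\}$ inside a ball of fixed radius. The claimed saving ``$\sim\omega_n\tilde r_\ell^n$'' presupposes that the two crossings persist as graphs over a ball of definite radius and that they fall into the collapsed region --- but the coarea/cardinality argument only gives a positive-measure set of fibers with $\geq\beta$ crossings, not sheet structure over a definite ball, and the heights $h_i,h_{i+1}$ sit on the \emph{boundary} of your slab rather than inside it. In fact $h_{i+1}-h_i$ may be much smaller than the confinement scale, so the slab may carry negligible mass and the collapse may save nothing (or even cost something, since the interpolating annulus tilts mass). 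You flag the annular-transition balance and condition (d) of Lemma \ref{echeck} as obstacles, but the more basic issue is that the mass trapped in your chosen slab is not bounded below. The paper's global $\delta$-slab collapse avoids all of this, and the admissibility check is trivial because $g$ is a retraction.

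One further remark: the BV observation and the comment that the saving is achieved ``regardless of whether the strips above and below carry the same or distinct labels'' belong more naturally to the proof of Lemma \ref{two-sided}; for Lemma \ref{uniden} the paper needs no case distinction on labels at all.
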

\begin{proof}
Suppose the contrary. Then there exists $z\in\R^{n+1}$ such that the blowup of $V$ at $z$
converges to $\nu|T|$ for some $\nu\in \N$ with $\nu\geq 2$ and $T\in {\bf G}(n+1,n)$. 
As we saw in the proof of Lemma \ref{integral}, let $\tilde F_\ell$ be chosen so that
$(\tilde F_\ell)_{\sharp}\partial\E_{j_\ell}\rightarrow \nu|T|$. Without loss of generality, we may
assume that $T=\R^n\times\{0\}\subset \R^{n+1}$. In the following, we define a Lipschitz map
which reduces the measure in the cylinder of radius 1. Namely, in such a cylinder, we have 
a measure of $(\tilde F_\ell)_{\sharp}\partial \mathcal E_{j_\ell}$ close to that of $\nu$ parallel discs and we will reduce it to that of one disc. Let $\delta>0$ be a 
small number to be chosen. Write $z=(z_1,\ldots,z_n,z_{n+1})=(\hat z,z_{n+1})$. 
In the cylinder $|\hat z|\leq 1$, define
\begin{equation}
\label{def-g1}
g(\hat z,z_{n+1})=\left\{
\begin{array}{ll}
(\hat z,z_{n+1}) & \mbox{if } |z_{n+1}|\geq \delta,\\
(\hat z,0) & \mbox{if } |z_{n+1}|\leq \frac{\delta}{2},\\
(\hat z, 2z_{n+1}-\delta) & \mbox{if } \frac{\delta}{2}\leq 
z_{n+1}\leq \delta,\\
(\hat z, 2z_{n+1}+\delta) & \mbox{if }-\delta\leq
z_{n+1}\leq -\frac{\delta}{2},
\end{array}\right.
\end{equation}
and in the annular region $1\leq |\hat z|\leq 1+\delta$, define
\begin{equation}
\label{def-g2}
g(\hat z,z_{n+1})=\left\{
\begin{array}{ll}
(\hat z, z_{n+1}) & \mbox{if }|z_{n+1}|\geq \delta\mbox{ or }|z_{n+1}|\leq|\hat z|-1,\\
(\hat z, |\hat z|-1) & \mbox{if }|\hat z|-1\leq 
z_{n+1}\leq \frac{|\hat z|-1}{2}+\frac{\delta}{2},\\
(\hat z, 2z_{n+1}-\delta) & \mbox{if }\frac{|\hat z|-1}{2}+\frac{\delta}{2}\leq z_{n+1}\leq \delta,\\
(\hat z, 1-|\hat z|) & \mbox{if }\frac{1-|\hat z|}{2}-\frac{\delta}{2}\leq z_{n+1}\leq 1-|\hat z|,\\
(\hat z, 2z_{n+1}+\delta) & \mbox{if }-\delta\leq z_{n+1}\leq \frac{1-|\hat z|}{2}-\frac{\delta}{2}.
\end{array}\right.
\end{equation}
\begin{figure}[!h]
\centering
\includegraphics[width=9cm]{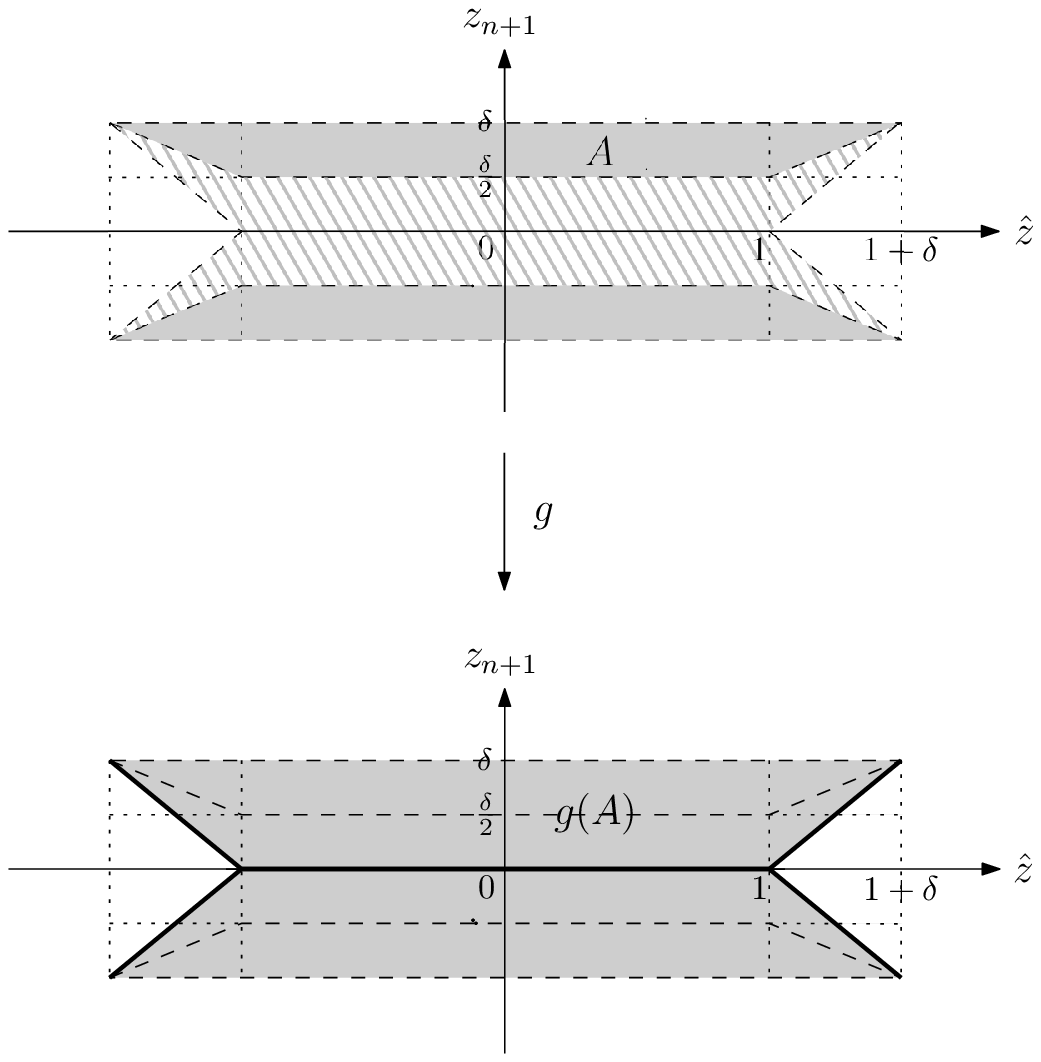}              
\caption{}
\label{Fig2}
\end{figure}
For $z$ with $|\hat z|>1+\delta$, define $g(z)=z$. 
The map $g$ is defined so that it 
crushes the region $\{z : |\hat z|\leq 1, |z_{n+1}|\leq \frac{\delta}{2}\}$ to the disc 
$B_1^{n}\subset T$ and stretches the top and the bottom. In the annular region, a small 
triangular region is also crushed to an $n$-dimensional set (Figure \ref{Fig2}). 
Since $g$ is a retraction, 
one can check that it is an admissible map for any open partition. Now consider 
$\partial(g\circ \tilde F_\ell)_{\star}\E_{j_\ell}$ for all large $\ell$ and compare its measure
to $(\tilde F_\ell)_{\sharp}\partial\E_{j_\ell}$. Since $(\tilde F_\ell)_{\sharp}\partial\E_{j_\ell}$
converges to $\nu|T|$, we have 
\begin{equation}
\label{tokei1}
\|(\tilde F_\ell)_{\sharp}\partial\E_{j_\ell}\|(\{z \,:\,
|\hat z|\leq 1+\delta,\,1\geq |z_{n+1}|\geq \delta/2\})\rightarrow 0
\end{equation}
as $\ell\rightarrow \infty$. Let $A\subset \R^{n+1}$ be open set defined by
\begin{equation*}
A=\big\{|\hat z|\leq 1, \, \frac{\delta}{2}<|z_{n+1}|<\delta\big\}\cup
\big\{1\leq |\hat z|<1+\delta,\,\frac{|\hat z|-1}{2}+\frac{\delta}{2}
<|z_{n+1}|<\delta\big\}.
\end{equation*}
The map $g$ stretches the set $A$ by twice, and $g$ maps $A$ bijectively to its image
$g(A)$. Thus, using \eqref{tokei1}, we obtain 
\begin{equation}
\label{tokei2}
\|\partial (g\circ \tilde F_\ell)_{\star} \E_{j_\ell}\|(g(A))
=\|(g\circ\tilde F_\ell)_{\sharp}\partial \E_{j_\ell}\|(g(A))
\leq 2^n \|(\tilde F_\ell)_{\sharp}\partial \E_{j_\ell}\|(A)\rightarrow 0
\end{equation}
as $\ell\rightarrow \infty$. Because of the property of $g$, in the region
$\{|\hat z|< 1+\delta,\, |z_{n+1}|<\delta\}\setminus g(A)$, we have
\begin{equation}
\label{tokei3}
\partial (g\circ\tilde F_\ell)_{\star}\E_{j_\ell}\subset 
\partial (g(A))\cup \big((\tilde F_\ell)_{\sharp}\partial\E_{j_\ell}\setminus \overline{ g(A)}\big).
\end{equation}
Note that $\H^n(\partial(g(A))\cap \{|\hat z|<1+\delta,\,  |z_{n+1}|<\delta\})\leq 
\omega_n+ c(n)\delta$. Combining \eqref{tokei2} and \eqref{tokei3}, we have
\begin{equation}
\label{tokei4}
\limsup_{\ell\rightarrow\infty} \|\partial (g\circ \tilde F_\ell)_{\star}\E_{j_\ell}\|
(\overline{g(A)})
\leq \omega_n+c(n)\delta.
\end{equation}
Since $\|(\tilde F_\ell)_{\sharp}\partial\E_{j_\ell}\|(\{|\hat z|<1+\delta,\,|z_{n+1}|<
\delta\})\rightarrow (1+\delta)^n\omega_n \nu$ with $\nu\geq 2$ as $\ell\rightarrow\infty$, for suitably
small $\delta$ depending only on $n$, we see that $g$ reduces the 
measure by definite amount for all large $\ell$. By the similar argument 
as in the previous proofs, we may obtain a contradiction to the almost measure-minimizing 
property. The argument up to this point is also valid for any limit varifold obtained from $V$
under dilations centered at any point. Since $V$ is stationary, the tangent cone is 
integral stationary varifold and we can prove that it has unit density for almost everywhere.
By \eqref{ub2}, we have $\|V\|(B_R)\leq\mathcal H^n(\partial B_1)R^n$ for all $R>0$, so that
the blow-down limit centered at arbitrary point exists and is integral stationary varifold which
is also a cone (the latter can be proved by the same argument for the proof of tangent cone 
being a cone). Thus any tangent cone or blow-down limit of $V$ has the same density property.
\end{proof}

Next we see that $V$ is ``two-sided'' in the following sense. Here recall the definition 
of $E_1,\ldots,E_N$ as in \eqref{dname2}. The stationarity of $V$ shows that
$\|V\|=\mathcal H^n\mres_{{\rm spt}\,\|V\|}$ (see \cite[17.9(1)]{Simon}). By \eqref{dname4}, 
each $\chi_{E_k}$ is constant on each connected component of $\R^{n+1}\setminus{\rm spt}\,\|V\|$.
By \eqref{dname3}, there is only one $k$ such that $\chi_{E_k}=1$ there. Thus, we may 
regard each $E_k$ to be open and $\cup_{k=1}^N E_k=\R^{n+1}\setminus {\rm spt}\,\|V\|$. 
\begin{lem}
\label{two-sided}
For any point $z$ with $\theta^n(\|V\|,z)=1$, there are two different indices $k_1,k_2\in\{1,\ldots,N\}$
and some $r>0$ such that $\mathcal L^{n+1}(U_r(z)\cap E_{k_i})>0$ for $i=1,2$ and
$U_r(z)\setminus{\rm spt}\|V\|=U_r(z)\cup (E_{k_1}\cup E_{k_2})$. 
\end{lem}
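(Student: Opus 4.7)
The plan is to combine Allard's $\varepsilon$-regularity theorem with a careful analysis of the approximate partition structure. Since $V$ is stationary (Proposition \ref{limst}), integral (Lemma \ref{integral}), has unit density at $z$, and is measure-minimizing under compact diffeomorphisms (Lemma \ref{ub3cl}), Allard's regularity theorem provides $r>0$ such that $\mathrm{spt}\|V\|\cap U_r(z)$ is a smooth (real-analytic) minimal hypersurface $\Sigma$ dividing $U_r(z)$ into exactly two connected open components $A_1,A_2$. Using the preamble's observation that the $E_k$'s are open, disjoint, and cover $\R^{n+1}\setminus\mathrm{spt}\|V\|$, each connected $A_i$ lies entirely in a single $E_{k_i}$, yielding $\mathcal L^{n+1}(U_r(z)\cap E_{k_i})\geq\mathcal L^{n+1}(A_i)>0$ and $U_r(z)\setminus\mathrm{spt}\|V\|\subset E_{k_1}\cup E_{k_2}$.

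The main content is showing $k_1\neq k_2$, which I would handle by contradiction. Assume $k_1=k_2=:k_0$; then $\mathcal L^{n+1}(E_k\cap U_r(z))=0$ for every $k\neq k_0$, so by the $L^1_{loc}$ convergence \eqref{dname2} one has $r_\ell^{-(n+1)}\mathcal L^{n+1}(E_{j_\ell,k}\cap B_{rr_\ell}(z^{(\ell)}))\to 0$ for each $k\neq k_0$. Picking $s\in(0,r)$ with $\|V\|(\partial B_s(z))=0$ (available because $\Sigma$ is smooth), varifold convergence yields $\|\partial\mathcal E_{j_\ell}\|(B_{sr_\ell}(z^{(\ell)}))\to\omega_n(sr_\ell)^n$. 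A configuration consisting of two sheets enclosing a thin vanishing layer of some $E_{j_\ell,k^*}$ with $k^*\neq k_0$ would contribute mass at least $2\omega_n(sr_\ell)^n$, contradicting this single-multiplicity limit; hence $\partial\mathcal E_{j_\ell}\cap B_{sr_\ell}(z^{(\ell)})$ must be a single ``slit'' sheet $\Sigma_\ell$ with the same phase $E_{j_\ell,k_0}$ on both sides.

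The plan is then to construct an admissible Lipschitz deformation $f_\ell\in\mathbf E(\mathcal E_{j_\ell},j_\ell)$ eliminating this slit inside $B_{sr_\ell}(z^{(\ell)})$, in the spirit of the crushing maps \eqref{def-g1}--\eqref{def-g2}. Since both sides of $\Sigma_\ell$ lie in the single phase $E_{j_\ell,k_0}$, the partition structure allows a Lipschitz retraction of the $n$-dimensional slit disc onto its $(n-1)$-dimensional boundary sphere in $\partial B_{sr_\ell}(z^{(\ell)})$, simultaneously merging the two sides into a single $\tilde E_{j_\ell,k_0}$ filling $U_{sr_\ell}(z^{(\ell)})$. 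Admissibility (Definition \ref{boldE}) would be verified via Lemma \ref{echeck}, using $sr_\ell<1/(2j_\ell^2)$ from \eqref{rg}. The reduction of $\|\partial\mathcal E_{j_\ell}\|(\Omega)$ is then of order $\omega_n(sr_\ell)^n\cdot\min_{B_{sr_\ell}}\Omega$, which by \eqref{rg2} and \eqref{caMCF5.5} dwarfs $\Cr{c_2}\Delta t_{j_\ell}$. This contradicts the almost-minimizing bound \eqref{boundht}, forcing $k_1\neq k_2$.

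The hard part will be the careful construction of the admissible slit-healing deformation $f_\ell$: unlike the multi-sheet crushing of Lemma \ref{uniden}, a single-sheet slit removal must retract an $n$-dimensional surface onto $(n-1)$-dimensional data while ensuring that the two sides of $\Sigma_\ell$ become part of the same open set $\tilde E_{j_\ell,k_0}$, without inadvertently creating a full $(n+1)$-dimensional ``hole'' outside $\bigcup_k\tilde E_{j_\ell,k}$ (which would make $\|\partial\tilde{\mathcal E}_{j_\ell}\|$ infinite). This demands a delicate tubular-neighborhood extension of the $(n-1)$-retraction, together with a separate handling of the asymptotically vanishing islands of the other phases inside $B_{sr_\ell}(z^{(\ell)})$.
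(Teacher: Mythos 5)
Your first half is correct and matches the paper: Allard gives a smooth $\Sigma$ splitting $U_r(z)$ into two components $A_1,A_2$, each lying in one $E_{k_i}$ because the $E_k$'s are open, disjoint, and cover $\R^{n+1}\setminus{\rm spt}\,\|V\|$. And your overall strategy for $k_1\neq k_2$ — contradict the almost-minimizing bound \eqref{boundht} by exhibiting a mass-reducing map in $\mathbf E(\mathcal E_{j_\ell},j_\ell)$ — is the right one.

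But the deformation you propose to build is not the one the paper uses, and you have not actually built it; you flag it yourself as ``the hard part.'' You want to retract the $n$-dimensional slit $\Sigma_\ell$ onto its $(n-1)$-dimensional boundary sphere so that $\tilde E_{j_\ell,k_0}$ fills the whole ball. As a map of $\Sigma_\ell$ that would be a deformation retraction of $B^n$ onto $S^{n-1}$, which does not exist; and as a Lipschitz map of $\R^{n+1}$ it would have to send the whole slit into $\partial B_{sr_\ell}$ while being the identity outside and $\mathcal E_{j_\ell}$-admissible inside, which your sketch does not come close to achieving. You also insert an unproved structural claim (``$\partial\mathcal E_{j_\ell}\cap B_{sr_\ell}(z^{(\ell)})$ must be a single slit sheet $\Sigma_\ell$''); the multiplicity-one limit only controls the total mass ratio, not the sheet structure of the approximating sets, and the paper never establishes or needs such a description.

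The paper's proof sidesteps all of this by reusing exactly the crushing map $g$ of Lemma~\ref{uniden} (formulas \eqref{def-g1}--\eqref{def-g2}), which only collapses a thin slab $\{|z_{n+1}|\le \delta/2\}$ to the $n$-disc $\{z_{n+1}=0\}$; there is no topological obstruction. The entire gain is not in choosing a more aggressive map but in how the partition operation $f_\star\mathcal E$ treats the crushed disc. Using \eqref{two-side} and Fubini one gets \eqref{two-side2}: at heights $z_{n+1}=\pm\delta/2$ the rescaled $E_{j_\ell,1}$ covers almost all of $\{|\hat z|\le 1\}$. Since $g$ sends $(\hat z,\delta/2)$ and $(\hat z,-\delta/2)$ to the same point $(\hat z,0)$, whenever both preimages lie in the open set $\tilde F_\ell(E_{j_\ell,1})$, a full neighborhood of $(\hat z,0)$ lands in $(g\circ\tilde F_\ell)(E_{j_\ell,1})$, so $(\hat z,0)$ is an \emph{interior} point and is removed from $\partial(g\circ\tilde F_\ell)_\star\mathcal E_{j_\ell}$. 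This gives \eqref{two-side3} and replaces the estimate $\omega_n+c(n)\delta$ of \eqref{tokei4} by the much smaller $c(n)\delta$ in \eqref{two-side4}, while the pre-image mass in $\{|\hat z|<1+\delta,|z_{n+1}|<\delta\}$ is $\approx(1+\delta)^n\omega_n$. That is the definite mass drop, and the rest is the same contradiction as in Lemma~\ref{uniden}. In short, it is the hypothesis $k_1=k_2$ that makes the \emph{ordinary} crushing map remove a whole disc's worth of boundary through the interior operation; you do not need, and cannot easily construct, a slit-to-sphere retraction.
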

\begin{proof}
Since $V$ is stationary and integral, by the Allard regularity theorem \cite{Allard}, ${\rm spt}\,\|V\|$ is a 
real-analytic minimal hypersurface in some neighborhood of such $z$. Thus for sufficiently small 
neighborhood, $U_r(z)\setminus{\rm spt}\,\|V\|$ consists of two connected non-empty open sets.
To prove the claim, we only need to 
prove that this set is $(E_{k_1}\cup E_{k_2})\cap U_r(z)$ with $k_1\neq k_2$. 
For a contradiction, assume $k_1=k_2$ and we may assume $k_1=k_2=1$ 
without loss of generality. We proceed just as in the proof of Lemma \ref{uniden}. Since 
$\chi_{F_\ell(E_{j_\ell,1})}\rightarrow \chi_{E_1}=1$ in $L^1(U_r(z))$ as $\ell\rightarrow \infty$, 
we may choose $\{\tilde F_\ell\}$ so that $(\tilde F_\ell)_{\sharp}\partial\mathcal E_{j_\ell}
\rightarrow |T|$ (note that the multiplicity is $1$) and additionally so that 
\begin{equation}
\label{two-side}
\chi_{\tilde F_\ell (E_{j_\ell,1})}\rightarrow 1 \mbox{ in }L^1_{loc}(\mathbb R^{n+1}).
\end{equation} 
We use the same Lipschitz map $g$ as in Lemma \ref{uniden} to reduce the measure as follows.
Because of \eqref{two-side} and Fubini theorem, for a.e.~$\delta>0$ (as in the 
proof of Lemma \ref{uniden}), we have
\begin{equation}
\label{two-side2}
\lim_{\ell\rightarrow\infty} \mathcal H^n(\{z\,:\, |\hat z|\leq 1, |z_{n+1}|=
\delta/2\}\setminus \tilde F_\ell(E_{j_\ell,1}))=0.
\end{equation}
With such $\delta$, let $g$ be defined as in \eqref{def-g1} and \eqref{def-g2}. 
Consider $(g\circ \tilde F_\ell)_{\star}\mathcal E_{j_\ell}=:\{\tilde E_{j_\ell,k}\}_{k=1}^N$.
Recall that $\tilde E_{j_\ell,k}$ is the set of interior points of $(g\circ \tilde F_\ell)(E_{j_\ell,k})$
for each $k=1,\ldots,N$. We pay special attention to $\tilde E_{j_\ell,1}$. 
Suppose that $(\hat z,\delta/2)\in \tilde F_\ell(E_{j_\ell,1})$ and $(\hat z,-\delta/2)\in 
\tilde F_\ell(E_{j_\ell,1})$ for $\hat z$ with $|\hat z|<1$. Since $\tilde F_\ell(E_{j_\ell,1})$ is open,
there are some neighborhoods of $(\hat z,\pm \delta/2)$ which also belong to $\tilde F_\ell(E_{j_\ell,1})$.
The map $g$ sends both $(\hat z,\pm\delta/2)$ to the same point $(\hat z,0)$
and there will be some neighborhood of $(\hat z,0)$ which is included in 
$(g\circ \tilde F_\ell)(E_{j_\ell,1})$. Thus it is an interior point of $(g\circ \tilde F_\ell)(E_{j_\ell,1})$ and
$(\hat z,0)\in \tilde E_{j_\ell,1}$. 
Because of \eqref{two-side2} and the preceding discussion, $\tilde E_{j_\ell,1}$ has the property that
\begin{equation}
\label{two-side3}
\lim_{\ell\rightarrow\infty} \mathcal H^n(\{(\hat z,0) \, :\, |\hat z|\leq 1\}\setminus \tilde E_{j_\ell,1})
=0.
\end{equation}
Since $\partial(g\circ \tilde F_\ell)_{\star}\mathcal E_{j_\ell}\cap \{(\hat z,0)\,:\, |\hat z|\leq 1\}
\subset \{(\hat z,0)\notin \tilde E_{j_\ell,1}\,:\, |\hat z|\leq 1\}$, in the estimate of \eqref{tokei4},
we may obtain
\begin{equation}
\label{two-side4}
\limsup_{\ell\rightarrow\infty} \|\partial(g\circ\tilde F_\ell)_{\star} \mathcal E_{j_\ell}\|(\overline{g(A)})
\leq c(n)\delta
\end{equation}
instead, without $\omega_n$ on the right-hand side. Since $\|(\tilde F_\ell)_{\sharp}\partial\mathcal
E_{j_\ell}\|(\{|\hat z|< 1+\delta, |z_{n+1}|<\delta\})\rightarrow (1+\delta)^n\omega_n$ as $\ell\rightarrow
\infty$, we again see that $g$ reduces the measure by definite amount and we may argue similarly
to obtain a contradiction. Thus we proved $k_1\neq k_2$, ending the proof. 
\end{proof}
\begin{lem}
\label{triplelem}
Suppose that $V$ has a tangent cone or blow-down limit which is 
given as an orthogonal rotation of
$|S\times\mathbb R^{n-1}|\in {\bf IV}_n(\mathbb R^{n+1})$, where $S\subset \mathbb R^2$ is 
a finite union of half-lines emanating from the origin. Then, (i) $S$ consists of 
three half-lines with equal 120 degree angles or, (ii) $S$ is a line through the origin.  
\end{lem}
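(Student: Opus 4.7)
The plan is to extract the one-dimensional cross-section and reduce the classification to a classical fact about planar stationary networks, then close the argument by the same almost-minimality argument used in the proofs of Lemmas \ref{uniden} and \ref{two-sided}. After the orthogonal rotation in the hypothesis, write the cone as $W=|S\times\R^{n-1}|$ with $S\subset\R^2$ a union of $m\geq 1$ distinct half-lines emanating from the origin, each of multiplicity one. Since $W$ is a tangent cone or blow-down limit of the stationary varifold $V$ (Proposition \ref{limst}) and tangent cones inherit stationarity, the cross-section $S$ is a stationary integer $1$-varifold in $\R^2$. Testing the first variation with a compactly supported vector field that is constant in a neighborhood of the origin yields the balance identity
\[
\sum_{i=1}^{m} v_i \;=\; 0,
\]
where $v_1,\ldots,v_m\in\mathbb{S}^1$ are the outward unit tangent vectors of the half-lines at the origin. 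From this identity alone the small cases are handled: $m=1$ is impossible; $m=2$ forces $v_2=-v_1$, so $S$ is a line (case (ii)); and for $m=3$ the unique solution in $\mathbb{S}^1$ up to rotation is the $120^\circ$ equiangular configuration (case (i)).

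The substantive content is ruling out $m\geq 4$. For this I would construct a length-reducing Lipschitz competitor using the classical Steiner principle: no junction of degree $\geq 4$ is length-minimizing among planar networks joining the same external points, so there exists a Lipschitz retraction $g:\R^2\to\R^2$, equal to the identity outside a small disk $B_r$ and mapping $B_r$ to itself, which replaces the $m$-junction by a network of triple junctions meeting at $120^\circ$ connected by short segments while fixing the $m$ crossings $S\cap\partial B_r$; this decreases total length by a definite amount of order $r$. Extending $g$ trivially in the $\R^{n-1}$ directions produces a Lipschitz map $G:\R^{n+1}\to\R^{n+1}$ that strictly decreases the $n$-dimensional measure of $W$ on a product slab $B_r\times(-\rho,\rho)^{n-1}$ by a definite amount. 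To convert this into a contradiction with the approximate minimality \eqref{boundht}, I would repeat the diagonal construction from the proof of Lemma \ref{integral}: since $W$ is a blow-up or blow-down limit of $V$, it is the varifold limit of $(\tilde F_\ell)_\sharp \partial\E_{j_\ell}$ for some rescaling sequence $\tilde F_\ell$ with $\tilde r_\ell(j_\ell)^2\to 0$ and $\tilde r_\ell(j_\ell)^3\to\infty$. The composite $G\circ\tilde F_\ell$ then differs from $\tilde F_\ell$ only inside a ball of radius $O(\tilde r_\ell)\ll j_\ell^{-2}$, so Lemma \ref{echeck} places it in ${\bf E}(\E_{j_\ell},j_\ell)$ for all large $\ell$, and the definite measure decrease contradicts \eqref{boundht} exactly as in the proofs of Lemmas \ref{uniden} and \ref{two-sided}.

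The main obstacle I expect is the $\mathcal E_{j_\ell}$-admissibility of $G\circ\tilde F_\ell$ (Definition \ref{adef}), that is, the compatibility of the Steiner replacement with the open partition structure. The $m$ sectors of $\R^2\setminus S$ at the origin inherit labels in $\{1,\ldots,N\}$ from the partition, and by Lemma \ref{two-sided} adjacent sectors carry distinct labels. Because the cyclic order of these labels along $\partial B_r$ is preserved by any planar Steiner network joining the same $m$ boundary points, one can label the connected components of $B_r\setminus(\text{new network})$ consistently with the external labels and construct $g$ as a retraction that sends each original sector onto the corresponding new region. Verifying this combinatorial bookkeeping, and checking that the quantitative conditions of Lemma \ref{echeck} are met uniformly in $\ell$, is the delicate part of the argument; everything else is a direct transcription of the arguments already developed earlier in the section.
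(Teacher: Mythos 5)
Your plan follows the paper's route closely: the cases $m\leq 3$ are settled by the first-variation balance identity at the vertex (the paper leaves this implicit, since it is part of Allard-Almgren's classification of one-dimensional stationary cones with unit density), and $m\geq 4$ is excluded by exhibiting a length-reducing Lipschitz retraction and contradicting the almost-minimality of $\partial\mathcal E_{j_\ell}$ along the lines of the proofs of Lemmas \ref{uniden} and \ref{two-sided}. The paper's competitor for $m\geq 4$ uses the pigeonhole consequence that some pair of half-lines subtends an angle $\leq 90^\circ$ and shortens just that pair; your appeal to the Steiner principle is the same mechanism stated abstractly. Your concern about partition-admissibility and its resolution via cyclic labeling of the sectors (using Lemma \ref{two-sided}) is the right bookkeeping to worry about.

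There is, however, a genuine gap in the passage from $n=1$ to $n\geq 2$. Extending $g$ trivially in the $\R^{n-1}$ directions yields a map $G$ with $\{z:G(z)\neq z\}\supset B_r\times\R^{n-1}$, which is unbounded; consequently the composite does \emph{not} differ from $\tilde F_\ell$ only inside a ball of radius $O(\tilde r_\ell)$ as you assert, Lemma \ref{echeck} is inapplicable as stated (it requires the deformation to be supported in a compact set $C$), and the global inequality in Definition \ref{boldE}(c) is out of reach. The paper handles this by extending $g$ as you suggest only on $\{|\hat z|\leq R\}\subset\R^{n-1}$ for a free parameter $R$, interpolating piece-wise smoothly to the identity on the shell $\{R\leq |\hat z|\leq R+1\}$, and then exploiting the competing rates: the Steiner replacement decreases the $n$-dimensional measure at rate $\sim R^{n-1}$, while the interpolation shell can increase it by at most $\sim R^{n-2}$, so choosing $R$ large still gives a definite net decrease and the contradiction goes through. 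This cut-off-and-compare step is the only new ingredient in the $n\geq 2$ case and is missing from your sketch.
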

\begin{proof}
Consider the case $n=1$. We need to exclude the possibility that $S$ consists of more than three 
half-lines. For a contradiction, assume the contrary. If there are four or more half-lines, 
then there would be at least one pair of half-lines intersecting 
with an angle $\leq 90^\circ$. Since one 
can reduce the length of such pair of half-lines by a Lipschitz map, we may follow the similar
\begin{figure}[!h]
\centering
\includegraphics[width=15cm]{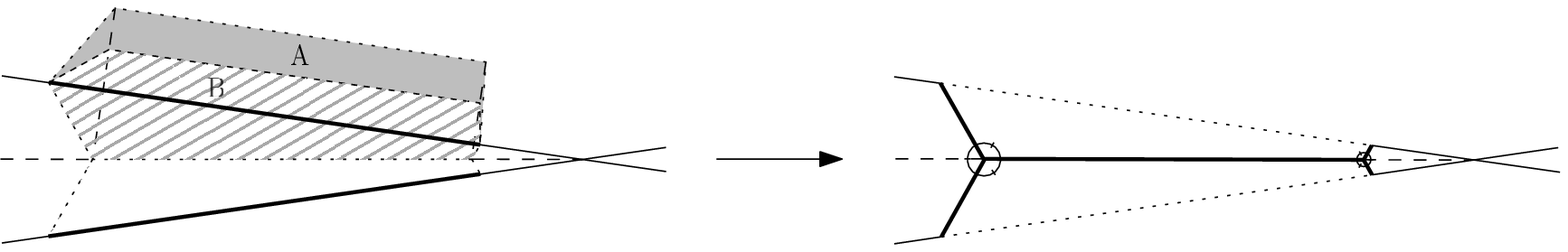}
\caption{}
\label{Fig3}
\end{figure}
procedure as in Lemma \ref{uniden}. Thus let $\{\tilde F_\ell\}$ be chosen so that $(\tilde F_\ell)_{\sharp}
\partial\mathcal E_{j_\ell}\rightarrow |S|$ as $\ell\rightarrow\infty$. 
For the Lipschitz map, we simply give the schematic picture in Figure \ref{Fig3} which describes the
map on the
upper half part of the bisected region between the two half-lines. On the lower half, the map
is symmetrically defined. Let $A$ and $B$ be
closed sets indicated in Figure \ref{Fig3}. The Lipschitz map $g$ is defined so that 
the region A is piece-wise smoothly expanded to cover the union of A and B bijectively 
while the region B is 
crushed to the solid line segments in the figure on the right-hand side. The two triple 
junctions are positioned so that they are sufficiently far apart and so that the map is 
length reducing map. Except for the neighborhood of endpoints, the region A is a
away from $S$ so that $\lim_{\ell\rightarrow\infty}\|(\tilde F_\ell)_{\sharp}\partial\mathcal E_{j_\ell}\|(A)
=0$. Consider $\partial(g\circ\tilde F_\ell)_{\star}\mathcal E_{j_\ell}$ inside ${\rm int}\,(A\cup B)$. 
Between ${\rm int}\,A$ and ${\rm int}\,(A\cup B)$, $g$ is bijective and 
\begin{equation}
\label{lipg}
\|\partial(g\circ\tilde F_\ell)_{\star}\mathcal E_{j_\ell}\|({\rm int}\,(A\cup B))\leq 
c({\rm Lip}\,g)\|(\tilde F_\ell)_{\sharp}\partial\mathcal E_{j_\ell}\|(A)\rightarrow 0
\end{equation}
as $\ell\rightarrow\infty$. Thus most of the measure of $\|\partial(g\circ \tilde F_\ell)_{\star}
\mathcal E_{j_\ell}\|$ in $A\cup B$ lies on $B\setminus (A\cup {\rm int}\,B)$.
Now let $C$ be the union of $A\cup B$ and its reflection with respect to the bisecting line,
and let $g$ be defined symmetrically on $C\setminus (A\cup B)$. We have the most of the measure
of $\|\partial(g\circ\tilde F_\ell)_{\star}\mathcal E_{j_\ell}\|$ in $C$ on the solid line segments, 
and the $\mathcal H^1$ measure is strictly less than $\mathcal H^1(S\cap C)$.
Thus for all
large $\ell$, $\|\partial(g\circ\tilde F_\ell)_{\star}\mathcal E_{j_\ell}\|(C)$ is strictly
smaller than $\|(\tilde F_\ell)_{\sharp}\partial\mathcal E_{j_\ell}\|(C)$, which would
lead to a contradiction to the almost measure-minimizing property just as in Lemma \ref{uniden}. 
For $n>1$, the picture is similar. If $S$ has more than three half-lines, again we have
a pair of two half-hyperplanes intersecting with an angle $\leq 90^{\circ}$. Then one 
can construct a Lipschitz map $g$ which is the same on 
$\mathbb R^2\times\{0\}$ as $g$ for $n=1$ and which is homogeneously extended in the $\mathbb R^{n-1}$
direction on $\{(z',\hat z)\in\mathbb R^2\times \mathbb R^{n-1}\,:\, |\hat z|\leq R\}$ for a large $R$. 
On $\{(z',\hat z)\in\mathbb R^2\times\mathbb R^{n-1}\,:\, R\leq |\hat z|\leq R+1\}$, as $|\hat z|$ changes from
$R$ to $R+1$, one can change $g$ on $\mathbb R^2\times\{\hat z\}$ piece-wise smoothly to the identity map.
Then, using the same idea, the reduction of measure in $\{(z',\hat z)\,:\, |\hat z|\leq R\}$ due to the map $g$
is proportional to $R^{n-1}$ for all large $\ell$. The possible increase of mass in $\{(z',\hat z)\,:\,
R\leq |\hat z|\leq R+1\}$ can be estimated by a constant multiple of $R^{n-2}$ and we again see a 
definite amount of reduction for sufficiently large $R$, from which we may derive a contradiction
as in Lemma \ref{uniden}.
\end{proof}

Finally we give a proof of Theorem \ref{chlim}. 
\begin{proof}
The claim that the limit $V$ is measure-minimizing is proved in Lemma \ref{ub3cl}, and the
claim of unit density is proved in Lemma \ref{uniden}. We prove (1)-(4) next. 
For $n=1$, since $V$ is a 1-dimensional stationary integral varifold, \cite{AA} shows that
${\rm spt}\,\|V\|$ consists of locally finite line segments 
with discrete junctions. At these junctions, Lemma \ref{triplelem} shows that they have to be
triple junctions. We claim that there can be only one triple junction at most. If there
is no triple junction, then it is a line, so assume that we have at least one junction. To see that
there cannot be more than one junction, 
we may consider a blow-down limit centered at one of the junctions. We shift the junction
to the origin. By the monotonicity formula,
we know that such limit $\tilde V$ is a cone, and again by Lemma \ref{triplelem},
the limit is also a triple junction (it cannot be a line). This implies that there exists a sequence $r_i\rightarrow\infty$
such that 
$\lim_{i\rightarrow\infty}\|V\|(B_{r_i})/(2r_i)=3/2$. Since the origin is a triple junction, we have
$\|V\|(B_r)/(2r)=3/2$ for sufficiently small
$r$. But then we have $\|V\|(B_r)/(2r)=3/2$ for all $r>0$ by the monotonicity formula and $V$ itself
has to be a cone. Thus $V$ can have at most one triple junction. This proves (1). 
The claim (2) follows from Federer's dimension reducing argument \cite{Federer_book}, Lemma \ref{triplelem}, and by the well-known free-boundary regularity theorem (see for example
\cite{Simon1} and the references therein). 
More precisely, at the top $(n-1)$-dimensional stratum ${\rm sing}_1 V$ of singularities, the tangent cone has to 
be some orthogonal rotation of $|S\times\R^{n-1}|$, and Lemma \ref{triplelem} specifies that 
$S$ has to be a triple junction. Then by \cite{Simon1}, ${\rm sing}_1 V$ has the desired regularity.
Federer's dimension reducing argument shows that the next dimensional stratum ${\rm sing}_2 V$ has Hausdorff dimension $\leq n-2$. 
For the case of $N=2$,
by Lemma \ref{two-sided}, note that there cannot be triple junctions for all dimensions. Thus,
for $n=1$, ${\rm spt}\,\|V\|$ is a line, and for $n\geq 2$, ${\rm sing}_1 V$ is empty. 
Since the tangent cone of $V$ is stable, again
by the dimension reducing argument, the Hausdorff dimension of ${\rm sing}_2 V$ is $\leq n-7$.
For $2\leq n\leq 6$, since the blow-down limit of $V$ is a stable minimal cone with unit
density, it is a multiplicity 1 hyperplane. But then, the monotonicity formula shows that 
${\rm spt}\,\|V\|$ itself needs to be the hyperplane. This proves (3) and (4). 
\end{proof}
\section{Behavior at larger scales} \label{Large}
In this section, we specialize in the case of $n=1$. The idea of the proof is the following.
We know asymptotically how $\partial\mathcal E_{j_\ell}$ looks like within the length scale of $O(r_\ell)$
due to Theorem \ref{chlim}. Namely, they are very close to either a line segment or a triple junction
with three half-lines. 
We would like to patch this local picture together globally. 
To do so, we take advantage of the $L^2$ bound of smoothed
curvature \eqref{boundht} which is a good enough quantity to control the variation of tangent lines
of curves. Smoothing parameter $\varepsilon_{j_\ell}$ is much smaller than $r_\ell$ so that
\eqref{boundht} serves like a real $L^2$ curvature bound for $\partial\mathcal E_{j_\ell}$.
In the following, we first single out 
a ``good portion'' of $\partial\mathcal E_{j_\ell}$
denoted by $ Z_\ell$. We show that $Z_\ell$ looks more or less like a network of $C^{1,\sfrac12}$ curves.
\begin{defi} \label{defZell}
Let $\{r_\ell\}_{\ell=1}^{\infty}$ be a
sequence satisfying \eqref{rg} and \eqref{rg2}. For each $\ell\in \mathbb N$, define
\begin{equation}
\label{defA}
Z_\ell:=\{z\in  \partial\mathcal E_{j_\ell}\,:\, \inf_{r\in(0,r_\ell)}\mathcal H^1(\partial\mathcal E_{j_\ell}\cap B_{r}(z))/2r
\geq  1\}, \hspace{.5cm} Z_\ell^c:=\partial\mathcal E_{j_\ell}\setminus Z_\ell.
\end{equation}
\end{defi}
\begin{lem}
The set $Z_\ell$ is closed and for any $R>0$ we have
\begin{equation}
\label{defB}
r_\ell^2\mathcal H^1(Z_\ell^c\cap B_R)\leq -\frac{\Cl[c]{ca}R^2}{\min_{B_{R+1}}\Omega}\Delta_{j_\ell}\|\partial\mathcal E_{j_\ell}\|(\Omega),
\end{equation}
where $\Cr{ca}>0$ is an absolute constant. 
\end{lem}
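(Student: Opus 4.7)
For the closedness of $Z_\ell$, the plan is to use upper semi-continuity of $z\mapsto\|\partial\E_{j_\ell}\|(B_r(z))$ for each fixed $r>0$. Since $B_r(z_k)\subset B_{r+|z_k-z|}(z)$ and a Radon measure is continuous from above on nested closed sets, this function is upper semi-continuous; moreover $\partial\E_{j_\ell}$ is closed as a finite union of topological boundaries. Hence, if $z_k\in Z_\ell$ and $z_k\to z$, then $z\in\partial\E_{j_\ell}$, and for each $r\in(0,r_\ell)$, $\|\partial\E_{j_\ell}\|(B_r(z))\ge\limsup_k\|\partial\E_{j_\ell}\|(B_r(z_k))\ge 2r$, giving $z\in Z_\ell$.

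For the measure estimate \eqref{defB}, the plan is to exploit the density deficit at each point of $Z_\ell^c$ via admissible Lipschitz deformations and to aggregate the local reductions against the almost measure-minimizing bound \eqref{boundht}. For each $z\in Z_\ell^c\cap B_R$ I would choose a witness radius $r_z\in(0,r_\ell)$ with $\|\partial\E_{j_\ell}\|(B_{r_z}(z))<2r_z$. Applying the $5r$-covering (Vitali) lemma produces a disjoint subfamily $\{B_{r_{z_i}}(z_i)\}_i$ whose $5$-fold enlargements still cover $Z_\ell^c\cap B_R$; combined with the upper density bound of Lemma~\ref{ub}, this gives $\mathcal H^1(Z_\ell^c\cap B_R)\le C\sum_i r_{z_i}$, while area-packing of the disjoint disks in $B_{R+r_\ell}\subset\R^2$ yields $\sum_i r_{z_i}^2\le 4R^2$.

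The next step is to construct, inside each $B_{r_{z_i}}(z_i)$, a Lipschitz map $f_i$ supported there whose partition push-forward strictly reduces local mass. The natural tool is the coarea identity
\[
\int_0^{r_{z_i}}\mathcal H^0\!\big(\partial B_s(z_i)\cap\partial\E_{j_\ell}\big)\,ds=\|\partial\E_{j_\ell}\|(B_{r_{z_i}}(z_i))<2r_{z_i},
\]
which forces some intermediate radius $s_i\in(0,r_{z_i})$ at which $\partial\E_{j_\ell}$ meets $\partial B_{s_i}(z_i)$ in at most two points; inside $B_{s_i}(z_i)$ one then replaces $\partial\E_{j_\ell}$ by a partition-compatible chord (or by a cone biased toward the longest phase-arc of $\partial B_{s_i}(z_i)$) of strictly smaller mass. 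The combined map $f$ is then placed in $\mathbf E(\E_{j_\ell},j_\ell)$ via Lemma~\ref{echeck}: (a) follows from $r_{z_i}\le r_\ell\ll j_\ell^{-2}$; (c) from $\sum_i\mathcal L^2(B_{r_{z_i}}(z_i))\le\pi\sum_ir_{z_i}^2\le 4\pi R^2\ll j_\ell$ for all large $\ell$; and (d) is exactly the local reduction. An $\Omega$-weighting argument analogous to \eqref{ch4} then converts the aggregate reduction into the lower bound $-\Delta_{j_\ell}\|\partial\E_{j_\ell}\|(\Omega)\ge c\min_{B_{R+1}}\Omega\cdot\sum_i(\text{reduction})_i$.

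The hard part will be the precise calibration of the per-ball reduction so that the $r_\ell^2$ factor on the left of \eqref{defB} emerges after summation. Because the density deficit $1-\|\partial\E_{j_\ell}\|(B_{r_{z_i}}(z_i))/(2r_{z_i})$ may be arbitrarily close to zero, there is no uniform per-ball lower bound on the reduction comparable to $r_{z_i}$; instead the bookkeeping must leverage the at-most-two-endpoints structure on $\partial B_{s_i}(z_i)$ (so that the minimal partition-respecting replacement inside $B_{s_i}(z_i)$ is a straight chord of length $\le 2s_i$), together with $r_{z_i}\le r_\ell$ and $\sum_ir_{z_i}^2\le 4R^2$, to match the powers of $r_\ell$ and $R$ in \eqref{defB}. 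Everything else -- the covering, the admissibility verification via Lemma~\ref{echeck}, and the $\Omega$-weighted comparison -- is routine.
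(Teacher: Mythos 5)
Your closedness argument is fine and in the same spirit as the paper's (the paper shows $Z_\ell^c$ is relatively open in $\partial\mathcal E_{j_\ell}$; yours uses upper semicontinuity of $z\mapsto\|\partial\mathcal E_{j_\ell}\|(B_r(z))$, which is equivalent).

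For the measure estimate, however, there are two concrete gaps, and both are fixed by a step you did not take.

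First, the coarea bound $\int_0^{r_z}\mathcal H^0(\partial\mathcal E_{j_\ell}\cap\partial B_s(z))\,ds<2r_z$ forces some $s_z\in(0,r_z)$ with $\mathcal H^0(\partial\mathcal E_{j_\ell}\cap\partial B_{s_z}(z))\in\{0,1\}$ --- at most \emph{one} point, not two. This is the crucial output: it makes $\partial B_{s_z}(z)\setminus\partial\mathcal E_{j_\ell}$ connected, hence contained in a \emph{single} phase $E_{j_\ell,i}$. A retraction that expands a small interior disc of that phase onto all of $B_{s_z}(z)$ then produces a partition whose boundary has \emph{zero} measure in $B_{s_z}(z)$. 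The per-ball reduction is thus exactly $\|\partial\mathcal E_{j_\ell}\|(B_{s_z}(z))$, and since $Z_\ell^c\subset\partial\mathcal E_{j_\ell}$, the Besicovitch sum of these reductions dominates $\mathcal H^1(Z_\ell^c)$ directly. Your chord-replacement (driven by the erroneous ``two points'') does not collapse mass to zero and leaves the reduction uncontrolled, which is precisely the worry you flag about the density deficit being arbitrarily small; with the one-point structure that worry disappears.

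Second, your admissibility check for Lemma~\ref{echeck}(c) reads $4\pi R^2\ll j_\ell$, but (c) demands $\mathcal L^2(E_i\triangle\tilde E_i)\le 1/j_\ell$, and $4\pi R^2$ does not satisfy this for large $j_\ell$. This is exactly why the paper first covers $B_R$ by $\lesssim R^2/r_\ell^2$ balls of radius $r_\ell$ and runs the retraction argument separately within each such ball: then the disjoint Besicovitch family lives inside one $r_\ell$-ball, so $\sum_k s_{z_k}^2\lesssim r_\ell^2\ll 1/j_\ell$ and (c) holds. This subdivision is also what produces the factor $r_\ell^2$ on the left of \eqref{defB}: each $r_\ell$-ball yields the same bound $\mathcal H^1(Z_\ell^c\cap B_{r_\ell}(\tilde z))\le -\frac{c}{\min_{B_{R+1}}\Omega}\Delta_{j_\ell}\|\partial\mathcal E_{j_\ell}\|(\Omega)$ against the one global quantity $\Delta_{j_\ell}$, and summing over the $\lesssim R^2/r_\ell^2$ balls gives \eqref{defB}. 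Your Vitali/area-packing route never manufactures this factor, which is why you found the calibration ``hard''; the difficulty is resolved by the subdivision, not by more careful per-ball bookkeeping.
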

\begin{rk} \label{gork} Once we have \eqref{defB},
combined with \eqref{dsmall} and $r_\ell=1/(j_\ell)^{2.5}$, we have
\begin{equation}
\label{dsmall2}
\mathcal H^1(Z_\ell^c\cap B_R)< \frac{\Cr{ca}R^2}{\min_{B_{R+1}}\Omega} \Cr{c_2} j_\ell^{-138} r_\ell^{-2}
=\frac{R^2\Cr{ca}\Cr{c_2}}{\min_{B_{R+1}}\Omega}\, j_\ell^{-133},
\end{equation}
which is negligibly small even if we rescale $Z_\ell$ by $1/r_\ell$. Also
the limits of $\partial\mathcal E_{j_\ell}$ and $Z_\ell$ as measures are equal. 
\end{rk}
\begin{proof} If $z\in Z_\ell^c$, there exists some $r\in (0,r_\ell)$ such that $\mathcal H^1(\partial
\mathcal E_{j_\ell}\cap B_r(z))<2r$. Then, there exists some 
$\epsilon>0$ such that $\mathcal H^1(\partial
\mathcal E_{j_\ell}\cap B_r(\tilde z))<2r$ for all $\tilde z\in U_{\epsilon}(z)$ and thus $Z_\ell\cap
U_{\epsilon}(z)=\emptyset$. This shows that $Z_\ell^c$ is relatively open in $\partial\mathcal E_{j_\ell}$ and since $\partial\mathcal E_{j_\ell}$ itself is closed, $Z_\ell$ is closed in $\R^2$.  
Let $B_R$ be covered by a union of balls of radius $r_\ell$ so that the number of balls is bounded by
$16R^2(r_\ell)^{-2}$, which can be done easily. Let $B_{r_\ell}(\tilde z)$ be any of such balls. If we prove
that 
\begin{equation}
\label{defB2}
\mathcal H^1(Z_\ell^c\cap B_{r_\ell}(\tilde z))
\leq -\frac{c}{\min_{B_{R+1}}\Omega} \Delta_{j_\ell}\|\partial\mathcal E_{j_\ell}\|(\Omega)
\end{equation}
for an absolute constant $c$, we will be done. For simplicity, rewrite
$Z_\ell^c\cap B_{r_\ell}(\tilde z)$ as $Z_\ell^c$. 
For each point $z\in Z^c_\ell$, by \eqref{defA}, there exists $r\in(0,r_\ell)$ such that
$\mathcal H^1(\partial\mathcal E_{j_\ell}\cap B_r(z))<2r$. Then, since 
$$\int^{r}_{0}\mathcal H^0(\partial\mathcal E_{j_\ell}\cap\partial B_s(z))\, ds\leq \mathcal H^1(\partial\mathcal E_{j_\ell}\cap B_r(z))<2r,$$ 
we must have some $s_z\in (0,r)$ such that $\mathcal H^0(\partial\mathcal E_{j_\ell}\cap
\partial B_{s_z}(z))=1$ or $=0$. Consider a covering of $Z_\ell^c$ by $\{B_{s_z}(z)\}_{z\in Z_\ell^c}$.
By the Besicovitch covering theorem, there exists a subfamily which consists of mutually disjoint
balls (and at most countable) $\{B_{s_{z_k}}(z_k) \}_{k}$ such that 
\begin{equation}
\label{defB3}
\mathcal H^1(Z_\ell^c)\leq {\bf B}_2 \sum_{k} \mathcal H^1(Z_\ell^c\cap B_{s_{z_k}}(z_k)),
\end{equation}
where ${\bf B}_2$ is the constant appearing in the Besicovitch covering theorem. 
Let $k_0\in\mathbb N$ be chosen so that 
\begin{equation}
\label{defB4}
\sum_k\mathcal H^1(Z_\ell^c\cap B_{s_{z_k}}(z_k))\leq 2\sum_{k=1}^{k_0}\mathcal H^1(Z_\ell^c\cap 
B_{s_{z_k}}(z_k)).
\end{equation}
Since $Z_\ell^c\subset\partial\mathcal E_{j_\ell}$, \eqref{defB3} and \eqref{defB4} show
\begin{equation}
\label{defB4sup}
\mathcal H^1(Z_\ell^c)\leq 2{\bf B}_2\mathcal \|\partial\mathcal E_{j_\ell}
\|(\cup_{k=1}^{k_0} B_{s_{z_k}}(z_k)).
\end{equation}
We next fix a Lipschitz map $f$ which is the identity map on $\mathbb R^{2}\setminus 
\cup_{k=1}^{k_0}U_{s_{z_k}}(z_k)$.  On each $U_{s_{z_k}}(z_k)$, we define $f$ as follows. 
For each $k=1,\ldots,k_0$, 
$\partial\mathcal E_{j_\ell}\cap \partial B_{s_{z_k}}(z_k)$ consists of at most one point
and $\partial B_{s_{z_k}}(z_k)\setminus \partial\mathcal E_{j_\ell}$ is connected.  Thus 
for each $k$, there is
one component of $\mathcal E_{j_\ell}=\{E_{j_\ell,i}\}_{i=1}^N$, say $E_{j_\ell,i(k)}$, such that $\partial B_{s_{z_k}}(z_k)\setminus \partial\mathcal E_{j_\ell}\subset E_{j_\ell,i(k)}$. Choose a ball $B_{s'_{k}}(z'_{k})$ such that  $B_{s_k'}(z_k')\subset  U_{s_{z_k}}(z_k)\cap E_{j_\ell,i(k)}$ and consider a Lipschitz retraction map $f$ which expands $B_{s'_{k}}(z'_k)$ bijectively to
$B_{s_{z_k}}(z_k)$ and maps $B_{s_{z_k}}(z_k)\setminus B_{s'_k}(z'_k)$ onto $\partial B_{s_{z_k}}(z_k)$. Since $f(B_{s_k'}(z_k'))
=B_{s_{z_k}}(z_k)$ and $B_{s_k'}(z_k')\subset E_{j_\ell,i(k)}$, $f$ has the 
property that 
\begin{equation}
\label{defB5}
B_{s_{z_k}}(z_k)\setminus (\partial\mathcal E_{j_\ell}\cap\partial B_{s_{z_k}}(z_k))
\subset {\rm int}\,f(E_{j_\ell,i(k)}).
\end{equation}
Now one can check that this $f$ is $\mathcal E_{j_\ell}$-admissible since it is a retraction map on 
each disjoint balls. By writing $\tilde{\mathcal E}_{j_\ell}:=f_{\star}\mathcal E_{j_\ell}$ and 
$\{\tilde E_{j_\ell,i}\}_{i=1}^N:=\tilde{\mathcal E}_{j_\ell}$,  
note that $B_{s_{z_k}}(z_k)\setminus (\partial\mathcal E_{j_\ell}\cap\partial B_{s_{z_k}}(z_k))
\subset \tilde E_{j_\ell,i(k)}$ for each $k=1,\ldots,k_0$. Since 
$\partial\mathcal E_{j_\ell}\cap\partial B_{s_{z_k}}(z_k)$ is a point or empty set, 
we have
\begin{equation}
\label{defB6}
\|\partial\tilde{\mathcal E}_{j_\ell}\|(\cup_{k=1}^{k_0} B_{s_{z_k}}(z_k))=0.
\end{equation}
It follows from \eqref{defB6} and Definition \ref{boldE} that $f\in {\bf E}(\mathcal E_{j_\ell},
j_\ell)$. It follows also from Definition \ref{tridef} that
\begin{equation}
\label{defB7}
\Delta_{j_\ell}\|\partial\mathcal E_{j_\ell}\|(\Omega)\leq -\|\partial{\mathcal E}_{j_\ell}\|
\mres_{\cup_{k=1}^{k_0}B_{s_{z_k}}(z_k)}(\Omega)\leq -(\min_{B_{R+1}} \Omega)\,\|\partial
{\mathcal E}_{j_\ell}\|(\cup_{k=1}^{k_0}B_{s_{z_k}}(z_k)).
\end{equation}
Combining \eqref{defB4sup} and \eqref{defB7} and setting $c=2{\bf B}_2$, we obtain
\eqref{defB2}, and subsequently \eqref{defB} with $\Cr{ca}=16c$. 
\end{proof}

\begin{lem}
\label{haudi}
Fix an arbitrary large $R>0$. 
Depending only on $\Cr{c_2}$, $\min_{B_{R+1}}\Omega$ and $\sup_{\ell} \|\partial\mathcal E_{j_\ell}\|(B_{R+2})$,
there exists a positive constant $\Cl[c]{c_4}\in(0,1)$ such that the following property holds for all sufficiently large $\ell$. 
For $z\in Z_\ell\cap B_R$ and $r\in (0,\Cr{c_4}]$, we have
\begin{equation}
\label{haudire}
\|\partial\mathcal E_{j_\ell}\|(B_r(z))\geq r/8.
\end{equation}
\end{lem}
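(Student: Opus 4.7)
The argument naturally splits into two regimes for $r$.

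For $r\in(0,r_\ell]$ the estimate is immediate from Definition \ref{defZell}: since $z\in Z_\ell$, one has $\|\partial\mathcal E_{j_\ell}\|(B_r(z))/(2r)\geq 1$, hence $\|\partial\mathcal E_{j_\ell}\|(B_r(z))\geq 2r\geq r/8$. All the work is therefore in the range $r\in(r_\ell,\Cr{c_4}]$.

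For this range the plan is to derive an approximate monotonicity formula for $\partial\mathcal E_{j_\ell}$ and propagate the lower density ratio from scale $r_\ell$ up to scale $r$, using \eqref{boundht} as a proxy for an $L^2$ mean-curvature bound. Since $\Omega\geq\min_{B_{R+1}}\Omega>0$ on $B_{R+1}$ and the right-hand side of \eqref{boundht} is at most $\Cr{c_2}$, one immediately gets the localized estimate
\[
\int_{B_{R+1}}\frac{|\Phi_{\varepsilon_{j_\ell}}\ast\delta(\partial\mathcal E_{j_\ell})|^2}{\Phi_{\varepsilon_{j_\ell}}\ast\|\partial\mathcal E_{j_\ell}\|+\varepsilon_{j_\ell}\Omega^{-1}}\,dz \leq \frac{\Cr{c_2}}{\min_{B_{R+1}}\Omega}.
\]
Because $\varepsilon_{j_\ell}\ll r_\ell$ by \eqref{caMCF5.5} and \eqref{rg2}, and $\sup_\ell\|\partial\mathcal E_{j_\ell}\|(B_{R+2})<\infty$, this plays the role of a uniform weighted $L^2$ bound for the approximate mean curvature of $\partial\mathcal E_{j_\ell}$ on $B_{R+1}$ at scales $\geq r_\ell$. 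Testing the smoothed first variation against a standard radial cutoff vector field $g_s(\tilde z)=(\tilde z-z)\phi_s(|\tilde z-z|)$ and integrating in $s$ from $r_\ell$ to $r$, and applying Cauchy--Schwarz on the curvature term against the mass bound, leads to an inequality of the shape
\[
\frac{\|\partial\mathcal E_{j_\ell}\|(B_r(z))}{2r}\geq \frac{\|\partial\mathcal E_{j_\ell}\|(B_{r_\ell}(z))}{2r_\ell}-C\,r^{1/2}-o_\ell(1),
\]
where $C$ depends only on $\Cr{c_2}$, $\min_{B_{R+1}}\Omega$, and $\sup_\ell\|\partial\mathcal E_{j_\ell}\|(B_{R+2})$, and $o_\ell(1)\to 0$ as $\ell\to\infty$ collects the errors from the $\varepsilon_{j_\ell}$-smoothing. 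Since $z\in Z_\ell$ gives $\|\partial\mathcal E_{j_\ell}\|(B_{r_\ell}(z))/(2r_\ell)\geq 1$, choosing $\Cr{c_4}\in(0,1)$ so that $C\Cr{c_4}^{1/2}\leq 7/8$ and taking $\ell$ large enough that $o_\ell(1)\leq 1/16$ yields $\|\partial\mathcal E_{j_\ell}\|(B_r(z))/(2r)\geq 1/16$, which is stronger than the claimed $r/8$.

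The main technical obstacle will be to convert the smoothed variational bound \eqref{boundht} into a bona fide approximate monotonicity for $\partial\mathcal E_{j_\ell}$ with constants independent of $\ell$. Concretely one has to (i) use the identities \eqref{defi4}--\eqref{defi5} so that testing the convolved first variation against the radial cutoff is equivalent, up to a controllable remainder, to testing the genuine first variation of $\Phi_{\varepsilon_{j_\ell}}\ast\partial\mathcal E_{j_\ell}$; (ii) pass from the weighted integrand of \eqref{boundht} to an unweighted $L^2$ curvature bound on $B_{R+1}$, absorbing $\Omega$ and the regularisation term $\varepsilon_{j_\ell}\Omega^{-1}$ into the stated constants; and (iii) check that the resulting error is uniformly $o_\ell(1)$ throughout the range $r\in(r_\ell,\Cr{c_4}]$, which reduces to estimating convolutions of radial cutoffs at scales $\geq r_\ell\gg\varepsilon_{j_\ell}$. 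Beyond these bookkeeping issues, the rest is the textbook monotonicity calculation for integral $1$-varifolds with $L^2$ mean curvature.
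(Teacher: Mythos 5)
Your plan is essentially the paper's proof. The paper likewise splits $r\in(0,r_\ell]$ (immediate from Definition \ref{defZell}) from $r\in(r_\ell,\Cr{c_4}]$, and in the latter range applies Allard's monotonicity formula \cite[5.1(1)]{Allard} to the smoothed varifold $\Phi_{\varepsilon_{j_\ell}}\ast\partial\mathcal E_{j_\ell}$, bounds the first-variation term by Cauchy--Schwarz against \eqref{boundht}, integrates the resulting ODE, and transfers the lower bound back from the smoothed measure to $\|\partial\mathcal E_{j_\ell}\|$; this is exactly the sequence of steps you describe as (i)--(iii), with the only cosmetic difference that the paper invokes Allard's formula directly rather than re-deriving it from a radial cutoff, and the intermediate inequality naturally comes out controlling $(\hat\xi(r))^{1/2}$ with a $-cr^{1/2}$ correction (i.e.\ $(\hat\xi^{1/2}+cs^{1/2})'\ge 0$) rather than $\hat\xi(r)$ itself as in your sketch, which is the form you would actually obtain after Cauchy--Schwarz. (Also note $\|\partial\mathcal E_{j_\ell}\|(B_r(z))/(2r)\ge 1/16$ is equivalent to $\ge r/8$, not strictly stronger.)
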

\begin{proof} By Definition \ref{defZell}, \eqref{haudire} is satisfied for $r\in (0,r_\ell]$ already.
Thus we need to prove the case for $r\in (r_\ell,\Cr{c_4}]$ for a suitable $\Cr{c_4}$.    
By \cite[5.1(1)]{Allard}, for general varifold $V\in {\bf V}_1(\R^{2})$ with locally bounded first variation
$\|\delta V\|$ and $z\in \R^{2}$, we have
\begin{equation}
\label{haudi1}
s_2^{-1}\|V\|(B_{s_2}(z))\exp\big(\int_{s_1}^{s_2}\delta V(\kappa(V,z,r))\,dr\big)-s_1^{-1}\|V\|(B_{s_1}
(z))\geq 0.
\end{equation}
Here, ${\rm dist}(z,{\rm spt}\|V\|)<s_1<s_2<\infty$ and $\kappa(V,z,r)$ is a vector field defined by 
\[
\kappa(V,z ,r)(\tilde z):=\left\{\begin{array}{ll} (r\|V\|(B_r(z)))^{-1}(\tilde z-z) & \mbox{if }
\tilde z\in B_r(z),\\
0 & \mbox{if }\tilde z\notin B_r(z). \end{array}\right.
\]
We use \eqref{haudi1} with $V=\Phi_{\varepsilon_{j_\ell}}\ast\partial\E_{j_\ell}$, $z\in Z_\ell\cap B_R$
and $r_\ell\leq s_1<s_2\leq 1$.  
By definition, $|\kappa(V,z,r)(\tilde z)|\leq (\|V\|(B_r(z)))^{-1}$ for any $\tilde z\in\mathbb R^2$ 
(note that $\kappa(V,z,r)$ vanishes outside of $B_r(z)$) and we may estimate
\begin{equation}
\label{haudi2}
\begin{split}
|\delta (\Phi_{\varepsilon_{j_\ell}}\ast\partial\mathcal E_{j_\ell})(\kappa(\Phi_{\varepsilon_{j_\ell}}
\ast\partial\mathcal E_{j_\ell},z,r))|&=|(\Phi_{\varepsilon_{j_\ell}}\ast\delta(\partial\mathcal E_{j_\ell}))(\kappa(\Phi_{\varepsilon_{j_\ell}}
\ast\partial\mathcal E_{j_\ell},z,r))| \\
&\leq ((\Phi_{\varepsilon_{j_\ell}}\ast\|\partial\mathcal E_{j_\ell}\|)(B_r(z)))^{-1}
\int_{B_r(z)}|\Phi_{\varepsilon_{j_\ell}}\ast\delta(\partial\mathcal E_{j_\ell})|.
\end{split}
\end{equation}
We used \eqref{defi5}, \eqref{defi3} and \eqref{defi1} here. Next, by the Cauchy-Schwarz inequality and
\eqref{boundht}, 
\begin{equation}
\label{haudi3}
\begin{split}
\int_{B_r(z)}|\Phi_{\varepsilon_{j_\ell}}\ast\delta(\partial\mathcal E_{j_\ell})|
&\leq \Big(\int\frac{|\Phi_{\varepsilon_{j_\ell}}\ast\delta(\partial\mathcal E_{j_\ell})|^2
\Omega}{\Phi_{\varepsilon_{j_\ell}}\ast\|\partial\mathcal E_{j_\ell}\|+\varepsilon_{j_\ell}\Omega^{-1}}
\Big)^{\frac12}\Big(\int_{B_r(z)}\frac{\Phi_{\varepsilon_{j_\ell}}\ast\|\partial\mathcal E_{j_\ell}\|
+\varepsilon_{j_\ell}\Omega^{-1}}{\Omega}\Big)^{\frac12} \\
&\leq c_2^{\sfrac12}(\min_{B_{R+1}}\Omega)^{-\sfrac12}\{(\Phi_{\varepsilon_{j_\ell}}
\ast\|\partial\mathcal E_{j_\ell}\|)(B_r(z))+\varepsilon_{j_\ell}(\min_{B_{R+1}}\Omega)^{-1}2\pi 
r^2\}^{\sfrac12}. 
\end{split}
\end{equation}
The last term can be controlled as follows. We have
\begin{equation}
\label{haudi4}
(\Phi_{\varepsilon_{j_\ell}}\ast\|\partial\mathcal E_{j_\ell}\|)(B_r(z))=\|\partial\mathcal E_{j_\ell}\|
(\Phi_{\varepsilon_{j_\ell}}\ast\chi_{B_r(z)})\geq \frac12 \|\partial\mathcal E_{j_\ell}\|(B_{r/2}(z))
\geq \frac12 r_\ell
\end{equation}
for all large $\ell$ using $r\geq r_\ell$ and $\varepsilon_{j_\ell}\ll r_\ell$. The last inequality
follows from
$\|\partial\mathcal E_{j_\ell}\|(B_{r/2}(z))\geq r_\ell$ for $r\geq r_\ell$ due to \eqref{defA} and
$z\in Z_\ell$. Again by $\varepsilon_{j_\ell}\ll r_\ell$ and \eqref{haudi4}, for any $r\in [r_\ell,1]$ and sufficiently large $\ell$, 
\begin{equation}
\label{haudi5}
\varepsilon_{j_\ell}(\min_{B_{R+1}}\Omega)^{-1}2\pi r^2\leq \varepsilon_{j_\ell}(\min_{B_{R+1}}\Omega)^{-1}2\pi\leq \frac12 r_\ell\leq (\Phi_{\varepsilon_{j_\ell}}\ast\|\partial\mathcal E_{j_\ell}\|)(B_r(z)).
\end{equation}
By \eqref{haudi2}, \eqref{haudi3} and \eqref{haudi5}, for $r\in [r_\ell,1]$, we obtain
\begin{equation}
\label{haudi6}
|\delta (\Phi_{\varepsilon_{j_\ell}}\ast\partial\mathcal E_{j_\ell})(\kappa(\Phi_{\varepsilon_{j_\ell}}
\ast\partial\mathcal E_{j_\ell},z,r))|
\leq (2c_2)^{\sfrac12}(\min_{B_{R+1}}\Omega)^{-\sfrac12} \{(\Phi_{\varepsilon_{j_\ell}}\ast\|\partial\mathcal E_{j_\ell}\|)(B_r(z))\}^{-\sfrac12}.
\end{equation}
Substituting \eqref{haudi6} into \eqref{haudi1} and writing $\xi(r):=(\Phi_{\varepsilon_{j_\ell}}\ast\|\partial\mathcal E_{j_\ell}\|)(B_r(z))$ and $c^2:=2c_2(\min_{B_{R+1}}\Omega)^{-1}$, 
we obtain for $r_\ell\leq s_1<s_2\leq 1$
\begin{equation}
\label{haudi7}
s_2^{-1}\xi(s_2)\exp\big(c\int_{s_1}^{s_2}\xi(r)^{-\sfrac12}\,dr\big)\geq s_1^{-1}\xi(s_1). 
\end{equation}
Note that $\xi$ is a smooth positive function, and \eqref{haudi7} shows that $s^{-1}\xi(s)\exp(c
\int^s\xi^{-\sfrac12})$ is a monotone increasing function. After setting $\hat\xi(s):=s^{-1}\xi(s)$
and by differentiation, we obtain $(\hat\xi^{\sfrac12}+cs^{\sfrac12})'\geq 0$. Since 
$\hat\xi(r_\ell)\geq \frac12$ by \eqref{haudi4}, for any $r\in[r_\ell,1]$ we obtain
\begin{equation}
\label{haudi8}
r^{-1}(\Phi_{\varepsilon_{j_\ell}}\ast\|\partial\mathcal E_{j_\ell}\|)(B_r(z))
=\hat\xi(r)^{\sfrac12}\geq -cr^{\sfrac12}+\hat\xi(r_\ell)^{\sfrac12}+cr_\ell^{\sfrac12}
\geq -cr^{\sfrac12}+1/2^{\sfrac12}.
\end{equation}
Thus, we restrict $r$ so that $cr^{\sfrac12}<1/4$, for example, we obtain a positive lower bound
for the density ratio of $\Phi_{\varepsilon_{j_\ell}}\ast\|\partial\mathcal E_{j_\ell}\|$. Finally, we can estimate as
\begin{equation}
\label{haudi9}
\Phi_{\varepsilon_{j_\ell}}\ast\chi_{B_{r/2}(z)}\mres_{\R^2\setminus B_r(z)}
\leq e^{-\varepsilon_{j_\ell}^{-1}}\chi_{B_{R+2}}
\end{equation}
for all large $\ell$. This is because, for $\tilde z\notin B_{R+2}$, $(\Phi_{\varepsilon_{j_\ell}}\ast\chi_{B_{r/2}(z)})(\tilde z)=0$ since ${\rm spt}\,\Phi_{\varepsilon}\subset B_1$, $z\in B_R$ and $r\leq 1$. For 
$\tilde z\notin B_r(z)$, (cf. \eqref{tphi})
$$(\Phi_{\varepsilon_{j_\ell}}\ast\chi_{B_{r/2}(z)})(\tilde z)\leq 
c(\varepsilon_{j_\ell})(2\pi\varepsilon_{j_\ell})^{-2}\exp(-r^2/8\varepsilon_{j_\ell}^2)\leq 
c(\varepsilon_{j_\ell})(2\pi\varepsilon_{j_\ell})^{-2}\exp(-r_\ell^2/8\varepsilon_{j_\ell}^2)\leq \exp(-\varepsilon_{j_\ell}^{-1})$$  
for all large $\ell$. This proves \eqref{haudi9}. 
We use inequality $\chi_{B_r(z)}+(\Phi_{\varepsilon_{j_\ell}}\ast\chi_{B_{r/2}(z)}
)\mres_{\R^2\setminus B_r(z)}\geq \Phi_{\varepsilon_{j_\ell}}\ast\chi_{B_{r/2}(z)}
$ (note that $\Phi_{\varepsilon_{j_\ell}}\ast\chi_{B_{r/2}(z)}\leq 1$ since $\int_{\mathbb R^{n+1}}
\Phi_{\varepsilon}=1$) and \eqref{haudi9} to derive
\begin{equation}
\label{haudi10}
\|\partial\mathcal E_{j_\ell}\|(B_r(z)) 
\geq (\Phi_{\varepsilon_{j_\ell}}\ast\|\partial\mathcal E_{j_\ell}\|)(B_{r/2}(z))-e^{-\varepsilon^{-1}_{
j_\ell}}\|\partial\mathcal E_{j_\ell}\|(B_{R+2}).
\end{equation}
The last exponentially small term converges to $0$ even after dividing by $r\geq r_\ell$. Thus, using \eqref{haudi8} with $r/2$
in place of $r$ and \eqref{haudi10}, we obtain the desired estimate. 
\end{proof}

\begin{lem}
\label{defC4}
Let $R,s>0$ be fixed and let $\epsilon>0$ be arbitrary. Then there exists $\ell_0\in\mathbb N$ 
such that the following holds for all $\ell\geq \ell_0$. If $N\geq 3$, for $z\in Z_\ell\cap B_R$, 
at least one of the following (a) or (b) holds (they are not mutually exclusive). If $N=2$, 
(a) holds for $z\in Z_\ell\cap B_R$. 
\newline
(a) There exists a line denoted by $S$ such that $z\in S$ and 
\begin{equation}
\label{defC1}
\begin{split}
{\rm dist}_H(Z_\ell\cap B_{r_\ell }(z),S\cap B_{r_\ell }(z))\leq \epsilon r_\ell.
\end{split}
\end{equation}
(b) There exists a triple junction with three half-lines denoted by $S$ such that $z\in S$ and with the junction in $U_{r_\ell}(z)$
and such that
\begin{equation}
\label{defC1tri}
{\rm dist}_H(Z_\ell\cap B_{2r_\ell }(z),S\cap B_{2r_\ell }(z))\leq \epsilon r_\ell.
\end{equation}
(c) Furthermore, in the case of (a), with the same $S$ and 
for any $\phi\in C_c^1(U_{r_\ell }(z))$ with $\sup\,(|\phi|+r_\ell|\nabla\phi|)\leq s$,
we have
\begin{equation}
\label{sdefC1}
\Big|\int_{B_{r_\ell}(z)}\phi(\tilde z)\,d\|\partial\mathcal E_{j_\ell}\|(\tilde z)
-\int_{B_{r_\ell}(z)\cap S}\phi(\tilde z)\,d\mathcal H^1(\tilde z)\Big|\leq \epsilon r_\ell
\end{equation}
and
\begin{equation}
\label{defC2}
 \Big|\int_{{\bf G}_1(B_{r_\ell }(z))} \tilde S\phi(\tilde z)\,d(\partial\mathcal E_{j_\ell})
 (\tilde z,\tilde S)-\int_{{\bf G}_1(B_{r_\ell }(z))} \tilde S\phi(\tilde z)
 \,d(|S|)(\tilde z,\tilde S)\Big|\leq \epsilon r_\ell.
\end{equation}
\end{lem}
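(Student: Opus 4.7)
The plan is to argue by contradiction and exploit Theorem \ref{chlim} on the scale $r_\ell$. Suppose the conclusion fails for some fixed $R,s,\epsilon>0$. Then, passing to a subsequence (still indexed by $\ell$), there exist points $z^{(\ell)}\in Z_\ell\cap B_R$ violating the assertion. Set $F_\ell(z):=(z-z^{(\ell)})/r_\ell$ and $V_\ell:=(F_\ell)_\sharp \partial\mathcal E_{j_\ell}$ exactly as in \eqref{dilf}--\eqref{vdef}. By Lemma \ref{ub} the sequence $\{V_\ell\}$ has uniformly bounded mass on compact sets, so after a further subsequence $V_\ell\rightharpoonup V$ as varifolds for some $V\in{\bf V}_1(\R^2)$. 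Moreover, by Lemma \ref{haudi} with $r=\Cr{c_4}$ applied at $z^{(\ell)}$, we get $\|V_\ell\|(B_{\Cr{c_4}/r_\ell})\geq \Cr{c_4}/(8r_\ell)$, which together with $r_\ell\to 0$ guarantees $0\in{\rm spt}\,\|V\|$ and $V\not\equiv 0$.

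Now I would invoke Theorem \ref{chlim} to classify $V$. For $N=2$ and $n=1$, case (3) of that theorem forces ${\rm spt}\,\|V\|$ to be a line through the origin (with unit density). For $N\geq 3$ and $n=1$, case (1) says ${\rm spt}\,\|V\|$ is either a line through the origin or a triple junction with its vertex at some point $z_0\in\R^2$. In the triple-junction situation, I would split according to whether $z_0\in U_1$ or $z_0\notin U_1$. In the latter case, ${\rm spt}\,\|V\|$ restricted to $B_2$ consists of line segments that locally look like a line through $0$ (they do not bend inside $B_2$), and I would simply take the line $S$ to be any of those half-lines through $0$; this reduces to case (a). In the former case we are in case (b), and $z_0\in U_1$ translates back to the junction lying in $U_{r_\ell}(z^{(\ell)})$ for large $\ell$.

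The next step is to upgrade varifold convergence to Hausdorff convergence of supports on the relevant ball. The inclusion ${\rm spt}\,\|V\|\cap B_2 \subset \liminf {\rm spt}\,\|V_\ell\|\cap B_2$ is immediate from weak convergence of measures. The reverse inclusion $\limsup {\rm spt}\,\|V_\ell\|\subset {\rm spt}\,\|V\|$ uses the lower density bound of Lemma \ref{haudi}: if $\tilde z_\ell\in Z_\ell\cap B_{r_\ell\cdot 2}(z^{(\ell)})$ rescales to $\tilde z\in B_2$, then $\|V_\ell\|(B_\rho(\tilde z))\geq \rho/8$ for all small $\rho$, which passes to the limit and forces $\tilde z\in{\rm spt}\,\|V\|$. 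To extend this to all of ${\rm spt}\,\|V_\ell\|\cap B_2$ (not only $Z_\ell$), I would use \eqref{dsmall2}, which shows $Z_\ell^c$ occupies $\mathcal H^1$-measure on the original scale that is $o(r_\ell)$; a simple covering argument then shows that every point of $\partial\mathcal E_{j_\ell}\cap B_{2r_\ell}(z^{(\ell)})$ lies within distance $o(r_\ell)$ of $Z_\ell$, so that after rescaling the Hausdorff neighborhoods collapse. Combined, these give \eqref{defC1} or \eqref{defC1tri} for large $\ell$, contradicting the assumption that $z^{(\ell)}$ violates the conclusion.

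It remains to obtain (c), which applies only in case (a). There, choosing test functions as pull-backs $\tilde\phi(z'):=\phi(z^{(\ell)}+r_\ell z')$, which lie in $C_c^1(U_1)$ with $\sup(|\tilde\phi|+|\nabla\tilde\phi|)\leq s$, one rewrites the integrals in \eqref{sdefC1} and \eqref{defC2} as scaled integrals against $V_\ell$ and $|S|$ respectively. The weak varifold convergence $V_\ell \rightharpoonup V=|S|$ yields both identities modulo an $o(r_\ell)$ error, which is exactly the bound requested. The main technical nuisance will be the bookkeeping between the rescaled and original scales (factors of $r_\ell$ in the bounds) and the dichotomy in case $N\geq 3$ between the genuine triple-junction limit, the degenerate case where the junction sits outside $U_1$ in the blow-up, and the pure line limit; once the Hausdorff/mass convergence on $B_2$ is set up, these all collapse into the statement of the lemma.
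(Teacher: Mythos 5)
Your proposal follows essentially the same strategy as the paper's proof: argue by contradiction, blow up at the offending points $z^{(\ell)}$ by $F_\ell$, pass to a varifold limit $V$, classify $V$ via Theorem \ref{chlim}, and upgrade varifold convergence to Hausdorff convergence of $(F_\ell)_\sharp Z_\ell$ using a lower density bound. That is the right plan, but several specific steps as written do not go through.

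First, the deduction that $0\in{\rm spt}\,\|V\|$ via Lemma \ref{haudi} applied with $r=\Cr{c_4}$ is incorrect: what that gives is $\|V_\ell\|(B_{\Cr{c_4}/r_\ell})\geq \Cr{c_4}/(8r_\ell)\to\infty$, a diverging mass bound in a ball of diverging radius, which says nothing about the origin. One should instead apply Lemma \ref{haudi} with $r=tr_\ell$ for a fixed $t\in(0,1)$, or better, use the definition \eqref{defA} of $Z_\ell$ directly: $\|\partial\mathcal E_{j_\ell}\|(B_{tr_\ell}(z^{(\ell)}))\geq 2tr_\ell$ for $t<1$, hence $\|V_\ell\|(B_t)\geq 2t$ and $0\in{\rm spt}\,\|V\|$ after passing to the limit. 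Second, in the triple-junction dichotomy, the parenthetical claim that ${\rm spt}\,\|V\|$ "does not bend inside $B_2$" when the vertex $z_0$ lies outside $U_1$ is false: if $z_0\in U_2\setminus U_1$ there is a bend at $z_0$ inside $B_2$. What is actually needed, and true, is that ${\rm spt}\,\|V\|\cap B_1$ is exactly a diameter: the two rays that do not pass through $0$ make $120^\circ$ angles with the one that does and therefore point strictly outward from $B_{|z_0|}$, so they never re-enter $U_1$. This elementary computation is the crux of the ``junction outside $U_1$ $\Rightarrow$ case (a)'' reduction and should be verified rather than asserted. Third, the assertion that \eqref{dsmall2} implies every point of $\partial\mathcal E_{j_\ell}\cap B_{2r_\ell}(z^{(\ell)})$ is within $o(r_\ell)$ of $Z_\ell$ is false — a set of small (even zero) $\mathcal H^1$-measure can sit at a definite distance from $Z_\ell$. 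It is also unnecessary: \eqref{defC1} and \eqref{defC1tri} are statements about $Z_\ell$, not about all of $\partial\mathcal E_{j_\ell}$; the role of \eqref{dsmall2} is only to ensure $V_\ell$ and $|(F_\ell)_\sharp Z_\ell|$ have the same limit as measures, which gives the "$\liminf$" half of the Hausdorff convergence. Finally, for part (c), you should account for the fact that the lines $S^{(\ell)}$ vary with $\ell$; the paper takes the limit $\hat S=\lim(F_\ell)_\sharp S^{(\ell)}$ and passes to a uniform (Arzel\`a--Ascoli) limit of the rescaled test functions $\hat\phi^{(\ell)}$ before invoking varifold convergence. Your sketch omits both steps, and without them the claimed $o(r_\ell)$ error in \eqref{sdefC1} and \eqref{defC2} is not justified.
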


\begin{rk}
Note that the integrand of \eqref{defC2} is $2\times 2$ matrix and the absolute value is 
the Euclidean norm of the matrix. One can check that the estimate \eqref{defC2} is independent of the coordinate system under orthogonal rotation. 
\end{rk}

\begin{proof} Consider the case $N\geq 3$. 
If the first claim were not true, we have some $\epsilon_0>0$ and subsequences (denoted by the same
index) $j_\ell$ and $z^{(\ell)}\in B_{R}\cap Z_\ell$ such that \eqref{defC1} and \eqref{defC1tri} are not true.
Since $\partial\mathcal E_{j_\ell}=Z_\ell\cup Z_\ell^c$ and $(r_\ell)^{-1}\mathcal H^1
(Z_\ell^c\cap B_{R+1})\rightarrow 0$ by \eqref{dsmall2}, the limit of 
$V_\ell$ (see the definition \eqref{vdef}) and that of $|(F_\ell)_{\sharp} Z_\ell|$ coincide. By 
Theorem \ref{chlim}(1), we know that the limit of $V_\ell$ is either a line or a triple junction, denoted by $\hat S$. 
For any $0<t<1/2$, $\mathcal H^1(B_{tr_\ell}(z^{(\ell)}) \cap \partial\mathcal E_{j_\ell})\geq 2tr_\ell$
by \eqref{defA}, hence we have $\mathcal H^1(B_t\cap \hat S)\geq 2t$ and $\hat S$ has to include the origin in particular. 
By the contradiction argument, we have ${\rm dist}_H(Z_\ell\cap B_{r_\ell}(z^{(\ell)}),S\cap B_{r_\ell}(z^{(\ell)}))>\epsilon_0 r_\ell$
for any line $S$ with $z^{(\ell)}\in S$ and the same inequality with $B_{2 r_\ell}(z^{(\ell)})$
in place of $B_{r_\ell}(z^{(\ell)})$
for any triple junction $S$ with the junction in $U_{r_\ell}(z^{(\ell)})$ and $z^{(\ell)}\in S$. After stretching by $F_\ell$, we have 
${\rm dist}_H((F_\ell)_{\sharp}Z_\ell\cap B_1,S\cap B_1)>\epsilon_0$, again for any line $S$ with $0\in S$
and the same inequality with $B_2$ in place of $B_1$ for any triple junction $S$ with $0\in S$ and the junction in $U_1$. With $S=\hat S$, on the other hand, we would have a contradiction since $(F_\ell)_{\sharp}Z_\ell$
converges to $\hat S$ in the Hausdorff distance, which follows from \eqref{defA}. Note that, if 
$\hat S$ is a triple junction with the junction in $U_1$, we would have a contradiction to 
${\rm dist}_H((F_\ell)_{\sharp}Z_\ell\cap B_2,\tilde S\cap B_2)>\epsilon_0$ for all large $\ell$. 
If $\hat S$ is either a line or a triple junction with the junction outside of $U_1$, we have
a contradiction to ${\rm dist}_H((F_\ell)_{\sharp}Z_\ell\cap B_1,\tilde S\cap B_1)>\epsilon_0$. 
This proves the first claim for $N\geq 3$. 
If $N=2$, the above argument using Theorem \ref{chlim}(3) proves the claim.

If (c) were not true, we have subsequences (again denoted by the same index) $z^{(\ell)}\in
Z_\ell\cap B_R$, lines $S^{(\ell)}$ with $z^{(\ell)}\in S^{(\ell)}$
and $\phi^{(\ell)}\in C_c^1(U_{r_\ell}(z^{(\ell)}))$ with $\sup\,(|\phi^{(\ell)}|+r_\ell|\nabla\phi^{(\ell)}|)\leq s$ such that 
$r_\ell^{-1}{\rm dist}_H(Z_\ell\cap B_{r_\ell}(z^{(\ell)}),S^{(\ell)}\cap B_{r_\ell}(z^{(\ell)}))\rightarrow
0$ and either \eqref{sdefC1} or \eqref{defC2} fails for these with $\epsilon_0$. 
As before, the limit of $V_\ell$ and $|(F_\ell)_{\sharp}Z_\ell|$ coincide, and since ${\rm dist}_H((F_\ell)_{
\sharp}Z_\ell\cap B_1,(F_{\ell})_{\sharp}S^{(\ell)}\cap B_1)\rightarrow 0$, $V_\ell$ subsequencially converges to a line $\hat S=\lim_{\ell\rightarrow\infty} (F_\ell)_{\sharp} S^{(\ell)}$
on $U_1$ as {\it varifolds}. 
If \eqref{defC2} is violated, in terms of $V_\ell$, we have
\begin{equation}
\label{defC3}
\big|\int_{{\bf G}_1(B_{1})} \tilde S\hat\phi^{(\ell)}(\tilde z)\,dV_\ell(\tilde z,\tilde S)
-\int_{{\bf G}_1(B_{1})} \tilde S\hat\phi^{(\ell)}(\tilde z)\, d(|(F_\ell)_{\sharp}S^{(\ell)}|)(\tilde z,\tilde S)\big|>\epsilon_0.
\end{equation}
Here, $\hat\phi^{(\ell)}(\tilde z):=\phi^{(\ell)}(r_\ell \tilde z+z^{(\ell)})$ and 
$\sup\,(|\hat \phi^{(\ell)}|+|\nabla\hat\phi^{(\ell)}|)\leq s$. Because of the latter uniform bound for 
$\hat\phi^{(\ell)}$, there exists a subsequence which converges uniformly to a Lipschitz function,
say, $\phi$, with support in $B_1$. Thus, with the varifold convergence, we have (for a not-relabeled subsequence)
\[
\lim_{\ell\rightarrow\infty} \int_{{\bf G}_1(B_{1})} \tilde S\hat\phi^{(\ell)}\,dV_\ell
=\int_{{\bf G}_1(B_{1})} \tilde S\phi\,d(|\hat S|)
=\lim_{\ell\rightarrow\infty}\int_{{\bf G}_1(B_{1})} \tilde S\hat\phi^{(\ell)}\, d(|(F_\ell)_{\sharp}S^{(\ell)}|).
\]
This cannot be compatible with \eqref{defC3} for all large $\ell$. Thus we obtain a desired contradiction. 
The case that \eqref{sdefC1} does not hold can be similarly handled. 
\end{proof}
\begin{thm}\label{regflat}
Given $\nu\in\mathbb N$, there exist $0<\Cl[c]{c_6},\Cl[c]{c_7}<1$ 
with the following property. Suppose $\{\partial\mathcal E_{j_\ell}\}$ is a sequence satisfying 
\eqref{boundht} and $\mu=\lim_{\ell\rightarrow\infty}\|\partial\mathcal E_{j_\ell}\|$ on $\R^2$. Assume that for $B_{2r}(a)\subset \R^2$,
we have 
\begin{equation}\label{pdefC1}
\limsup_{\ell\rightarrow\infty} \int_{B_{2r}(a)} \frac{ r |\Phi_{\e_{j_\ell}}\ast
\delta(\partial \mathcal E_{j_\ell})|^2}{\Phi_{\e_{j_\ell}}\ast\|\partial\mathcal E_{j_\ell}\|+\e_{j_\ell}\Omega^{-1}}\leq \Cr{c_6},
\end{equation}
\begin{equation}\label{pdefC2}
{\rm spt}\,\mu\cap B_{2r}(a)\subset\{a+(x,y)\in\mathbb R^2\,:\, |y|\leq \Cr{c_7} r\},
\end{equation}
\begin{equation}\label{pdefC3}
r\big(\nu-\frac12\big)\leq \mu \big( 
\{a+(x,y)\in \R^2\,:\, |y|\leq \Cr{c_7} r,\,|x|\leq r/2\}
\big),
\end{equation}
\begin{equation}\label{pdefC2sup}
\mu \big( 
\{a+(x,y)\in \R^2\,:\, |y|\leq \Cr{c_7} r,\,|x|\leq r\}
\big)\leq 2r\big(\nu+\frac12\big).
\end{equation}
Then there exist functions $f_i\,:\,[-r,r]\rightarrow [-\Cr{c_7} r,\Cr{c_7} r]$ ($i=1,\ldots,\nu$)
with 
\begin{equation} \label{pdefC4}
f_i\in
W^{2,2}([-r,r]), \hspace{.5cm}f_1(x)\leq f_2(x)\leq\ldots\leq f_\nu(x) \mbox{ for }x\in[-r,r]
\end{equation} 
and such that, writing ${\rm graph}\,f_i:=\{a+(x,f_i(x))\in\mathbb R^2\,:\,x\in[-r,r]\}$, we have
\begin{equation} \label{pdefC5}
\mu\mres_{B_r(a)}=\sum_{i=1}^\nu\mathcal H^1\mres_{B_r(a)\cap {\rm graph}\,f_i}.
\end{equation}
\end{thm}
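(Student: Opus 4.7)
The overall strategy is to combine the local structure result of Lemma \ref{defC4} with the flatness hypotheses \eqref{pdefC2}--\eqref{pdefC2sup} and the $L^2$ curvature bound \eqref{pdefC1} to show that, at scale $r_{\ell}$, the good part $Z_\ell \cap B_{3r/2}(a)$ of the approximate flow consists only of nearly horizontal line segments (case (a) of Lemma \ref{defC4} with slope $O(\Cr{c_7})$) -- not triple junctions. Once this is established, the curvature bound lets me upgrade to a uniform $C^{1,\sfrac12}$ control on tangent directions over the macroscopic scale $r$, from which a graph decomposition follows. The density hypotheses then pin the number of graphs to exactly $\nu$.

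First, I would rule out triple junctions. Pick $\Cr{c_7}$ small; suppose at some $z \in Z_\ell \cap B_{3r/2}(a)$ case (b) of Lemma \ref{defC4} holds, so $Z_\ell$ is within Hausdorff distance $\epsilon r_\ell$ of a triple junction $S$ with vertex in $U_{r_\ell}(z)$. Since ${\rm spt}\,\mu \subset \{|y-a_2|\le \Cr{c_7}r\}$ by \eqref{pdefC2}, and since the convergence $\|\partial\mathcal E_{j_\ell}\|\to \mu$ together with Remark \ref{gork} implies $Z_\ell$ is confined to an $o(1)$-neighborhood of this same strip, at least one of the three half-lines of $S$ must make an angle $\ge 60^\circ$ with the horizontal. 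Following that half-line forward inside $B_{2r}(a)$ using Lemma \ref{haudi} (which forces the curve to continue until it leaves $B_{2r}(a)$), the support must reach height $\gtrsim r\sin 60^\circ \gg \Cr{c_7} r$, contradicting \eqref{pdefC2}. Hence case (a) holds everywhere in $Z_\ell\cap B_{3r/2}(a)$, with the line $S$ necessarily having slope $\lesssim \Cr{c_7}$.

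Next, I would promote this local near-flatness to uniform $C^{1,\sfrac12}$ control of tangent directions. Using \eqref{defC2} of Lemma \ref{defC4}(c), the direction of the approximating line at $z \in Z_\ell$ is determined up to error $O(\epsilon)$ by an average of the varifold tangent over $B_{r_\ell}(z)$. The variation of this average between two nearby points $z,z'$ with $|z-z'|=\rho \in (r_\ell, r)$ is controlled by $\delta(\partial\mathcal E_{j_\ell})$ integrated over a tubular neighborhood, and by Cauchy--Schwarz together with \eqref{pdefC1},
\begin{equation*}
\Bigl|\int_{B_\rho(z)}|\Phi_{\varepsilon_{j_\ell}}\ast\delta(\partial\mathcal E_{j_\ell})|\Bigr|^2
\le \Cr{c_6}\,\rho \cdot (\text{length in }B_\rho(z)) \lesssim \Cr{c_6}\,\rho^2.
\end{equation*}
Together with Lemma \ref{haudi} and a standard tilt-excess-type argument, this yields a $\sqrt{\rho/r}$-H\"{o}lder estimate on the direction of the approximating line, uniform in $\ell$. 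Consequently $Z_\ell$ locally decomposes into a finite collection of graphs $y=f_i^{(\ell)}(x)$ over subintervals, each of bounded slope and uniformly $C^{1,\sfrac12}$, with second derivative $L^2$-norm controlled by the smoothed curvature integral.

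Finally I would count and pass to the limit. The lower density bound \eqref{pdefC3} and the graph structure imply that the number of graph sheets over each $x\in(-r/2,r/2)$ is at least $\nu$ for all large $\ell$; the upper bound \eqref{pdefC2sup} (combined with the near-horizontal slopes) gives at most $\nu$ (integer multiplicity of the integral limit rules out a fractional sheet). Ordering the sheets at each fixed $x$ yields $f_1^{(\ell)}\le \ldots \le f_\nu^{(\ell)}$ on $[-r,r]$. By Arzel\`{a}--Ascoli applied to each $f_i^{(\ell)}$ in $C^{1,\sfrac13}([-r,r])$ (using the uniform $C^{1,\sfrac12}$ bound), I extract a subsequence converging to $f_i$ with the ordering \eqref{pdefC4} preserved, and the uniform $L^2$ bound on $(f_i^{(\ell)})''$ passes to the limit by lower semicontinuity, giving $f_i\in W^{2,2}([-r,r])$. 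The representation \eqref{pdefC5} then follows from $\|\partial\mathcal E_{j_\ell}\|\to \mu$ and the graph decomposition. The most delicate step is ruling out case (b) of Lemma \ref{defC4} uniformly across $B_{3r/2}(a)$ -- a purely local exclusion is not enough, since one must track the triple junction's branches all the way to the boundary of $B_{2r}(a)$ to extract the contradiction with the thin-strip hypothesis \eqref{pdefC2}.
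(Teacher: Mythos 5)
Your overall plan mirrors the paper's: decompose $Z_\ell$ into nearly horizontal line segments at scale $r_\ell$, rule out triple junctions using the thin-strip hypothesis \eqref{pdefC2}, upgrade to a uniform $C^{1,\sfrac12}$ slope estimate via \eqref{pdefC1}, count the graph sheets with \eqref{pdefC3}--\eqref{pdefC2sup}, and pass to the limit by Arzel\`a--Ascoli plus lower semicontinuity of the $W^{2,2}$ norm. This is the right route, and your observation that fractional sheet counts are excluded by integrality is consistent with the paper's Step 7.

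The genuine gap is in the \emph{order} of your first two steps and the mechanism you invoke in Step 1. You propose to rule out triple junctions before establishing any slope control, and you argue that ``following that half-line forward inside $B_{2r}(a)$ using Lemma \ref{haudi}'' forces the curve to reach height $\gtrsim r\sin 60^\circ$. But Lemma \ref{haudi} is only a lower \emph{density} bound at a point; it gives no directional information whatsoever and cannot by itself ``force the curve to continue'' upward. The half-line $S$ produced by Lemma \ref{defC4}(b) approximates $Z_\ell$ only in $B_{2r_\ell}(z)$, a ball of microscopic radius, so to track the branch emanating from the junction all the way to $\partial B_{2r}(a)$ you must chain line segments as in the paper's Steps 1--4 \emph{and} control the slope of the chain via the $\sfrac12$-H\"older estimate \eqref{defC6re}, which is precisely what you postpone to your Step 2. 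Without it, the branch could a priori bend back and remain inside the strip $\{|y|\le \Cr{c_7}r\}$. The paper's Step 5 resolves this by first proving \eqref{defC6re} (which requires choosing $\Cr{c_6}$ small so that segments starting at slope $\geq 1/\sqrt 3$ retain slope $\geq 3/(4\sqrt 3)$), then iterating through possibly several further junctions, each time selecting the upward-going branch; only then does \eqref{nai1} (the strip confinement from Lemma \ref{haudi}) yield the contradiction. You flag this delicacy in your closing remark but do not supply the missing ingredient, namely that the triple-junction exclusion must come \emph{after}, not before, the slope/curvature control.
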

\begin{proof} Choose and fix a large $R>1$ so that $B_{2r}(a)\subset B_{R/2}$, and let $\Cr{c_4}$ be the corresponding
constant obtained in Lemma \ref{haudi} with $\Cr{c_2}$ in \eqref{boundht}, $\min_{B_{R+1}}\Omega$ and $\sup_\ell\|\partial\mathcal E_{j_\ell}
\|(B_{R+2})$ with the fixed $R$. 
We will fix $\Cr{c_6}$ and 
$\Cr{c_7}$ later as absolute constants. Let $\{\epsilon_\ell\}_{\ell\in\mathbb N}$ be a sequence of
positive numbers converging to $0$, and let $\{s_\ell\}_{\ell\in \mathbb N}$ be a sequence of positive
numbers diverging to $\infty$. Using Lemma \ref{defC4}, we choose a subsequence
(denoted by the same index) such that $Z_\ell$ satisfies the properties listed in Lemma \ref{defC4} 
with $\epsilon=\epsilon_\ell$ and $s=s_{\ell}$ and
so that 
\begin{equation}\label{epdiv}
r_\ell\ll \epsilon_\ell\,\,\,\mbox{ as }\,\,\,\ell\rightarrow \infty.
\end{equation}
Define
\begin{equation}
Z_\ell^*:=\{z\in Z_\ell\cap B_{2r}(a)\,:\, \mbox{there is a line $S$ satisfying $z\in S$ and \eqref{defC1}}\}.
\end{equation}
By Lemma \ref{defC4}, for any point $z\in B_{2r}(a)\cap Z_\ell\setminus Z_\ell^*$, there exists
a triple junction $S$ with the junction in $U_{r_\ell}(z)$ satisfying $z\in S$ and 
\eqref{defC1tri}. 
First, we show that the 
set $Z_\ell^*$ is composed of a finite set of ``almost $C^{1,\frac12}$ curves'' for sufficiently
small $\Cr{c_6}$. 

{\bf Step 1 : selection of line segments}.
Pick an arbitrary point $z_0\in Z_\ell^*\cap B_{5r/4}(a)$. Then
there exists a line $S$ with the stated properties in \eqref{defC1}. 
Call the line segment $S\cap B_{r_\ell}(z_0)$ as $L_0$. For notational convenience, 
we consider a coordinate system so that $L_0$ is parallel to the $x$-axis and suppose
that $z_0=(x_0,y_0)$. Write the coordinates of the endpoints of $L_0$ as 
$z_0^-:=(x_0-r_\ell,y_0)$ and $z_0^+:=(x_0+r_\ell,y_0)$. 
Then, we inductively choose $z_k\in Z_\ell$ and a line segment $L_k$ for $k\geq 1$ 
as long as $L_k\subset B_{3r/2}(a)$ or there is an ``encounter with a triple junction'' as
detailed in the following (see Figure \ref{linefigure}). 
\begin{figure}[!h]
\centering
\includegraphics[width=16cm]{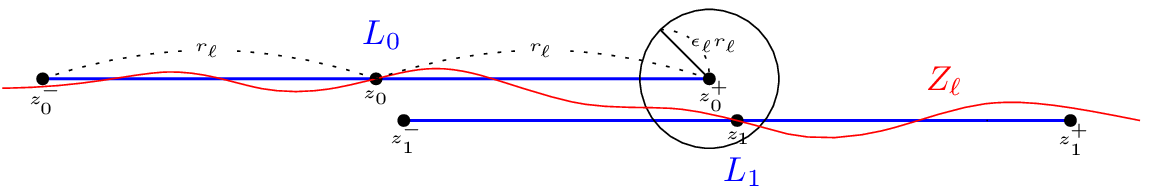}              
\caption{}
\label{linefigure}
\end{figure} 
Write the right-hand side endpoint of $L_k$ as $z_k^+$. By \eqref{defC1},
we have $B_{\epsilon_\ell r_\ell}(z_k^+)\cap Z_\ell\neq\emptyset$. There are two possibilities:
(i) $Z_\ell^*\cap B_{\epsilon_\ell r_\ell}(z_k^+)\neq \emptyset$ and (ii) $Z_\ell^*\cap B_{\epsilon_\ell r_\ell}(z_k^+)=\emptyset$. 
\newline
(i) We pick $z_{k+1}\in Z_\ell^*\cap B_{\epsilon_\ell r_\ell}(z_k^+)$. Then there exists a line $S$ with $z_{k+1}\in S$ and \eqref{defC1} in $B_{r_\ell}(z_{k+1})$. We let $L_{k+1}:=S\cap B_{r_\ell}(z_{k+1})$. 
If $L_{k+1}\setminus
B_{3r/2}(a)\neq \emptyset$ for the first time, we stop the induction and name this $k$ as $k_0$. Otherwise
we define $z_{k+1}^+$ as before and continue with the inductive process.
By \eqref{defC1} in particular, we have
\begin{equation}
\label{defC1sup1}
Z_\ell\cap B_{r_\ell}(z_{k+1})\subset (L_{k+1})_{\epsilon_\ell r_\ell}.
\end{equation}
Here, for shorthand, $(A)_t$ is the $t$-neighborhood of the set $A$, as in $(L_{k+1})_{\epsilon_\ell r_\ell}$
of \eqref{defC1sup1}. 
\newline
(ii) Since $B_{\epsilon_\ell r_\ell}(z_k^+)\cap Z_\ell\neq\emptyset$, there exists $z_{k+1}
\in B_{\epsilon_\ell r_\ell}(z_k^+)\cap Z_\ell\setminus Z_\ell^*$. By Lemma \ref{defC4}, 
there exists a triple 
junction $S$ with $z_{k+1}\in S$ and with the junction in $U_{r_\ell}(z_{k+1})$ such that
\eqref{defC1tri} holds with $z=z_{k+1}$. In the following, the occurrence of (ii) is
casually called {\it encounter with a triple junction}, since this is a place where $Z_\ell$ 
looks like a triple junction. 
When (ii) occurs for the first time, we end the induction, and let this $k$ be $k_0$ (so that $z_{k_0+1}$ 
is in $r_\ell$-neighborhood of a junction). 

There is a possibility that the case (i) occurs indefinitely without exit from $B_{3r/2}(a)$, but this
possibility will be eliminated in Step 2. For now, in this case, we let $k_0$ be arbitrary. 

{\bf Step 2: estimate on the slope of line segments.}
In the following, we prove
that the angle between the $x$-axis and $L_k$ ($1\leq k\leq k_0$) remains small if
$\Cr{c_6}$ is appropriately small, which effectively tells that $\cup_{k=0}^{k_0} L_k$ can
be approximated well by a graph over the $x$-axis with small slope.
For that purpose, we prove for $k=1,\ldots,k_0$ that
\begin{equation}
|(y_{k}^+-y_{k})/(x_{k}^+-x_{k})|\leq \Cl[c]{c_8}\Big(\epsilon_\ell+\Big(\int_{\cup_{i=0}^{k}
U_{r_\ell}(z_{i})} \frac{|\Phi_{\e_{j_\ell}}\ast\delta(\partial\mathcal E_{j_\ell})|^2}{
\Phi_{\e_{j_\ell}}\ast\|\partial\mathcal E_{j_\ell}\|+\e_{j_\ell}\Omega^{-1}}\Big)^{\frac12}
(k\,r_\ell+\epsilon_\ell)^{\frac12}\Big).
\label{defC6}
\end{equation}
Here, $z_{k}^+=(x_{k}^+,y_{k}^+)$ and $z_{k}=(x_{k},y_{k})$, thus the left-hand
side is the slope of $L_{k}$. The constant $\Cr{c_8}$ is an absolute constant. 
To prove \eqref{defC6}, the first remark is that for any $0\leq k\leq k_0$, 
the angle between the neighboring
line segments $L_k$ and $L_{k+1}$ is bounded by a fixed constant multiple of $\epsilon_\ell$. 
This follows from $|z_{k+1}-z_k^+|\leq \epsilon_\ell r_\ell$, ${\rm dist}(z_{k+1}^-,L_{k})\leq
2\epsilon_\ell r_\ell$ (due to \eqref{defC1} and a triangle inequality) and a simple geometric argument. 
Thus, starting from the slope of $L_0$ being $0$, we may assume that the slope of $L_k$ remains less than,
say, $1/10$, until some $\tilde k\leq k_0$. In the following, we prove \eqref{defC6} for all $k\leq \tilde k$. 
Once this is done, by assuming $\Cr{c_6}$ small, note that \eqref{defC6}
ensures that the slope of $L_{\tilde k}$ remains small (say, less than $1/20$). Here, note that
$\tilde k r_\ell$ in \eqref{defC6} is less than a constant multiple of $r$ since $\cup_{i=0}^{\tilde k} L_i$ is more or less
parallel to the $x$-axis and it has to remain within $B_{3r/2}(a)$. If we restrict 
$\Cr{c_6}$ depending only on $\Cr{c_8}$, we can make sure that the slope is less than 
$1/10$ for all $k\leq k_0$ in the end and we have \eqref{defC6} for all $k\leq k_0$. Thus assume that the slopes of $L_1,\ldots,L_k$ are smaller than $1/10$. Utilizing the fact that the difference of slopes on the neighboring segments is at most constant multiple of
$\epsilon_\ell$, we may construct a 
function $\psi_1$ satisfying the following properties.
\begin{itemize}
\item $\psi_1=1$ on the $r_\ell/8$-neighborhood of $\cup_{i=0}^{k} L_i$.
\item $\psi_1=0$ on the complement of $r_\ell/6$-neighborhood of $\cup_{i=0}^{k} L_i$.
\item $0\leq \psi_1\leq 1$ and $\sup( r_\ell|\nabla\psi_1|+r_\ell^2\|\nabla^2\psi_1\|)\leq c$, where
$c$ is an absolute constant. 
\end{itemize}
Next, we define a smooth function $\psi_2$ which depends only on the $x$-variable and 
such that
\begin{itemize}
\item
$\psi_2(x)=0$ for $x<x_0-r_\ell/4$ or $x>x_k+r_\ell/4$.
\item
$\psi_2(x)=1$ for $x_0-r_\ell/8<x<x_k+r_\ell/8$.
\item
$\sup(r_\ell|\psi_2'|+r_\ell^2|\psi_2^{''}|)\leq c$, where $c$ is an absolute constant.
\end{itemize}
Finally, we set $\psi(x,y)=\psi_1(x,y)\psi_2(x)$. 
Because of the properties of $\psi_1$ and $\psi_2$, one can check that 
\begin{equation}\label{defC18}
\psi=1\,\,\mbox{ on }\,\,(\cup_{i=1}^{k-1}L_i)_{r_\ell/10}\,\,\mbox{ and }\,\,
{\rm spt}\,\psi\subset  
(\cup_{i=1}^{k-1} L_i)_{r_\ell/2}
\end{equation} 
for small $\epsilon_\ell$. 
Another property we use is 
\begin{equation}
\label{defC7}
\frac{\partial\psi}{\partial y}=\psi_2\frac{\partial\psi_1}{\partial x}=0\, \mbox{ on } \, Z_\ell.
\end{equation}
To check this, since $\psi_2$ is independent of $y$, 
we only need to check that the set of points with $\nabla\psi_1\neq 0$ and 
$\psi_2>0$ does not intersect $Z_\ell$. The set satisfying these conditions is included in
$\{(x,y)\,:\,x_0-r_\ell/4\leq x\leq x_k+r_\ell/4\} \cap (\cup_{i=0}^k L_i)_{r_\ell/6}\setminus(\cup_{i=0}^k
L_i)_{r_\ell/8}$. But having a point $\tilde z$ of $Z_\ell$ in this set implies that $\tilde z\in 
\cup_{i=0}^k U_{r_\ell}(z_i)\setminus (\cup_{i=0}^k
L_i)_{r_\ell/8}$, which is a contradiction to \eqref{defC1sup1} for small $\epsilon_\ell$. This proves \eqref{defC7}.
We next estimate
\begin{equation}
\label{defC8}
\begin{split}
\delta (\partial \mathcal E_{j_\ell})((0,\psi))
&=\int_{{\bf G}_1(\mathbb R^2)}
\tilde S\cdot \nabla(0,\psi(\tilde z))\,d(\partial \mathcal E_{j_\ell})(\tilde z,\tilde S)\\
&=\int_{{\bf G}_1(\mathbb R^2)} \tilde S_{21}\frac{\partial \psi}{\partial x}
+\tilde S_{22}\frac{\partial \psi}{\partial y}\,d(\partial\mathcal E_{j_\ell})(\tilde z,\tilde S).
\end{split}
\end{equation}
Here $\tilde S_{21}$ is the $(2,1)$ component of $\tilde S\in{\bf G}(2,1)$ and similarly
for $\tilde S_{22}$. 
For the second term of \eqref{defC8}, by \eqref{defC7}, we have
\begin{equation}
\label{defC9}
\Big|\int_{{\bf G}_1(\mathbb R^2)} \tilde S_{22}\frac{\partial\psi}{\partial y}\,d(\partial
\mathcal E_{j_\ell})\Big|\leq \int_{{\rm spt}\,\psi\cap Z_\ell^c}
\Big|\frac{\partial\psi}{\partial y}\Big|d\|\partial\mathcal E_{j_\ell}\|
\leq c r_\ell^{-1} \mathcal H^1({\rm spt}\,\psi\cap Z_\ell^c)\leq \epsilon_\ell.
\end{equation}
The last inequality follows since $\mathcal H^1(Z_\ell^c\cap B_{R})\ll r_\ell^3$ by \eqref{dsmall2}, and by 
\eqref{epdiv}. For the first term of \eqref{defC8},
for $\frac{\partial\psi}{\partial x}=\psi_2'\psi_1+\psi_2\frac{\partial\psi_1}{
\partial x}$, the integral of the second term can be handled similarly as above using 
\eqref{defC7}. 
The term $\psi_2'\psi_1$ is nonzero only on $U_{r_\ell/2}(z_0)$ and $U_{r_\ell/2}(z_k)$. We use
\eqref{defC2} with $\phi=r_\ell\psi_2'\psi_1$ on these balls. For all large $\ell$, we have
$s_\ell\geq\sup(|\phi|+r_\ell|\nabla\phi|)$.   
Thus we obtain
\begin{equation}\label{defC10}
\Big|\int_{{\bf G}_1(U_{r_\ell/2}(z_i))} \tilde S_{21}\psi_2'\psi_1\,d(\partial\mathcal E_{j_\ell})
-\int_{{\bf G}_1(U_{r_\ell/2}(z_i))}\tilde S_{21}\psi_2'\psi_1\,d(|L_i|)\Big|\leq \epsilon_\ell
\end{equation}
for $i=0$ and $i=k$. When $i=0$, since $L_0$ is a line segment parallel to the $x$-axis,
we have $\tilde S_{21}=0$ and 
\begin{equation}\label{defC11}
\int_{{\bf G}_1(U_{r_\ell/2}(z_0))}\tilde S_{21}\psi_2'\psi_1\,d(|L_0|)
=0. 
\end{equation}
On the other hand, when $i=k$ and writing the slope of $L_k$ as 
$\alpha:=(y_k^+-y_k)/(x_k^+-x_k)$, $\tilde S_{21}$ on $L_k$ is $\alpha/(1+\alpha^2)$ and 
\begin{equation}\label{defC12}
\int_{{\bf G}_1(U_{r_\ell/2}(z_k))}\tilde S_{21}\psi_2'\psi_1\,d(|L_k|)
=-\frac{\alpha}{\sqrt{1+\alpha^2}}.
\end{equation}
Combining \eqref{defC8}-\eqref{defC12}, we obtain the estimate
\begin{equation}\label{defC13}
\Big|\delta(\partial\mathcal E_{j_\ell})((0,\psi))+\frac{\alpha}{\sqrt{1+\alpha^2}}\Big|\leq 
4\epsilon_\ell.
\end{equation}
We next estimate
\begin{equation}\label{defC14}
\big|\delta(\partial\mathcal E_{j_\ell})((0,\psi))-(\Phi_{\varepsilon_{j_\ell}}\ast
\delta(\partial\mathcal E_{j_\ell}))((0,\psi))\big|\leq
\int_{\mathbb R^2} |\nabla\psi-\Phi_{\varepsilon_{j_\ell}}\ast\nabla\psi|\,d\|\partial
\mathcal E_{j_\ell}\|.\end{equation}
Since $\psi=0$ outside of $B_{R}$ in particular, and since $\Phi_{\varepsilon_{j_\ell}}$
has support on $B_1$, the integrand of the above vanishes outside of $B_{R+1}$. Furthermore,
one can estimate
\begin{equation}\label{defC15}
\begin{split}
&|\nabla\psi(z)-(\Phi_{\varepsilon_{j_\ell}}\ast\nabla\psi)(z)|
\leq \int_{B_{1}(z)} |\nabla\psi(z)-\nabla\psi(\hat z)|\Phi_{\varepsilon_{j_\ell}}(z-\hat z)
\, d\hat z\\
&\leq c r_\ell^{-2} \int_{B_1(z)}|z-\hat z|\Phi_{\varepsilon_{j_\ell}}(z-\hat z)\, d\hat z
\leq cr_\ell^{-2}(\varepsilon_{j_\ell}^{9/10}+\varepsilon_{j_\ell}^{-2}
\exp(-1/(2\varepsilon^{1/5}_{j_\ell}))).
\end{split}
\end{equation}
The last inequality may be obtained by splitting the domain of integration to $\{|z-\hat z|
\leq \varepsilon_{j_\ell}^{9/10}\}$ and the complement, where $\Phi_{\varepsilon_{j_\ell}}$ is
exponentially small (see the analogous estimate \cite[(5.6)]{KimTone}).  
Thus, by \eqref{defC14} and \eqref{defC15}, we have
\begin{equation}\label{defC16}
\big|\delta(\partial\mathcal E_{j_\ell})((0,\psi))-(\Phi_{\varepsilon_{j_\ell}}\ast
\delta(\partial\mathcal E_{j_\ell}))((0,\psi))\big|\leq
cr_\ell^{-2}\varepsilon_{j_\ell}^{9/10}(\min_{B_{R+1}}\Omega)^{-1}\|\partial\mathcal E_{j_\ell}\|
(\Omega).
\end{equation}
Since $\varepsilon_{j_\ell}<j_\ell^{-6}$ and  $r_\ell=j_\ell^{-2.5}$, we have $r_\ell^{-2}
\varepsilon_{j_\ell}^{9/10}\ll 1$ and 
\eqref{defC16} is estimated by $\epsilon_\ell$
for all large $\ell$. 
Next, since $0\leq \psi\leq 1$, we have (writing $U={\rm spt}\,\psi$)
\begin{equation}\label{defC17}
|(\Phi_{\varepsilon_{j_\ell}}\ast
\delta(\partial\mathcal E_{j_\ell}))((0,\psi))|\leq \Big(\int_{U} 
\frac{|\Phi_{\varepsilon_{j_\ell}}\ast
\delta(\partial\mathcal E_{j_\ell})|^2}{\Phi_{\varepsilon_{j_\ell}}\ast\|\partial\mathcal E_{j_\ell}\|
+\varepsilon_{j_\ell}\Omega^{-1}}\Big)^{\frac12}\Big(\int_{U} \Phi_{\varepsilon_{j_\ell}}\ast\|\partial\mathcal E_{j_\ell}\|
+\varepsilon_{j_\ell}\Omega^{-1}\Big)^{\frac12}.
\end{equation}
Note that the second quantity on the right-hand side should correspond to the ``length of
curve''. On each ball $U_{r_\ell}(z_i)$, the measure $\|\partial \E_{j_\ell}\|$ is well-approximated
by $\mathcal H^n\mres_{L_i}$, and we take advantage of this in the following. 
To estimate $\int_U\Phi_{\varepsilon_{j_\ell}}\ast\|\partial\mathcal E_{j_\ell}\|$, 
consider a partition of of unity $\{\zeta_i\}_{i=0}^k$ 
subordinate to $
\{U_{3r_\ell/4}(z_i)\}_{i=0}^k$ such that $\zeta_i\in C_c^\infty (U_{3r_\ell/4}(z_i))$, $0\leq \zeta_i\leq 1$,
$\sup(\zeta_i+r_\ell|\nabla\zeta_i|)\leq s_\ell$ (which is true for all large $\ell$)
and $\sum_{i=0}^k\zeta_i=1$ on $(\cup_{i=1}^{k-1} L_i)_{r_\ell/2}$. Note that the latter
includes ${\rm spt}\,\psi$ by \eqref{defC18}. Thus we have
\begin{equation}
\label{sdefC2}
\int_U\Phi_{\varepsilon_{j_\ell}}\ast\|\partial\mathcal E_{j_\ell}\|\,dz\leq \sum_{i=0}^k
\int_{U_{3r_\ell/4}(z_i)} \Phi_{\varepsilon_{j_\ell}}\ast\|\partial
\mathcal E_{j_\ell}\|\zeta_i\,dz.
\end{equation}
We may estimate
\begin{equation}
\label{sdefC3}
\big|\int \Phi_{\varepsilon_{j_\ell}}\ast\|\partial\mathcal E_{j_\ell}\|
\zeta_i-\int\zeta_i\,d\|\partial\mathcal E_{j_\ell}\|\big|\leq \int_{\R^2} |\zeta_i
-\Phi_{\varepsilon_{j_\ell}}\ast\zeta_i|\,d\|\partial\mathcal E_{j_\ell}\|
\end{equation}
which can be estimated just like \eqref{defC14}-\eqref{defC16} (with $r_\ell^{-1}$ in place of $r_\ell^{-2}$) and we may assume that this is less than $\epsilon_\ell r_\ell$.
By \eqref{sdefC1}, 
\begin{equation}
\label{sdefC5}
\int \zeta_i\,d\|\partial\mathcal E_{j_\ell}\|\leq \int \zeta_i\,d(|L_i|)+\epsilon_\ell r_\ell
\leq r_\ell(2+\epsilon_\ell).
\end{equation}
By combining \eqref{sdefC2}-\eqref{sdefC5}, we obtain
\begin{equation} \label{sd1}
\int_U \Phi_{\varepsilon_{j_\ell}}\ast\|\partial\mathcal E_{j_\ell}\|\,dz
\leq 2(k+1) r_\ell(1+\epsilon_\ell).
\end{equation}
By \eqref{defC18}, we have $\mathcal L^2(U)\leq kr_\ell^2$ and  
$\int_U\varepsilon_{j_\ell} \Omega^{-1}\leq (\min_{B_{R}}\Omega)^{-1}k r_\ell^2\varepsilon_{j_\ell}$
and with \eqref{sd1}, we obtain (for all large $\ell$ so that $(\min_{B_R}\Omega)^{-1}r_\ell \varepsilon_{j_\ell}<2\epsilon_\ell$)
\begin{equation}\label{sdefC4}
\int_U (\Phi_{\varepsilon_{j_\ell}}\ast\|\partial\mathcal E_{j_\ell}\|+\varepsilon_{j_\ell}\Omega^{-1})\leq  2(k+1)r_\ell(1+2\epsilon_\ell).
\end{equation}
Now, \eqref{defC13}, \eqref{defC16}, \eqref{defC17} and \eqref{sdefC4} prove \eqref{defC6} 
with a suitable choice of absolute constant. 

{\bf Step 3: completion of selection of line segments.}
Using \eqref{defC6}, we can make sure that the selection of line segments does not continue indefinitely
and after a finite selection, we have either
$L_k$ exits from $B_{3r/2}(a)$ or there is an encounter with a triple junction.
We can similarly proceed to choose line segments starting again from $L_0$
in the opposite direction. Let these points and line segments be $z_{-1},\ldots,z_{-k'_0}$ and $L_{-1},\ldots,
L_{-k'_0}$. In this opposite direction, the same estimate for the slope of line segments 
holds. The similar slope estimate holds starting from any $L_{\pm k}$. Namely, for any
$k'<k$ in $\{-k_0',\ldots,k_0\}$, we have
\begin{equation}
\label{defC6re}
\begin{split}
|(y_k^+-y_k)/(x_k^+-x_k)&-(y_{k'}^+-y_{k'})/(x_{k'}^+-x_{k'})| \\
&\leq \Cr{c_8}\Big(\epsilon_\ell+\Big(\int_{\cup_{i=k'}^{k}
U_{r_\ell}(z_{i})} \frac{|\Phi_{\e_{j_\ell}}\ast\delta(\partial\mathcal E_{j_\ell})|^2}{
\Phi_{\e_{j_\ell}}\ast\|\partial\mathcal E_{j_\ell}\|+\e_{j_\ell}\Omega^{-1}}\Big)^{\frac12}
(|k-k'|\,r_\ell+\epsilon_\ell)^{\frac12}\Big).
\end{split}
\end{equation}
This gives 
an analogue of $\sfrac12$-H\"{o}lder estimate for the slopes of segments. 

{\bf Step 4: construction of smooth approximate curve from line segments.}
From $\cup_{k=-k_0'}^{k_0} L_k$, we can construct an approximate curve
as follows. Define $I_k:=\{x\in\mathbb R\,:\, (x,y)\in L_k\}$, i.e. the projection of $L_k$ to the $x$-axis. 
They are intervals of length approximately $2r_\ell$. 
Recall $z_k=(x_k,y_k)$ and $z_k^{\pm}=(x_k^{\pm},y_k^{\pm})$, and due to the construction, we have
$|x_{k}^+ - x_{k+1}|\leq \epsilon_\ell r_\ell$ for $k=0,\ldots,k_0$ and $|x_{-k}^- -x_{-k-1}|\leq 
\epsilon_\ell r_\ell$ for $k=0,\ldots,-k_0'$. With the slope $\leq 1/10$ (for example) and for small 
$\epsilon_\ell$, we can guarantee that at most three $I_k$'s can intersect each other. 
Let $\{\tilde \zeta_k\}_{k=-k_0'}^{k_0}$ be a partition of unity subordinate to $\{{\rm int}\,I_k\}_{k=-k_0'}^{k_0}$
such that $\tilde \zeta_k\in C_c^{\infty}({\rm int}\,I_k)$, $0\leq \tilde\zeta_k\leq 1$ and $\sum_{k=-k_0'}^{k_0}\tilde \zeta_k=1$ on $\cup_{k=-k_0'+1}^{k_0-1}I_k$.
We may in addition ask 
that $\sup(r_\ell |\tilde\zeta_k'(x)|+r_\ell^2 |\tilde \zeta_k''(x)|)\leq c$
for some absolute constant $c$. On each $I_k$, let $g_k:I_k\rightarrow\mathbb R$ be the function so that
$\{(x,g_k(x))\,:\,x\in I_k\}=L_k$ holds. For $x\in \cup_{k=-k_0'+1}^{k_0-1} I_k$, define
\begin{equation}
\label{gfdef}
f(x):=\sum_{k=-k_0'}^{k_0}g_k(x)\tilde\zeta_k(x).
\end{equation}
Note that $f'(x)$ involves at most three neighboring terms on $I_i$, say, $k=i-1,i,i+1$ (or $k=i-2,i-1,i$,
or $k=i,i+1,i+2$). Consider the first case and the other two cases are similar. Due to \eqref{defC1}, 
$(\epsilon_\ell r_\ell)^{-1}|g_i(x)-g_{i\pm 1}(x)|+\epsilon^{-1}_\ell |g_i'(x)-g_{i\pm 1}'(x)|$ is 
bounded by an absolute constant. Since
we have $\sum_{k=i-1}^{i+1}\tilde\zeta_k=1$, 
\begin{equation}\label{sva1}
f'(x)-g_i'(x)=\sum_{k=i-1}^{i+1}\{(g'_k(x)-g_i'(x))\tilde\zeta_k(x)+(g_k(x)-g_i(x))\tilde\zeta_k'(x) \}.
\end{equation}
The right-hand side may be estimated by a constant multiple of $\epsilon_\ell$ using the 
bounds on $\tilde\zeta'_k$. 
The variation of the slopes of $L_k$ (i.e. $g_k'(x)$) are estimated by \eqref{defC6re}, so we obtain  
for any $x,\tilde x\in \cup_{k=-k_0'+1}^{k_0-1} I_k$
\begin{equation}
\label{varf}
|f'(x)-f'(\tilde x)|\leq \Cr{c_8}\Big(\epsilon_\ell+\Big(\int_{\cup_{k=k_1}^{k_2}U_{r_\ell}(z_k)}
\frac{|\Phi_{\varepsilon_{j_\ell}}\ast\delta(\partial\mathcal E_{j_\ell})|^2}{\Phi_{\varepsilon_{j_\ell}}
\ast\|\partial\mathcal E_{j_\ell}\|+\varepsilon_{j_\ell}\Omega^{-1}}\Big)^{\frac12}(|x-\tilde x|
+\epsilon_\ell)^{\frac12}\Big).
\end{equation}
Here, $k_1,k_2\in \mathbb Z$ are chosen so that $|x-x_{k_1}|\leq r_\ell$ and $|\tilde x-x_{k_2}|\leq r_\ell$, and 
$\Cr{c_8}$ here may be different from $\Cr{c_8}$ in \eqref{defC6re} by a factor of absolute constant. 
From the construction, it is also clear that $\sup_{x\in I_k}|g_k(x)-f(x)|$ is estimated by a constant
multiple of $r_\ell \epsilon_\ell$, and due also to \eqref{defC1sup1}, 
there exists an absolute constant $\Cl[c]{c_9}$ such that
\begin{equation}
\label{varf1}
\begin{split}
Z_\ell\cap &\big\{(x,y)\,:\,|y-f(x)|\leq r_\ell/4,\,x\in  \cup_{k=-k_0'+1}^{k_0-1} I_k\big\} \\
&\subset \big\{(x,y)\,:\,  |y-f(x)|\leq \Cr{c_9}\epsilon_\ell r_\ell,\,x\in  \cup_{k=-k_0'+1}^{k_0-1} I_k\big\}.
\end{split}
\end{equation}

{\bf Step 5: Proof for ``no triple junction''.}
We next prove using the assumption \eqref{pdefC2} with small $\Cr{c_7}$ that there is no triple junction
in $B_{9r/8}(a)$ for all large $\ell$, or more precisely, there is no point of $Z_\ell
\setminus Z_\ell^*$. 
First note that, because of Lemma \ref{haudi},
for all sufficiently large $\ell$, we have
\begin{equation}
\label{nai1}
B_{3r/2}(a)\cap Z_\ell\cap\{a+(x,y)\,:\, |y|\geq 2\Cr{c_7} r\}=\emptyset.
\end{equation}
Otherwise, we would have a converging subsequence (with the same index) $\{z^{(\ell)}\}\in 
B_{3r/2}(a)\cap Z_\ell\cap \{a+(x,y)\,:\, |y|\geq 2\Cr{c_7} r\}$ which converges to $\hat z\in
B_{3r/2}(a)\cap \{a+(x,y)\,;\,|y|\geq 2\Cr{c_7} r\}$. By Lemma \ref{haudi}, we have $\|\partial\mathcal E_{j_\ell}\|
(B_{s}(z^{(\ell)}))\geq s/8$ for all large $\ell$ and $s\in (0,\Cr{c_4}]$. 
We then have $\mu (B_{s}(\hat z))
\geq s/8$ for $s\in (0,\Cr{c_4}]$, 
and in particular $\hat z\in {\rm spt}\,\mu$. This is a contradiction to \eqref{pdefC2}. 

Suppose for a contradiction that we had some $z_0\in B_{9r/8}(a)\cap Z_\ell\setminus Z_\ell^*$, thus there 
exists a triple junction $S$ with the junction in $U_{r_\ell}(z_0)$ satisfying $z_0\in S$ and \eqref{defC1tri}.
We may assume that $z_0\in \{a+(x,y)\,:\,|y|<2\Cr{c_7} r\}$ due to \eqref{nai1}. 
Out of the three half-lines of $S$, there is a half-line which goes upwards (i.e. towards the
positive $y$-direction) and which has at least 30 degrees with positive or negative $x$-axis (or 
the absolute value of slope $\geq 1/\sqrt3$). 
Let $z_0^+$ be the intersection of this half-line and $\partial B_{2r_\ell}(z_0)$.
By \eqref{defC1tri}, we know that there exists $z_1\in Z_\ell\cap B_{\epsilon_\ell r_\ell}(z_0^+)$. 
Starting from this $z_1$, we may start selecting line segments until 
the encounter with a triple junction or the exit from $B_{3r/2}(a)$ occur as in Step 1. Suppose 
that the encounter with a triple
junction does not occur. Since one can estimate $|y_k-y_0|/|x_k-x_0|\geq 3/(4\sqrt 3)$ 
(where $z_k=(x_k,y_k)$) due to \eqref{defC6re} with sufficiently small $\Cr{c_6}$,
the $y$-coordinate of $z_k$ keeps going upwards. Then by selecting a sufficiently small absolute
constant $\Cr{c_7}$, the union of line segments (which is close to a straight line due to \eqref{varf}
with small $\Cr{c_6}$) would enter $B_{5r/4}(a)\cap \{a+(x,y)\,:\,|y|\geq 2\Cr{c_7} r\}$
after a finite number of induction. 
Since $L_k$ and $Z_\ell$ in $B_{r_\ell}(z_k)$ are close
in the Hausdorff distance, this would be a contradiction to \eqref{nai1}. Thus, after a finite
number of induction, there must be another triple junction which is close to $Z_\ell$.
This triple junction has one half-line whose angle with the $x$-axis is at least 30 degrees and which 
is going upwards. Along the direction of this half-line, we can select line segments as before, 
which again has a definite slope going upwards. By the similar reasoning, there must be another triple 
junction, and then we choose a half-line going upwards just as before. In this process, we can make sure
that the slope of line segments have lower bound (say, $3/(4\sqrt 3)$, for example) and the line segments
inevitably enter into $B_{5r/4}(a)\cap \{a+(x,y)\,:\,|y|\geq 2\Cr{c_7} r\}$ after a finite number of induction. 
Thus, we inevitably have a contradiction to \eqref{nai1}. This proves that there is no point
of $Z_\ell\setminus Z_\ell^*$ inside $B_{9r/8}(a)$. 

{\bf Step 6: construction of graphs separated by a definite distance.} 
Consider $\tilde Z_\ell:=Z_\ell\cap \{a+(x,y)\,:\,|x|\leq r\}\cap B_{9r/8}(a)$. The set $\tilde Z_\ell$ is included in $\{a+(x,y)\,:\,
|y|\leq 2\Cr{c_7}r\}$ for all large $\ell$ due to \eqref{nai1}, and by Step 5, we have $B_{9r/8}(a)\cap
Z_\ell\setminus Z_\ell^*=\emptyset$. Thus, starting from any $z_0\in \tilde Z_\ell$, the inductive
step to choose $L_k$ (and $L_{-k}$) in Step 1 does
encounter a triple junction and will continue 
until $L_k$ (resp. $L_{-k}$) exits from
$\{a+(x,y)\,:\, |x|\leq r\}$. Namely, with the same notation in Step 1-4, we have some $k_0\geq 1$
and $k_0'\geq 1$ such that $r\in I_{k_0-1}$, $-r\in I_{-k_0'+1}$ and 
$[-r,r]\subset \cup_{k=-k_0-1}^{k_0+1} I_k$ 
hold. The function $f$ constructed in Step 4 over $x\in[-r,r]$ 
has a small slope for small $\Cr{c_6}$ and $\Cr{c_7}$. By \eqref{varf1}, note that 
\begin{equation}\label{nai2}
\tilde Z_\ell\cap \{a+(x,y)\,:\,|y-f(x)|\leq r_\ell/4,|x|\leq r\}\subset \{a+(x,y)\,:\,|y-f(x)|\leq 
\Cr{c_9}\epsilon_\ell r_\ell,|x|\leq r\}.
\end{equation}
Let this $f$ be renamed as $f_{(1,\ell)}$. 
If $\tilde Z_\ell\setminus \{a+(x,y)\,:\,|y-f_{(1,\ell)}(x)|\leq r_\ell/4,|x|\leq r\}\neq \emptyset$, then,
we can pick a point $z_0'$ from this set and start the construction of Step 1-4. Let $f_{(2,\ell)}$ be the
resulting function. Note that 
\begin{equation}\label{nai5}
|f_{(1,\ell)}(x)-f_{(2,\ell)}(x)|\geq r_\ell/8\,\,\,\mbox{ for }x\in[-r,r],
\end{equation}
which can be seen as follows. Because $z_0'=(x_0',y_0')\notin \{(x,y)\,:\, |y-f_{(1,\ell)}(x)|\leq r_\ell/4,|x|\leq r\}$ (which implies $|f_{(1,\ell)}(x_0')-y_0'|>r_\ell/4$)
and $|y_0'-f_{(2,\ell)}(x_0')|\leq \Cr{c_9}\epsilon_\ell r_\ell$ due to \eqref{nai2} satisfied by
$f_{(2,\ell)}$, we have $|f_{(1,\ell)}(x_0')-f_{(2,\ell)}(x_0')|>3r_\ell/16$ for large $\ell$.
If there exists $x\in[-r,r]$ such that $|f_{(1,\ell)}(x)-f_{(2,\ell)}(x)|<r_\ell/8$, then by 
the continuity of $f_{(1,\ell)}$ and $f_{(2,\ell)}$, there must exist some $\hat x\in [-r,r]$ such that
$|f_{(1,\ell)}(\hat x)-f_{(2,\ell)}(\hat x)|=3r_\ell/16$. Then there must be a point $\tilde z
\in Z_\ell\cap B_{c\epsilon_\ell r_\ell}((\hat x,f_{(2,\ell)}(\hat x)))$, which contradicts \eqref{nai2}
with $f=f_{(1,\ell)}$ and sufficiently small $c\epsilon_\ell$. 
Thus, we have \eqref{nai5},
and if $\tilde Z_\ell\setminus \cup_{i=1}^2 \{a+(x,y)\,:\,|y-f_{(i,\ell)}(x)|\leq r_\ell/4,|x|\leq 
r\}\neq\emptyset$, then we can pick a point from this set and construct another function, $f_{(3,\ell)}$.
By repeating this construction and due to the above disjointedness property of these graphs, 
we can exhaust all points of $\tilde Z_\ell$ after finite steps
and we have a sequence
of graphs $f_{(1,\ell)},\ldots,f_{(\nu_\ell',\ell)}$, that is, 
\begin{equation} \label{nai3}
\tilde Z_\ell\setminus \cup_{i=1}^{\nu_\ell'}\{a+(x,y)\,:\,|y-f_{(i,\ell)}(x)|\leq r_\ell/4,|x|\leq 
r\}=\emptyset.
\end{equation}

{\bf Step 7: proof that the number of graphs is $\nu$.}
In the following, we will see that $\nu_\ell'$ has to be necessarily equal to $\nu$ given in \eqref{pdefC3}
for all large $\ell$ if $\Cr{c_6}$ and $\Cr{c_7}$ are sufficiently small. 
Set 
\begin{equation}\label{nai3.5}
\mu_\ell:=\mathcal H^1\mres_{\cup_{i=1}^{\nu_\ell'} \{a+(x,f_{(i,\ell)}(x))\,:\, |x|\leq r\}},
\end{equation}
that is, $\mu_\ell$ is the 1-dimensional measure obtained from the union of above graphs. 
We claim that 
\begin{equation}
\label{nai4}
\mu=\lim_{\ell\rightarrow\infty} \|\partial\mathcal E_{j_\ell}\|=\lim_{\ell\rightarrow\infty}
\mu_\ell
\end{equation}
in $U_{9r/8}(a)\cap\{a+(x,y)\,:\, |x|\leq r\}$ as measures. The difference as measures between 
$\|\partial\mathcal E_{j_\ell}\|$ and $\mathcal H^1\mres_{Z_\ell}$ is negligible in the limit
due to Remark
\ref{gork}, and within the domain under consideration, $Z_\ell=\tilde Z_\ell$, thus we need to prove $\lim_{\ell\rightarrow\infty}\mathcal H^1\mres_{\tilde Z_\ell}=
\lim_{\ell\rightarrow\infty}\mu_\ell$. For this, fix a smooth function $\phi\in C_c^{\infty}(
U_{9r/8}(a)\cap\{a+(x,y)\,:\,|x|\leq r\})$ with $|\phi|\leq 1$. 
We know that $\tilde Z_\ell$ is within $\Cr{c_9}\epsilon_\ell
r_\ell$-neighborhood of graphs of $f_{(1,\ell)},\ldots,f_{(\nu_\ell',\ell)}$ by \eqref{nai2},
and these graphs are separated by $r_\ell/8$ as in \eqref{nai5}. Now, take a neighborhood of the graph
of $f_{(1,\ell)}$. Recall that $f_{(1,\ell)}$ is defined as in \eqref{gfdef}, with the partition of unity
$\{\tilde\zeta_k\}_{k=-k_0'}^{k_0}$ subordinate to the intervals $\{I_k\}_{k=-k_0'}^{k_0}$ such that
$-r\in I_{-k_0'+1}$, $r\in I_{k_0-1}$ and $[-r,r]\subset\cup_{k=-k_0'+1}^{k_0-1}I_k$ (these objects 
depend also on the indices of the functions but we drop the dependence). Let $\psi_{(1,\ell)}$ be a function such that 
\begin{equation}
\label{naiad}
\begin{array}{ll}
\psi_{(1,\ell)}=1 & \mbox{ on }
\{a+(x,y)\,:\, |y-f_{(1,\ell)}(x)|<r_\ell/32,\, |x|\leq r\}, \\
\psi_{(1,\ell)}=0 & \mbox{ on }
\{a+(x,y)\,:\, |y-f_{(1,\ell)}(x)|>r_\ell/16,\, |x|\leq r\},
\end{array}
\end{equation}
$0\leq \psi_{(1,\ell)}\leq 1$ and 
$\sup (r_\ell |\nabla\psi_{(1,\ell)}|)\leq c$ where $c$ is an absolute constant. Such $\psi_{(1,\ell)}$ can be 
constructed due to the definition of $f_{(1,\ell)}$. We then use \eqref{sdefC1} with $\tilde\zeta_k \psi_{(1,\ell)}
\phi$ in place of $\phi$ there. Note that the projection of $U_{r_\ell}(z_k)$ to the $x$-axis is $I_k$
in this case, and we have $\tilde\zeta_k \psi_{(1,\ell)}
\phi\in C_c^1(U_{r_\ell}(z_k))$ for each $k=-k_0'+1,\ldots,k_0-1$ and for sufficiently large $\ell$ 
\begin{equation}
\big|\int_{B_{r_\ell}(z_k)}\tilde \zeta_k\psi_{(1,\ell)}\phi\,d\|\partial\mathcal E_{j_\ell}\|-\int_{B_{r_\ell}
(z_k)\cap L_k}\tilde\zeta_k\psi_{(1,\ell)}\phi\,d\mathcal H^1\big|\leq \epsilon_\ell r_\ell.
\label{nai6}
\end{equation}
By the definition \eqref{gfdef} and the subsequent discussion, within $U_{r_\ell}(z_k)$, the distance
between $L_k$ and the graph of $f_{(1,\ell)}$ is within $c\epsilon_\ell r_\ell$
and the difference of the derivatives is within $c\epsilon_\ell$ (with absolute constant $c$), so that
we may estimate as
\begin{equation}
\label{nai8}
\big|\int_{B_{r_\ell}
(z_k)\cap L_k}\tilde\zeta_k\psi_{(1,\ell)}\phi\,d\mathcal H^1-\int_{B_{r_\ell}(z_k)\cap {\rm graph}\,f_{(1,\ell)}}
\tilde\zeta_k\psi_{(1,\ell)}\phi\,d\mathcal H^1\big|\leq c(\|\phi\|_{C^1})\epsilon_\ell r_\ell. 
\end{equation}
Now, summing over $k$ and using the fact that $\sum \tilde\zeta_k=1$ and $\psi_{(1,\ell)}=1$ on ${\rm graph}\,f_{(1,\ell)}$, we obtain from \eqref{nai6} and \eqref{nai8} that
\begin{equation}
\label{nai9}
\big|\int \psi_{(1,\ell)} \phi\,d\|\partial\mathcal E_{j_\ell}\|-\int_{{\rm graph}\,f_{(1,\ell)}}\phi\,d\mathcal H^1
\big|\leq c(\|\phi\|_{C^1})\epsilon_\ell r,
\end{equation} 
where we used $(k_0+k_0')r_\ell\leq cr$ with an absolute constant (for example, we may take $c=5$). 
Now, suppose that $\nu_\ell'\geq \nu+1$ for large $\ell$. Then, we can obtain \eqref{nai9} for 
each $f_{(1,\ell)},\ldots,f_{(\nu+1,\ell)}$, and summing over $i=1,\ldots,\nu+1$, we have
(with $\psi_{(2,\ell)},\ldots,\psi_{(\nu+1,\ell)}$ being the corresponding cut-off functions for
graphs of $f_{(2,\ell)},\ldots,f_{(\nu+1,\ell)}$)
\begin{equation}
\label{nai10}
\big|\int\sum_{i=1}^{\nu+1}\psi_{(i,\ell)}\phi\,d\|\partial\mathcal E_{j_\ell}\|-\sum_{i=1}^{\nu+1}
\int_{{\rm graph}\,f_{(i,\ell)}}\phi\,d\mathcal H^1\big|\leq c(\nu,\|\phi\|_{C^1})\epsilon_\ell r.
\end{equation}
We choose $\phi$ which is a smooth approximation to the characteristic function of the set
$\{a+(x,y)\,:\,|y|< 2\Cr{c_7}r, |x|<r\}$ but which has a compact support within the same set.
We may assume that the graphs of $f_{(1,\ell)},\ldots,f_{(\nu+1,\ell)}$ are within 
$\{a+(x,y)\,:\,|y|<3\Cr{c_7}r/2\}$ and we can make sure that the following inequality holds:
\begin{equation}
\label{nai11}
\sum_{i=1}^{\nu+1}\int_{{\rm graph}\,f_{(i,\ell)}}\phi\, d\mathcal H^1\geq 2r(\nu+\sfrac34).
\end{equation}
On the other hand, since the supports of $\psi_{(1,\ell)},\ldots,\psi_{(\nu+1,\ell)}$ are mutually
disjoint, \eqref{pdefC2sup} implies
\begin{equation}
\label{nai12}
\limsup_{\ell\rightarrow\infty}\int\sum_{i=1}^{\nu+1}\psi_{(i,\ell)}\phi\,d\|\partial\mathcal E_{j_\ell}\|
\leq \int\phi\,d\|\partial\mathcal E_{j_\ell}\|\leq 
2r(\nu+\sfrac12).
\end{equation}
Then \eqref{nai10}-\eqref{nai12} lead to a contradiction as $\ell\rightarrow\infty$. Thus we proved 
$\nu_\ell'\leq \nu$. To see that $\nu_\ell'=\nu$, assume otherwise, that is, $\nu_\ell'\leq \nu-1$.
This time, we choose $\phi$ which is a smooth approximation to the characteristic function of the set 
$\{a+(x,y)\,:\, |y|<2\Cr{c_7},|x|<r/2\}$ but which has a compact support within the same set. 
We may assume that the slopes of $f_{(1,\ell)},\ldots,f_{(\nu_\ell',\ell)}$ are small by restricting
$\Cr{c_6}$ and $\Cr{c_7}$ and we can make sure that
\begin{equation}
\label{nai13}
\sum_{i=1}^{\nu_\ell'}\int_{{\rm graph}\,f_{(i,\ell)}}\phi\, d\mathcal H^1\leq  r(\nu_\ell'+\sfrac14)
\leq r(\nu-\sfrac34).
\end{equation}
By \eqref{nai3}, on the other hand, we have
\begin{equation}
\label{nai14}
\liminf_{\ell\rightarrow\infty}\int\sum_{i=1}^{\nu_\ell'}\psi_{(i,\ell)}\phi\,d\|\partial\mathcal E_{j_\ell}\|
=\lim_{\ell\rightarrow\infty} \int_{\tilde Z_\ell} \phi\,d\mathcal H^1 =\int\phi\,d\mu.
\end{equation}
By \eqref{pdefC3} and choosing an appropriate $\phi$ to begin with, we may assume
\begin{equation}
\label{nai15}
\int\phi\,d\mu\geq r(\nu-\sfrac23).
\end{equation}
Since we have \eqref{nai10} with $\nu+1$ there replaced by $\nu_\ell'$, \eqref{nai13}-\eqref{nai15} lead
to a contradiction, which proves $\nu_\ell'=\nu$ at last. 

{\bf Step 8: the limit functions are in $W^{2,2}$.}
Thus, for sufficiently large $\ell$, we have \eqref{nai10} with $\nu+1$ there replaced by $\nu$, and 
we may assume that $f_{(1,\ell)}<f_{(2,\ell)}<\ldots<f_{(\nu,\ell)}$ for $|x|\leq r$. These
functions satisfy \eqref{varf}, and by Arzel\`{a}-Ascoli theorem (with a slight modification of the 
proof due to the vanishing error $\epsilon_\ell$), there exists a 
subsequence denoted by the same index converging in the $C^1$ norm
and the limit $C^{1,\sfrac12}$ functions $f_1=\lim_{\ell\rightarrow\infty}f_{(1,\ell)}\leq \ldots\leq f_\nu=\lim_{\ell\rightarrow\infty}f_{(\nu,\ell)}$ defined on $|x|\leq r$. 
Because of the uniform $C^1$ convergence and by \eqref{nai10} (with $\nu$ in place of $\nu+1$), we proved
\eqref{nai4} as well as \eqref{pdefC5}. 
Lastly, we prove that the limit functions are in
$W^{2,2}$. We prove that for any $\phi\in C_c^2((-r,r))$ and for any $i=1,\ldots,\nu$, we have
\begin{equation}
\label{nai17}
\int \phi'\frac{f_i'}{\sqrt{1+(f_i')^2}}\,dx\leq \Big(\frac{\Cr{c_6}}{r} \int  \phi^2\sqrt{1+
(f_i')^2}\,dx\Big)^{\frac12}.
\end{equation}
This proves that $f_i'/\sqrt{1+(f_i')^2}$ has the weak derivative
in $L^2$, and which shows that $f_i\in W^{2,2}$. Assume $i=1$ and the other cases are similar. 
By the $C^1$ convergence, we have
\begin{equation}
\label{nai18}
\int \phi'\frac{f_1'}{\sqrt{1+(f_1')^2}}\,dx=\lim_{\ell\rightarrow\infty} 
\int \phi'\frac{f_{(1,\ell)}'}{\sqrt{1+(f_{(1,\ell)}')^2}}\,dx.
\end{equation}
Fix a large $\ell$. We go back to the argument and notation following \eqref{nai4}. Since $\sum \tilde \zeta_k=1$, 
we have
\begin{equation}
\label{nai19}
\int \phi'\frac{f_{(1,\ell)}'}{\sqrt{1+(f_{(1,\ell)}')^2}}\,dx=\sum_{k=-k_0'}^{k_0} 
\int_{I_k} \tilde \zeta_k \phi'\frac{f_{(1,\ell)}'}{\sqrt{1+(f_{(1,\ell)}')^2}}\,dx.
\end{equation}
On each $I_k$, by \eqref{sva1},
\begin{equation}
\label{nai20}
\Big|\int_{I_k} \tilde \zeta_k \phi'\frac{f_{(1,\ell)}'}{\sqrt{1+(f_{(1,\ell)}')^2}}\,dx
-\int_{I_k} \tilde \zeta_k \phi'\frac{g_k'}{\sqrt{1+(g_k')^2}}\,dx\Big|\leq c(\sup|\phi'|)\epsilon_\ell r_\ell.
\end{equation}
Recall that ${\rm graph}\,g_k$ represents $L_k$, and the $(1,2)$-component of the orthogonal projection to 
the tangent space of $L_k$ is given by $g_k'/(1+(g_k')^2)$. Thus, in terms of the language of varifold, 
\begin{equation}
\label{nai21}
\int_{I_k} \tilde \zeta_k\phi'\frac{g_k'}{\sqrt{1+(g_k')^2}}\,dx=\int_{{\bf G}_1(U_{r_\ell}(z_k))}
\tilde \zeta_k(x)\phi'(x) \tilde S_{12}\,d(|L_k|)(z,\tilde S),
\end{equation}
where $z=(x,y)$. Since $\psi_{(1,\ell)}=1$ on $L_k$, we have
\begin{equation}
\label{nai21sub}
\int_{{\bf G}_1(U_{r_\ell}(z_k))}
\tilde \zeta_k(x)\phi'(x) \tilde S_{12}\,d(|L_k|)(z,\tilde S)=\int_{{\bf G}_1(U_{r_\ell}(z_k))} 
\tilde \zeta_k(x)\phi'(x) \psi_{(1,\ell)}(z)\tilde S_{12}\,d(|L_k|)(z,\tilde S).
\end{equation}
We next use \eqref{defC2} to obtain
\begin{equation}
\label{nai22}
\Big| 
\int_{{\bf G}_1(U_{r_\ell}(z_k))}  \tilde\zeta_k\phi'\psi_{(1,\ell)}\tilde S_{12}\,d(|L_k|)
-\int_{{\bf G}_1(U_{r_\ell}(z_k))}\tilde\zeta_k\phi'\psi_{(1,\ell)}\tilde S_{12}\,d(\partial\mathcal E_{j_\ell})
\Big| \leq \epsilon_\ell r_\ell. 
\end{equation}
For this to be true, we note that $\tilde\zeta_k\phi'\psi_{(1,\ell)}\in C_c^1(U_{r_\ell}(z_k))$ and
$r_\ell|\nabla(\tilde\zeta_k\phi'\psi_{(1,\ell)})|$ can be bounded by a
constant depending only on $\|\phi\|_{C^2}$, thus \eqref{defC2} is valid with this function
for sufficiently large $\ell$. 
By \eqref{nai19}-\eqref{nai22}, and using that $(k_0+k_0')r_\ell\leq 5r$, we obtain
\begin{equation}
\label{nai23}
\Big|\int \phi'\frac{f_{(1,\ell)}'}{\sqrt{1+(f_{(1,\ell)}')^2}}\,dx -
\int_{{\bf G}_1(\R^2)} \phi'\psi_{(1,\ell)} \tilde S_{12}\,d(\partial \mathcal E_{j_\ell})
\Big|\leq 5 (c(\sup|\phi'|)+1)\epsilon_\ell r. 
\end{equation}
In particular, \eqref{nai18} and \eqref{nai23} show
\begin{equation}
\label{nai24}
\int \phi'\frac{f_1'}{\sqrt{1+(f_1')^2}}\,dx=\lim_{\ell\rightarrow\infty} 
\int_{{\bf G}_1(\R^2)} \phi'\psi_{(1,\ell)} \tilde S_{12}\,d(\partial \mathcal E_{j_\ell}).
\end{equation}
Now $\phi'(x)\psi_{(1,\ell)}(z)=\frac{\partial}{\partial x}(\phi\psi_{(1,\ell)})-\phi\frac{\partial}{\partial
x}\psi_{(1,\ell)}$ and $\nabla\psi_{(1,\ell)}$ is non-zero only on $\{a+(x,y)\,:\, r_\ell/32\leq |y-f_{(1,\ell)}(x)|
\leq r_\ell/16\}$ by \eqref{naiad}. By \eqref{nai2}, on this set, there is no 
point of $\tilde Z_\ell$. Since $\partial\mathcal E_{j_\ell}=\tilde Z_\ell\cup \tilde Z_\ell^c$
and the measure of $\tilde Z_\ell^c$ is negligible as noted in Remark \ref{gork}, we have
\begin{equation}
\label{nai25}
\lim_{\ell\rightarrow\infty} 
\int_{{\bf G}_1(\R^2)} \phi'\psi_{(1,\ell)} \tilde S_{12}\,d(\partial \mathcal E_{j_\ell})
=\lim_{\ell\rightarrow\infty} 
\int_{{\bf G}_1(\R^2)} \Big\{\frac{\partial}{\partial x}(\phi\psi_{(1,\ell)})\tilde S_{12}
+\frac{\partial}{\partial y}(\phi\psi_{(1,\ell)})\tilde S_{22}\Big\}\,d(\partial \mathcal E_{j_\ell}).
\end{equation}
By the definition of the first variation, we have
\begin{equation}
\label{nai26}
\int_{{\bf G}_1(\R^2)} \Big\{\frac{\partial}{\partial x}(\phi\psi_{(1,\ell)})\tilde S_{12}+
\frac{\partial}{\partial y}(\phi\psi_{(1,\ell)})\tilde S_{22}\Big\}\,d(\partial \mathcal E_{j_\ell})
=\delta(\partial\mathcal E_{j_\ell})((0,\phi\psi_{(1,\ell)})).
\end{equation}
We estimate as in \eqref{defC14}-\eqref{defC16} with $\psi$ there replaced by $\phi\psi_{(1,\ell)}$,
which gives
\begin{equation}
\label{nai26ad}
\lim_{\ell\rightarrow\infty}\big|\delta(\partial\mathcal E_{j_\ell})((0,\phi\psi_{(1,\ell)}))
-(\Phi_{\varepsilon_{j_\ell}}\ast\delta(\partial\mathcal E_{j_\ell}))((0,\phi\psi_{(1,\ell)}))\big|=0.
\end{equation}
In place of \eqref{defC17}, we obtain (with $U={\rm spt}\,\phi\psi_{(1,\ell)}$ and \eqref{pdefC1}) 
\begin{equation}
\label{nai27}
\begin{split}
|(\Phi_{\varepsilon_{j_\ell}}\ast\delta(\partial\mathcal E_{j_\ell}))((0,\phi\psi_{(1,\ell)}))|
&\leq \big(\int_U \frac{|\Phi_{\varepsilon_{j_\ell}}\ast \delta(\partial\mathcal E_{j_\ell})|^2}
{\Phi_{\varepsilon_{j_\ell}}\ast\|\partial\mathcal E_{j_\ell}\|+\varepsilon_{j_\ell}\Omega^{-1}}
\big)^{\frac12}\big(\int (\phi\psi_{(1,\ell)})^2(\Phi_{\varepsilon_{j_\ell}}\ast
 \|\partial\mathcal E_{j_\ell}\|+\varepsilon_{j_\ell}\Omega^{-1})\big)^{\frac12} \\
&\leq (\Cr{c_6}/r)^{\sfrac12}\big(\int (\phi\psi_{(1,\ell)})^2(\Phi_{\varepsilon_{j_\ell}}\ast
 \|\partial\mathcal E_{j_\ell}\|+\varepsilon_{j_\ell}\Omega^{-1})\big)^{\frac12}.
 \end{split}
\end{equation}
Since $\phi\psi_{(1,\ell)}$ is bounded, it follows 
\begin{equation}
\label{nai28}
\lim_{\ell\rightarrow\infty} \int(\phi\psi_{(1,\ell)})^2\varepsilon_{j_\ell}\Omega^{-1}=0. 
\end{equation}
We can also estimate $|\Phi_{\varepsilon_{j_\ell}}\ast (\phi\psi_{(1,\ell)})-\phi\psi_{(1,\ell)}|$
as in \eqref{defC15} so that
\begin{equation}
\label{nai29}
\lim_{\ell\rightarrow\infty} \Big|\int(\phi\psi_{(1,\ell)})^2(\Phi_{\varepsilon_{j_\ell}}\ast
\|\partial\mathcal E_{j_\ell}\|)-\int(\phi\psi_{(1,\ell)})^2\,d\|\partial\mathcal E_{j_\ell}\|\Big|=0.
\end{equation}
By the similar argument leading to \eqref{nai9} and the $C^1$ convergence of $f_{(1,\ell)}$
to $f_1$, one can prove that
\begin{equation}
\label{nai30}
\lim_{\ell\rightarrow\infty} \int (\phi\psi_{(1,\ell)})^2\,d\|\partial\mathcal E_{j_\ell}\|=
\int_{{\rm graph} f_1}\phi^2\, d\mathcal H^1.
\end{equation} 
Combining \eqref{nai24}-\eqref{nai30}, we obtain \eqref{nai17}, proving that $f_i\in W^{2,2}([-r,r])$. 
This concludes the proof of
Theorem \ref{regflat}.
\end{proof}
\section{Proof of main theorems}\label{Pr}
Finally, we give a proof of Theorem \ref{KTmain2} and \ref{KTmain3}:
\begin{proof}
As stated in Theorem \ref{limreg}, for almost all $t\in [0,\infty)$, 
the limit varifold $V_t$ is integral with locally square-integrable generalized mean curvature, and we have
Theorem \ref{regflat} available for this $V_t$, where $\mu=\|V_t\|$ there. We omit $t$ in the following.
By the monotonicity formula (see \cite[17.7]{Simon} for the precise form we need),
the density function $\theta^1(\|V\|,\cdot)$ is an upper semicontinuous function on $\R^2$ and $
\theta^1(\|V\|,x)\geq 1$ for $x\in {\rm spt}\,\|V\|$. Moreover, for any sequence $R_i\rightarrow 0+$ and
$z_0\in {\rm spt}\,\|V\|$, define $F_i(z):=(z-z_0)/R_i$, and consider the sequence $(F_i)_{\sharp} V$.
By the well-known argument on the existence of tangent cone (see for example \cite[Sec. 42]{Simon}), there exists a subsequence (denoted
by the same index) and the limit $\tilde V=\lim_{i\rightarrow\infty} (F_i)_{\sharp} V$ which is stationary integral varifold and which is homogeneous degree $0$.
That means that there exist distinct half-lines emanating from the origin
$M_1,\ldots,M_m$ and integer multiplicities $\theta_1,\ldots,\theta_m$ such that $\tilde V=\sum_{k=1}^m
\theta_k|M_k|$. If $m=2$, the stationarity of $\tilde V$ implies $\theta_1=\theta_2$ and $M_1$ and $M_2$
are parallel, that is, ${\rm spt}\,\|\tilde V\|$ is a line through the origin. If $\theta_1=1$,
we can apply the Allard regularity theorem \cite{Allard} and conclude that ${\rm spt}\,\|V\|$ is
a $W^{2,2}$ curve in a neighborhood of $z_0$. If $\theta_1\geq 2$, then, we apply  
Theorem \ref{regflat} to $V$ with $\nu=\theta_1$, $a=z_0$ and $B_{R_i}(a)$. 
The condition \eqref{pdefC1} is satisfied for 
all sufficiently large $i$ since $R_i\rightarrow 0+$.
We may assume after rotation that $M_1$ is 
parallel to the $x$-axis, and ${\rm spt}\,\|(F_i)_{\sharp} V\|$ converges to the $x$-axis locally
in the Hausdorff distance due to the monotonicity formula. Thus \eqref{pdefC2} is satisfied for all large $i$,
and the convergence of $\|(F_i)_{\sharp}\,V\|$ to $\nu\mathcal H^1\mres_{\{y=0\}}$ shows that
\eqref{pdefC3} and \eqref{pdefC2sup} are satisfied for all large $i$. Thus, there exists a neighborhood
of $z_0$ having the description of \eqref{pdefC5}, that is, ${\rm spt}\,\|V\|$ is a union of $W^{2,2}$
curves tangent at $z_0$. Let ${\rm reg}\,V$ be the set of points where the tangent cone is a 
line with multiplicity as above, and
let ${\rm sing}\, V$ be ${\rm spt}\,\|V\|\setminus {\rm reg}\,V$. This is the set of points where
there exists a tangent cone which is not a line, that is, there exists a tangent cone of the form
$\tilde V=\sum_{k=1}^m\theta_k|M_k|$ with $m\geq 3$. Note that ${\rm sing}\,V$ is a closed set.
Moreover, by the following well-known argument, it is a discrete set: 
Otherwise, there is a sequence $\{z_i\}_{i=1}^\infty\subset{\rm sing}\,V$
converging to $z_0\in {\rm sing}\,V$. We may consider a map $F_i(z)=(z-z_0)/|z_i-z_0|$ and 
$(F_i)_{\sharp} V$. By the same argument, a subsequence converges to a tangent cone, and we may assume 
after choosing a further subsequence that $(z_i-z_0)/|z_i-z_0|$ converges to $\tilde z$ with $|\tilde z|=1$.
But then, $(F_i)_{\sharp} V$ approaches to a line with possible multiplicity 
near $\tilde z$ as $i\rightarrow\infty$, which 
implies from the preceding argument that $(F_i)_{\sharp} V$ is regular in a neighborhood of $\tilde z$, and that means that 
$z_i$ is in ${\rm reg}\, V$, a contradiction.

Next, fix $z_0\in{\rm sing}\,V$ and after a change of variables, assume $z_0=0$. Suppose $\lim_{\ell\rightarrow\infty} (F_i)_{\sharp} V=\sum_{k=1}^m
\theta_k|M_k|$, where $F_i(z)=z/R_i$ and $R_i\rightarrow 0+$. Assume that $M_1,\ldots,M_m$ are ordered
counterclockwise. Let us denote the annulus $\{z\,:\,R_i\leq |z|
\leq 2R_i\}$ as $A_i$. Since the convergence is also in the Hausdorff distance for $(F_i)_{\sharp} V$, 
we have
$\lim_{i\rightarrow\infty}(R_i)^{-1}d_H({\rm spt}\,\|V\|\cap A_i,\cup_{k=1}^m M_k\cap A_i)=0$. 
Depending only on the smallest angle between the half lines $M_1,\ldots,M_m$, we choose a sufficiently small
$\beta>0$
so that for each $k=1,\ldots,m$ and $\tilde z\in M_k\cap \{z\,:\,1\leq |z|\leq 2\}$, we have $B_{4\beta}
(\tilde z)\cap M_{k'}=\emptyset$ for all $k'\neq k$. By the stated convergence, 
we may apply Theorem \ref{regflat} to each $B_{2\beta R_i}(\tilde z)$ for $\tilde z\in M_k\cap A_i$
and conclude that $\|V\|$ in $A_i$
is represented by $\theta_k$ graphs over $M_k$ denoted by $f_1^{(k)}\leq\ldots\leq f_{\theta_k}^{(k)}$
of $W^{2,2}$ functions near $A_i\cap M_k$. Here, the index is chosen so that ${\rm graph}\,f_1^{(k)},\ldots,
{\rm graph}\, f_{\theta_k}^{(k)}$ are ordered counter-clockwise for each $k=1,\ldots,m$ (the ``graph over $M_k$'' means that $M_k$
is identified with the positive $x$-axis and the graph is considered as $(x,f_1^{(k)}(x))$
and similarly for others with this coordinate). For all large $i$, these graphs are $C^1$ close to $M_k$, 
so their slopes may be arbitrarily close to that of $M_k$ in $A_i$ by choosing a large $i$.  
Recall that we have a sequence $\{\partial\mathcal E_{j_\ell}\}$ converging to $V$ and
that we may consider $Z_\ell$ in place of $\{\partial\mathcal E_{j_\ell}\}$ due to Remark \ref{gork}. 
Since ${\rm sing}\,V$ is a discrete set, we may assume $B_{4R_i}\cap {\rm sing}\,V=\{0\}$. 
We repeat the argument in the proof of Theorem \ref{regflat}, with the same notation and with
$B_{4R_i}$ in place of $B_{2r}(a)$ there. For each $z\in {\rm spt}\,\|V\|\setminus \{0\}\subset {\rm reg}\,V$, we saw
in the proof of Theorem \ref{regflat} that there exists a neighborhood $B_{\epsilon_z}(z)$ 
and a subsequence $\{Z_{\ell'}\}\subset \{Z_\ell\}$ such that $B_{\epsilon_z}(z)\cap (Z_{\ell'}
\setminus Z_{\ell'}^*)=\emptyset$ for all sufficiently large $\ell'$, or, there is no triple junction
in a neighborhood. Thus, by a diagonal argument, 
we may choose a subsequence (denoted by the same index) such that 
\begin{equation}\label{nojun}
(Z_\ell\setminus Z_{\ell}^*)\cap 
(B_{4R_i}\setminus B_{\epsilon})=\emptyset\,\,\mbox{ for all sufficiently large $\ell$ for each fixed $\epsilon\in (0,R_i)$}
\end{equation}
and we consider this subsequence in the following. 
For the construction of the approximate $C^{1,\sfrac12}$
curves with junctions, we start in the neighborhood of $A_i\cap M_k$. As we saw in the
proof of Theorem \ref{regflat}, in $A_i$, we may construct $C^{1,\sfrac12}$ 
functions $f_{(1,\ell)}^{(k)}<\ldots<f_{(\theta_k,\ell)}^{(k)}$ over $M_k\cap A_i$ each of which converges
to $W^{2,2}$ functions $f_1^{(k)},\ldots,f_{\theta_k}^{(k)}$ in $C^1$ topology as $\ell\rightarrow\infty$. 
Thus we have 
\begin{equation}
\label{nojun1}
\|V\|\mres_{A_i}=\sum_{k=1}^m\sum_{j=1}^{\theta_k}\mathcal H^1\mres_{A_i\cap {\rm graph}\,f_j^{(k)}}.
\end{equation}
Consider
in particular $M_1$ and $f^*_\ell:=f_{(\theta_1,\ell)}^{(1)}$ and for convenience, consider the coordinate
system so that $M_1$ is the positive $x$-axis. We may continue choosing line segments to extend the 
${\rm graph}\, f^*_\ell$ in the negative direction until (a) the exit from $B_{2R_i}$ occurs or (b) the encounter with a triple junction occurs. 
Note that (a) is not possible: for a contradiction, this would mean that we can construct $f^*_\ell$ for
$x\in[-R_i,2R_i]$ with the property that (see \eqref{varf1})
\begin{equation*}
Z_\ell\cap \{(x,y)\,:\, |y-f^*_\ell(x)|\leq r_\ell/4,x\in[-R_i,2R_i]\}
\subset \{(x,y)\,:\, |y-f^*_\ell(x)|\leq \Cr{c_9}\varepsilon_\ell r_\ell,x\in[-R_i,2R_i]\},
\end{equation*}
which means that there is an empty horizontal strip devoid of $Z_\ell$ just above 
${\rm graph}\,f^*_\ell$ over $[-R_i,R_i]$ in particular. 
But then, as one construct a graph of $f_{(1,\ell)}^{(2)}$ starting near $M_2\cap A_i$ towards the 
origin, we would have the following contradiction. 
Since the angle between ${\rm graph}\,f_{(1,\ell)}^{(2)}$
and ${\rm graph}\,f^*_\ell$ is strictly smaller than $\pi$ (since the 
angle between $M_1$ and $M_2$ is), we cannot extend ${\rm graph}\,f_{(1,\ell)}^{(2)}$ past this horizontal strip
without having a triple junction. The argument of Step 5 shows that the resulting network of line
segments starting from the
triple junction encountered by ${\rm graph}\,f_{(1,\ell)}^{(2)}$ has to eventually needs to approach to this horizontal strip non-tangentially, but that would again be 
a contradiction.
Thus (a) is not possible, and we have the case (b). This implies that there exists 
$x_\ell:=x_{(\theta_1,\ell)}^{(1)}\in [-R_i,R_i]$ such that $f^*_\ell$ is defined
on $[x_\ell,2R_i]$, and 
that $(Z_\ell\setminus Z_\ell^*)\cap B_{\epsilon_\ell r_\ell}((x_\ell,f^*_\ell(x_\ell)))\neq 
\emptyset$. 
By \eqref{nojun}, $\lim_{\ell\rightarrow\infty} (x_\ell,f_\ell^*(x_\ell))=0$. We can similarly 
argue that each $f_{(j,\ell)}^{(1)}$ ($j=1,\ldots,\theta_1-1$) can be defined on
$[x_{(j,\ell)}^{(1)}, 2R_i]$ with $\lim_{\ell\rightarrow\infty}(x_{(j,\ell)}^{(1)},f_{(j,\ell)}^{(1)}(x_{(j,\ell)}^{(1)}))=0$. Since we have
$C^{1,\sfrac12}$ estimates \eqref{varf} for these functions, there exists a subsequence such that
$f_{(j,\ell)}^{(1)}$ converges as $\ell\rightarrow \infty$ 
to $f_j^{(1)}\in W^{2,2}$ defined on $[0,2R_i]$ with respect 
to the $C^1([\epsilon,2R_i])$-norm for fixed but arbitrary $\epsilon>0$ for each $j=1,\ldots,\theta_1$. 
We can make sure that $\sup_{x\in[0,2R_i]}|(f_j^{(1)})'(x)|$ is small due to \eqref{varf} for 
each $j=1,\ldots,\theta_1$, thus in particular $(f_j^{(1)})'(0)$ can be made so small that they are 
different from the slopes of
any of $M_2,\ldots, M_m$. Then, we claim that $(f_j^{(1)})'(0)=0$. Otherwise, note that 
$\lim_{i'\rightarrow\infty} (F_{i'})_{\sharp}V$ would have to have a half-line with slope given by 
$(f_j^{(1)})'(0)\neq 0$ with respect to the $x$-axis, which is different from any of $M_1,\ldots,M_m$, 
a contradiction. In summary, we may conclude the following: for arbitrary $\epsilon\in (0,R_i)$, $C^{1,\sfrac12}$ 
functions
$f_{(1,\ell)}^{(1)}<\ldots<f_{(\theta_1,\ell)}^{(1)}$ satisfy
\begin{equation}
\label{oritan3}
\lim_{\ell\rightarrow\infty} 
\|f_{(j,\ell)}^{(1)}-f^{(1)}_{j}\|_{C^1([\epsilon,2R_i])}=0,
\end{equation}
\begin{equation}
\label{oritan4}
Z_\ell\cap\{(x,y)\,:\, |y-f_{(j,\ell)}^{(1)}(x)|\leq r_\ell/4,x\in[\epsilon,2R_i]\}\subset
\{(x,y)\,:\, |y-f_{(j,\ell)}^{(1)}(x)|\leq \Cr{c_9}\epsilon_\ell r_\ell,x\in[\epsilon,2R_i]\},
\end{equation}
and 
\begin{equation}
\label{oritan5}
(f_j^{(1)})'(0)=0
\end{equation}
for $j=1,\ldots,\theta_1$. The similar conclusion follows near $M_2,\ldots,M_m$ as well. We next claim
that $Z_\ell\cap \{z\,:\, \epsilon\leq |z|\leq R_i\}$ is all included in the 
$\Cr{c_9}\epsilon_\ell r_\ell$-neighborhood of $\cup_{k=1}^m\cup_{j=1}^{\theta_k}\,{\rm graph}\,f_{(j,\ell)}^{(k)}$. Otherwise, we have a sequence $z_\ell
\in Z_\ell\cap \{z\,:\, \epsilon\leq |z|\leq R_i\}$ outside of the neighborhood. 
We can construct a $C^{1,\sfrac12}$ curve with small
slope variation starting from $z_\ell$, and since there is no point of $Z_\ell\setminus Z_\ell^*$ within
$B_{4R_i}\setminus B_\epsilon$ by \eqref{nojun} and it has to be disjoint from $\cup_{j,k}\,{\rm graph}\,f_{(j,\ell)}^{(k)}$, one can argue that this curve denoted by $Q_\ell$
is close to a straight
line intersecting $\partial B_{\epsilon}$ and $\partial B_{2R_i}$. In particular, $\lim_{\ell\rightarrow
\infty}\mathcal H^1\mres_{A_i\cap Q_\ell}$ contributes positively to $\|V\|$ on $A_i$ and we can argue that
\begin{equation*}
\|V\|\mres_{A_i}>\sum_{k=1}^m\sum_{j=1}^{\theta_k}\mathcal H^1\mres_{A_i\cap {\rm graph}\,f_j^{(k)}}.
\end{equation*}
But this is a contradiction to \eqref{nojun1}. Thus, we have the conclusion. 
Since $Z_\ell$ is covered by $\Cr{c_9}\epsilon_\ell r_\ell$-neighborhood of $\cup_{j,k}\,{\rm graph}\,f_{(j,\ell)}^{(k)}$, the same argument proving
\eqref{nai4} shows 
\begin{equation}
\label{oritan2}
\|V\|= \sum_{k=1}^m\sum_{j=1}^{\theta_k}\mathcal H^1\mres_{{\rm graph}\,f_j^{(k)}}\,\,\mbox{ on }B_{2R_i}.
\end{equation}
In fact, one shows \eqref{oritan2} on $B_{2R_i}\setminus B_{\epsilon}$ 
for arbitrary $\epsilon>0$ first, which is
enough to conclude the claim on $B_{2R_i}$. 

Finally we prove that the angle between the neighboring lines $M_k$ and $M_{k+1}$ is either 60 or 120 degrees.
It is enough to consider the case of $M_1$ and $M_2$, and let us identify $M_1$ with the positive $x$-axis.
As we saw already, we have a sequence of 
$C^{1,\sfrac12}$ functions $f_\ell^*:=f_{(\theta_1,\ell)}^{(1)}$ defined on $[x_\ell,
2R_i]$ (where $x_\ell:=x_{(\theta_1,\ell)}^{(1)}$) with $(Z_\ell\setminus Z_\ell^*)\cap B_{\epsilon_\ell r_\ell}((x_\ell,f^*_\ell(x_\ell)))\neq 
\emptyset$. Moreover, since $(f_{\theta_1}^{(1)})'(0)=0$, one can show that $\lim_{\ell\rightarrow\infty} 
(f_\ell^{*})'(x_\ell)=0$. 
Starting from $(x_\ell,f^*_\ell(x_\ell))$, one can construct two curves approximating $Z_\ell$
again (due to Lemma \ref{defC4}
and arguing as in Step 5 in the proof of Theorem \ref{regflat}). Follow the curve going up 
approximately in the direction of $(-1,\sqrt 3)$ denoted by $C_\ell$,
and note that the angle between $C_\ell$ and $M_1$ ($x$-axis)
at $(x_\ell,f^*_\ell(x_\ell))$ approaches to 120 degrees as $\ell\rightarrow\infty$. We consider the
following three cases, (a), (b), (c).
\newline
(a) Suppose that for each $\ell$, $C_\ell$ extends without 
encountering a triple junction until it exits from $B_{2R_i}$ for all large $\ell$. By the estimate of
\eqref{varf}, there is a limit curve $C_\infty$ of $C_\ell$ which contributes to 
$\|V\|$, and it has to be one of ${\rm graph}\,f_j^{(k)}$.
In fact, $C_\infty$ has to be ${\rm graph}\,f_1^{(2)}$. This is because of the following. Consider
the connected component of 
\begin{equation*}
B_{R_i}\cap \Big(\{z\,:\,\Cr{c_9}\epsilon_\ell r_\ell<d(z,C_\ell)<r_\ell/4\}
\cup \{(x,y)\, :\, \Cr{c_9}\epsilon_\ell r_\ell<y-f_\ell^*(x)<r_\ell/4,x\in[x_\ell,R_i]\}\Big). 
\end{equation*}
This looks like a wedge-shaped strip of width $(\sfrac14-\Cr{c_9}\epsilon_\ell)r_\ell$ just above
${\rm graph}\,f_\ell^*\cup C_\ell$, and due to the construction of $f_\ell^*$ and $C_\ell$ (see \eqref{oritan4}), 
it does not contain any point of $Z_\ell$. This implies that there is no point of $Z_\ell$ in the set
bounded by this wedge-shaped strip and $\partial B_{R_i}$ (and between $M_1$ and $M_2$), since otherwise we should be able to 
construct a curve (and possibly a network) approximating $Z_\ell$.
Then, arguing as in Step 5 in the proof of Theorem \ref{regflat}, this has 
to cross this wedge-shaped strip, but that is not possible. Thus there cannot be
any point of ${\rm spt}\,\|V\|$ in the region bounded by ${\rm graph}\,f_{\theta_1}^{(1)}$, $C_{\infty}$
and $\partial B_{R_i}$, and $C_{\infty}$
has to be ${\rm graph}\,f_1^{(2)}$. This also shows that the angle between $M_2$ and $M_1$ has to be 120 degrees.
\newline
(b) Suppose that for each $\ell$, as one extends $C_\ell$ by choosing line segments, $C_\ell$
meets some point of $Z_\ell\setminus Z_\ell^*$ (an encounter with a triple junction). Because of
\eqref{nojun}, this point has to converge to the origin as $\ell\rightarrow\infty$. We can again
construct two curves starting from this point for each $\ell$, and let $\tilde C_\ell$ be the
one going in the direction of (approximately) $(1,\sqrt 3)$. Assume that $\tilde C_\ell$ 
can be extended without encountering a triple junction and exits from $B_{2R_i}$. In this case, 
note that the length of $C_\ell$ approaches to $0$ as $\ell\rightarrow\infty$ and the slope of
$\tilde C_\ell$ at the junction approaches to $\sqrt 3$. By arguing as in the case (a), we can prove that
the limit curve $\tilde C_\infty$ has to be ${\rm graph}\, f_1^{(2)}$, proving that the angle between 
$M_1$ and $M_2$ is 60 degrees. 
\newline
(c) Continuing from (b), we consider the possibility that $\tilde C_\ell$ encounters another triple junction. 
There, we can again construct two curves starting from this point, and we follow the curve $\hat C_\ell$
going to the right.
Just to visualize the setting, imagine that we ``walk'' on ${\rm graph}\,f_{\ell}^*$ in the negative $x$ direction, then, at the triple junction $(x_\ell,
f_{\ell}^*(x_\ell))$, turn 60 degrees to the right, and walk along $C_\ell$. Then at another triple junction,
we turn 60 degrees to the right again and walk along $\tilde C_\ell$. In the present case, we assume that
we encounter another triple junction, and turn right by 60 degrees, and walk along $\hat C_\ell$.
All these triple junctions converge to the origin as $\ell\rightarrow\infty$ by \eqref{nojun}. 
Note that $\hat C_\ell$ has to be almost parallel to the $x$-axis and we would be walking in the
positive $x$ direction. If $\hat C_\ell$ can be 
extended without triple junction until it exits from $B_{2R_i}$, this means that we have another
curve above ${\rm graph}\,f_{(\theta_1,\ell)}^{(1)}$ of $Z_\ell$, and the limit $\hat C_{\infty}$ 
of $\hat C_\ell$ will be a curve tangent to $M_1$ at the origin which contribute to $\|V\|$
in addition to $f_1^{(1)},\ldots,f_{\theta_1}^{(1)}$. This would be a contradiction to
\eqref{oritan2}. Thus, $\hat C_\ell$ has to encounter a triple junction. But then, just as in the 
argument of Step 5, the resulting curve (or network) starting from this triple junction has to go down
in the negative $y$ direction until it intersects ${\rm graph}\, f_{(\theta_1,\ell)}^{(1)}$, but that
is not possible. To sum up, the case (c) actually does not occur, and we have either (a) or (b), 
proving the desired angle condition.

The proof of Theorem \ref{KTmain2} is now complete, by rotating the coordinate so that $M_1$ is the
positive $x$-axis. The equality \eqref{conc3} follows immediately from the fact that the tangent cone
is stationary. The proof of Theorem \ref{KTmain3} ($N=2$ case) can be completed by reviewing 
the present proof. The point is that, if $N=2$ we have $Z_\ell=Z_\ell^*$ due Lemma 5.5 with $N=2$,
that is, there is no triple junction in $Z_\ell$. Then, the above analysis around ${\rm sing}\,V$ shows that
the tangent cone has to be a line without junction and ${\rm sing}\,V$ is empty.
\end{proof}
\begin{lem}\label{comp}
Suppose that $\{V_t^i\}_{t\in I}$ ($i\in\N$) is a sequence of 1-dimensional Brakke flows in $U_R(a)\times I$ ($I$ is an interval)
such that $\sup_{i\in\N}\sup_{t\in I}\|V_t^i\|(U_R(a))<\infty$ and such that the property stated in Theorem \ref{KTmain2}
(resp.\,\ref{KTmain3}) holds true in $U_R(a)$ for a.e.\,$t\in I$. Then there exist a subsequence 
(denoted by the same index) and a Brakke flow $\{V_t\}_{t\in I}$ such that $\lim_{i\rightarrow\infty}
\|V_t^i\|=\|V_t\|$ in $U_R(a)$ for all $t\in I$ and $V_t$ satisfies Theorem \ref{KTmain2} (resp. \ref{KTmain3}) for a.e.\,$t\in I$.
\end{lem}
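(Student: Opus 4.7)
The first step is standard Brakke flow compactness. Testing \eqref{bineq} against a fixed cut-off together with $\sup_{i,t}\|V_t^i\|(U_R(a))<\infty$ yields a uniform bound on $\int_I\!\int |h(\cdot,V_t^i)|^2\, d\|V_t^i\|\, dt$, from which (as in \cite{Ton1}) a subsequence and a Brakke flow $\{V_t\}_{t\in I}$ on $U_R(a)$ are extracted with $\|V_t^i\|\to\|V_t\|$ as Radon measures for every $t\in I$. For a.e.\,$t\in I$, $V_t^i\to V_t$ as integer rectifiable varifolds, each $V_t^i$ satisfies Theorem \ref{KTmain2} (resp.~\ref{KTmain3}), and by Fatou one may pass to a further $t$-dependent subsequence with $\int |h(\cdot,V_t^i)|^2\, d\|V_t^i\|\le C$ uniformly in $i$. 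The problem thus reduces to showing that, at each such $t$, the limit $V_t$ inherits the structure of Theorem \ref{KTmain2} (resp.~\ref{KTmain3}).

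For this reduction, I would rerun the tangent-cone and network-reconstruction argument of Section \ref{Pr}, using $V_t^i$ in place of $\partial\mathcal E_{j_\ell}$. Fix $z\in\mathrm{spt}\|V_t\|$ and a tangent cone $W=\sum_{k=1}^m\theta_k|M_k|$ of $V_t$ at $z$. For small $R$, in the annulus $A_R:=B_{2R}(z)\setminus B_{R/2}(z)$ both $V_t$ and (by varifold convergence) all large-$i$ approximants $V_t^i$ are Hausdorff-close to $\bigcup_k M_k$. Theorem \ref{KTmain2} applied to each $V_t^i$ near $M_k\cap A_R$ provides $W^{2,2}$ graph functions $f^{(k)}_{j,i}$ ($j=1,\ldots,\theta_k$) over $M_k\cap A_R$ with $W^{2,2}$ norms bounded by the uniform $L^2$-curvature and mass bounds, and Rellich compactness yields $C^1$-convergent subsequences $f^{(k)}_{j,i}\to f^{(k)}_j\in W^{2,2}$ representing $V_t$ on $A_R$. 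Extending these graphs towards $z$ exactly as in Section \ref{Pr}, each step is controlled either by continuation of a graph or by a junction of $V_t^i$, whose angles lie in the discrete set $\{0^{\circ},60^{\circ},120^{\circ}\}$ and satisfy \eqref{conc3}. Both constraints are closed under $C^1$ convergence, so the limiting network at $z$ is of the form described in Theorem \ref{KTmain2}. For the case $N=2$, the two-phase property passes to the limit via $L^1$-convergence of indicator functions (cf.~\eqref{dname2}--\eqref{dname4}), so the Lemma \ref{two-sided} argument again rules out triple junctions in $V_t$.

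The main obstacle is that graphs of $V_t^i$ may coalesce in the limit, producing at $z$ one of the degenerate configurations (b)--(d) of Figure \ref{Fig1} with fewer graphs on each side than in the approximants, or inducing new junctions at $z$ that are invisible at each finite $i$. This is handled exactly as in Section \ref{Pr}: multiplicities are preserved by mass convergence, the admissible slope set $\{0,\pm\sqrt 3\}$ is discrete, and the tangent-vector identity \eqref{conc3} is a continuous constraint. Together with the ordering $f^{(k)}_{1,i}\le\cdots\le f^{(k)}_{\theta_k,i}$ passing to the limit, these ensure that the limiting configuration around $z$ matches the description of Theorem \ref{KTmain2} (resp.~\ref{KTmain3}), completing the reduction and hence the lemma.
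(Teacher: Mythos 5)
Your proposal follows essentially the same route as the paper's proof: standard Brakke-flow compactness, a Fatou argument giving a $t$-dependent subsequence with uniformly bounded $L^2$ curvature for a.e.\ $t$, then reconstruction of $V_t$ near each point from the graphical structure of the approximants $V_t^i$, using discreteness of the admissible angle set $\{0,\pm\sqrt3\}$ and closedness of the balance condition \eqref{conc3} to conclude that the limiting configuration is of the type in Theorem~\ref{KTmain2}.

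One small inaccuracy: for the $N=2$ case you invoke $L^1$-convergence of indicator functions and Lemma~\ref{two-sided}, but Lemma~\ref{comp} only assumes that the $V_t^i$ are Brakke flows with the local regularity of Theorem~\ref{KTmain3}; no open-partition structure is part of the hypotheses, so the machinery of \eqref{dname2}--\eqref{dname4} is not available. The paper instead argues purely geometrically: since each $V_t^i$ is locally a union of stacked embedded $W^{2,2}$ curves with uniform $W^{2,2}$ bounds, no transversal crossing can form in the limit, so every tangent cone of $V_t$ is a (multiplicity) line. The conclusion you want is correct, but this step needs the $W^{2,2}$-bound argument rather than a two-phase/indicator argument.
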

\begin{proof}
One can find the proof of compactness 
(the existence of convergent subsequence and the limit being a Brakke flow) 
in \cite{Ilm1,Ton1} so we only discuss the limit satisfying the regularity property. 
For any non-negative function $\phi\in C^2_c(U_R(a))$ and $t_1<t_2$ in $I$, by \eqref{bineq} and \eqref{bineq2},
\begin{equation}
\begin{split}
\|V^i_{t}\|(\phi)\Big|_{t=t_1}^{t_2}&\leq \int_{t_1}^{t_2}\int_{U_R(a)}
|\nabla\phi||h^i|-\phi|h^i|^2\,d\|V_t^i\|dt 
\leq \int_{t_1}^{t_2}\int_{U_R(a)} \frac{|\nabla\phi|^2}{2\phi}-\frac12\phi|h^i|^2\,d\|V_t^i\| dt
\\
&\leq (t_2-t_1)\sup\|\nabla^2\phi\|\sup_{t\in I}\|V_t^i\|({\rm spt}\phi)-\frac12\int_{t_1}^{t_2}
\int_{U_R(a)}\phi|h^i|^2\,d\|V_t^i\|dt,
\end{split}
\end{equation}
where $h^i=h(\cdot,V_t^i)$ and we used $ab\leq \frac12(a^2+b^2)$ and $|\nabla\phi|^2/\phi\leq 2\sup\|\nabla^2\phi\|$. By Fatou's Lemma and the given uniform bound, we have
\begin{equation}
\int_{t_1}^{t_2}\liminf_{i\rightarrow\infty}\Big(\int_{U_R(a)}\phi|h^i|^2\,d\|V_t^i\|\Big)dt<\infty.
\end{equation}
Thus, for a.e.\,$t\in I$, we can choose a subsequence (denoted by the same index) such that
the $L^2$ norm of $h^i$ is uniformly bounded on a compactly supported domain in $U_R(a)$. 
Consider $t\in I$ such that such a subsequence exists and additionally assume that the 
regularity property described in Theorem \ref{KTmain2} holds, which is true for a.e.\,$t$. 
By the lower semicontinuity argument,
the limit $V_t$ has also $h(\cdot,V_t)\in L_{loc}^2(\|V_t\|)$ for such $t$ and 
we may also assume that $V_t$ is integral for this $t$ (since we already know that $V_t$ is 
integral for a.e.\,$t$). For any $z_0\in {\rm spt}\,\|V_t\|$, a tangent cone exists, which 
consists of a finite number of half-lines with integer multiplicities. Let $F_i(z):=(z-z_0)/r_i$,
where $(F_i)_{\sharp}V_t$ converges to a tangent cone. 
We can choose a further subsequence $\{V_t^i\}_{i\in\N}$ (with the same index) so that
$(F_i)_{\sharp}V_t^i$ also converges to the same tangent cone as $i\rightarrow\infty$. Since
$\int_{B_{r_i}(z_0)} |h^i|^2\,d\|V_t^i\|$ is uniformly bounded and due to the regularity property of 
$V_t^i$, the variation of the slope of tangent line along each curve of 
${\rm spt}\,\|V_t^i\|$ in $B_{r_i}(z_0)$
is $O((r_i)^{\sfrac12})$, and any of their junctions are of the type described in Theorem \ref{KTmain2}.
The rest of the argument is similar to the one given in the proof of Theorem \ref{KTmain2}. In fact,
it is much easier since the converging objects are regular curves $V_t^i$ 
instead of ``approximately regular'' curves $Z_\ell$. 
If the tangent cone of $V_t$ at $z_0$ is a line with multiplicity $\nu$, then, for all sufficiently large 
$i$, $\|(F_i)_{\sharp}V_t^i\|$ will be close to it in measure in $B_1$, 
and we may conclude that ${\rm spt}\,\|V_t^i\|$ cannot have a junction and it
is a union of stacked $\nu$ curves of class $W^{2,2}$ with uniform $W^{2,2}$ bound. 
With the radius fixed at this
point and letting $i\rightarrow \infty$, we conclude that $V_t$ is also made up by a stacked $W^{2,2}$
curves near $z_0$. If the tangent cone is not a line but a union of half-lines with multiplicities, 
we know that such point is isolated from the same argument as before. Also 
$(F_i)_{\sharp}V_t^i$ is locally close to half-lines away from the origin for large $i$, and arguing 
as before, we can conclude that angles of the tangent cone have to be either 60 or 120 degrees 
depending on how many triple junctions there are along the curve. We then prove that
the limit $\|V_t\|$ near each half-line is obtained as the limit of graphs of $W^{2,2}$ functions.
We omit the detail since the idea is similar. The case of $N=2$ is simpler since $V_t^i$ is 
a union of embedded $W^{2,2}$ curves which may be tangent to each other at some point. Since there
is a uniform $W^{2,2}$ bound, one can check that $V_t$ cannot have a tangent cone which
is not a line. Since any tangent cone is a line with multiplicity, one can show the desired local 
property.  
\end{proof}
Theorem \ref{KTmain4} is a direct corollary of Lemma \ref{comp}: 
\begin{proof} 
Any tangent flow is obtained as a 
limit of parabolically rescaled Brakke flow, and the regularity property in Theorem \ref{KTmain2} and \ref{KTmain3}
is not affected under the rescaling. Thus Lemma \ref{comp} shows that the tangent flow has the same
property.
\end{proof}
\section{Concluding remarks}\label{Final}

\subsection{Results for Brakke flows in \cite{Stuvard-Tonegawa}}\label{Final1}
In Section \ref{Main}, mainly to avoid confusion, 
we state results for Brakke flows obtained in \cite{KimTone}.
However, since the results are local in nature, the same regularity properties hold true 
for Brakke flows obtained in \cite{Stuvard-Tonegawa}, where the existence of Brakke flow
with fixed boundary condition in a strictly convex domain $U\subset \R^{n+1}$ was established. 
The proof of the present paper relies on the estimate \eqref{caMCF9}, and the same but localized
version \cite[Proposition 4.13]{Stuvard-Tonegawa} is available and
the proof proceeds by the same argument in the interior of $U$ (with the weight function
$\Omega=1$). Additionally, we recall the following
\begin{thm} (\cite[Theorem B]{Stuvard-Tonegawa}) \\
Let $\{V_t\}_{t\in\R^+}$ be a Brakke flow obtained in \cite{Stuvard-Tonegawa}. Then there 
exists a sequence of times $\{t_k\}_{k=1}^{\infty}$ with $\lim_{k\rightarrow\infty}t_k=\infty$
such that the corresponding varifolds $V_k:=V_{t_k}$ converge to a stationary integral varifold 
$V_{\infty}$ in $U$ such that $({\rm clos}\,({\rm spt}\,\|V_{\infty}\|))\setminus U=\partial\Gamma_0$.
\label{STthm}
\end{thm}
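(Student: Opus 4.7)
The plan is to combine the Brakke inequality (suitably localized and integrated in time) with Allard's integral varifold compactness theorem. First, I would use \eqref{bineq} with $\phi\equiv 1$ (or, more precisely, with a sequence of smooth cutoffs $\phi_k \uparrow 1$ compactly supported in $U$ but with $\phi_k = 1$ on compact exhaustion of $U$; in the \cite{Stuvard-Tonegawa} setup the relevant localized version of the Brakke inequality is available and $\Omega\equiv 1$). Combined with \eqref{bineq2} and dropping the test-function gradient term by Cauchy-Schwarz, this yields the differential inequality
\begin{equation*}
\|V_{t_2}\|(U) - \|V_{t_1}\|(U) \;\leq\; -\tfrac{1}{2}\int_{t_1}^{t_2}\!\!\int_U \phi\,|h(\cdot,V_t)|^2\, d\|V_t\|\, dt \;+\; C\,(t_2-t_1),
\end{equation*}
where the additive term $C(t_2-t_1)$ reflects the gradient of the cutoff and can be absorbed since we may let the cutoff exhaust $U$ and the boundary data $\partial\Gamma_0$ is fixed. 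Using the monotonicity of $\|V_t\|(U)$ (or, with some care, its quasi-monotonicity) and the non-negativity of the mass, one obtains
\begin{equation*}
\int_0^{\infty}\!\!\int_U |h(\cdot,V_t)|^2 \, d\|V_t\|\, dt \;<\; \infty.
\end{equation*}

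Next, by an elementary pigeonhole/Fatou argument, there exists a sequence $t_k \to \infty$ along which
\begin{equation*}
\int_U |h(\cdot,V_{t_k})|^2\, d\|V_{t_k}\| \;\longrightarrow\; 0,
\end{equation*}
while $\|V_{t_k}\|(U)$ remains uniformly bounded by the initial mass. Since each $V_{t_k}$ is integral with square-integrable mean curvature (by Theorem \ref{KTmain}(2) transplanted to the \cite{Stuvard-Tonegawa} setting), I would apply Allard's integral varifold compactness theorem to extract a further subsequence (not relabeled) converging as varifolds to some $V_\infty \in \mathbf{IV}_n(U)$. The lower semicontinuity of the first variation under varifold convergence, together with the vanishing $L^2$-norm of $h(\cdot,V_{t_k})$, forces $\delta V_\infty = 0$ in $U$; hence $V_\infty$ is stationary and integral in $U$.

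The remaining and most delicate point is the boundary identification
\begin{equation*}
(\mathrm{clos}(\mathrm{spt}\,\|V_\infty\|)) \setminus U \;=\; \partial \Gamma_0.
\end{equation*}
One inclusion (the support cannot escape beyond $\partial\Gamma_0$) follows from the fact that, in the construction of \cite{Stuvard-Tonegawa}, the partition data and its boundary trace on $\partial U$ are preserved along the flow, so that $\mathrm{clos}(\mathrm{spt}\,\|V_t\|) \setminus U \subset \partial \Gamma_0$ uniformly in $t$, and this property is preserved under varifold convergence. The reverse inclusion requires showing that each point of $\partial \Gamma_0$ really is a limit point of $\mathrm{spt}\,\|V_\infty\|$. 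The hard part will be this lower bound near the boundary: I would prove it by establishing a uniform-in-$t$ lower density estimate for $\|V_t\|$ near $\partial\Gamma_0$, exploiting the strict convexity of $U$ (which, by a barrier/monotonicity-formula argument in the half-space-like setting, prevents the flow from detaching from its fixed boundary) and then passing this lower density bound to the limit. Once both inclusions are verified, the theorem follows.
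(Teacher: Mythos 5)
This theorem is simply quoted from \cite[Theorem B]{Stuvard-Tonegawa}; the present paper gives no proof of it, so there is no in-paper argument to compare against. Your sketch is, in its broad outline, the natural one and the one underlying the cited result: extract a sequence of times along which the $L^2$-norm of $h$ goes to zero, pass to a stationary integral limit by Allard's compactness, then identify the boundary. However, there is a genuine gap at the central step. You try to extract $\int_0^\infty\int_U|h(\cdot,V_t)|^2\,d\|V_t\|\,dt<\infty$ from the localized Brakke inequality \eqref{bineq} by letting the cutoff $\phi$ exhaust $U$ and claiming the $\nabla\phi$ error is absorbable. It is not. The Cauchy--Schwarz estimate with $\phi\in C_c^2(U;\R^+)$ time-independent gives
\begin{equation*}
\|V_{t_2}\|(\phi)-\|V_{t_1}\|(\phi)+\tfrac12\int_{t_1}^{t_2}\int_U\phi|h|^2\,d\|V_t\|\,dt
\;\leq\; \sup\|\nabla^2\phi\|\,\sup_t\|V_t\|({\rm spt}\,\phi)\,(t_2-t_1),
\end{equation*}
and since the flow has a fixed nontrivial boundary trace $\partial\Gamma_0$, the mass in any neighborhood of $\partial U$ stays bounded below, so $\sup\|\nabla^2\phi\|\,\|V_t\|({\rm spt}\,\nabla\phi)$ blows up as the cutoff sharpens. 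The right-hand side grows linearly in time, and the claimed global-in-time $L^2$ bound does not follow. In the actual proof of \cite[Theorem B]{Stuvard-Tonegawa}, this estimate is established instead at the level of the time-discrete approximating flows (cf.\ the analogues of \eqref{caMCF4} and \eqref{caMCF9}), where the step-by-step mass decrease dominates the discrete curvature term, and then passed to the limit; one cannot obtain it by a posteriori cutoffs on the limiting Brakke flow alone.

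Your treatment of the boundary identification is also thin: you correctly flag it as the delicate part and propose one inclusion from persistence of the partition trace and the reverse from a lower density bound near $\partial U$. Both ideas are in the right spirit, but the actual argument relies on the structure $\|\nabla\chi_{E_i(t)}\|\leq\|V_t\|$ (cf.\ Theorem \ref{KTmain}(7)), the fixed boundary data of the open partitions, and the continuity in $t$ of $\chi_{E_i(t)}$; without these inputs the ``lower density exploiting strict convexity'' step is not a proof, just a plausible heading. So the architecture is right, but the two technically hard steps are not actually carried out and one of them (the global $L^2$ bound on $h$) is argued by a mechanism that fails.
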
 
Here $\partial\Gamma_0$ is the given boundary condition. Since $V_{\infty}$ is stationary and we 
are concerned with $n=1$, by \cite{AA},
${\rm spt}\,\|V_{\infty}\|$ consists of finite line segments with junctions. 
A corollary of the present paper is the following.
\begin{thm}
For $n=1$, the limit $V_{\infty}$ of Theorem \ref{STthm} has to satisfy the same
angle condition at junctions in $U$, namely, angles at junctions are either 60 or 120 degrees. 
For $N=2$, ${\rm spt}\,\|V_{\infty}\|$ consists of
lines with no junction in $U$. 
\end{thm}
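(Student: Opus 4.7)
The strategy is to realize $V_\infty$ as a time-slice of a limit Brakke flow obtained from time-shifted copies of the flow in \cite{Stuvard-Tonegawa}, and then to transfer the angle condition of Theorem \ref{KTmain2}/\ref{KTmain3} via the compactness of Lemma \ref{comp}.

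First, I would select the sequence $\{t_k\}$ from Theorem \ref{STthm} with the additional property that
\[
\int_{t_k-1}^{t_k+1}\int_U|h(\cdot,V_s)|^2\,d\|V_s\|\,ds\longrightarrow 0\qquad(k\to\infty).
\]
Such a choice is possible because Brakke's inequality applied to the flow in \cite{Stuvard-Tonegawa} (with a compactly supported test function in $U$ and a Cauchy--Schwarz splitting of $\int\nabla\phi\cdot h$) yields finiteness of $\int_0^\infty\int_U|h(\cdot,V_s)|^2\,d\|V_s\|\,ds$. Define the time-translated Brakke flows $V^{(k)}_t:=V_{t_k+t}$ for $t\in[-1,1]$. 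These are $1$-dimensional Brakke flows in $U$ with uniformly bounded mass, and by Section \ref{Final1} each $V^{(k)}_t$ satisfies the regularity of Theorem \ref{KTmain2} (or \ref{KTmain3} in the case $N=2$) for a.e.\,$t\in[-1,1]$. Lemma \ref{comp} then supplies, along a subsequence, a limit Brakke flow $\{\tilde V_t\}_{t\in[-1,1]}$ enjoying the same regularity property for a.e.\,$t$.

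Second, I would identify $\tilde V_t$ with $V_\infty$. The chosen smallness of $\int\!\int|h|^2$ on the time window together with the lower-semicontinuity argument used inside the proof of Lemma \ref{comp} forces $h(\cdot,\tilde V_t)\equiv 0$ for a.e.\,$t$; in other words, $\tilde V_t$ is a stationary integral $1$-varifold for a.e.\,$t$. Brakke's inequality with $h\equiv 0$ renders $t\mapsto\|\tilde V_t\|(\phi)$ non-increasing for every $\phi\in C_c(U;\R^+)$, and the varifold convergence $V_{t_k}\to V_\infty$ gives $\tilde V_0=V_\infty$; a monotonicity argument then pins down $\tilde V_t\equiv V_\infty$ on $[0,1]$. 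In particular, choosing any $t_0\in[0,1]$ at which the regularity of Theorem \ref{KTmain2} (or \ref{KTmain3}) holds for $\tilde V_{t_0}$, one obtains the same regularity for $V_\infty=\tilde V_{t_0}$.

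Third, I would combine regularity with stationarity to deduce the angle statement. By \cite{AA}, the stationary integral $1$-varifold $V_\infty$ has support that is locally a finite union of straight line segments meeting at isolated junction points. Theorem \ref{KTmain2} restricts the slopes of incoming $W^{2,2}$ curves at each junction to $\{0,\pm\sqrt{3}\}$ with the balance \eqref{conc3}. A $0^\circ$ incidence would mean two $W^{2,2}$ curves tangent at the junction; but for a stationary integral $1$-varifold these curves must themselves be line segments, so tangency at a common endpoint forces them to coincide as sets and only increases the multiplicity rather than creating a genuine junction. The remaining slopes $\pm\sqrt{3}$ produce precisely the $60^\circ$ and $120^\circ$ configurations. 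For $N=2$, Theorem \ref{KTmain3} prohibits any transverse crossing, so no junction of $V_\infty$ can occur in $U$ and ${\rm spt}\,\|V_\infty\|$ is a union of lines.

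The hard part is Step 2, the identification $\tilde V_t\equiv V_\infty$: Brakke's inequality is one-sided, so a priori time-shifted subsequential limits could drift toward distinct stationary varifolds of the same mass. The remedy I plan to invoke is that, once the one-sided mass monotonicity and the initial equality $\|\tilde V_0\|=\|V_\infty\|$ together force $\|\tilde V_t\|\leq\|V_\infty\|$ on $[0,1]$, lower semicontinuity of mass under varifold convergence combined with the fact that $V_\infty$ is a common accumulation point of the mass gradient-flow converts this into $\|\tilde V_t\|=\|V_\infty\|$; two stationary integral $1$-varifolds in $U$ of equal total mass with inclusion of supports must then coincide.
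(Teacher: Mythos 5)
Your route is genuinely different from the paper's, and it has a gap at the step you yourself flag. The paper never passes to a limit Brakke flow: it chooses $\{t_k\}$ so that, simultaneously, each static time-slice $V_k=V_{t_k}$ satisfies the regularity of Theorem \ref{KTmain2} (resp.\ \ref{KTmain3}) \emph{and} $\|h(\cdot,V_k)\|_{L^2(\|V_k\|)}\to 0$ (both properties hold for a.e.\,$t$, so one diagonal subsequence secures both), and then applies the elliptic portion of the proof of Lemma \ref{comp} directly to the static sequence $\{V_k\}$ — the reasoning that a varifold limit of regular curves with uniformly bounded $L^2$ curvature inherits the junction structure of Theorem \ref{KTmain2}. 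Since the curvatures vanish in the limit, $V_\infty$ is stationary and the only surviving junction angles are $60^\circ$ and $120^\circ$ (and for $N=2$ no junctions at all).

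Your Step 2 — the identification $\tilde V_{t_0}=V_\infty$ — does not close. Theorem \ref{STthm} gives varifold convergence $V_{t_k}\to V_\infty$ only along the chosen subsequence, not along $\{t_k+t\}$ for $t\neq 0$, so there is no a priori reason $\|V_{t_k+t}\|(\phi)$ accumulates at $\|V_\infty\|(\phi)$. Brakke's inequality is strictly one-sided even when the generalized mean curvature vanishes for a.e.\,time — a Brakke flow of stationary varifolds may still shed mass instantaneously — so the chain $\|\tilde V_t\|\leq\|\tilde V_0\|=\|V_\infty\|$ on $[0,1]$ cannot be upgraded to an equality without an independent lower bound on $\liminf_k\|V_{t_k+t}\|(\phi)$. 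The phrase ``$V_\infty$ is a common accumulation point of the mass gradient-flow'' is precisely such a lower bound; it is what would need to be proved, and neither Theorem \ref{STthm} nor Lemma \ref{comp} provides it. You also cannot sidestep this by working at $t_0=0$, since the regularity asserted in Lemma \ref{comp} for the limit flow is only guaranteed for a.e.\,$t$. The static-sequence argument in the paper eliminates all of these difficulties; your Step 3 (deducing the angle statement from stationarity plus the slope constraints \eqref{conc1}--\eqref{conc3}) is then essentially correct.
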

The proof is by observing that we may choose the sequence 
$\{t_k\}_{k=1}^\infty$ so that the $L^2$ curvatures of $V_{k}$ converge to 0 and so that
$V_k$ has the regularity property stated in Theorem \ref{KTmain2} (or \ref{KTmain3} if $N=2$). 
See \cite[Section 7]{Stuvard-Tonegawa} for the detail of the choice of the sequence. Then the 
same argument for the proof of Lemma \ref{comp} shows the claim for $V_{\infty}$.  

\subsection{The higher dimensional case}
The present paper does not give any definite conclusion for $n\geq 2$, except for 
Theorem \ref{chlim} that tells that the approximate MCF $\partial\mathcal E_{j_\ell}(t)$ 
is close to a measure-minimizing regular cluster in a small length scale of $o(1/j_\ell^2)$
for a.e.\,$t$. 
Even though we have the $L^2$ bound of smoothed mean curvature vector of 
$\partial\mathcal E_{j_\ell}$, unlike the case of $n=1$, it does not provide enough 
control of the converging $n$-dimensional surfaces. 
It still raises a natural question as to what is the additional regularity property of 
the Brakke flow obtained in \cite{KimTone,Stuvard-Tonegawa}. As a closely related result,
the measure-theoretic closure of smooth MCF of clusters is studied in \cite{SW}, where
it is proved that such class is compact under a set of natural assumptions. Formally, a major difficulty
of pursuing the similar line of idea for the present case is that $\partial \mathcal E_{j_\ell}(t)$
satisfies the approximate Brakke's inequality for test functions which cannot vary within the
length scale of $o(1/j_\ell)$ (see \eqref{caMCF5} which is an approximate Brakke's inequality
and the definition of the class of test functions $\mathcal A_j$). Because of this restriction, 
we cannot fully 
utilize the aspect that $\partial
\mathcal E_{j_\ell}(t)$ is an approximate MCF. On the other hand, it seems reasonable to 
expect that certain ``unstable'' singularity cannot persist in time in general. 

\subsection{Further regularity results}
\label{frr}
The present paper narrows down the possibility of angles of junctions to 0, 60 and 120 degrees,
but this does not necessarily mean that they all actually appear for a set of times with
positive Lebesgue measure. In fact, except for 
triple junctions of 120 degrees with unit density, we expect that the other types of junctions 
should not persist in time and are
likely to break up into triple junctions immediately. Also, higher multiplicity should not persist
as well, since setting the multiplicity equal to 1 simply reduces the total mass of varifold
and Brakke's formulation should not pose serious difficulty doing so. It may be the case that certain
restarting procedure is necessary to obtain a ``better'' Brakke flow which is a network with 
triple junctions with unit density for a.e.\,$t$. 

\bibliographystyle{siam}
\bibliography{ERMCF_biblio}

\end{document}